\setlist[enumerate]{label=\arabic*.}
\newcommand{\AdSq}{{\mathrm{Ad}}^2}
\newcommand{\specmu}{\mathbf{spec}}
\newcommand{\sinmu}{\mathbf{sin}}
\newcommand{\paren}[1]{\ensuremath{\left( #1 \right)}}
\newcommand{\set}[1]{\ensuremath{\left\{ #1 \right\}}}
\newcommand{\innerprod}[1]{\ensuremath{\left\langle #1 \right\rangle}}
\newcommand{\abs}[1]{\ensuremath{\left| #1 \right|}}
\newcommand{\norm}[1]{\ensuremath{\left\| #1 \right\|}}
\newcommand{\setdiv}{\,\middle|\,}
\newcommand{\summod}[1]{\ensuremath{\,(\mathrm{mod}\,#1)}}
\newcommand{\Matrix}[1]{\begin{pmatrix}#1\end{pmatrix}}
\newcommand{\SmallMatrix}[1]{\left(\begin{smallmatrix}#1\end{smallmatrix}\right)}
\newcommand\revdots{\mathinner{\mkern1mu\raise1pt\vbox{\kern7pt\hbox{.}}\mkern2mu\raise4pt\hbox{.}\mkern2mu \raise7pt\hbox{.}\mkern1mu}}
\newcommand{\piecewise}[1]{\left\{\begin{matrix}#1\end{matrix}\right.}
\newcommand{\If}{\mbox{if }}
\newcommand{\Otherwise}{\mbox{otherwise}}
\renewcommand{\Re}{{\mathop{\mathgroup\symoperators Re}}}
\renewcommand{\Im}{{\mathop{\mathgroup\symoperators Im}}}
\newcommand{\sgn}{{\mathop{\mathgroup\symoperators \,sgn}}}
\newcommand{\Max}[1]{\ensuremath{\max \set{#1}}}
\newcommand{\Z}{\mathbb{Z}}
\newcommand{\R}{\mathbb{R}}
\newcommand{\N}{\mathbb{N}}
\newcommand{\C}{\mathbb{C}}
\newcommand{\wbar}[1]{\overline{#1}}
\newcommand{\wtilde}[1]{\widetilde{#1}}
\newcommand{\what}[1]{\widehat{#1}}
\newcommand{\BigO}[2][]{O_{#1}\paren{#2}}
\newcommand{\e}[1]{e\paren{#1}}
\newcommand{\trans}[1]{{#1}^T}
\newcommand{\Poch}[2]{\paren{#1}_{#2}}
\DeclareMathOperator{\Tr}{Tr}
\DeclareMathOperator*{\res}{res}
\DeclareMathOperator{\diag}{diag}
\DeclareMathOperator{\Span}{span}
\theoremstyle{plain} 
\newtheorem{thm}{Theorem}
\newtheorem*{thm*}{Theorem}
\newtheorem*{cor*}{Corollary}
\newtheorem{lem}[thm]{Lemma}
\newtheorem*{lem*}{Lemma}
\newtheorem{prop}[thm]{Proposition}
\newtheorem*{prop*}{Proposition}
\newtheorem{conj}[thm]{Conjecture}
\newtheorem*{conj*}{Conjecture}
\crefname{thm}{Theorem}{Theorems}
\crefname{lem}{Lemma}{Lemmas}
\crefname{prop}{Proposition}{Propositions}
\crefname{cor}{Corollary}{Corollaries}
\crefname{conj}{Conjecture}{Conjectures}
\newcommand{\IoI}{\hyperlink{conj:IoI}{Interchange of Integrals}\xspace}
\newcommand{\IoIC}{\hyperlink{conj:IoI}{Interchange of Integrals Conjecture}\xspace}
\newcommand{\StrongIoI}{\hyperlink{conj:StrongIoI}{Strong Interchange of Integrals}\xspace}
\newcommand{\StrongIoIC}{\hyperlink{conj:StrongIoI}{Strong Interchange of Integrals Conjecture}\xspace}
\newcommand{\AnCont}{\hyperlink{conj:AnCont}{Analytic Continuation}\xspace}
\newcommand{\AnContC}{\hyperlink{conj:AnCont}{Analytic Continuation Conjecture}\xspace}
\newcommand{\DEPS}{\hyperlink{conj:DEPS}{Differential Equations and Power Series}\xspace}
\newcommand{\DEPSC}{\hyperlink{conj:DEPS}{Differential Equations and Power Series Conjecture}\xspace}
\newcommand{\Asymps}{\hyperlink{thm:Asymps}{Asymptotics Theorem}\xspace}
\newcommand{\JWDCC}{\hyperlink{conj:JWDC}{Jacquet-Whittaker Direct Continuation Conjecture}\xspace}
\newcommand{\MBIntsC}{\hyperlink{conj:MBInts}{Mellin-Barnes Integrals Conjecture}\xspace}
\newcommand{\BesselExpand}{\hyperlink{conj:BesselExpand}{Bessel Expansion}\xspace}
\newcommand{\BesselExpandC}{\hyperlink{conj:BesselExpand}{Bessel Expansion Conjecture}\xspace}
\newcommand{\TestFuns}{\hyperlink{conj:TestFuns}{Test Functions}\xspace}
\newcommand{\dspec}{d_\text{spec}}
\newcommand{\mlb}{\\ \hspace*{0.3in}}
\newcommand{\mlbb}{\\ \hspace*{0.6in}}
\title[Bessel functions on $GL(n)$]{Bessel functions on $GL(n)$, I}
\author{Jack Buttcane}
\date{12 December 2022}
\address{Department of Mathematics \& Statistics, 5752 Neville Hall, Orono, ME 04469, USA}
\email{jack.buttcane@maine.edu}
\begin{document}

\begin{abstract}
In the context of the Kuznetsov trace formula, we outline the theory of the Bessel functions on $GL(n)$ as a series of conjectures designed as a blueprint for the construction of Kuznetsov-type formulas with given ramification at infinity.
We are able to prove one of the conjectures at full generality on $GL(n)$ and most of the conjectures in the particular case of the long Weyl element; as with previous papers, we give some unconditional results on Archimedean Whittaker functions, now on $GL(n)$ with arbitrary weight.
We expect the heuristics here to apply at the level of real reductive groups.
A forthcoming paper will address the initial conjectures up to Mellin-Barnes integral representations in the case of $GL(4)$ Bessel functions.
\end{abstract}

\subjclass[2020]{Primary 11F72; Secondary 11F55, 33C70, 42B37}

\maketitle

\section{Introduction}

Bessel functions appeared in a scattering of topics in analysis and physics prior to their systematic study by Bessel \cite{Watson}.
Their introduction to number theory also follows several avenues:
The oldest the author is aware of is the work of Voronoi where the Bessel functions $K_0$ and $Y_0$ appear in his summation formula \cite{Voronoi}.
The Bessel functions occur in the theory of automorphic forms as the Fourier coefficients of Maass forms for $SL(2,\Z)$ are classical Whittaker functions and in the spherical case, these are just the $K$-Bessel functions.
Lastly, they are found in the trace formulas of Petersson \cite{Petersson} and Kuznetsov \cite{Kuz01}.
These functions are characterized either by integral formulas which happen to produce the functions that Bessel studied or by asymptotics and differential equations that can be manipulated into the Bessel differential equation.

The generalization of Maass forms to different groups leads to new meanings of the term ``Fourier coefficient'', but the (properly defined) Fourier coefficients of automorphic forms typically still satisfy asymptotics and differential equations whose solutions we also call ``Whittaker functions''.
The Voronoi formula can be generalized to any set of generating functions having some sort of functional equation and the kernel functions occurring in such a formula are typically referred to as Bessel functions.
The Petersson and Kuznetsov formulas can be viewed as special cases of a Kuznetsov formula over Maass forms with weight, and this can be generalized to Kuznetsov-type formulas on other groups (or more broadly, to Jacquet's relative trace formulas \cite{Jac03}); again, we will refer to the (conjectural) kernel functions in such formulas as Bessel functions.
For automorphic forms on a given group, the spherical Whittaker function should turn out to be among the Kuznetsov kernel functions, as should the Voronoi Bessel functions.
The functions that we propose to study here are the Kuznetsov kernel functions for the group $GL(n, \R)$.

Recent developments in number theory frequently use deep asymptotics for the Voronoi and Kuznetsov-type Bessel functions and stationary phase arguments in the integral transforms.
Currently, we have excellent understanding of the Bessel functions on $GL(2)$ (i.e. the classical Bessel functions) for example, in Olver's work \cite{Olver01}.
Even for $GL(3)$, construction of an Olver-type asymptotic expansion (which holds across a wide range of parameters) is dizzying to think about, but eventually one would like to characterize the behavior of the functions in every region of the parameter space.
However, the author's work on $GL(3)$ does give us some understanding of the Bessel functions on that group, and the purpose of the current paper is to provide a blueprint for extending this to other groups.
In particular, we certainly expect that these ideas continue to hold at the level of real reductive groups, but we restrict our attention to the case of $GL(n,\R)$ because the author is simply not well-versed in that language.
The paper is reasonably self-contained without reliance on the author's previous works and effort has been made to isolate the interactions with representation theory.

The process developed by the author for the $GL(3)$ Kuznetsov formula was:
\begin{enumerate}
\item Show that the integral transforms are given by kernel integrals; this is the \IoIC.

\item Show that the Bessel functions satisfy certain differential equations and symmetries, have predictable asymptotics and these properties uniquely characterize them; these are the \DEPSC and \Asymps.

\item Solve the differential equations to obtain power series solutions and use the power series solutions and/or the defining properties to obtain useful integral representations of the Bessel functions.
\end{enumerate}

One may regard this paper as studying a class of Bessel functions and the \IoIC would imply these are the kernel functions in the $GL(n)$ Kuznetsov formulas.
Crucially, we won't prove the \IoIC here, but we do prove the basic properties of the Bessel functions, especially in the case of the long Weyl element and in a forthcoming paper, we will construct the $GL(4)$ Bessel functions.

The full detail of the conjectures and what we can prove is laid out in \cref{sect:ConjConseq}.
We generally limit the scope to those properties associated with the construction of spectral-type Kuznetsov formulas as in \cref{sect:SpecKuz}, but we do briefly discuss arithmetic-type Kuznetsov formulas as well as integral representations of the Bessel functions and other applications in \cref{sect:Apps}.

One new ingredient in this paper, over the author's work on $GL(3)$, is the realization that a slightly stronger version of the \IoIC would imply the Bessel functions at higher weight (i.e. attached to strict subrepresentations of principal series representations attached to characters of the minimal parabolic) are given by analytic continuation from the principal series representations, and this removes the need to explicitly construct series and integral representations for the higher-weight cases.
The \StrongIoIC is somewhat technical to state and is given in \cref{sect:StrongIoI}; along the way we prove some new results about $GL(n)$ Whittaker functions in \cref{sect:WhittFunResults}, and both of these things follow from matrix decompositions given in \cref{sect:MatrixDecomps}.
Mathematica code for the decompositions is provided in \cref{sect:AppMD}.

Assuming the \IoI and \DEPS Conjectures, we are able to determine the ``small $y$'' asymptotics (i.e. the leading terms of the power series) of the Bessel functions in general; this is the \Asymps, which is proved in \cref{sect:Asymptotics}.
Finally, under the \IoIC, we are able to say quite a bit about the Bessel function attached to the long Weyl element, and these results are proved in \cref{sect:LE}.

The corresponding theory at the finite places is laid out in general in \cite[Section 6]{Baruch04}; the key distinction is that the defining integral converges (or rather, can be made to converge) absolutely for the finite places, but not the Archimedean place.
Baruch and Mao \cite{BaruchMao02} have tackled the Archimedean case from that perspective on $SL(2,\R)$, and more recent work on $GL(n)$ at the finite places can be found in \cite{Miao}.

\subsection{A technical note}
Obviously, one would hope to express the $GL(n)$ Bessel function as an integral transform of the corresponding $GL(n-1)$ Bessel function via matrix decompositions in the defining integral; certainly this trick works very well for the Whittaker functions.
(In fact, we use that idea in \cref{sect:WhittFunResults}.)
There is a technical point that seems to arise in every viewpoint the author has explored, having to do with conjugating an upper triangular matrix past a lower triangular matrix in the presence of characters of both the lower triangular matrix and the $x$-component of the Iwasawa decomposition of the product.
The conjugation results in terms that are complex exponentials of the bottom row of the lower triangular matrix, and for $n > 3$ this is no longer given by a character, preventing the descent to $GL(n-1)$.
Precisely, this can be seen in the \StrongIoIC, which requires additional variables $\alpha$ that are zero for the Bessel function, but generally not in the lower-rank integrals of the inductive construction.
If one is looking for the exact step in the matrix decompositions that this occurs, it is the conjugations \eqref{eq:BadHatter1} (on the other side of the Weyl element, $x_a$ would be lower triangular, while $x_b^*$ remains upper triangular) as well as \eqref{eq:BadHatter2} which moves a function of the top row into the first super-diagonal; this directly translates to a need for the $\alpha$ vector in \cref{sect:BesselMatrixDecomps}.

\section{Some Notation}
We take $G = GL(n,\R)/\R^+$ and $K=O(n)$; please note that this is different from previous papers where we used $GL(n,\R)/\R^\times$ and $SO(n)$.
The distinction allows us to better induct on the rank, e.g. even and odd $GL(2)$ Maass forms are distinguished by the $\det$ representation of $O(2)$ (and hence correspond to different representations of $SO(n)$ when embedded into higher rank groups), but both correspond to the trivial representation of $SO(2)$.
We regularly identify representatives of cosets with the cosets themselves; in particular, we treat elements of $G$ as matrices instead of cosets modulo the positive scalar matrices.

Take $Y$ to be the diagonal matrices in $G$, then characters of $Y$ are parameterized by two tuples $\mu \in \C^n$ and $\delta \in \set{0,1}^n$ (or $\delta \in \Z^n$ modulo 2) where $\mu_1+\ldots+\mu_n=0$ (for consistency mod $\R^+$) and the characters $p_\mu$ and $\chi_\delta$ on the diagonal matrices are given by
\[ p_\mu(\diag(a_1,\ldots,a_n)) = \prod_{i=1}^n \abs{a_i}^{\mu_i}, \qquad \chi_\delta(\diag(a_1,\ldots,a_n)) = \prod_{i=1}^n \sgn(a_i)^{\delta_i}. \]
We generally normalize the character $p_\mu$ by translating to $p_{\rho+\mu}$ where $\rho$ is the half-sum of the positive roots with coordinates $\rho_i = \frac{n+1}{2}-i$, $i=1,\ldots,n$; this places the tempered almost-spherical automorphic forms at $\Re(\mu)=0$, see the next section.

Set $V = Y \cap K$ and take $Y^+$ to be the positive diagonal matrices in $G$ so that $Y=Y^+V$.
Note that $p_\mu$ is really a character of $Y^+$ and $\chi_\delta$ a character of $V$.

Take $U(\R)$ to be the upper-triangular unipotent (i.e. diagonal entries equal to 1) matrices in $G$, then the Iwasawa decomposition of $G$ is $G=U(\R) Y^+ K$.
Characters of $U(\R)$ have the form
\[ \psi_m\Matrix{1&x_{1,2}&&&*\\&1&x_{2,3}\\&&\ddots&\ddots\\&&&1&x_{n-1,n}\\&&&&1} = \e{m_1 x_{1,2}+\ldots+m_{n-1} x_{n-1,n}}, \qquad \e{t} = e^{2\pi i t}, \]
for some $m \in \R^{n-1}$.
We say $\psi_m$ is non-degenerate if $m_1 \cdots m_{n-1} \ne 0$.

On $Y$, we take coordinates
\begin{align}
\label{eq:YCoords}
	y =& \Matrix{y_1 y_2\cdots y_n\\&y_2 \cdots y_n\\&&\ddots\\&&&y_n},
\end{align}
and we generally do not distinguish between $y \in Y$ and $y=(y_1,\ldots,y_n) \in (\R^\times)^{n-1} \times \set{\pm 1} \cong Y$ as the multiplication is the same.
In this notation, we have $\psi_y(x) = \psi_I(yxy^{-1})$.
(Note the change $y_j \mapsto y_{n-j}$ compared to \cite{ArithKuzII}, etc.)
We will denote the $n\times n$ identity matrix by $I$ or $I_n$.

We take the Weyl group $W$ to be the set of matrices with exactly one 1 on every row and column and the remaining entries all zero.
Of particular interest will be the set $W^\text{rel}$ of ``relevant'' Weyl elements (as in \cite{Jac03} or \cite[Section 7.5]{KnightlyLi01}) of the form
\begin{align}
\label{eq:RelevantWeylEles}
	w_{r_1,\ldots,r_\ell} = \Matrix{&&I_{r_1}\\&\revdots\\I_{r_\ell}}, \qquad r_1+\ldots+r_\ell = n,
\end{align}
and we denote the long Weyl element $w_{1,\ldots,1}$ by $w_l$.
The Weyl elements can also be thought of as permutation matrices, and we occasionally use cycle notation, e.g. if $n=3$, then
\[ w_{(1\,2\,3)} = w_{2,1} = \Matrix{&1\\&&1\\1}. \] 

The Weyl group acts on $\mu$ and $\delta$ by permutations using
\begin{align*}
	p_{\mu^w}(y) :=& p_\mu(w y w^{-1}), & \chi_{\delta}(y) = \chi_\delta(w y w^{-1}), & w \in W, y \in Y,
\end{align*}
and in particular, $\mu^{w_l} = (\mu_n, \mu_{n-1}, \ldots, \mu_1)$.
(Note that, e.g., $p_{(\mu^w)^{w'}}(y) = p_{\mu^w}(w' y {w'}^{-1}) = p_\mu(w (w' y {w'}^{-1}) w^{-1}) = p_{\mu^{ww'}}(y)$, so this really is a right action.)
Similarly, we can define the action on the indices; using $\mu^w_i := (\mu^w)_i$, we take $\mu_{i^w} := (\mu^w)_i$.

Suppose $w \in W$ can be written $w = \trans{\paren{e_{i_1} \, \ldots \, e_{i_n}}}$ with $e_j$ the $j$-th standard basis element (as a column vector), so $e_1 = \trans{\paren{1 \, 0 \, \ldots \, 0}}$.
Then $w e_j = e_{i_j}$.
If we define $w(j) = i_j$, then we have $\mu^w_j = (\mu^w)_j = \mu_{w^{-1}(j)}$, and we defined the right action so that $j^w = w^{-1}(j)$.
Note that we could very well have defined the left action $p_{w(\mu)}(y) = p_\mu(w^{-1} y w)$; this is clearly related to the right action by $\mu^w = w^{-1}(\mu)$, and we would have $w(\mu)_j = \mu^{w^{-1}}_j = \mu_{w(j)}$.

For any $w \in W$, we have the decomposition $U=U_w \, \wbar{U}_w$ where $U_w = U \cap w^{-1} U w$ and $\wbar{U}_w = U \cap w^{-1} \trans{U} w$.
(These are orthogonal subspaces of $U$ under the logarithm map and the Killing form.)
We will also write $U_m(\R)$ for the upper triangular unipotent matrices in $GL(m,\R)$, $1 \le m \le n$.

Unless stated otherwise, the variable $g$ will denote an element of $G$, $x$ an element of $U(\R)$, $y$ an element of $Y$, $k$ an element of $K$ and $w$ an element of $W$.
We generally embed $GL(n-1,\R) \subset G$ in the upper left corner
\[ g \mapsto \Matrix{g\\&1}, \]
and we use prime notation to distinguish objects of a similar type, not as derivatives.

\section{Spectral Kuznetsov formulas on $GL(n)$}
\label{sect:SpecKuz}
We are interested in studying Fourier-Whittaker coefficients/Hecke eigenvalues of automorphic forms (i.e. a spectral basis) in $L^2(\Gamma\backslash G)$ where $\Gamma \subset G$ is a discrete subgroup of finite covolume.
One key tool in this area is the spectral Kuznetsov formula (aka. ``relative trace formula''), which equates a spectral and a geometric/arithmetic decomposition of the Fourier coefficient
\[ \int_{U(\Z)\cap\Gamma\backslash U(\R)} P(xg) \wbar{\psi_\beta(x)} dx \]
 of a Poincar\'e series $P$, i.e. a function of the form
\[ P(g) = \sum_{\gamma \in U(\Z)\cap\Gamma\backslash \Gamma} F(\gamma g) \]
with a kernel function $F:G \to \C$ that satisfies $F(xg) = \psi_a(x) F(g)$ for some characters $\psi_a,\psi_b$ of $U(\R)$.

Automorphic forms can be arranged into representations of $G$ according to the decomposition of the right-regular representation on the $L^2$-space, and these can be further decomposed into their $K$-types, i.e. representations of $K$.
Automorphic representations can be embedded as subrepresentations of principal series representations, i.e. representations induced from characters of the group of upper triangular matrices, and also collected into families according to the smallest weight $K$-type occurring inside the representation (up to conjugation).
The principal series representations correspond to characters $p_{\rho+\mu} \chi_\delta$ of $Y$.

Take $\Lambda = \paren{\frac{k_1-1}{2},-\frac{k_1-1}{2},\ldots,\frac{k_m-1}{2},-\frac{k_m-1}{2},0,\ldots,0} \in \R^n$ with $2m\le n$, $2 \le k_i \in \N$, and consider the family of automorphic representations occurring as subrepresentations of principal series representations $\pi_{\Lambda,\Lambda+\wtilde{\mu},\delta}$ with characters of the form $p_{\rho+\Lambda+\wtilde{\mu}} \chi_\delta$ for some $\wtilde{\mu},\delta$ with $\wtilde{\mu}_1+\ldots+\wtilde{\mu}_n=0$ and $\Re(\wtilde{\mu})=0$.
These share a minimal $K$-type and are called ``tempered'' for the condition $\Re(\wtilde{\mu})=0$.
By abusive of terminology, we refer to either $\Lambda$ or the tuple $(k_1,\ldots,k_m)$ as the weight of the automorphic representation (as opposed to the weight of the representation of $K$) and we can assume that $k_{i+1} \le k_i$, $i=1,\ldots,m-1$ (up to conjugation of the representation) as well as $\wtilde{\mu}_{2i}=\wtilde{\mu}_{2i-1}$, $i=1,\ldots,m$ (for unitary representations).
Let $i \mathfrak{a}_0^*(\Lambda)$ be the set of all such $\Lambda+\wtilde{\mu}$, i.e.
\[ i \mathfrak{a}_0^*(\Lambda) = \set{\Lambda+\wtilde{\mu} \setdiv \mu \in \C^n, \Re(\wtilde{\mu})=0, \wtilde{\mu}_1+\ldots+\wtilde{\mu}_n=0, \wtilde{\mu}_{2j}=\wtilde{\mu}_{2j-1}, j=1,\ldots,m}. \]

We note that for a given $\Lambda \ne 0$, not all values of $\delta$ are possible -- i.e. yield nontrivial representations; we have the parity constraints
\begin{align}
\label{eq:LambdadeltaParity}
	\delta_{2j-1}+\delta_{2j} \equiv k_i \pmod{2}, \qquad j=1,\ldots,m,
\end{align}
see \eqref{eq:Sigmasigmad} -- and not all $\delta$ are inequivalent (i.e. if there is some Weyl element which fixes $\Lambda$ and also permutes $\delta \to \delta'$, then the data $(\Lambda,\delta)$ and $(\Lambda,\delta')$ yield the same set of forms).
(Beware of drawing a line between $\Lambda$ and the minimal $K$-type; for $GL(3)$ with $\Lambda=(0,0,0)$ and $\delta = (1,1,0)$, the minimal $K$-type has dimension 3, i.e. highest weight 1.)

In order to localize the Kuznetsov formula to such a family of automorphic representations, we take the kernel of the Poincar\'e series to be an inverse Whittaker transform
\[ F(g) = \int_{i \mathfrak{a}_0^*(\Lambda)} f(\mu) W_\sigma(g,\mu,\delta) \dspec\mu, \]
where $\sigma$ is the minimal $K$-type corresponding to the weight $\Lambda$ and $W_\sigma(g,\mu,\delta)$ is the Jacquet-Whittaker function at $K$-type $\sigma$ in the principal series representation with character $p_{\rho+\mu} \chi_\delta$.
The measure $\dspec\mu$ occurring in Wallach's Whittaker inversion formula \cite{Wallach}
\[ \int_{U(\R)\backslash G} \int_{i\mathfrak{a}_0^*(\Lambda)} f(\mu) W_\sigma(g,\mu,\delta) \dspec\mu \, \wbar{W_\sigma(g,\mu',\delta)} dg = f(\mu') \]
can be computed by a method of Goldfeld and Kontorovich \cite{GoldKont} if one has the appropriate generalization of Stade's formula \cite{Stade02}, which is a direct computation of
\begin{align}
\label{eq:StadesFormula}
	L(s,\mu,\mu',\delta) :=& \int_{U(\R)\backslash GL(n-1,\R)} \Tr\paren{W_\sigma(g,\mu,\delta) \trans{\wbar{W_\sigma(g,\mu',\delta)}}} \det(g)^s dg,
\end{align}
and one expects this to be the gamma factors of the Rankin-Selberg $L$-function \cite{Jac02}.
(There are ways to temporarily get around the use of Stade's formula, but each has its own downside.)
A solution of this problem in the principal series case can be found in \cite{IshiiMiyazaki}.

If we write the Langlands spectral expansion \cite{Langlands02} over this family of automorphic forms (we are not assuming the absence of complementary series forms) as
\[ P(g) = \int_{\mathcal{B}(\Lambda,\delta)} \innerprod{P,\Xi} \Xi(g) \, d\Xi, \]
then applying Wallach's Whittaker inversion and using Stade's formula to get rid of the spare Whittaker function gives the spectral Kuznetsov formula (see \cite[Section 8]{ArithKuzII})
\begin{align}
\label{eq:SpecKuz}
	\int_{\mathcal{B}(\Lambda,\delta)} \frac{\lambda_\Xi(b) \wbar{\lambda_\Xi(a)}}{L(1,\AdSq \Xi)} f(\mu_\Xi) d_\text{H}\Xi = \sum_{w \in W} \sum_c \frac{S_w(a,b,c)}{p_{-\rho}(c)} H_w(f, acw b^{-1} w^{-1}).
\end{align}
On the left side, $f(\mu)$ is a reasonably arbitrary test function and we've included the Rankin-Selberg constants from converting to Hecke eigenvalues $\lambda_\Xi$ in the measure $d_\text{H}\Xi$.
The right side has a sum over the Weyl group $W$ and a sum over tuples of non-zero integers $c \in \Z^{n-1}$ which we arrange into matrices of the form
\[ c=\diag(1/c_1,c_1/c_2,\ldots,c_{n-2}/c_{n-1},c_{n-1}). \]
(Note the change $c_j \mapsto c_{n-j}$ compared to \cite{ArithKuzII}, etc.)
The variables $a$ and $b$ are tuples of nonzero integers $a,b \in \Z^{n-1}$, which we treat as elements of $Y$.
We have the Kloosterman sum
\[ S_w(a,b,c) = \sum_{xcwx' \in U(\Z)\cap\Gamma/\backslash\wbar{U}_w(\Z)\cap\Gamma} \psi_a(x) \psi_b(x'), \]
defined in terms of the Bruhat decomposition $G=U(\R)YWU(\R)$ (the Kloosterman sum is taken to be zero when it is not well-defined), and finally, we have the integral transform
\begin{equation}
\label{eq:HwDef}
\begin{aligned}
	p_\rho(y) H_w(f,y) =& \int_{U(\R)\backslash GL(n-1,\R)} \int_{\wbar{U}_w(\R)} \int_{i \mathfrak{a}_0^*(\Lambda)} \frac{f(\mu)}{L(1,\mu,\mu,\delta)} \\
	& \qquad \times \Tr\paren{W_\sigma(ywug,\mu,\delta) \trans{\wbar{W_\sigma(g,\mu,\delta)}}} \dspec\mu \, \wbar{\psi_I(u)} du \, \det(g) dg.
\end{aligned}
\end{equation}

In this paper, we are interested in the special functions occurring in the integral transform $H_w(f,y)$.
The final integral over $g$ simply serves to remove a spare Whittaker function, so we will ignore it.
Also, the Whittaker function $\wbar{W_\sigma(g,\mu,\delta)}$ can be expressed in terms of $W_\sigma(g,2\Lambda-\mu,\delta)$, which is entire in $\mu$ and reasonably bounded, so we can simply include that in the test function $f(\mu)$.
Similarly, the weight $\frac{\dspec\mu}{L(1,\mu,\mu,\delta)}$ is not especially relevant as it is (conjecturally) holomorphic near $\Re(\mu)=0$, so we restrict our attention to the integral transform
\begin{align}
\label{eq:MainIntegral}
	\int_{\wbar{U}_w(\R)} \int_{i \mathfrak{a}_0^*(\Lambda)} f(\mu) W_\sigma(ywug,\mu,\delta) \dspec\mu \, \wbar{\psi_I(u)} du.
\end{align}
The spectral measure $\dspec\mu$ has some zeros on $i\mathfrak{a}_0^*(\Lambda)$ which will be relevant.
Lastly, only the relevant Weyl elements $w \in W^\text{rel}$ occur in the spectral Kuznetsov formula, by a theorem of Friedberg \cite{Friedberg01}.

We should mention two very common, equivalent constructions of the spectral Kuznetsov formulas:
\begin{enumerate}
\item One can take the inner product of two Poincar\'e series $P_1,P_2$, applying the unfolding to one of the series and proceeding precisely as above; this yields a spare weight function, which can be sent to 1 by a density argument.
\item One can take the double Fourier coefficient
\[ \int_{U(\Z)\cap\Gamma\backslash U(\R)} \int_{U(\Z)\cap\Gamma\backslash U(\R)} F(xy,x'y') \wbar{\psi(x)} dx \, \wbar{\psi'(x')} dx' \]
of the kernel function
\[ F(g,g') = \sum_{\gamma \in \Gamma} f(g^{-1} \gamma g). \]
One then takes $f$ such that on the spectral side an application of Harish-Chandra's (zonal) spherical inversion reduces to some reasonably arbitrary test function $\hat{f}$, while on the arithmetic side, one needs to show that the Fourier transform of a zonal spherical function is a product of two Whittaker functions, but the remainder of the process is identical.
\end{enumerate}
One should likely always handle the convergence issues by starting with an inner product of a Poincar\'e series with itself and apply Plancherel's identity, but having some knowledge of the convergence on the spectral side \cite{JanaNelson}, it is slightly simpler to directly consider the Fourier coefficient of a Poincar\'e series.

\section{The Conjectures and Their Consequences}
\label{sect:ConjConseq}
Let $w=w_{r_1,\ldots,r_\ell} \in W^\text{rel}$.
We define the ``compatibility condition'' for $y \in Y$ to be $\psi_y(w u w^{-1}) = \psi_I(u)$ for all $u \in U_w(\R)$, and it turns out that the condition for the Kloosterman sum above to be well-defined is exactly that $y=acwb^{-1}w^{-1}$ satisfies the compatibility condition.
Furthermore, if we treat $\wbar{U}_w$ as the quotient $U_w\backslash U$, then the compatibility condition is precisely what \eqref{eq:MainIntegral} requires to be well-defined.
Define $\hat{r}_i = r_1+\ldots+r_i$ and
\begin{align}
\label{eq:YwDef}
	Y_w = \set{\pm\Matrix{y_{\hat{r}_1} \cdots y_{\hat{r}_{\ell-1}} I_{r_1}\\&\ddots\\&& y_{\hat{r}_\ell} I_{r_{\ell-1}}\\&&&I_{r_\ell} } \setdiv y_i \in \R^\times},
\end{align}
or equivalently
\[ Y_w =\set{y \in Y \setdiv y_i = 1, i \notin \set{\hat{r}_1, \ldots, \hat{r}_\ell}}. \]
Then $Y_w$ is exactly the set of $Y$ matrices which satisfy the compatibility condition (but only for $w \in W^\text{rel}$).
We also take $\wbar{Y}_w$ to be the orthogonal complement of $Y_w$ (under the logarithm map) so that $Y_w \wbar{Y}_w = Y$.

Define $G_w := U(\R)Y_w w \wbar{U}_w(\R) w^{-1}$.
(This is slightly uncommon; more typical would be $U(\R)Y_w w \wbar{U}_w(\R)$, but our definition is more convenient here.)
For $F:G \to \C$ nice enough, $w \in W^\text{rel}$ and $y,g \in G$, define
\begin{align}
\label{eq:TwDef}
	T_w(F_g)(y) = \int_{\wbar{U}_w(\R)} F(ywug) \wbar{\psi_I(u)} du.
\end{align}

\begin{conj*}[Interchange of Integrals]
\hypertarget{conj:IoI}{}If $f(\mu)$ is holomorphic with rapid decay on an open tube domain containing $\Re(\mu)=0$, $y \in G_{w_l}$, $t\in G$, $\Lambda$ the weight of a family of automorphic representations and $\sigma$ a $K$-type occurring such representations, then for 
\[ F(g) = \int_{i \mathfrak{a}_0^*(\Lambda)} f(\mu) W_\sigma(yw x t,\mu,\delta) \dspec\mu, \]
we have
\begin{align*}
	T_w(F_t)(y) &= \int_{i \mathfrak{a}_0^*(\Lambda)} f(\mu) \wtilde{K}_w(y,t,\Lambda,\mu,\delta,\sigma) \dspec\mu
\end{align*}
for some function $\wtilde{K}_w(y,t,\Lambda,\mu,\delta,\sigma)$.
Furthermore, this function is smooth in the coordinates of $t$ and $y$ and polynomially bounded in $\norm{\mu}$.
\end{conj*}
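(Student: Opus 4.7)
The fundamental difficulty is that the integral defining $T_w$ and the integral over $i\mathfrak{a}_0^*(\Lambda)$ do not have a common region of absolute convergence when $\Re(\mu)=0$: the Jacquet--Whittaker function $W_\sigma(ywut,\mu,\delta)$ for $u$ large tends to zero only in a delicate oscillatory sense that is captured by the integration against $\wbar{\psi_I(u)}$, so naive Fubini fails. The plan is the familiar contour-shifting strategy.

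First, I would identify an open tube domain $\mathcal{T} \subset \mathfrak{a}_0^*(\Lambda)_\C$, situated well inside the Weyl chamber where $\Re(\mu_i-\mu_j)$ is sufficiently negative for $i<j$, on which the $\wbar{U}_w(\R)$-integral $\int_{\wbar{U}_w(\R)} W_\sigma(ywut,\mu,\delta) \wbar{\psi_I(u)} du$ converges absolutely and uniformly in compact subsets. For the spherical principal series this is standard from the Jacquet-integral-type majorization of $\abs{W_\sigma}$ on cones; for the nonspherical $K$-types, I would use the matrix decompositions of \cref{sect:MatrixDecomps} together with the $GL(n-1)$ Whittaker function bounds of \cref{sect:WhittFunResults} to reduce to an inductive statement, since each entry of the matrix $W_\sigma$ is a sum of products involving lower-rank Whittaker functions. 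Call the resulting $u$-integral $\wtilde{K}_w(y,t,\Lambda,\mu,\delta,\sigma)$, initially defined for $\mu \in \mathcal{T}$.

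Second, on $\mathcal{T}$ with $f$ replaced by a holomorphic continuation (the hypothesis gives holomorphy on an open tube containing $\Re(\mu)=0$; a preliminary step is to enlarge this by shifting the $\mu$-contour in the original definition of $F$, which is permissible because $W_\sigma$ is entire in $\mu$ and of moderate growth, and $\dspec\mu$ is holomorphic), apply Fubini to obtain
\[ T_w(F_t)(y) = \int_{\Re(\mu)\in\mathcal{T}} f(\mu) \wtilde{K}_w(y,t,\Lambda,\mu,\delta,\sigma) \dspec\mu. \]
Third, establish the analytic continuation of $\wtilde{K}_w$ from $\mathcal{T}$ back to a neighborhood of $\Re(\mu)=0$, with at worst polynomial growth in $\norm{\Im\mu}$. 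This is where the \DEPS Conjecture would feed in: the kernel $\wtilde{K}_w$ should satisfy the same system of differential equations (in $y$) as the Whittaker function, so its analytic behavior in $\mu$ is controlled by the functional-equation/meromorphic-continuation structure of those ODEs. Alternatively, Mellin--Barnes manipulations of the defining $u$-integral identify the poles explicitly and justify contour shifts.

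Fourth, shift the $\mu$-contour in the Fubini'd expression back to $\Re(\mu)=0$. The main obstacle is twofold: (i)~controlling the residues at intermediate hyperplanes $\mu_i-\mu_j = $ integer, which requires showing that the zeros of $\dspec\mu$ (noted in the remarks following \eqref{eq:MainIntegral}) cancel the poles of $\wtilde{K}_w$ — this is the deepest input and is precisely the reason the conjecture remains open at full generality; and (ii)~justifying the interchange of the resulting contour shift and the remaining $\wbar{U}_w$-integration along the way, for which the polynomial bound on $\wtilde{K}_w$ together with the rapid decay of $f$ should suffice. Smoothness in $y$ and $t$ then follows from differentiation under the (now convergent) integral. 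Verifying (i) in general appears out of reach without either a full asymptotic expansion of $W_\sigma$ in $\mu$ or a rigidity theorem uniquely characterizing $\wtilde{K}_w$; this is why the paper proves the conjecture only for $w=w_l$ and in low rank.
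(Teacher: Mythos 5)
The statement you are attempting to prove is explicitly a \emph{conjecture} in the paper, not a theorem; the paper says directly that it will not prove the \IoIC, only derive it from the (also conjectural) \StrongIoIC in \cref{prop:StrongIoIConsequences}. Your proposal, despite the honest caveats at the end, contains a genuine gap at the very first step that makes the whole strategy unworkable, and it is worth naming precisely because it distinguishes this problem from the superficially similar Jacquet-integral situation.

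Your Step~1 asserts the existence of a tube domain $\mathcal{T}$, ``situated well inside the Weyl chamber,'' on which $\int_{\wbar{U}_w(\R)} W_\sigma(ywut,\mu,\delta)\,\wbar{\psi_I(u)}\,du$ converges absolutely. No such domain exists. The Whittaker function's decay for small Iwasawa $Y$-coordinate of $ywu$ is governed by the lead term $\sim p_{\rho+\mu^{w'}}(\cdot)$ of the Frobenius expansion, and the exponent $\rho+\Re(\mu^{w'})$ is \emph{maximized}, over the Weyl orbit, precisely at $\Re(\mu)=0$; moving $\Re(\mu)$ in any direction (including into the chamber $\Re(\mu_i-\mu_j)<0$) only worsens the small-$y$ behavior. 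The paper makes this explicit just before the statement of the \StrongIoIC: modelling $W_\sigma(y,\mu,\delta)$ by $p_\rho(y)$ (the best case, at $\Re(\mu)=0$) each $u$-integral behaves like $\int_{\R}(1+u^2)^{-1/2}\,du = \infty$, ``with, say, $\Re(\mu)=0$; the general case tends to be worse.'' In other words, the divergence is marginal but unavoidable for every $\mu$ --- which is exactly why the paper frames \eqref{eq:IoIBadDef} as a borderline oscillatory integral rather than a conditionally convergent Jacquet-type integral that merely lacks a common domain of Fubini applicability. Your later steps (analytic continuation via \DEPSC, contour shifting back to $\Re(\mu)=0$, residue cancellation against the zeros of $\dspec\mu$) therefore never get started.

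The paper's machinery works around this in a structurally different way: the matrix decompositions of \cref{sect:MatrixDecomps} are used to choose coordinates on $\wbar{U}_w(\R)$, the $u$-integral is broken up by a smooth dyadic partition of unity (the $C_{m,j}$ of \cref{sect:StrongIoI}), each compactly supported piece is trivially interchangeable with the $\mu$-integral, and the convergence of the resulting dyadic sum is what the \StrongIoIC actually asserts. The logarithmic amount of extra decay needed to turn marginal divergence into marginal convergence comes not from contour shifting in $\mu$ but from the fact that the $\mu$-\emph{integral} of the Whittaker function against a rapidly decaying test function has strictly better decay than the Whittaker function itself --- this is \cref{thm:IWPW}. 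So the order of operations is essentially reversed from your proposal: one does not make the $u$-integral converge before interchanging; one uses the interchange on bounded pieces and relies on the $\mu$-integral to supply decay across pieces. If you want to turn your sketch into something compatible with the paper's framework, the place to start is the paragraph following the \IoIC in \cref{sect:ConjConseq} and then \cref{sect:StrongIoI}, not a Jacquet-integral-style domain of convergence.
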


One would like to treat the function $\wtilde{K}_w(y,t,\Lambda,\mu,\delta,\sigma)$ in the \IoIC as being defined by the (non-convergent) integral
\begin{align}
\label{eq:IoIBadDef}
	\int_{\wbar{U}_w(\R)} W_\sigma(yw x t,\mu,\delta) \wbar{\psi_I(x)} dx
\end{align}
This can be thought of (see the discussion in \cref{sect:StrongIoI}) as an example of an oscillatory integral
\[ \int_{\R^m} e^{i\phi(x)} f(x) dx \]
which just fails to converge, i.e.
\[ \int_{\R^m} \abs{f(x)} dx = \infty, \]
but for any $\epsilon>0$,
\[ \int_{\R^m} \abs{f(x)} \prod_{i=1}^m \paren{1+x_i^2}^{-\epsilon} dx < \infty. \]
In the one-dimensional case, failure of such an oscillatory integral to converge requires that, for some combination of the parameters, the oscillatory factor $e^{i\phi(x)}$ fails to oscillate at the point $\abs{x}=\infty$; for a two-dimensional integral, there must be a line along which the oscillation cancels, and so on, until a failure of convergence in the general $n$-dimensional case requires a grand conspiracy among the oscillating factors.

\begin{prop}
\label{prop:IoIConsequence}
Assume the \IoIC, then the restriction of $\wtilde{K}_w(y,t,\Lambda,\mu,\delta,\sigma)$ to $y \in G_w$ satisfies
\[ \wtilde{K}_w(y,t,\Lambda,\mu,\delta,\sigma) = K_w(y,\Lambda,\mu,\delta) W_\sigma(t,\Lambda,\delta) \]
for some function $K_w(y,\Lambda,\mu,\delta)$ which is independent of $\sigma$.
\end{prop}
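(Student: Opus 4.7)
The plan is to exploit the equivariance properties that the formal Bessel integral
\[
\wtilde{K}_w(y,t,\Lambda,\mu,\delta,\sigma) \;=\; \int_{\wbar{U}_w(\R)} W_\sigma(ywut,\mu,\delta)\, \wbar{\psi_I(u)}\, du
\]
inherits from $W_\sigma$, and then appeal to uniqueness of Whittaker models to force the factorization. Because the \IoIC guarantees smoothness in $y,t$ and polynomial growth in $\mu$, one may treat $\wtilde{K}_w$ as an honest function and transport identities between the spectrally averaged form $\int f(\mu) \wtilde{K}_w(\cdots) \dspec\mu$ and pointwise identities in $\mu$ by varying $f$ and invoking Wallach's Whittaker inversion.

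The first step is to establish the left-$U(\R)$-equivariance in $t$: for $x \in U(\R)$ and $y \in G_w$,
\[
\wtilde{K}_w(y, xt, \Lambda, \mu, \delta, \sigma) \;=\; \psi_I(x)\, \wtilde{K}_w(y, t, \Lambda, \mu, \delta, \sigma).
\]
One checks this by a change of variable $u \mapsto u'$ in the $\wbar{U}_w$-integral that absorbs $x$ into the unipotent variable, using the decomposition $U = U_w \wbar{U}_w$ together with conjugations $wx_w w^{-1}$ for the $U_w$-component of $x$; the residual character contribution is precisely annihilated by the compatibility condition \eqref{eq:YwDef} defining $Y_w \subset G_w$. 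Right $K$-equivariance in $t$ at $K$-type $\sigma$ is inherited directly from that of $W_\sigma$.

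With these equivariances, $t \mapsto \wtilde{K}_w(y, t, \Lambda, \mu, \delta, \sigma)$ lies in a Whittaker model at $K$-type $\sigma$. Interpreting the $\wbar{U}_w$-integral as a formal Bessel operator $\mathcal{B}_w$ on the principal series $\pi_{\mu, \delta}$ and evaluating on the $K$-type $\sigma$ vector, one argues that the image is captured by the subrepresentation attached to the weight $\Lambda$, whose Whittaker model at $K$-type $\sigma$ is one-dimensional and generated by $W_\sigma(t,\Lambda,\delta)$. Uniqueness of Whittaker models then forces
\[
\wtilde{K}_w(y, t, \Lambda, \mu, \delta, \sigma) \;=\; K_w(y, \Lambda, \mu, \delta, \sigma)\, W_\sigma(t, \Lambda, \delta)
\]
for a scalar $K_w$, and the $\sigma$-independence of $K_w$ follows from the one-dimensionality of the Bessel functional on the subrepresentation: any two $K$-type vectors inside the subrepresentation yield $K_w$-scalars with the same ratio as their Whittaker functions, forcing $K_w$ itself to be the universal scalar determined by $(y, \Lambda, \mu, \delta)$.

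The principal obstacle is the non-absolute convergence of the formal Bessel operator $\mathcal{B}_w$ flagged in the discussion after the \IoIC: the identification of its image with the subrepresentation associated to $\Lambda$ cannot be done at the level of ordinary integrals, but must be deduced from the spectrally-averaged equality supplied by the \IoIC, using the freedom in $f$ to push identities from the spectrum down to pointwise statements in $\mu$.
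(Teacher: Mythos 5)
Your strategy is at bottom the same as the paper's: both proofs rest on the Archimedean multiplicity-one theorem (Shalika's uniqueness of continuous Whittaker functionals). The path you take to get there, however, is more roundabout and has a genuine soft spot at the most important step.

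The paper's proof is short because it works globally: it defines two maps, $h_1:\sigma \mapsto \wtilde{K}_w(y,t,\Lambda,\mu,\delta,\sigma)$ and $h_2:\sigma \mapsto W_\sigma(t,\mu,\delta)$, extends both to operators on the full principal series via the Peter--Weyl decomposition, notes that both land in the Whittaker model, observes that $h_2$ is continuous for the usual Lie-algebraic seminorm topology and that the \IoIC forces $h_1$ to be continuous as well, and then invokes Shalika's theorem to conclude $h_1 = K_w \cdot h_2$ with a \emph{single} scalar $K_w$. Because the proportionality is a proportionality of operators on the whole representation, the $\sigma$-independence of $K_w$ is automatic — it is the scalar intertwining $h_1$ and $h_2$ on the irreducible representation.

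Your proof instead tries to verify, $K$-type by $K$-type, that $t\mapsto\wtilde{K}_w(y,t,\Lambda,\mu,\delta,\sigma)$ satisfies the Whittaker transformation laws, then applies uniqueness per $\sigma$ to obtain scalars $c_\sigma$, and only afterwards tries to argue $c_\sigma$ does not depend on $\sigma$. This last step — precisely the point of the proposition — is the weak link. The sentence ``any two $K$-type vectors inside the subrepresentation yield $K_w$-scalars with the same ratio as their Whittaker functions, forcing $K_w$ itself to be the universal scalar'' is not an argument; the fact that needs to be used is that the \emph{whole} map is a single Whittaker functional on the representation (so that a single Schur-type constant appears), and you have not constructed that global map — you've only studied it restricted to each $\sigma$-isotypic component. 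Without upgrading the per-$\sigma$ statement to a statement about a continuous operator on the full (Casselman--Wallach) representation, one cannot rule out that the constant varies with $\sigma$. That upgrade is exactly what the Peter--Weyl step accomplishes in the paper. Relatedly, you never address the continuity hypothesis that Shalika's Archimedean uniqueness theorem requires; the paper makes a point of noting that $h_2$ is continuous in the seminorm topology and that $h_1$ inherits this because it is built out of $h_2$ via the \IoIC, and some version of that remark should appear in your argument.

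A smaller point: the first paragraph advocates establishing the $U(\R)$-equivariance of $\wtilde{K}_w$ in $t$ by a change of variable $u\mapsto u'$ in the non-convergent $\wbar{U}_w(\R)$-integral, but then the final paragraph correctly observes that manipulations at that level are not permitted and must be replaced by arguments on the spectrally-averaged side; these two paragraphs are in tension, and the proof as written never says which one actually carries the day. The commutation of $x\in U(\R)$ past $ywu$ with $y\in G_w$ is also not as routine as the phrase ``absorbs $x$ into the unipotent variable'' suggests, since $y$ carries a $w\wbar{U}_w(\R)w^{-1}$-component; this is exactly the kind of step that in the paper is subsumed into the statement that $h_1$, by construction, has image in the Whittaker model.

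So: same central tool, but you should recast the argument around the global operator picture (Peter--Weyl $\Rightarrow$ a single Whittaker functional $\Rightarrow$ one constant) rather than trying to patch together per-$\sigma$ statements, and you should say explicitly where the continuity required by Shalika's theorem comes from.
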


Applying the proposition to the statement of the conjecture gives
\begin{equation}
\label{eq:KwDef}
\begin{aligned}
	&\int_{\wbar{U}_w(\R)} \int_{i \mathfrak{a}_0^*(\Lambda)} f(\mu) W_\sigma(yw x t,\mu,\delta) \dspec\mu \, \wbar{\psi_I(x)} dx \\
	&= \int_{i \mathfrak{a}_0^*(\Lambda)} f(\mu) K_w(y,\Lambda,\mu,\delta) W_\sigma(t,\mu,\delta) \dspec\mu.
\end{aligned}
\end{equation}

\cref{prop:IoIConsequence} implies that the Bessel function is an invariant of the irreducible representation providing the Whittaker function; i.e. we may vary the particular vector of the Whittaker model without changing the Bessel function.
Informally, the \AnContC is the stronger statement \textit{if $\pi$ is a subrepresentation of a principal series representation $\pi'$, then the Bessel functions attached to $\pi$ are the same as that of $\pi'$.}
Justification for this conjecture is simply that the same is true for the Whittaker function from which the Bessel function is defined.
\begin{conj*}[Analytic Continuation]
\hypertarget{conj:AnCont}{}Assume the \IoIC, then the function $K_w(y,\Lambda,\mu,\delta)$ is entire in $\mu$ and satisfies
\[ K_w(y,\Lambda,\mu,\delta) = K_w(y,0,\mu,\delta). \]
\end{conj*}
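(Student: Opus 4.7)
The strategy is to reduce the \AnContC to the analogous statement at the level of Whittaker functions: that the $K$-type $\sigma$ Whittaker function $W_\sigma(g,\Lambda+\wtilde{\mu},\delta)$ of the weight-$\Lambda$ subrepresentation agrees pointwise in $g$ with the Jacquet-Whittaker function of the ambient principal series $\pi_{\rho+\Lambda+\wtilde{\mu},\delta}$ at the same $K$-type, the latter understood as the analytic continuation in $\mu$ from generic (irreducible) parameters. Because the integrand \eqref{eq:IoIBadDef} depends on $\Lambda$ only through the Whittaker function, the Bessel-level identification is then essentially formal.

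Concretely, I would first apply the \IoIC in the principal series setting, i.e.\ with $\Lambda=0$ and $\mu$ ranging over a tube domain containing $i\mathfrak{a}_0^*(0)$. \cref{prop:IoIConsequence} produces a function $K_w(y,0,\mu,\delta)$, holomorphic on this domain, satisfying
\[ \wtilde{K}_w(y,t,0,\mu,\delta,\sigma)=K_w(y,0,\mu,\delta)\,W_\sigma(t,\mu,\delta). \]
Restricting now to $\mu=\Lambda+\wtilde{\mu}$, which lies in the tube domain around $i\mathfrak{a}_0^*(0)$ after a real shift by the fixed vector $\Lambda$, and invoking the Whittaker-level identification, the integrand $W_\sigma(ywxt,\mu,\delta)$ is literally the same function of $g$ whether one interprets it via the weight-$\Lambda$ subrepresentation or via the principal series. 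Hence $\wtilde{K}_w(y,t,\Lambda,\mu,\delta,\sigma)=\wtilde{K}_w(y,t,0,\mu,\delta,\sigma)$, and choosing $t,\sigma$ with $W_\sigma(t,\mu,\delta)\not\equiv 0$ and dividing gives $K_w(y,\Lambda,\mu,\delta)=K_w(y,0,\mu,\delta)$ wherever both sides are defined. For entire-ness, one would seek to continue the principal series $K_w(y,0,\mu,\delta)$ past any apparent poles using the Weyl-group functional equations of the Jacquet-Whittaker function, the differential equations to be set up in \DEPS, or an explicit Mellin-Barnes representation; the first strategy is attractive because the hypothetical poles of $K_w$ must be invariant under the Weyl symmetries of the spectral measure, which heavily constrains their location.

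The main obstacle is the Whittaker-level identification at the reducible parameters $\mu=\Lambda+\wtilde{\mu}$. For generic $\mu$ the Jacquet integral is the unique (up to scalar) Whittaker functional on the irreducible principal series, but at such special $\mu$ the principal series becomes reducible and in principle the analytic continuation of the generic Jacquet integral could pick out the Whittaker model of the subrepresentation, that of a quotient, or develop poles. One must verify that, properly normalized, the continuation of the principal series Jacquet integral restricted to $\sigma$-isotypic vectors in the weight-$\Lambda$ subrepresentation coincides with the intrinsic Whittaker function of that subrepresentation. This is a delicate statement about Whittaker models in the reducible case and is where the real analytic content of the conjecture lies; a representation-theoretic realization of $W_\sigma$ in which $\Lambda$ enters only as a shift in the spectral parameter $\mu$ appears to be the most natural path, and is consistent with the remark in the introduction that the \StrongIoIC is what would ultimately supply this identification via analytic continuation.
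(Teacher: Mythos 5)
Your reduction to the Whittaker-function level is the right instinct, but there are two issues, one of framing and one of substance. On framing: you posit as the ``main obstacle'' a Whittaker-level identification at reducible parameters, but in the paper's setup $W_\sigma(g,\mu,\delta)$ is defined once and for all by the analytically continued Jacquet integral \eqref{eq:JWDef} of the principal series; there is no second, intrinsic Whittaker function of the subrepresentation with which it must be reconciled. The integrand in \eqref{eq:IoIBadDef} is literally the same entire function of $\mu$ whatever the ambient weight $\Lambda$, so the identification you worry about is built away by definition and cannot be where the real content lies.

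The substantive gap is in the claim that $K_w(y,\Lambda,\mu,\delta)=K_w(y,0,\mu,\delta)$ is ``essentially formal.'' Each side is produced by the \IoIC only on a tube neighbourhood of $i\mathfrak{a}_0^*(\Lambda)$ respectively $i\mathfrak{a}_0^*(0)$, and these sit over $\Re(\mu)=\Lambda$ respectively $\Re(\mu)=0$. For nonzero $\Lambda$ (whose coordinates can be arbitrarily large) these domains need not intersect, so the step ``restricting now to $\mu=\Lambda+\wtilde{\mu}$, which lies in the tube domain around $i\mathfrak{a}_0^*(0)$'' is not available; no real shift by $\Lambda$ keeps you near $\Re(\mu)=0$. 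To compare the two you need exactly the analytic continuation whose existence --- the entireness of $K_w$ in $\mu$ --- is itself part of what the \AnCont asserts. Your proposal defers this to \DEPS, a Mellin--Barnes representation, or the Weyl functional equations, but supplies none of them, so the argument is circular. The paper avoids this by deriving \AnCont from the \StrongIoIC rather than from the \IoIC alone: in the proof of \cref{prop:StrongIoIConsequences}, the Strong IoI gives an explicit, absolutely convergent rearrangement in which $\wtilde{K}_w$ appears as a limit (over the dyadic parameter $C$) of integrals against $\what{W}_{n,\sigma}(s,\mu,\delta)$ --- an expression in which $\Lambda$ simply does not occur and in which the entireness in $\mu$ is inherited directly from the Whittaker Mellin transform. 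Your closing sentence correctly intuits that the \StrongIoIC is what supplies the missing ingredient, but the body of your argument does not use it, and without it the continuation step has no foundation.
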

When assuming the \AnContC, we drop the extraneous argument 0.
Assuming the conjecture in an argument typically means we may suppose $\Re(\mu)=0$ and the case of general $\mu$ follows by analytic continuation; moreover, one may typically assume the Whittaker function in the definition is at the minimal weight by \cref{prop:IoIConsequence}.

Some simple consequences of \cref{prop:IoIConsequence} are
\begin{prop}
\label{prop:IoIConsequences2}
Assume the \IoIC and, for convenience, the \AnContC.
Define the involution $\iota:G \to G$ by $g^\iota = w_l \trans{\paren{g^{-1}}} w_l$ and let $w' \in W$.
Then the function $K_w(y,\mu,\delta)$ satisfies
\begin{align*}
	K_w(y,\mu,\delta) =& K_w(\tilde{v}y^\iota,-\mu^{w_l},\delta^{w_l}) = K_w(y,\mu^{w'},\delta^{w'}),
\end{align*}
where $\tilde{v} = v w v w^{-1}$ with $v \in Y$ having all coordinates $v_i = -1$, $i=1,\ldots,n$.
Furthermore, $K_I(y,\mu,\delta)$ is identically one.
\end{prop}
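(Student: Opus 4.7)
The plan is to deduce each symmetry from a corresponding symmetry of the Jacquet--Whittaker function $W_\sigma$, exploiting \cref{prop:IoIConsequence} to characterize $K_w$. By that proposition (combined with the \AnContC to suppress $\Lambda$), we have
\[
\wtilde{K}_w(y, t, \mu, \delta, \sigma) = K_w(y, \mu, \delta) W_\sigma(t, \mu, \delta),
\]
where $\wtilde{K}_w$ is represented, in the distributional sense of the \IoIC, by $\int_{\wbar{U}_w(\R)} W_\sigma(ywxt, \mu, \delta) \wbar{\psi_I(x)} dx$. Fixing $t$ for which $W_\sigma(t, \mu, \delta)$ is nonzero on a dense set of $\mu$, any transformation of the integrand $W_\sigma(ywxt, \mu, \delta)$ that also transforms $W_\sigma(t, \cdot)$ by the same scalar will descend to an identity for $K_w$ after cancellation, pulled back through \eqref{eq:KwDef} by varying the test function $f$.

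The case $w = I$ is immediate: $\wbar{U}_I$ is trivial, so \eqref{eq:KwDef} reduces to $K_I(y, \mu, \delta) W_\sigma(t, \mu, \delta) = W_\sigma(yt, \mu, \delta)$, and the compatibility condition on $Y_I$ forces $y$ to be a (signed) scalar matrix, whose central character is absorbed by the mod-$\R^+$ normalization to give $K_I \equiv 1$. The Weyl invariance $K_w(y, \mu^{w'}, \delta^{w'}) = K_w(y, \mu, \delta)$ follows from the Jacquet--Whittaker functional equation $W_\sigma(g, \mu^{w'}, \delta^{w'}) = c(w', \mu, \delta) W_\sigma(g, \mu, \delta)$ in the common (one-dimensional) Whittaker model: substituted both in the integrand for $\wtilde{K}_w$ and in $W_\sigma(t, \cdot)$, the intertwining scalar $c(w', \mu, \delta)$ cancels.

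The involution symmetry is the main obstacle. The plan is to use the contragredient relation $W_\sigma(g^\iota, -\mu^{w_l}, \delta^{w_l}) = W_\sigma(g, \mu, \delta)$, expressing that $\iota$ realizes the contragredient of $\pi_{\mu,\delta}$ as $\pi_{-\mu^{w_l},\delta^{w_l}}$. Applying this to the integrand produces $W_\sigma((ywxt)^\iota, -\mu^{w_l}, \delta^{w_l})$; using that $\iota$ is an anti-homomorphism, decompose $(ywxt)^\iota = t^\iota x^\iota w^\iota y^\iota$ and perform a change of variable $x \mapsto x'$ on $\wbar{U}_w(\R)$ that absorbs $w^\iota$ back into the $w$-position, at the cost of conjugating $y^\iota$ by $\tilde{v} = v w v w^{-1}$---precisely the sign correction needed to preserve the character $\psi_I$ on $\wbar{U}_w$ under the combined operation. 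Matching the result against $\wtilde{K}_w(\tilde{v} y^\iota, t^\iota, -\mu^{w_l}, \delta^{w_l}, \sigma) = K_w(\tilde{v} y^\iota, -\mu^{w_l}, \delta^{w_l}) W_\sigma(t^\iota, -\mu^{w_l}, \delta^{w_l})$ and invoking $W_\sigma(t^\iota, -\mu^{w_l}, \delta^{w_l}) = W_\sigma(t, \mu, \delta)$ yields the stated identity. The technical heart is the sign bookkeeping: verifying that $\wbar{U}_w$ maps to itself (up to the $\tilde{v}$-twist on $y^\iota$) under this combined $\iota$-and-change-of-variable operation, which is exactly what the explicit factor $\tilde{v} = v w v w^{-1}$ records.
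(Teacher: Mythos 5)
Your overall strategy---transfer symmetries of the Whittaker function to $K_w$ through \eqref{eq:KwDef} and cancel the spare Whittaker factor---is the same as the paper's, and your treatments of $K_I$ and of the $W$-invariance are close to the paper's (though $M_\sigma(w',\mu,\delta)$ is matrix-valued, not scalar, and the paper also invokes the holomorphy of $M_\sigma$ near $\Re(\mu)=0$ to justify applying the functional equation inside the $\mu$-integral). The main issue is in the treatment of $\iota$.

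First, $\iota$ is a \emph{homomorphism}, not an anti-homomorphism: $(gh)^\iota = w_l\,\trans{(h^{-1}g^{-1})}\,w_l = w_l\,\trans{(g^{-1})}\,\trans{(h^{-1})}\,w_l = g^\iota h^\iota$, because transposition reverses a product and the two reversals cancel. Your decomposition $(ywxt)^\iota = t^\iota x^\iota w^\iota y^\iota$ is therefore wrong, and in fact would place $t^\iota$ on the left and $y^\iota$ on the right of the Whittaker argument, which does not match the structure of \eqref{eq:KwDef}. Fortunately, the correct identity $(ywxt)^\iota = y^\iota w^\iota x^\iota t^\iota$ is what you actually want.

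Second, the asserted ``contragredient relation'' $W_\sigma(g^\iota,-\mu^{w_l},\delta^{w_l}) = W_\sigma(g,\mu,\delta)$ is not correct as stated. What actually holds, and what the paper derives from the behavior of $I_{\mu,\delta,\sigma}$ under $\iota$, is
\[
W_\sigma(g,w,\mu,\delta,\psi_I) = \sigma(w_l)\,W_\sigma\!\bigl(g^\iota,w^\iota,-\mu^{w_l},\delta^{w_l},\wbar{\psi_I}\bigr)\,\sigma(w_l),
\]
together with $\psi_I(x^\iota)=\wbar{\psi_I(x)}$ and $\wbar{\psi(x)}=\psi(vxv)$. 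Your version omits the $\sigma(w_l)$-conjugation and, more importantly, glosses over the appearance of the conjugate character $\wbar{\psi_I}$; the identity $\wbar{\psi(x)}=\psi(vxv)$ used to remove it is the actual origin of the element $v$, and hence of $\tilde v = vwvw^{-1}$. You attribute the factor $\tilde v$ to a ``change of variable that conjugates $y^\iota$''; but it enters by \emph{left multiplication} of the argument, and it comes from this character manipulation rather than from any adjustment of the domain $\wbar{U}_w$. So while the ``sign bookkeeping'' step you flag is indeed the technical heart, the bookkeeping you describe is not the one that works, and a careful reader would not be able to reconstruct the $\tilde v$-factor from your outline.
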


We can also formulate a more precise version of the \IoIC; informally, the \StrongIoIC states: \textit{For all $w,\delta,\mu,y,t$, the (non-convergent) integral \eqref{eq:IoIBadDef} can be rearranged through change of variables, integration by parts and a smooth, dyadic partition of unity so that it converges rapidly.}
The full statement is rather technical, so we postpone it to \cref{sect:StrongIoI}.
For the \StrongIoIC, we are reaching rather farther than in the Interchange of Integrals and assuming that the integral oscillates rapidly at infinity.
We offer two pieces of justification here:
first, that this is the case for $GL(2)$ and $GL(3)$, and second that number of oscillating factors in \eqref{eq:StrongIoI} grows faster in $n$ than the number of integrals, so the equations required to ensure a lack of oscillation likely form an overconstrained system.

\begin{prop}
\label{prop:StrongIoIConsequences}
The \StrongIoIC implies the \IoI and \AnCont Conjectures.
\end{prop}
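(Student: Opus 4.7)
The plan is to use the rearranged, rapidly convergent integral guaranteed by the \StrongIoIC to directly define $\wtilde{K}_w(y,t,\Lambda,\mu,\delta,\sigma)$, and then to deduce both conjectures by standard applications of Fubini's theorem and differentiation under the integral sign. In particular, I would set $\wtilde{K}_w$ equal to the value of the rearranged integral whose absolute convergence is promised by the \StrongIoIC. That rearrangement consists of a change of variables, repeated integration by parts exploiting the oscillatory factor $\wbar{\psi_I(x)}$, and a smooth dyadic partition of unity on $\wbar{U}_w(\R)$; each piece of the partition, after integration by parts, is bounded by a function of the dyadic scales that is rapidly decreasing in those scales and at most polynomially growing in $\norm{\mu}$, so the sum of pieces produces an absolutely convergent integral.

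To derive the \IoIC from this, I would note that multiplying the rearranged integrand by $f(\mu)$ and integrating against $\dspec\mu$ produces a doubly-iterated, absolutely convergent integral, because $f(\mu)$ has rapid decay on the tube and $\wtilde{K}_w$ is polynomially bounded by the estimate above. Fubini then allows the $\mu$-integration to be brought inside the rearranged $x$-integration; undoing the rearrangement (valid on the rapidly convergent side) recovers $T_w(F_t)(y)$, which proves that $T_w(F_t)(y) = \int f(\mu) \wtilde{K}_w(y,t,\Lambda,\mu,\delta,\sigma) \dspec\mu$. Smoothness of $\wtilde{K}_w$ in the coordinates of $y$ and $t$ follows by differentiating under the integral sign: derivatives of $W_\sigma(ywxt,\mu,\delta)$ in these coordinates introduce only polynomial factors in $\mu$ and bounded or $x$-polynomial factors in $x$, none of which destroy the rapid convergence. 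The polynomial bound in $\norm{\mu}$ is what the bookkeeping of the integration-by-parts estimates yields directly.

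For the \AnContC, I would observe that the oscillating factor $\wbar{\psi_I(x)}$ does not depend on $\mu$, so the rearrangement remains valid for \emph{all} $\mu \in \C^n$ with $\mu_1+\cdots+\mu_n=0$. Since $W_\sigma(g,\mu,\delta)$ is entire in $\mu$ after the customary normalization, differentiating under the integral in $\mu$ shows $\wtilde{K}_w$ is entire; by \cref{prop:IoIConsequence} and the fact that $W_\sigma(t,\mu,\delta)$ itself is entire, the same is true of $K_w$. The identity $K_w(y,\Lambda,\mu,\delta) = K_w(y,0,\mu,\delta)$ then follows because, for $\mu \in i\mathfrak{a}_0^*(\Lambda)$, the Whittaker function at the minimal $K$-type of the weight-$\Lambda$ subrepresentation coincides with the analytic continuation in $\mu$ of the corresponding principal-series Whittaker function at $\Lambda=0$; the rearranged integral respects this continuation, so both sides of the identity are values of a single entire function of $\mu$ specialized on overlapping parameter regions.

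The main obstacle is the uniformity of the estimates in $\mu$. The integration-by-parts scheme in the \StrongIoIC was designed to defeat oscillation at infinity for fixed $\mu$, but for the Fubini step we need the resulting bounds to be simultaneously (i) rapidly decreasing in the dyadic scales, (ii) at most polynomially growing in $\norm{\mu}$, and (iii) locally uniform in $\mu$ across the tube. Verifying (ii) in particular requires careful tracking of how the coordinate derivatives introduced by the integration-by-parts steps distribute across the coordinates of $x$ and interact with the polynomial $\mu$-dependence of $W_\sigma$. A secondary subtlety is the identification in the last step of $W_\sigma$ in the subrepresentation with the principal-series Whittaker function, though this is a standard feature of the Jacquet construction and should cause no essential difficulty once the convergence issues have been handled.
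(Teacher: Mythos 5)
Your proof takes essentially the same approach as the paper: define $\wtilde{K}_w$ via the absolutely convergent rearrangement, use Fubini/dominated convergence to interchange the $\mu$-integral with the sum over dyadic scales, and derive Analytic Continuation from the entirety of the Whittaker function together with the construction's visible independence from $\Lambda$. The ``main obstacle'' you flag (uniformity of the estimates in $\mu$) is precisely what the formal Strong IoI statement supplies by requiring the super-polynomial decay in $\norm{C}$ to set in once $\norm{C}$ exceeds a fixed power of the norm of $(y,t,y^{-1},t^{-1},s,a_\sigma)$; the polynomial bound in $\norm{\mu}$ is then extracted in the paper by combining the trivial polynomial-in-$s$ bound on $\mathcal{I}_n$ with the rapid decay of the Mellin transform $\what{W}_{n,\sigma}(s,\mu,\delta)$ outside the range $\norm{\Im(s)} \ll \norm{\Im(\mu)}$.
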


Note: In a forthcoming paper, the author hopes to demonstrate the \StrongIoI and \DEPS (below) Conjectures in the case of $GL(4)$.
From early computations it appears that, while the \IoIC remains true in all cases, the \StrongIoIC fails for precisely one Weyl element, $w_{2,2}$, on $GL(4)$.
The obstructions for that element appear to relate to a (literal) symmetry in $\wbar{U}_{w_{2,2}}$ and a small presence on the first super-diagonal, and suggest that some elements $GL(n)$, $n \ge 5$ will fail the stronger conjecture.
(Again, the weaker conjecture should continue to hold.)
To overcome this, one can replace Shalika's multiplicity one theorem in \cref{sect:IoIConsequencePf} with the \DEPSC (i.e. multiplicity one for Bessel functions), but then one must manually compute the \Asymps (below) for the higher-weight cases $\Lambda \ne 0$ in order to justify the \AnContC.

Now we begin to consider representations of these functions.
For any $w \in W^\text{rel}$, define $W_w$ to be the set of Weyl elements fixing every element of $Y_w$
\[ W_w = \set{w' \in W\setdiv w' y {w'}^{-1} = y \, \forall y \in Y_w}. \]

\begin{conj*}[Differential Equations and Power Series]
\hypertarget{conj:DEPS}{}Suppose the \IoIC and, for convenience, the \AnContC, then
\begin{enumerate}
\item $K_w(y,\mu,\delta)$ inherits $\ell-1$ partial differential equations from the action of the Casimir operators on $W_\sigma(yw x t,\mu,\delta)$.
The degree of each partial differential equation does not exceed $n!$ and the equations do not depend on $\delta$.

\item Let $v \in V_w := Y_w \cap K$ and define $\mathcal{K}_w(\mu,v)$ to be the space of smooth functions $f(y)$ on $y \in G_w, \sgn(y)=v$ satisfying those $\ell-1$ differential equations and
\[ f\paren{uyv \paren{wu'w^{-1}}} = \psi_I(uu') f(yv), \qquad u \in U(\R), y \in Y_w^+, u' \in \wbar{U}_w \]
then $\dim \mathcal{K}_w(\mu,v) = \abs{W/W_w}$.

\item There exists a Frobenius series of the form
\[ J_w^*(y,\mu) = p_{\rho+\mu}(y) \sum_{m \in \N_0^{\ell-1}} a_{w,m}(\mu) \prod_{i=1}^{\ell-1} y_{\hat{r}_i}^{m_i}, \qquad y \in Y_w, \]
with $a_{w,m}(\mu)$ independent of $\sgn(y)$, $a_{w,m}(\mu^{w'})=a_{w,m}(\mu)$ for all $w' \in W_w$, and $a_{w,0}(\mu)=1$, such that the restriction of $J_w^*(y,\mu)$ to $\sgn(y)=v$ is in $\mathcal{K}_w(\mu,v)$.

\item Provided $\mu_i-\mu_j \notin \Z$ for $i \ne j$, the set
\[ \set{J_w^*(y,\mu^{w'})\setdiv w' \in W/W_w} \]
spans $\mathcal{K}_w(\mu,v)$.

\item Define $J_w(y,\mu) = J_w^*(y,\mu)/\Lambda_w(\mu)$, where
\[ \Lambda_w(\mu^{w_l}) = \prod_{\substack{j<k\\ k^w < j^w}} (2\pi)^{\mu_j-\mu_k} \Gamma\paren{1+\mu_k-\mu_j}. \]
When $\mu_i - \mu_j \in \Z$ for one or more pairs $i \ne j$, we have
\[ \dim\Span\set{J_w(y,\mu^{w'}), J_w(y,\mu^{w_{(i\,j)}w'})}=1 \]
where $w_{(i\,j)}$ transposes $\mu_i$ and $\mu_j$.
The remaining solutions in that case are given by the analytic continuation of, e.g.,
\[ \frac{J_w(y,\mu)-J_w(y,\mu^{w_{(i\,j)}})}{\sin \pi(\mu_i-\mu_j)}, \]
provided $w_{(i\,j)} \notin W_w$.

\item The coefficients $a_{w,m}(\mu)$ are entire in $\mu$ except for possible poles at points where $\mu_i - \mu_j \in \Z$ for some $i \ne j$, and there exists some $C>0$ such that $a_{w,m}(\mu) \ll \frac{(3+\norm{\mu})^{C M}}{(M!)^2}$ where $M=\sum_{i=1}^{\ell-1} m_i$.
In particular, the series converges absolutely and uniformly on compact subsets in $y,\mu$.
\end{enumerate}
\end{conj*}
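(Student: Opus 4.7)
The plan is to establish the six parts in order, leveraging that the Whittaker function $W_\sigma(g,\mu,\delta)$ is an eigenfunction of the center $Z(U(\mathfrak{gl}_n))$ of the universal enveloping algebra. For part (1), I push the $n-1$ Casimir operators (acting in the variable $g=ywxt$) through the integral in \eqref{eq:KwDef}, using that they commute with left translation and that integration by parts against $\wbar{\psi_I(x)}$ produces no boundary terms; this shows $K_w(y,\mu,\delta)$ is annihilated by a system of operators with polynomial-in-$\mu$ eigenvalues given by the Harish-Chandra homomorphism. Writing $y\in G_w$ in Iwasawa-type coordinates and exploiting the covariance $f(uyv(wu'w^{-1}))=\psi_I(uu')f(yv)$ reduces these to operators purely in the $\ell-1$ variables $y_{\hat r_1},\ldots,y_{\hat r_{\ell-1}}$. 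The invariance of $Y_w$ under $W_w$ is what picks out $\ell-1$ (rather than $n-1$) algebraically independent operators. The degree bound is inherited from the $\abs{W}=n!$-fold radial-component construction, and $\delta$-independence is immediate because Casimir eigenvalues depend only on the infinitesimal character, which is determined by $\mu$ modulo $W$.

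For parts (2)--(4), I apply the Frobenius method. The indicial equation's characteristic exponents are the $W/W_w$ orbit of $\rho+\mu$ restricted to $Y_w$; the stabilizer of this restriction is exactly $W_w$, giving at most $\abs{W/W_w}$ distinct leading exponents. For each coset representative $w'$, substituting the ansatz $p_{\rho+\mu^{w'}}(y)\sum_m a_m \prod_i y_{\hat r_i}^{m_i}$ yields a recursion which, under the hypothesis $\mu_i-\mu_j\notin\Z$, uniquely determines $a_m$ from $a_0=1$; the symmetry $a_{w,m}(\mu^{w'})=a_{w,m}(\mu)$ for $w'\in W_w$ follows because such $w'$ preserve both the leading term and the recursion, which proves part (3). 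The upper bound $\dim\mathcal{K}_w(\mu,v)\le\abs{W/W_w}$ requires showing the PDE system has a regular singular point at $y=0$ of the appropriate rank, so that all solutions admit Frobenius expansions; the lower bound is constructive. Linear independence of the $J_w^*(y,\mu^{w'})$ for generic $\mu$ is visible from their distinct leading monomials, yielding parts (2) and (4).

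For part (5), the prefactor $\Lambda_w(\mu)$ is engineered so that its gamma poles at $\mu_k-\mu_j\in -\N$ (with $j<k$, $k^w<j^w$) match the poles arising in the recursion for $a_{w,m}(\mu)$; division by $\Lambda_w(\mu)$ extends $J_w$ holomorphically across these loci, and the dimension collapse $\dim\Span\set{J_w(y,\mu^{w'}),J_w(y,\mu^{w_{(i\,j)}w'})}=1$ results from the two recursions becoming proportional after normalization. The complementary solution via the sine denominator is the standard L'Hôpital construction, as in classical Bessel functions of integer order. For part (6), the recursion denominators are quadratic polynomials in the $m_i$ with leading coefficient independent of $\mu$, giving both the $(M!)^{-2}$ decay and a polynomial-in-$\norm{\mu}$ growth of degree $O(M)$; absolute and uniform convergence on compacta is then a routine consequence of the bound.

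The main obstacle is the upper bound in part (2), that $\dim\mathcal{K}_w(\mu,v)\le\abs{W/W_w}$. For $w=w_l$ this follows from classical Harish-Chandra/Kostant theory for Whittaker functions, where $W_{w_l}$ is trivial and the dimension equals $\abs{W}$. For intermediate relevant Weyl elements, one must verify that the differential operators descending to $Y_w$ generate a holonomic $D$-module of rank exactly $\abs{W/W_w}$ rather than something smaller, which requires a careful analysis of how the $\psi_I$-covariance interacts with the radial components of the Casimir operators on the torus subspace $Y_w$. A secondary difficulty is making the degenerate-parameter analysis in part (5) uniform; verifying that the Frobenius recursion develops poles exactly where $\Lambda_w$ does, with matching orders and residues, appears to require either a careful combinatorial inductive analysis or an alternative integral representation to serve as a check.
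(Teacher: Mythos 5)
The statement you have set out to prove is labeled a \emph{conjecture} in the paper, and the paper makes no claim to establish it in generality. The only case proved is $w=w_l$, in Section 10.1 (outside the degenerate locus $\mu_i-\mu_j\in\Z$), together with explicit verification for $n=4$ in Appendix A. Your proposal is therefore not a proof at all but a heuristic outline, and to your credit you partially recognize this by flagging the rank bound in part (2) as unresolved for intermediate $w$. But there is an earlier gap that you elide. Your reduction in part (1)---``exploiting the covariance reduces these to operators purely in the $\ell-1$ variables $y_{\hat r_i}$''---presupposes that one can find $\ell-1$ elements of the left ideal $H$ (defined just before the conjecture in the paper) whose restriction to $Y_w$ kills all derivatives transverse to $Y_w$ inside $G_w$. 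The covariance $f(uyv(wu'w^{-1}))=\psi_I(uu')f(yv)$ does not by itself grant this: an arbitrary Casimir, restricted to bi-covariant functions on $G_w$, will in general still differentiate in the $\wbar Y_w$ and $wU_ww^{-1}$ directions. The paper explicitly phrases this as an expectation (``we expect to find a basis of $\ell-1$ operators $\Delta^*_i$ [...] if one believes such a basis exists, then...''), and it is precisely this existence that makes the statement a conjecture rather than a theorem. Absent it, part (1) and everything downstream in your outline is not established.

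For the one case the paper does prove, $w=w_l$, your approach also differs from the paper's. You propose a direct radial-component / Frobenius analysis and cite classical Harish-Chandra--Kostant theory for the rank bound. The paper instead observes that $f(g):=p_{-\rho}(g)K_{w_l}(g\trans{g},\mu,\delta)$ transforms by $\psi_I^2$ on both sides and is a Casimir eigenfunction with eigenvalues $\lambda_{2\mu}(\Delta)$, so it is a spherical Whittaker function at parameter $2\mu$; the identification is pinned down by matching asymptotics via the Asymptotics Theorem. The Frobenius series, the $W$-invariance of the coefficients, and the bound in part (6) are then imported wholesale from Hashizume's theory of spanning sets of power-series solutions of the spherical Whittaker equations, and the $\sgn(y)$-independence of the coefficients is read off from the fact that the restricted Casimirs are polynomials in $y_i$ and $y_i\partial_{y_i}$. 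This is more economical than a rank computation for a holonomic $D$-module. The paper's route also explains why it cannot be extended to other $w$: there is no analogous symmetric-square trick that identifies $K_w$ with a classical special function, which is why only the long element is handled.
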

Outside of the exceptional cases $\mu_i - \mu_j \in \Z$, $i \ne j$, this is proved for the long element in \cref{sect:LEDiffEqs}; the bound on the long-element coefficients in that case follows from work of Hashizume \cite{Hashi}.
For later usage, we also define
\[ J_w^*(y,\mu,\delta) = \chi_\delta(y) J_w^*(y,\mu), \qquad J_w(y,\mu,\delta) = \chi_\delta(y) J_w(y,\mu). \]
By dropping the condition $\sgn(y)=v$, we can define $\mathcal{K}_w(\mu)$ as a space of functions on $G_w$, then the conjecture implies $\dim \mathcal{K}_w(\mu) = 2^\ell \abs{W/W_w}$ and it is spanned by these $J_w(y,\mu,\delta)$.

Take $\lambda_i=\lambda_i(\mu)$ to be the eigenvalue of $p_{\rho+\mu}$ under the Casimir operator $\Delta_i$ and let $H$ be the space of all linear combinations
\[ \sum_{i=2}^n a_i X_i (\Delta_i-\lambda_i) \]
with $a_i = a_i(\lambda_2,\ldots,\lambda_n) \in \C[y_1,\ldots,y_{n-1}]$ and $X_i$ an element of the commutative algebra
\[ \C[\Delta_2,\ldots,\Delta_n,E_{1,1},\ldots,E_{n-1,n-1}], \]
using the usual basis $E_{i,j}$ of the Lie algebra of $G$.
Then $\wtilde{K}_w(g,t,\Lambda,\mu,\delta,\sigma)$ as a function of $g \in G_{w_l}$ is killed by the operators $\Delta_i-\lambda_i$, so also by any operator in $H$.
We expect to find a basis of $\ell-1$ operators $\Delta^*_1,\ldots,\Delta^*_{\ell-1} \in H$ with the property that restricting to $y \in Y_w$ removes all derivatives with respect to the coordinates of $\wbar{Y}_w$ and $w U_w(\R) w^{-1}$; in other words, the operators $\Delta^*_i$ properly restrict to functions of $G_w$.
It would then follow that $K_w(g,\mu,\delta)$ is killed by each $\Delta^*_i$.

If one believes such a basis (i.e. minimal set of elements that generate this left ideal in the universal enveloping algebra) of operators exists, then the restrictions of these $\Delta^*_i$ to $y \in Y_w$ are composed of multiples of the coordinates of $y$ and the operators $y_i \partial_{y_i}$ and having $\ell-1$ independent operators is sufficient that solutions should be uniquely determined by their asymptotics near zero.
Since each $\Delta^*_i$ is invariant under the action of the Weyl group, this justifies parts 2 and 3 of the conjecture.

A trivial corollary of the conjecture is:
\begin{prop}[Uniqueness]
Assume the \DEPSC.
The function $K_w(y,\Lambda,\mu,\delta)$ is uniquely determined by its first-term asymptotics, bi-$U(\R)$-invariance and differential equations.
\end{prop}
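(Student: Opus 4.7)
The plan is to use the \DEPSC directly: the space $\mathcal{K}_w(\mu,v)$ of smooth functions on $\{y \in G_w : \sgn(y) = v\}$ that are bi-$U(\R)$-equivariant (in the precise sense of part~2 of the conjecture) and annihilated by the differential operators in question has dimension exactly $|W/W_w|$, and for generic $\mu$ (i.e. $\mu_i-\mu_j \notin \Z$ for $i\ne j$) a basis is supplied by the Frobenius series $\{J_w^*(y,\mu^{w'})\}_{w' \in W/W_w}$. So the strategy is simply: restrict $K_w(y,\Lambda,\mu,\delta)$ to each sign component $\sgn(y)=v$, observe that this restriction lies in $\mathcal{K}_w(\mu,v)$, expand it in this basis, and identify the coefficients from the leading asymptotics.

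First I would fix a sign $v \in V_w$ and view the restriction of $K_w$ to $\{y \in G_w : \sgn(y)=v\}$ as an element of $\mathcal{K}_w(\mu,v)$. The hypothesis that $K_w$ is bi-$U(\R)$-invariant in the appropriate sense, together with the $\ell-1$ differential equations it satisfies, is exactly the definition of membership in $\mathcal{K}_w(\mu,v)$. Then I would write
\[ K_w(y,\Lambda,\mu,\delta)\big|_{\sgn(y)=v} = \sum_{w' \in W/W_w} c_{w'}(\mu,\delta,v) \, J_w^*(y,\mu^{w'}) \]
with scalars $c_{w'}$ to be determined. Each $J_w^*(y,\mu^{w'})$ has the explicit Frobenius leading term $p_{\rho+\mu^{w'}}(y)$ on $y \in Y_w^+$ (with $a_{w,0}=1$). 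For $\mu$ in the generic locus, the monomials $\{p_{\rho+\mu^{w'}}(y)\}_{w' \in W/W_w}$ have pairwise distinct exponent tuples on the coordinates $y_{\hat r_1},\ldots,y_{\hat r_{\ell-1}}$, so they are linearly independent as functions of $y \in Y_w^+$. Consequently matching the first-term asymptotics as $y_{\hat r_i} \to 0$ reads off each $c_{w'}$ uniquely, and the equality propagates from $Y_w^+$ back to all of $G_w$ via the bi-$U(\R)$-equivariance. Running this argument for each $v$ separately gives uniqueness on all of $G_w$.

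For $\mu$ in the exceptional locus where $\mu_i - \mu_j \in \Z$ for some $i \ne j$, part~5 of the \DEPSC says the dimension is preserved, but the basis must be replaced by analytic continuations of linear combinations such as $\bigl(J_w(y,\mu)-J_w(y,\mu^{w_{(i\,j)}})\bigr)/\sin\pi(\mu_i-\mu_j)$. Here I would appeal to the fact that the exceptional locus is a union of hyperplanes of codimension one in $\mu$-space, and that $K_w(y,\Lambda,\mu,\delta)$ together with all $J_w(y,\mu^{w'})$ are (by the entirety claim of part~6 and the renormalization $J_w = J_w^*/\Lambda_w$) at worst meromorphic in $\mu$; the identification of coefficients on the generic locus extends by continuity, with the leading-power monomials replaced by their limiting logarithmic/polynomial-in-$\log y$ refinements, from which the coefficients are still read off unambiguously.

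The main potential obstacle is the exceptional-locus case: one must verify that "first-term asymptotics" contains enough data to determine the expansion when two or more leading exponents coincide and logarithmic terms appear, rather than merely in the generic case where distinct exponents make linear independence immediate. This is ultimately a consequence of the continuity/analytic continuation packaged into parts~5 and~6 of \DEPS, but it is the one step in the argument that is not a pure formality. The bi-$U(\R)$-invariance and the equivariance under $W_w$ (baked into the symmetry $a_{w,m}(\mu^{w'})=a_{w,m}(\mu)$ for $w' \in W_w$) ensure that there is no redundancy in the basis indexed by $W/W_w$.
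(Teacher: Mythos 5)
Your argument is correct and is exactly the one the paper has in mind: the paper explicitly labels this result ``a trivial corollary of the conjecture'' and omits the proof, and what you have written out --- membership of the restriction of $K_w$ in $\mathcal{K}_w(\mu,v)$, dimension count $|W/W_w|$, linear independence of the distinct leading monomials $p_{\rho+\mu^{w'}}(y)$ on $Y_w^+$, and reading off the coefficients from the first-term asymptotics, with the exceptional locus handled by continuity via parts~5 and~6 of the conjecture --- is precisely the intended instantiation of that remark.
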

Hence, with \cref{prop:IoIConsequences2}, we may define coefficients $C_w(\mu,\delta)$ by 
\begin{align}
\label{eq:KwPS}
	K_w(y,\mu,\delta) = \sum_{w' \in W/W_w} C_w(\mu^{w'},\delta^{w'}) J_w(y,\mu^{w'}, \delta^{w'}).
\end{align}
We also define $C_w^*(\mu,\delta) = C_w(\mu,\delta)/\Lambda_w(\mu)$.
Note:  The quotient by $W_w$ is on the right because $p_{\mu^{w_a w_b}}(y) = p_{\mu^{w_a}}(w_b y w_b^{-1}) = p_{\mu^{w_a}}(y)$ if $y\in Y_w$ and $w_b \in W_w$.

Informally, the Asymptotics Conjecture would state that \textit{dropping the $\mu$ integral and ignoring issues of convergence, replacing the Whittaker function in the definition of $K_w(y,\mu,\delta)$ with its first-term asymptotics as $Y_w \ni y \to 0$ yields the first-term asymptotics of $K_w(y,\mu,\delta)$.}

We prove the Asymptotics Conjecture and furthermore compute the $C_{w,w'}^*(\mu,\delta)$ using Shahidi's theory of local coefficients \cite[Chapter 5]{Shah01}:
\begin{thm*}[Asymptotics]
\hypertarget{thm:Asymps}{}Assume the \IoI, \AnCont and \DEPS Conjectures, then for all $w,\mu,\delta$,
\begin{enumerate}
\item $C_w(\mu,\delta)$ is meromorphic in $\mu$ and for all $w' \in W$, we have $C_w(\mu^{w'},\delta^{w'}) = C_w(\mu,\delta)$.

\item We have
\[ C_w^*(\mu^{w_l},\delta^{w_l}) = \prod_{\substack{1\le j < k \le n\\k^w < j^w}} (-1)^{\delta_k} \pi^{\frac{1}{2}+\mu_j-\mu_k} i^{m_{j,k}} \frac{\Gamma\paren{\frac{m_{j,k}-\mu_j+\mu_k}{2}}}{\Gamma\paren{\frac{1+m_{j,k}+\mu_j-\mu_k}{2}}}, \]
where $\set{0,1} \ni m_{j,k} \equiv \delta_j+\delta_k \pmod{2}$.
\item Define
\[ \sinmu(\mu,\delta) = \prod_{1 \le j < k \le n} \sin \frac{\pi}{2}\paren{\mu_j+\delta_j-\mu_k-\delta_k}, \]
then we have
\[ \sinmu(\mu,\delta) C_{w_l}(\mu^w,\delta^w) = (-\pi)^{\frac{n(n-1)}{2}} i^{-(n-1)(\delta_1+\ldots+\delta_n)}\sgn(w). \]
\end{enumerate}
\end{thm*}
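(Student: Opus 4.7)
The plan is to pin down the coefficients $C_w(\mu,\delta)$ by matching two descriptions of $K_w(y,\mu,\delta)$: on one hand the Frobenius expansion \eqref{eq:KwPS} from the \DEPSC, and on the other hand the defining integral \eqref{eq:KwDef} with the Jacquet-Whittaker function replaced by its first-term asymptotic near $y=0$ on $Y_w$. For part (1), apply \cref{prop:IoIConsequences2} to get $K_w(y,\mu,\delta)=K_w(y,\mu^{w'},\delta^{w'})$ for every $w'\in W$; expanding both sides in the basis $\set{J_w(y,\mu^{w''},\delta^{w''}) : w''\in W/W_w}$ and invoking the linear independence from \DEPSC part 4 (valid off the diagonal locus $\mu_i-\mu_j\in\Z$) forces $C_w(\mu^{w'},\delta^{w'})=C_w(\mu,\delta)$ generically, hence everywhere by continuation. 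Meromorphicity follows from $C_w=C_w^*\Lambda_w$ together with the entireness of $K_w$ in $\mu$ (\AnContC) and the meromorphicity of the $J_w$'s (\DEPSC part 6).

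For part (2), the task reduces to isolating the coefficient of $p_{\rho+\mu^{w_l}}(y)\chi_{\delta^{w_l}}(y)$ in the $y\to 0$ expansion on $Y_w$ of the right-hand side of \eqref{eq:KwDef}. After an Iwasawa decomposition of $wx$, the first-term asymptotic of $W_\sigma(ywxt,\mu,\delta)$ decomposes into a sum of $\abs{W/W_w}$ principal terms of the shape $p_{\rho+\mu^{w'}}(y)\chi_{\delta^{w'}}(y)$ times an $x,t$-integrand. The $w'$-piece, integrated against $\wbar{\psi_I(x)}$ over $\wbar{U}_w(\R)$, factorizes as a product of $SL(2)$ Jacquet integrals indexed by the inversions of $w$ (i.e.\ pairs $j<k$ with $k^w<j^w$); by Shahidi's local coefficient computation \cite[Ch.~5]{Shah01}, each such factor evaluates to the explicit Gamma ratio $\Gamma\paren{(m_{j,k}-\mu_j+\mu_k)/2}/\Gamma\paren{(1+m_{j,k}+\mu_j-\mu_k)/2}$, with the $(-1)^{\delta_k}\pi^{1/2+\mu_j-\mu_k}i^{m_{j,k}}$ prefactors arising from normalization of the Jacquet model and the parity condition \eqref{eq:LambdadeltaParity}. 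Reading off the $w'=w_l$ term and dividing by $\Lambda_w(\mu^{w_l})$ produces the stated formula for $C_w^*(\mu^{w_l},\delta^{w_l})$.

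Part (3) is deduced from (1) and (2). Specializing (2) to $w=w_l$ makes the product range over every pair $j<k$; since $\sinmu(\mu^w,\delta^w)=\sgn(w)\sinmu(\mu,\delta)$ (the sine factors simply reorder, producing the sign of the permutation) and $C_{w_l}(\mu^w,\delta^w)=C_{w_l}(\mu,\delta)$ by (1), it suffices to verify the $w=I$ case. There, the Legendre duplication formula rewrites the $\Gamma$'s in $\Lambda_{w_l}(\mu^{w_l})$ as products of $\Gamma(\cdot/2)$'s that pair cleanly with those in $C_{w_l}^*(\mu^{w_l},\delta^{w_l})$; the reflection formula $\Gamma(s)\Gamma(1-s)=\pi/\sin\pi s$ then converts each paired ratio into a single sine, matching $\sinmu(\mu,\delta)^{-1}$. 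The residual powers of $\pi$ and $i$ collect, after a bookkeeping argument using \eqref{eq:LambdadeltaParity}, to $(-\pi)^{n(n-1)/2}$ and $i^{-(n-1)(\delta_1+\ldots+\delta_n)}$ respectively.

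The principal obstacle is the asymptotic step in (2): \eqref{eq:KwDef} is not absolutely convergent, so termwise substitution of the Whittaker asymptotic is not automatic. The cleanest path is to use \AnContC to shift $\mu$ into a tube domain where the relevant Shahidi intertwining integrals converge classically, perform the factorization through simple reflections and the $SL(2)$ Gamma evaluations there, and finally analytically continue the resulting rational expression in $\Gamma$-functions back to arbitrary $\mu$. Matching normalizations — in particular tracking the $(-1)^{\delta_k}$ and $i^{m_{j,k}}$ signs through the induction on the length of $w$ — is where the remaining technical effort concentrates.
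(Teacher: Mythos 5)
Your high-level blueprint matches the paper's: part (1) via $\cref{prop:IoIConsequences2}$ and the $W_w$-invariance/holomorphy of $J_w$, part (2) by identifying the first-term asymptotics of $K_w$ with Shahidi's local coefficients and reducing to rank-one building blocks through a factorization into adjacent transpositions, and part (3) by the permutation-reordering identity for $\sinmu$ and duplication/reflection applied to the $\Gamma$-ratios. That structure is correct, and your part (1) and part (3) arguments are essentially what the paper does.

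The genuine gap is the asymptotic step in part (2), and your proposed repair does not close it. You need the statement ``the leading $y\to 0$ asymptotics of $K_w(y,\mu,\delta)$ equal the $x$-integral of the leading asymptotics of the Whittaker function,'' but the Bruhat integral defining $K_w$ \emph{never} converges absolutely — this holds for no choice of $\Re(\mu)$, as the paper emphasizes when motivating the \StrongIoIC. Shifting $\mu$ into a tube domain makes the degenerate Jacquet/Shahidi integral in $\eqref{eq:LimitJacquetInt}$ converge, but it does nothing for the oscillatory $x$-integral whose asymptotics you are trying to extract; you still cannot interchange the limit $y\to 0$ with the conditionally-convergent $x$-integration. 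The paper's resolution (Lemma \ref{lem:InformalAC}) is a different argument altogether: it keeps the $\mu$-integral in place with a carefully tuned two-parameter test function $f_{\mathcal{S}}\cdot f_{\mathcal{S}^c}$, scales $y_j = \sgn(y_j)e^{-R_1^2}$ along $\mathcal{S}$, Mellin-expands the Whittaker function via $\eqref{eq:WhittContShift}$, shifts the $s$- and $\mu$-contours while tracking the location of poles and the zeros of the spectral measure, controls the error with the pointwise Iwasawa bounds of Lemma \ref{lem:IwaBounds}, and only then identifies the leading term with $\wtilde{C}_w$ via $\eqref{eq:LimitJacquetInt}$. There is also a subtle ordering issue you have not confronted: the asymptotic $\eqref{eq:WhittYwAsymp}$ is valid on one cone $\Re(\mu_i-\mu_j)>0$, while the Jacquet integral $\eqref{eq:LimitJacquetInt}$ converges on a different, $w$-dependent cone $\Re(\mu^{w^{-1}}_i-\mu^{w^{-1}}_j)>0$; reconciling those requires the contour-shifting infrastructure. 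Without something in the spirit of Lemma \ref{lem:InformalAC}, the chain from the power-series coefficient $C_w^*$ to the Shahidi local coefficient remains unproven, and the explicit formula in part (2) is an educated guess rather than a theorem.
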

This is proved in \cref{sect:Asymptotics}; we are using the \AnCont non-trivially to reduce to $\Lambda=0$ where we have holomorphy of the Whittaker functional equations, and the zeros of the spectral measure $\dspec\mu$ are needed here to cancel the poles of $C_w^*(\mu,\delta)$.
The $C_w^*(\mu,\delta)$ are (up to normalization) the local coefficients of Shahidi \cite[Chapter 5]{Shah01}.

It is a standard fact from complex analysis that for a function having a Mellin transform in addition to a power series (Frobenius series) expansion, the poles of the Mellin transform are simple and occur precisely at the locations corresponding to terms of the power series.
So there is a strong connection between power series and inverse Mellin transforms.
For hypergeometric functions, one can easily go in the opposite direction; given a series
\[ f(x) = \sum_{k=0}^\infty \frac{\Gamma(a_1+k)\cdots\Gamma(a_r+k)}{\Gamma(b_1+k)\cdots\Gamma(b_\ell+k)} \frac{(-x)^k}{k!}, \]
we have an inverse Mellin integral representation on a ``hook contour''
\[ f(x) = \int_{\mathcal{C}_1} \frac{\Gamma(a_1-s)\cdots\Gamma(a_r-s)}{\Gamma(b_1-s)\cdots\Gamma(b_\ell-s)} \Gamma(s) x^{-s} \frac{ds}{2\pi i}, \]
with $\mathcal{C}_1$ as in \cref{pic:HookContour} (the dots there are the poles of $\Gamma(s)$, which correspond to the terms of the power series), provided the poles of the numerator lie outside the contour (and the original series converged).
If the gamma factors in this integral representation have sufficient decay at infinity, we might hope to straighten the contour into something of the form $\mathcal{C}_2$ where the the contour passes to the left of the poles of the gamma factors $\Gamma(a_i-s)$ and the real part of the contour (i.e. $\set{\Re(z)\setdiv z \in \mathcal{C}_2}$) remains bounded; we call this a vertical contour.
Having an integral representation as an inverse Mellin transform over a vertical contour implies that $x^{-r} f(x)$ is bounded as $x \to \infty$ for some $r$, and conversely, a combination of differentiability and boundedness implies good decay for the Mellin transform, hence an integral representation over a vertical contour.

\begin{figure}[t!]
\centering
\begin{subfigure}[t]{3in}
\centering
\includegraphics[width=3in]{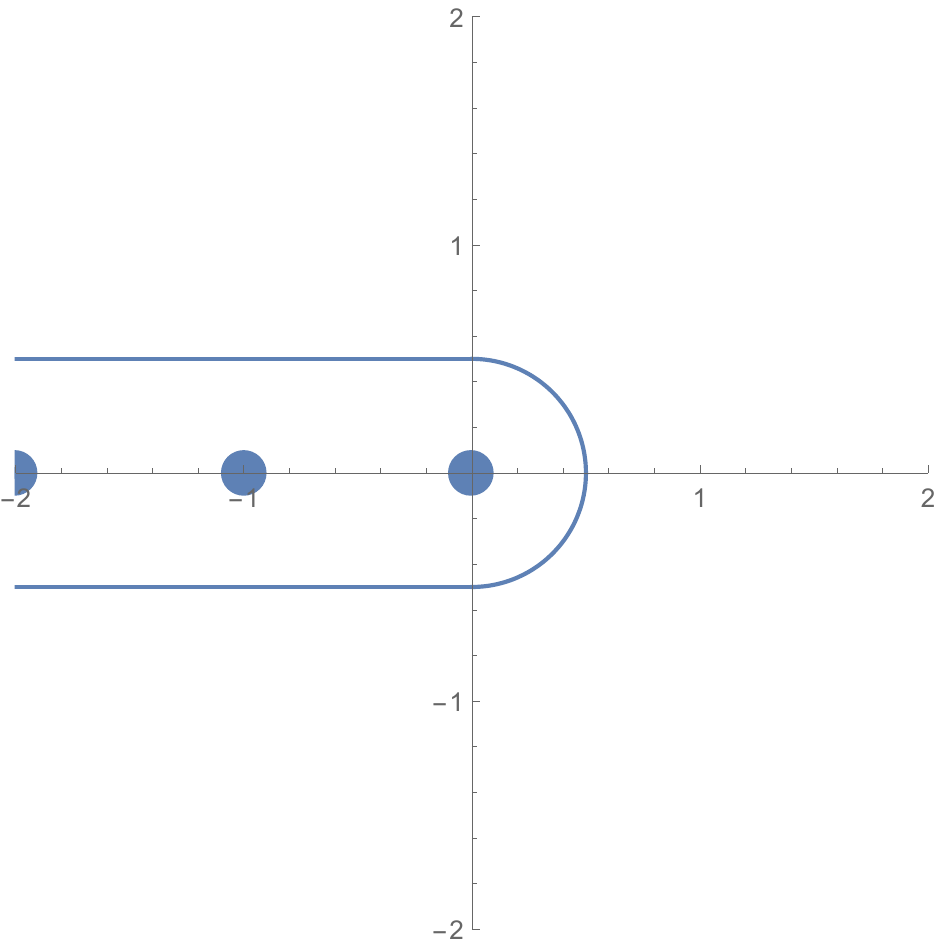}
\caption{The contour $\mathcal{C}_1$.}
\label{pic:HookContour}
\end{subfigure}
~
\begin{subfigure}[t]{3in}
\centering
\includegraphics[width=3in]{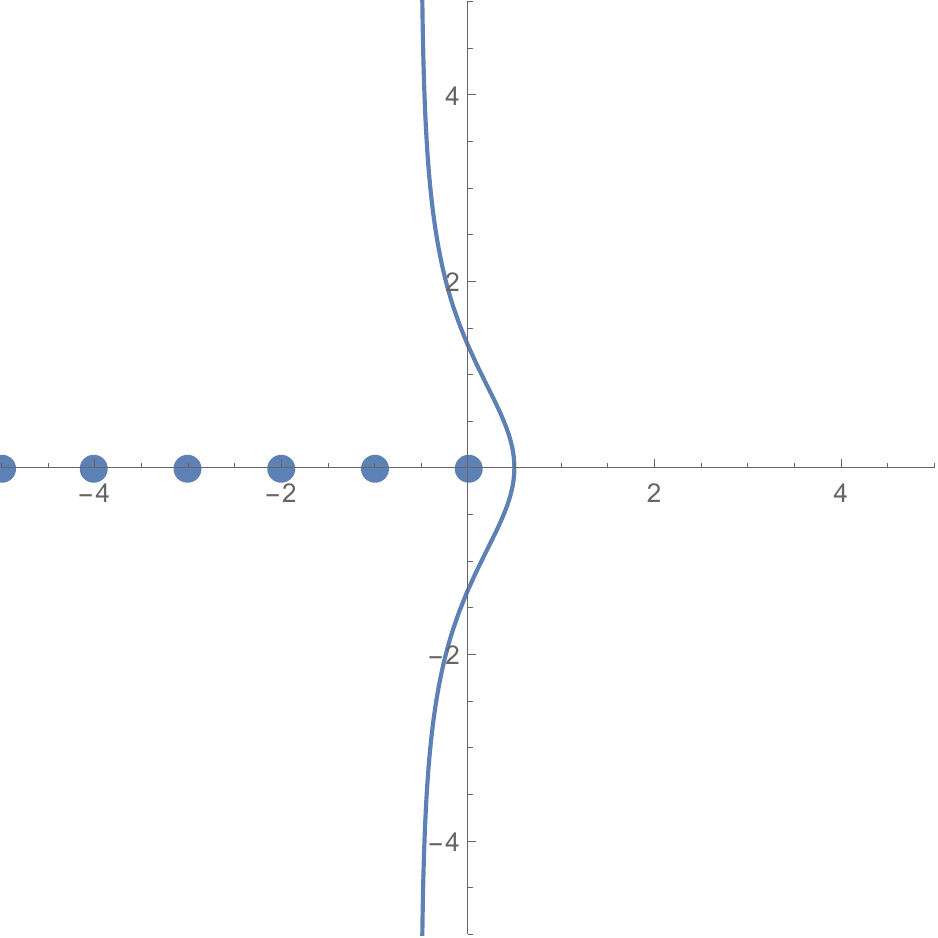}
\caption{The contour $\mathcal{C}_2$.}
\end{subfigure}
\caption{Contours for inverse Mellin transforms.}
\end{figure}

We allow a Mellin-Barnes integral (in this paper) to be a multi-dimensional inverse Mellin transform of a (finite) sum of quotients of gamma functions.
We require the contours to be vertical and follow the Barnes integral convention that the argument of every gamma function remains in $\C \setminus (-\infty,0]$ throughout the contour.
(The unbounded parts of each contour may well need to pass to the left of the imaginary axis for absolute convergence.)
If one believes the \DEPS, it should be the case that the coefficients of the power series are given by finite term recursions whose solutions are multi-dimensional hypergeometric functions.
In that case, we again have a trivial inverse Mellin integral representation over a hook contour, but the original definition of $K_w$ suggests it is polynomially bounded in $y$ and the differential equations lead us to believe it should have a Mellin-Barnes integral (using vertical contours).

\begin{conj*}[Mellin-Barnes Integrals]
\hypertarget{conj:MBInts}{}
Assume the \IoIC, and for convenience the \AnContC.
Fix any $v \in V_w$ and $\delta$, then as a function of $y \in Y^+_w$, $K_w(vy,\mu,\delta)$ has a Mellin-Barnes integral.
\end{conj*}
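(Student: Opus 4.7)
The plan is to combine the Frobenius series expansion guaranteed by the \DEPSC with a polynomial-growth bound on $K_w$, extract the Mellin transform as a finite sum of quotients of gamma functions, and then invert. Each term in the expansion \eqref{eq:KwPS} is a multi-dimensional hypergeometric function and so has a natural hook-contour Mellin-Barnes representation; the real work is straightening the hooks into vertical contours.

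First I would show that $K_w(vy,\mu,\delta)$ grows at most polynomially as $y$ ranges over $Y_w^+$, uniformly on compact subsets in $\mu$. The behaviour near $y=0$ is pinned down by the leading terms $p_{\rho+\mu^{w'}}(y)$ supplied by the \Asymps. For growth as individual coordinates $y_{\hat{r}_i}\to\infty$, one returns to the defining relation \eqref{eq:KwDef}: the Jacquet-Whittaker function $W_\sigma$ is bounded in its argument with rapid decay in the wave-front directions, and the change of variables pushing $y$ through $w\wbar{U}_w(\R)w^{-1}$ converts large-$y$ behaviour into oscillation tempered by Whittaker decay. Combined with the $\ell-1$ holomorphic differential relations from part (1) of the \DEPSC, this yields a non-empty vertical strip on which the $(\ell-1)$-dimensional Mellin transform of $K_w(vy,\mu,\delta)$ in the coordinates $y_{\hat{r}_1},\ldots,y_{\hat{r}_{\ell-1}}$ converges absolutely and is holomorphic.

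Next I would identify this Mellin transform explicitly. The recursion satisfied by the Frobenius coefficients $a_{w,m}(\mu^{w'})$, when converted to Mellin variables via the natural shift $s_i \leftrightarrow (\rho+\mu^{w'})_{\hat{r}_i}+m_i$, becomes a multiplicative system; the special structure of the Casimir-derived operators forces the normalised solution ($a_{w,0}=1$) to be a ratio of products of $\Gamma$'s with linear arguments, consistent with the growth bound in part (6) of \DEPS. Each $J_w^*(vy,\mu^{w'})$ therefore admits a hook-contour Mellin-Barnes representation with explicit kernel $R_{w,w'}(s,\mu)$, and summing against $C_w^*(\mu^{w'},\delta^{w'})/\Lambda_w(\mu^{w'})$ over $w'\in W/W_w$ displays $K_w(vy,\mu,\delta)$ as a finite linear combination of such hook integrals.

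The main obstacle is unhooking each contour, which has two facets. First, one needs Stirling-type decay of $R_{w,w'}$ along the horizontal leg of each hook strong enough to permit term-by-term deformation; since the counts of numerator and denominator $\Gamma$-factors are balanced by the number of independent recursions, Stirling should yield polynomial decay, but one must verify it dominates the $y^{-s}$ oscillation on the straightened contour. Second, one needs the residues that would otherwise be collected when the contour sweeps past the shared poles of the $C_w^*(\mu^{w'},\delta^{w'})$ at zeros of $\sinmu(\mu,\delta)$ to cancel across the $W/W_w$-sum. Here the polynomial bound from the first step serves as the decisive rigidity input: any uncancelled hook residue would contribute a term violating that bound, so the cancellation is forced, in analogy with the way the local coefficients in the \Asymps assemble into the $\sinmu(\mu,\delta)$ product.
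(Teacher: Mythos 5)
There is a fundamental mismatch here: the statement you are attempting to prove is labeled a \emph{conjecture} in the paper, and the paper does \emph{not} prove it. The only case for which the paper claims a proof is $w=w_l$, $v=I$, and that argument is entirely different from yours: in \cref{sect:LEDiffEqs} the author shows $K_{w_l}(y^2,\mu,\delta) = C(\mu)\,p_\rho(y)\,W_1(2y,2\mu,0)$ for $y\in Y^+$ by matching the Casimir eigenvalue equations and first-term asymptotics to those of the spherical Whittaker function, and then the Mellin--Barnes representation is imported wholesale from Stade's theorem \cite{Stade01}. No attempt is made to prove the conjecture for any other $w$ or for $v\ne I$. Your proposal should be read as an expansion of the heuristic discussion that precedes the conjecture (``the original definition of $K_w$ suggests it is polynomially bounded in $y$ and the differential equations lead us to believe it should have a Mellin--Barnes integral''), not as a proof the paper gives.

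Beyond this mismatch, two of your intermediate steps are themselves not established by the paper and appear to be genuinely hard. First, the polynomial-in-$y$ bound as $y\to\infty$ over $Y_w^+$: you argue by ``returning to the defining relation \eqref{eq:KwDef} \dots the Jacquet-Whittaker function $W_\sigma$ is bounded,'' but \eqref{eq:KwDef} still carries the $\mu$-integral against the test function; the Bessel function is not literally the oscillatory integral \eqref{eq:IoIBadDef}, which fails to converge absolutely, so a naive bound on $W_\sigma$ does not translate into a bound on $K_w$. Nothing in the paper supplies the uniform control in $y$ needed here (the paper's polynomial bound is in $\norm{\mu}$, not in $y$). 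Second, and more concretely damaging, your claim that ``the special structure of the Casimir-derived operators forces the normalised solution \dots to be a ratio of products of $\Gamma$'s'' is refuted by the paper's own $GL(4)$ computation: for $w_{211}$ in \AppA, the author explicitly reports that both Mathematica and Petkov\v{s}ek's Hyper algorithm fail to produce a gamma-quotient closed form for $a_{w_{211},m}$, and expects at best a two-variable hypergeometric evaluated at $1$. Since the paper's definition of a Mellin--Barnes integral only requires a \emph{finite sum} of gamma quotients for the kernel, this is not automatically fatal to the conjecture, but it does mean your step ``identify the Mellin transform as a ratio of gamma functions'' does not go through as stated, and the subsequent Stirling-decay argument for unhooking the contour has no explicit kernel to analyze.

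In short, you have fleshed out the paper's motivating heuristic, but the paper itself leaves the statement as an open conjecture, and the key ingredients you invoke (polynomial growth in $y$, closed-form gamma-quotient Mellin kernels) are exactly the unproved parts.
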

Note this conjecture is true in the case $w=w_l$ and $v=I$ by a theorem of Stade \cite{Stade01}, as $K_{w_l}(y,\mu,\delta)$ will turn out to be the Whittaker function for $y \in Y^+$; we prove this in \cref{sect:LEDiffEqs}.

For $y \in Y^+$, let $v\in V$ and define
\begin{align*}
	W_v =& \set{w' \in W \setdiv w'v=vw'}, \\
	K^v_{w_l}(y,\mu) =& \sum_{w \in W_v} \sgn(w) J_{w_l}(vy,\mu^w).
\end{align*}
For $y \in Y_w^+$ and $\eta \in \Z^n$, also define
\begin{align*}
	J^\dagger_w(y,\mu,\eta) =& p_{\rho+\mu}(y) \sum_{\substack{m \in \N_0^{\ell-1}\\ m_i \equiv \eta_{\hat{r}_i}\summod{2}}} a_{w,m}(\mu) \prod_{i=1}^{\ell-1} y_{\hat{r}_i}^{m_i}. \\
	K^\dagger_w(y,\mu,\eta,\delta) =& \sum_{w' \in W/W_w} C^*_w(\mu^{w'},\delta^{w'}) J^\dagger_w(y,\mu^{w'},\eta-\delta^{w'}).
\end{align*}

\begin{prop}
\label{prop:MBConsequences}
Assume the \MBIntsC.
Then $K^v_{w_l}(y,\mu)$ and \\ $K^\dagger_w(y,\mu,\eta,\delta)$ also have Mellin-Barnes integrals.
\end{prop}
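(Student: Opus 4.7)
The plan is to exhibit both $K^\dagger_w(y,\mu,\eta,\delta)$ and $K^v_{w_l}(y,\mu)$ as finite linear combinations of the Bessel functions $K_w(v'y,\mu,\delta')$ with $v'\in V_w$ (and varying $\delta'$), whose coefficients are at worst finite sums of gamma quotients in $\mu$. Since the \MBIntsC supplies a Mellin-Barnes integral for each such $K_w(v'y,\mu,\delta')$, and the paper's definition of a Mellin-Barnes integral already allows a finite sum of gamma quotients inside the integrand, any such finite combination (with gamma-quotient coefficients absorbed into the integrand) is again a Mellin-Barnes integral.

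For $K^\dagger_w$ the reduction is direct. Applying the parity decomposition
\[ \mathbf{1}_{m_i \equiv \eta_{\hat{r}_i}\summod{2}} = \tfrac{1}{2}\bigl(1+(-1)^{m_i-\eta_{\hat{r}_i}}\bigr) \]
for each $i=1,\ldots,\ell-1$ and expanding the resulting product over subsets $S \subseteq \{1,\ldots,\ell-1\}$, one identifies the factor $\prod_{i \in S}(-1)^{m_i}$ as the effect on the series $J_w^*$ of negating the coordinates $y_{\hat{r}_i}$ for $i\in S$. This produces
\[ J^\dagger_w(y,\mu,\eta) = \frac{1}{2^{\ell-1}}\sum_{S\subseteq\{1,\ldots,\ell-1\}}(-1)^{\sum_{i\in S}\eta_{\hat{r}_i}}\,J_w^*(v_S y,\mu), \]
where $v_S \in V_w$ has $-1$ in positions $\hat{r}_i$ for $i \in S$ and $+1$ elsewhere. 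Substituting into the definition of $K^\dagger_w$, noting that $(-1)^{\sum_{i\in S}\delta^{w'}_{\hat{r}_i}}=\chi_{\delta^{w'}}(v_S)$, and using the identity $C^*_w(\mu^{w'},\delta^{w'})\,J^*_w(v_Sy,\mu^{w'}) = C_w(\mu^{w'},\delta^{w'})\,J_w(v_Sy,\mu^{w'})$ to reassemble the $w'$-sum into a Bessel function gives the clean formula
\[ K^\dagger_w(y,\mu,\eta,\delta) = \frac{1}{2^{\ell-1}}\sum_{S\subseteq\{1,\ldots,\ell-1\}}(-1)^{\sum_{i\in S}\eta_{\hat{r}_i}}\,K_w(v_S y,\mu,\delta), \]
a finite $\C$-linear combination to which the \MBIntsC applies termwise.

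For $K^v_{w_l}$ one instead inverts over $\delta \in V\cong(\Z/2)^n$. Since $W_{w_l}=\{I\}$, the expansion \eqref{eq:KwPS}, combined with the Weyl-invariance of $C_{w_l}$ supplied by part (1) of the \Asymps and the fact that $\chi_{\delta^{w'}}(y)=1$ on $y\in Y^+$, collapses to
\[ \frac{K_{w_l}(vy,\mu,\delta)}{C_{w_l}(\mu,\delta)} = \sum_{w'\in W}\chi_{\delta^{w'}}(v)\, J_{w_l}(vy,\mu^{w'}). \]
The Pontryagin duality $|V|^{-1}\sum_\delta \chi_\delta(v_0)\chi_{\delta^{w'}}(v) = \mathbf{1}_{w'vw'^{-1}=v_0}$ then extracts, for each $v_0$ in the $W$-orbit of $v$, the partial sum $\sum_{w':\,w'vw'^{-1}=v_0}J_{w_l}(vy,\mu^{w'})$ as a $\C$-linear combination of the ratios $K_{w_l}(vy,\mu,\delta)/C_{w_l}(\mu,\delta)$. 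The sign character $\sgn(w')$ required by $K^v_{w_l}$ is then introduced through the explicit formulas for $C_{w_l}$ in parts (2)--(3) of the \Asymps, where $\sgn(w')$ enters alongside a product of sines; the reflection formula $\Gamma(z)\Gamma(1-z)=\pi/\sin(\pi z)$ keeps those sines within the gamma-quotient class, so the resulting coefficient multiplications remain admissible for a Mellin-Barnes integrand. The main technical obstacle is exactly this sign-tracking: one must verify that the sign character of $W_v$ as a subgroup of $W$ is reproduced on the nose by the combination of $V$-characters and Weyl-alternating sine/gamma data coming out of $C_{w_l}$. With that matching in hand, the $K^v_{w_l}$ case reduces to a finite combination of functions covered by the \MBIntsC, just as before.
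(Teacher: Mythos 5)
Your overall strategy — exhibit both $K^\dagger_w$ and $K^v_{w_l}$ as finite $\C$-linear combinations (with gamma-quotient coefficients) of the Bessel functions $K_w(v'y,\mu,\delta')$, then cite the \MBIntsC termwise — is exactly the paper's strategy, and it is the right one. The $J^\dagger_w$-parity identity you derive is also correct. But there are two genuine gaps in the execution, one in each half.

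For $K^\dagger_w$, the claimed identity $(-1)^{\sum_{i\in S}\delta^{w'}_{\hat{r}_i}}=\chi_{\delta^{w'}}(v_S)$ is false. Recall that $\chi_\delta$ is defined on diagonal entries, while $v_S$ is specified by its $Y$-coordinates; unwinding \eqref{eq:YCoords} gives $\chi_\delta(y)=\prod_j\sgn(y_j)^{\hat{\delta}_j}$ with $\hat{\delta}_j=\delta_1+\cdots+\delta_j$, so in fact
\[
\chi_{\delta^{w'}}(v_S)=\prod_{i\in S}(-1)^{\hat{\delta}^{w'}_{\hat{r}_i}},
\]
not $\prod_{i\in S}(-1)^{\delta^{w'}_{\hat{r}_i}}$. (A two-second sanity check on $GL(3)$ with $v_S=\diag(-1,-1,1)$ and $S=\{2\}$ shows the two sides disagree whenever $\delta_1\neq 0$.) Consequently your $w'$-sum does not reassemble into $K_w(v_Sy,\mu,\delta)$: after the substitution you have the weights $(-1)^{\sum_S\delta^{w'}_{\hat{r}_i}}$, but to form the Bessel function you need the weights $(-1)^{\sum_S\hat{\delta}^{w'}_{\hat{r}_i}}$, and these differ for a general relevant $w$. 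The averaging you want only matches the definition of $K^\dagger_w$ if the parity condition in the definition of $J^\dagger_w$ is read as $m_i\equiv\hat{\eta}_{\hat{r}_i}\pmod 2$ (note the hat); if you suspect that is what is intended, say so explicitly, but as written the reassembly step is unjustified.

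For $K^v_{w_l}$, the route you chose cannot be completed. You first divide by $C_{w_l}(\mu,\delta)$, invoking the Weyl-invariance in part~(1) of the \Asymps; after that the coefficient $C_{w_l}(\mu^{w'},\delta^{w'})$ is gone, and the Pontryagin inversion over $\delta$ extracts only the unweighted coset sums $\sum_{w'\colon w'vw'^{-1}=v_0}J_{w_l}(vy,\mu^{w'})$. The character $\sgn$ restricted to $W_v$ is nontrivial, and no multiplication of these coset sums by meromorphic functions of $\mu$ can re-weight individual $w'$ inside a single coset sum — that information was discarded when you cancelled $C_{w_l}$. The paper does not divide by $C_{w_l}$ at all: it multiplies $K_{w_l}(vy,\mu,\delta)$ by $\sinmu(\mu,\delta)/\bigl((-\pi)^{n(n-1)/2}i^{-(n-1)(\delta_1+\cdots+\delta_n)}\bigr)$ \emph{before} averaging over $\delta$, so that by part~(3) each $C_{w_l}(\mu^{w'},\delta^{w'})$ is converted into $\sgn(w')$ inside the sum, and the sign appears automatically. (It is also worth flagging that part~(1) as you are invoking it — Weyl-invariance of $C_{w_l}(\mu^{w'},\delta^{w'})$ in $w'$ — is in tension with part~(3), which makes $\sinmu(\mu,\delta)\,C_{w_l}(\mu^{w'},\delta^{w'})$ proportional to $\sgn(w')$; the paper's proof of this proposition uses only part~(3), and so should you.) Once you switch to the $\sinmu$-weighted average, the finite-combination conclusion follows exactly as you outline.
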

Remarks:
\begin{enumerate}
\item If $\sgn(y_i)=+$, then we expect $J_{w_l}(y,\mu)$ to grow like $I_\nu(4\pi\sqrt{y_i})$ as $y_i \to \infty$, but if $\sgn(y_i)=\sgn(y_j)=-$, there is a diagonal term which likely grows exponentially along $y_i=y_j \to -\infty$.
So we need at least the sum over $W_v$.

\item For $y_1 < 0$ on $GL(2)$, $\SmallMatrix{-1\\&1}$ is not fixed by the long Weyl element and in that case we have $J_{w_l}(y,\mu) =\sqrt{\abs{y_1}} J_{2\mu_1}(4\pi\sqrt{\abs{y_1}})$, which has a Mellin-Barnes integral.

\item The group $W_v$ necessarily factors as product of two permutation groups -- the permutations of the positive sign coordinates and the permutations of the negative sign coordinates.

\item The dimension of the space of moderate-growth long-element Bessel functions of sign $v$ should be exactly $\abs{W/W_v}$.

\item We can certainly consider functions $K^v_w(y,\mu)$ for $w \ne w_l$, but the correct linear combinations are not obvious to the author.
\end{enumerate}

From the analysis of \cref{sect:LEDiffEqs} and an integral representation of Ishii \cite[Theorem 3]{Ishii} for the Frobenius series solutions of the Whittaker differential equations, we will show
\begin{prop}
\label{prop:IshiiRepns}
Assume the \DEPSC.
Define $Z^+_\alpha(x) = I_\alpha(x)$ and $Z^-_\alpha(x) = J_\alpha(x)$ and make the $n$- and $v$-dependence explicit by writing
\[ J_{n,w_l}^\sharp(v, y, \mu) = p_{-\rho}(y) J_{n,w_l}^*(vy,\mu).\]
for $v \in V$ and $y \in Y^+$.
Then we have the integral representation
\begin{align*}
	& J_{n,w_l}^\sharp(v, y, \mu) = \\
	& \prod_{j=1}^{n-1} y_j^{\frac{n-2j}{2(n-2)}(\mu_1+\mu_n)} \frac{1}{(2\pi i)^{n-2}} \int_{\abs{u_1}=1} \cdots \int_{\abs{u_{n-2}}=1} \prod_{j=1}^{n-1} Z^{v_j}_{\mu_1-\mu_n}\paren{4\pi\sqrt{y_j(1+u_{j-1})(1+1/u_j)}} \\
	& \qquad J_{n-2,w_l}^\sharp\paren{v', \paren{y_2 u_1/u_2,\ldots,y_{n-2} u_{n-3}/u_{n-2}},\mu'} \prod_{j=1}^{n-2} u_j^{-\frac{n}{2(n-2)} (\mu_1+\mu_n)} \frac{du_j}{u_j},
\end{align*}
where $v'=(v_2,\ldots,v_{n-2})$, $u_0=1/u_{n-1}=0$ and  $\mu' \in \C^{n-2}$ defined by $\mu'_j = \mu_{j+1}+\frac{\mu_1+\mu_n}{n-2}$.
\end{prop}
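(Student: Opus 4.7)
The plan is to derive this from Ishii's Theorem 3 of \cite{Ishii}, which gives an explicit integral representation for a Frobenius-series solution of the long-element Whittaker differential equations on $GL(n)$ in terms of the corresponding solution on $GL(n-2)$, and then to identify both sides with $J_{n,w_l}^\sharp$ using the uniqueness clause of the \DEPSC.

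First, I would combine the observation from \cref{sect:LEDiffEqs} that for $y \in Y^+$ the long-element Bessel function agrees with the spherical Whittaker function (so $J_{n,w_l}^*(y,\mu)$ is a Frobenius-series solution of the Whittaker differential equations, normalized by $a_{w_l,0}(\mu)=1$) with the positive-signature case $v=I$ of Ishii's theorem. That theorem yields the claim with $Z^+_\alpha = I_\alpha$, the prefactor $\prod_j y_j^{\frac{n-2j}{2(n-2)}(\mu_1+\mu_n)}$, the Mellin weight $u_j^{-\frac{n}{2(n-2)}(\mu_1+\mu_n)}$, and the shifted spectral parameter $\mu'_j = \mu_{j+1} + \frac{\mu_1+\mu_n}{n-2}$; matching the overall normalization is a direct computation from the small-argument asymptotics of $I_{\mu_1-\mu_n}$ after reading off the residue contribution in the torus integrals.

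Second, to extend to arbitrary $v \in V$, I would use the fact from the \DEPSC that the coefficients $a_{w_l,m}(\mu)$ are independent of $\sgn(y)$, so $J_{n,w_l}^*(vy,\mu)$ is the same Frobenius series with sign-twisted arguments. Analytically continuing each $y_j$ through a half-turn on the right-hand side converts the $I$-Bessel factor into a $J$-Bessel factor via the standard identity $I_\alpha(ix)=e^{i\pi\alpha/2}J_\alpha(x)$; this is exactly what the unified notation $Z^{v_j}$ encodes. I would then verify, inductively on $n$ and using the algebra of Casimir operators described just before \cref{prop:IshiiRepns}, that the resulting integral with mixed Bessel kernels still solves the $n-1$ long-element differential equations and has the correct leading asymptotic, whereupon uniqueness from the \DEPSC forces it to equal $J_{n,w_l}^\sharp(v,y,\mu)$.

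The main obstacle is the bookkeeping of phase factors in the sign-extension step: each substitution $y_j \mapsto -y_j$ produces factors of $e^{\pm i\pi(\mu_1-\mu_n)/2}$ from the Bessel conversion together with $e^{\pm i\pi \frac{n-2j}{2(n-2)}(\mu_1+\mu_n)}$ from the monomial prefactor, and these must conspire to cancel cleanly for every $v \in V$ in order to reproduce the sign-independent series $J_{n,w_l}^*(vy,\mu)$. Citing Ishii's theorem rather than re-deriving the identity is essential here, since his derivation already fixes the branch choices that make the multi-variable $Z^{v_j}$ formula consistent across all sign sectors.
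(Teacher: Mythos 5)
Your core strategy is the one the paper uses: specialize to $v=I$, invoke Ishii's Theorem~3 (this is recorded as the identity for $J^\dagger_{n,w_l}(I,y,\mu)$ just before \cref{prop:IshiiRepns}), and then obtain the other sign sectors by analytic continuation in each $y_j$ together with $I_\alpha(ix)=e^{i\pi\alpha/2}J_\alpha(x)$. But your final step is a detour that the paper avoids. Once the $v=I$ identity is in hand, one should analytically continue \emph{both sides}, not just the right-hand side: the left side is an explicit Frobenius series whose coefficients $a_{w_l,m}(\mu)$ are sign-independent by the \DEPSC, so rotating $y_j\mapsto e^{i\pi}\abs{y_j}$ for each $j$ with $v_j=-1$ sends $J_{n,w_l}^\sharp(I,y,\mu)$ to $J_{n,w_l}^\sharp\bigl(v,(\abs{y_1},\ldots,\abs{y_{n-1}}),\mu\bigr)\exp\bigl(i\pi\sum_{v_j=-1}\hat\mu_j\bigr)$, directly from the power-series form. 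The right side picks up a phase from the $I\to J$ conversion, the monomial prefactor, and the inner $J^\sharp_{n-2}$ factor. The proof is then a bookkeeping computation showing these two exponentials are \emph{equal}; there is nothing to re-derive about the differential equations, no induction on $n$ through the Casimir algebra, and no appeal to the uniqueness clause of the \DEPSC for $v\neq I$. Your phrasing that the phases ``must conspire to cancel cleanly'' also suggests a small misconception: the phases are nonzero and sign-dependent on both sides, and the content of the argument is that they agree, not that they vanish. You correctly identify the phase bookkeeping as the crux but stop short of carrying it out, while simultaneously proposing an unnecessary and more laborious ODE-and-uniqueness verification in its place.
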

One should interpret the integral by analytic continuation from the region having $\mu \in \R^n$ with $\mu_i - \mu_j > 0$ for all $i < j$.

Taking the appropriate sum over $W_v$ immediately gives an inductive integral representation for $K^v_{w_l}(y,\mu)$:
Define
\begin{align*}
	K^{v \sharp}_{w_l}(y,\mu) :=& \sum_{w \in W_v} \sgn(w) J^\sharp_{w_l}(vy,\mu^w),
\end{align*}
then
\begin{align*}
	K_{n,w_l}^{v \sharp}(v, y, \mu) =& \sum_{w \in W_v/W_{v'}} \sgn(w) \prod_{j=1}^{n-1} y_j^{\frac{n-2j}{2(n-2)}(\mu^w_1+\mu^w_n)} \frac{1}{(2\pi i)^{n-2}} \int_{\abs{u_1}=1} \cdots \int_{\abs{u_{n-2}}=1} \\
	& \qquad \prod_{j=1}^{n-1} Z^{v_j}_{\mu^w_1-\mu^w_n}\paren{4\pi\sqrt{\abs{y_j}(1+u_j)(1+1/u_{j-1})}} \\
	& \qquad K_{n-2,w_l}^{v' \sharp}\paren{v', \paren{y_2 u_2/u_1,\ldots,y_{n-2} u_{n-2}/u_{n-3}},(\mu^w)'} \prod_{j=1}^{n-2} u_j^{\frac{n}{2(n-2)} (\mu^w_1+\mu^w_n)} \frac{du_j}{u_j},
\end{align*}
where we've embedded $W_{v'} \subset W_{n-2}$ in $W_n$ by acting on the coordinates of $\mu'$ (equivalently, by taking the permutations that fix $\mu_1$ and $\mu_n$).

These integral representations suffer from exponential growth in the integrands unless all $\abs{y_j}$ are small (compared to $\min_{i < j} \abs{\mu_i - \mu_j}$), so the question is now can we find a good representation (along the lines of \cite{SubConv}) when any of the $y_i$ are not small.
One might consider contour shifting in the $u$ variables, but also we can simply guess and check using Ishii's recurrence relation \eqref{eq:IshiiRecRel}.

\section{Applications}
\label{sect:Apps}
Consider the spectral Kuznetsov formula as in \eqref{eq:SpecKuz}.
We can get a bound on the Kloosterman sum terms by contour shifting just in the Mellin expansion of the first Whittaker function of $H_w(f,y)$ and the $\mu$ integrals (without using the Bessel functions).
This bound will certainly be weaker than even simply Mellin expanding the Bessel functions since we lose by Mellin expanding the Whittaker function and we don't save anything when we use absolute convergence on the $\mu$, $u$ and $g$ integrals in $H_w(f,y)$.
This weaker bound plus the compatibility relation might be good enough for nontrivial results except on the long Weyl element term, just by comparing the distance needed to shift the leading asymptotics of the Whittaker function.
(The shorter Weyl elements come with compatibility relations which affect the region of absolute convergence of the corresponding Kloosterman zeta function and so reduce the distance needed in the contour shifting.)
For the long Weyl element term, we might hope to prove the \IoIC at just that Weyl element, then we have everything but a good integral representation, and one would hope that follows by some contour shifting and combinatorics on \cref{prop:IshiiRepns}; this would likely be the largest benefit at the minimal effort.

In this section, we explore some possible applications of the above conjectures (at all Weyl elements), and what additional results would be required.

\subsection{Arithmetically-weighted Weyl laws}
Define $\tau_w > 0$ so that the Kloosterman zeta
\[ Z_w(a,b,\mu) = \sum_c S_w(a,b,c) p_{\rho+\mu}(c) \]
converges absolutely on the region $\Re(\mu_i-\mu_{i+1}) > \tau_w$ for all $1 \le i \le n-1$.

We have $\tau_w \le 1$ by D\c{a}browski-Reeder \cite[Theorem 0.3.i]{DabReeder} and $\tau_{w_l} \le \frac{7}{8}$ by Miao \cite{Miao}.
One might conjecture that $\tau_w = \frac{1}{2}$ is sufficient (as this is true for $n \le 3$, see the references in \cite{SpectralKuz}), but it is not clear that this should hold for $n \ge 4$.
Furthermore, we would expect that $\tau_w < \tau_{w_l}$ for $w \ne w_l$, due to the compatibility relation, though it might be necessary to take a finer-grained $\tau_{w,i}$, $i=1,\ldots,n-1$.
Some specific bounds for the Voronoi Weyl element can be found in \cite{Friedberg01} and for $n=4$ in Huang's appendix to \cite{GSW}.

Write $\dspec\mu = \specmu(\mu) d\mu$ where $d\mu=d\mu_1 \cdot d\mu_{n-1}$.
Note that we expect
\[ \abs{\specmu(\mu)} \asymp \prod_{i<j} \abs{\mu_i-\mu_j} \]
away from the points $\mu_i=\mu_j$, $i \ne j$.

If we construct a test function by convolving the characteristic function of a set against a Gaussian and apply contour shifting in the Mellin expansion of the Bessel functions to reach the region of absolute convergence of the Kloosterman zeta functions, we get an arithmetically-weighted Weyl law of the shape:
\begin{thm}
\label{thm:ArithWeyl}
Assume the \MBIntsC.
For $\Omega \subset i\mathfrak{a}^*_0(\Lambda)$ take $T = \sup_{\mu \in \Omega} \norm{\mu}$ and suppose $T \ge n^2$.
Take $E(\mu,\delta)$ as in \cref{sect:ArithWeyl} and assume that the relevant contour shifts converge, then
\begin{align*}
	\int\limits_{\substack{\mathcal{B}(\Lambda,\delta)\\ \mu_\xi\in \Omega}} \frac{1}{L(1,\AdSq \xi)} d_\text{H}\xi = \int_{\Omega} \dspec\mu + \BigO{\int_{\Omega\cup\partial \Omega^*} E(\mu,\delta) \abs{d\mu}+\int_{\partial \Omega^*} \abs{\specmu(\mu)} \abs{d\mu}},
\end{align*}
where $\partial \Omega^* = \partial \Omega+B(0,(\log T)^{-1/2+\epsilon})$ with $B(0,R)$ the ball of radius $R$ centered at $\mu=0$.
\end{thm}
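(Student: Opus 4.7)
The plan is to apply the spectral Kuznetsov formula \eqref{eq:SpecKuz} with trivial Hecke weights (say $a = b$ equal to the tuple of ones) against a test function that approximates $\mathbf{1}_\Omega$. Explicitly, I would take $f_\Omega = \mathbf{1}_\Omega * G_h$, where $G_h$ is a Gaussian on $i\mathfrak{a}_0^*(\Lambda)$ of width $h = (\log T)^{-1/2+\epsilon}$. Such a $G_h$ extends to an entire function with rapid decay in a tube $|\Re(\mu)| \lesssim h^{-1}$, so the rapid-decay hypothesis of the \IoIC applies. Since $G_h$ is an $L^1$-normalized bump concentrated at scale $h$, $f_\Omega$ agrees with $\mathbf{1}_\Omega$ outside $\partial\Omega^*$, giving $\int f_\Omega(\mu)\dspec\mu = \int_\Omega \dspec\mu + O\bigl(\int_{\partial\Omega^*} |\specmu(\mu)||d\mu|\bigr)$; this accounts for the second boundary error in the statement.

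On the geometric side, the identity Weyl element $w = I$ contributes exactly $\int f_\Omega(\mu) \dspec\mu$: the Kloosterman sum collapses onto $c = I$, and $K_I \equiv 1$ by \cref{prop:IoIConsequences2}. For each nontrivial $w \in W^{\text{rel}}$ I would invoke the \MBIntsC to replace $K_w(y,\mu,\delta)$ in \eqref{eq:HwDef} by a Mellin-Barnes integral in the coordinates of $Y_w^+$. Interchanging the Mellin contour with the $c$-sum and with the $g$- and $\mu$-integrals collapses the $c$-sum into the Kloosterman zeta $Z_w$ evaluated at a shifted spectral parameter. The final step is to shift each of the $\ell - 1$ Mellin-Barnes contours across the line $\Re(\mu_i - \mu_{i+1}) = \tau_w$ into the absolute-convergence region of $Z_w$, then bound what remains using Stirling's formula on the gamma factors and the Gaussian decay of $f_\Omega$; the resulting estimate produces the error $E(\mu,\delta)$ integrated over $\Omega \cup \partial\Omega^*$.

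The main obstacle is the simultaneous control of the $\ell - 1$ Mellin-Barnes contour shifts. The shifts must be chosen so that (i) they respect the compatibility condition cutting out $Y_w$, (ii) each crossed gamma-pole contributes a lower-dimensional Mellin-Barnes term that can be absorbed into $E(\mu,\delta)$, and (iii) at the final contour the gamma-factor Stirling decay strictly dominates the polynomial growth of $Z_w$ on its line of absolute convergence. This last domination is the quantitative content of the \emph{``relevant contour shifts converge''} hypothesis. The long Weyl element is the tightest case, because Miao's bound $\tau_{w_l} \le 7/8$ is still far from the conjectural $\tau_{w_l} = 1/2$ and the required shift is therefore the largest; the shorter Weyl elements enjoy shorter shifts thanks to their compatibility relations, which effectively shrink the summation range in $c$ and reduce their contribution to $E(\mu,\delta)$.
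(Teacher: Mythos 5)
Your proposal follows the paper's strategy: a mollified indicator of $\Omega$ at scale $\sim(\log T)^{-1/2}$ fed into the spectral Kuznetsov formula with $a=b=I$, with the identity Weyl term (where $K_I\equiv 1$) supplying $\int_\Omega\dspec\mu$ and Mellin-contour shifts of the nontrivial Bessel kernels into the absolute-convergence region of the Kloosterman zeta producing $E(\mu,\delta)$. Two details the paper uses that your sketch leaves out. First, the mollifier is not a bare Gaussian: the paper takes $f_1(\Lambda+\wtilde{\mu}) = Q(\wtilde{\mu})\,T^{(\wtilde{\mu}_1^2+\ldots+\wtilde{\mu}_n^2)/n^5}$ with a specific polynomial prefactor $Q$, and because the theorem references the particular $E(\mu,\delta)$ defined as a supremum of $\abs{f_1(\mu^{w'}+\beta_{S,Z})\specmu(\mu^{w'}+\beta_{S,Z})\res\cdots}$, the concrete choice of $f_1$ is part of the content; a pure Gaussian would yield a different (if morally similar) $E$.

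Second -- and this is the more substantive omission -- the Kuznetsov formula produces the soft spectral average $\int f_\Omega(\mu_\xi)\,L(1,\AdSq\xi)^{-1}\,d_\text{H}\xi$, not the hard cutoff $\mu_\xi\in\Omega$ appearing on the left of the theorem. Your estimate $\int f_\Omega\,\dspec\mu = \int_\Omega\dspec\mu + O(\int_{\partial\Omega^*}\abs{\specmu}\abs{d\mu})$ addresses the smoothing error on the \emph{geometric} main term, but the nontrivial half of the comparison is on the \emph{spectral} side: one must bound $\int\abs{\mathbf{1}_\Omega(\mu_\xi)-f_\Omega(\mu_\xi)}\,L(1,\AdSq\xi)^{-1}\,d_\text{H}\xi$, which requires positivity of the weight together with a second application of the trace formula against a non-negative majorant concentrated near $\partial\Omega$. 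The paper does exactly this with the slightly sharper mollifier built from $f_2(\Lambda+\wtilde{\mu}) = Q(\wtilde{\mu})\,T^{(\wtilde{\mu}_1^2+\ldots)/n^5/(\log T)^\epsilon}$ applied to $\partial\Omega + B(0,(\log T)^{-1/2+\epsilon})$. Without this second pass the soft-to-hard transition on the spectral side is unproved, so you should add it to your plan; otherwise the outline matches the paper.
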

(Note that we are expecting the Bessel functions to demonstrate some decay in $\norm{\mu}$; otherwise, the first error term would dominate.)

Nelson and Jana have also given a conductor-ordered arithmetically-weighted Weyl law \cite[Theorem 3]{JanaNelson}; it would be interesting to replicate this result using the Kuznetsov formula directly.

\subsection{Bessel expansions}
A paper of Baruch and Mao \cite{BaruchMao01} defines Bessel-like distributions in terms of the expansion of test functions into a basis of the Whittaker model of a representation.
We wish to connect these back to the Bessel functions studied here, using the methods of \cite{ArithKuzII}.

\begin{conj*}[Bessel Expansion]
\hypertarget{conj:BesselExpand}{}
Suppose $F:G \to \C$ is smooth and compactly supported in the Iwasawa coordinates, then for $g \in G$, the Bessel-like distribution
\[ \what{F}(g,\mu,\delta) := \sum_\sigma (\dim\sigma) \Tr\paren{W_\sigma(g,\mu,\delta) \int_G F(g') \trans{\wbar{W_\sigma(g',-\wbar{\mu},\delta)}} dg'} \]
is given by the integral
\[ \what{F}(g,\mu,\delta) = \frac{1}{\abs{V}} \int_{G_{w_l}} F(g'g) \wbar{K_{w_l}(g',-\wbar{\mu},\delta)} dg'. \]
\end{conj*}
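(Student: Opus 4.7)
My plan is to combine the Jacquet integral representation of the Whittaker functions with the Peter--Weyl completeness identity for $K=O(n)$, viewing the Bessel-like distribution as an evaluation kernel against $F$ on the big Bruhat cell.

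First, using the compact support and smoothness of $F$ in the Iwasawa coordinates, I would justify moving the sum over $\sigma$ inside the integral over $g'$, via polynomial-in-$\sigma$ bounds for Whittaker matrix coefficients together with Sobolev smoothing from integration by parts against $F$ in the $K$-direction. Substituting the Jacquet integral
\[
	W_\sigma(h,\mu,\delta) = \int_{U(\R)} \paren{p_{\rho+\mu}\chi_\delta}\paren{y(w_l u h)}\, \sigma\paren{\kappa(w_l u h)}\, \wbar{\psi_I(u)}\, du
\]
(with $h = x(h)\, y(h)\, \kappa(h)$ the Iwasawa decomposition) into both Whittaker functions inside the trace, and using unitarity of $\sigma$, the trace collapses to a double $U(\R)\times U(\R)$ integral of scalar Iwasawa character factors times $\chi_\sigma\paren{\kappa(w_l u g)\kappa(w_l u' g')^{-1}}$.

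Next, I apply Peter--Weyl refined to the $\chi_\delta$-isotypic component of the restriction to $V = Y\cap K$: since $W_\sigma$ vanishes unless $\sigma|_V$ contains $\chi_\delta$, the sum over admissible $\sigma$ yields
\[
	\sum_{\sigma:\,\sigma|_V \supset \chi_\delta} (\dim\sigma)\, \chi_\sigma(k) = \frac{1}{\abs{V}} \sum_{v\in V} \chi_\delta(v)\, \delta_K(kv),
\]
a finite sum of point masses on $V \subset K$. Applied with $k = \kappa(w_l u g) \kappa(w_l u' g')^{-1}$, this collapses one of the two Jacquet integrations, produces the averaging factor $\frac{1}{\abs{V}}$, and (generically) forces $g'g^{-1} \in G_{w_l}$. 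Changing variables from $g'$ to $g'' = g'g^{-1}$ via the parametrization $G_{w_l} = U(\R)\,Y\,w_l\,U(\R)\,w_l^{-1}$, the residual $u$-integration against $\wbar{\psi_I(u)}$ of the remaining scalar Iwasawa character factors is precisely the non-convergent Jacquet-type integral \eqref{eq:IoIBadDef} that defines $\wbar{K_{w_l}(g'',-\wbar{\mu},\delta)}$. By \cref{prop:IoIConsequence} under the \IoIC, this integration---interpreted inside the outer $g''$-integration against the smooth compactly supported $F$---yields the claimed kernel, completing the identification.

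The main obstacle is the rigorous justification of the Peter--Weyl step together with the $\sigma$-sum/$g'$-integral interchange, since the Jacquet integral is only conditionally convergent and the matrix-valued Whittaker functions are only known to grow polynomially in the $K$-type parameter; this is handled by repeated integration by parts against $F$ to gain $\sigma$-decay, and by appealing to the \IoIC (or the \StrongIoIC for the most transparent convergence control) to legitimize the appearance of $K_{w_l}$ from the non-convergent residual integration. A secondary delicate point is the tracking of the sign character $\chi_\delta$ through the Bruhat decomposition; here the $V$-compatibility of $K_{w_l}$ on $Y_{w_l} = Y$ must be used to correctly collapse the sum over $v \in V$ into the factor $\frac{1}{\abs{V}}$, mirroring the compatibility constructions in \cref{sect:ConjConseq}.
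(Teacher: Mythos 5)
Note first that the paper does not prove this statement: the Bessel Expansion is stated purely as a conjecture, with the remark that it should follow from Wallach-type Whittaker inversion and the Bessel Plancherel identity \eqref{eq:InvBesPlancherel}, and that the author's intended route for the former passes through the \JWDCC rather than the \IoIC. So you are proposing a proof where none is given, and the essential question is whether your outline closes the gap.

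It does not, for a reason your sketch never engages with: the two Jacquet integrals you want to open have \emph{disjoint} regions of absolute convergence. $W_\sigma(g,\mu,\delta)$ converges for $\Re(\mu_i-\mu_j)>0$, $i<j$, while $W_\sigma(g',-\wbar{\mu},\delta)$ converges for $\Re(\mu_j-\mu_i)>0$, $i<j$; on the line $\Re(\mu)=0$, where the conjecture is actually of interest, neither does. You therefore cannot substitute the Jacquet integral into both Whittaker functions to set up the Peter--Weyl collapse, and Sobolev smoothing against $F$ gives decay in $\sigma$ but does nothing for the $u$- and $u'$-integrals. This is exactly why the paper isolates the \JWDCC (conditional convergence of one Jacquet integral after integration by parts and a dyadic partition of unity); that --- or the corresponding special case of the \StrongIoIC, not the bare \IoIC --- is the input you would need, and it remains conjectural. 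Relatedly, your appeal to \cref{prop:IoIConsequence} to identify the residual integral as $\wbar{K_{w_l}(g'',-\wbar{\mu},\delta)}$ is misplaced: that proposition only characterizes $K_w$ as the kernel inside a $\mu$-integral against a holomorphic rapidly-decaying test function, whereas here $\mu$ is held fixed and you integrate over $g''$, and the non-convergent expression \eqref{eq:IoIBadDef} does not define $K_{w_l}$ pointwise in $\mu$. A small notational correction: what appears after opening the Jacquet integrals is $\Tr\paren{\Sigma_{\delta,\sigma}\,\sigma(\kappa(w_l u g)\,\kappa(w_l u' g')^{-1})}$, not the full character $\chi_\sigma$, since both Whittaker functions already carry the projector $\Sigma_{\delta,\sigma}$; the Peter--Weyl collapse you want should read $\sum_\sigma(\dim\sigma)\Tr\paren{\Sigma_{\delta,\sigma}\sigma(k)}=\frac{1}{\abs{V}}\sum_{v\in V}\chi_\delta(v)\,\delta_e(vk)$.
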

Here we are using the term ``Bessel-like distribution'' by analogy with \cite[Appendix 4]{BaruchMao01}.
One should compare this to \cite[Theorem 2.5]{Baruch01}; if one could connect the Bessel-like distribution as it appears here to the Bessel distribution there, then that theorem would show $\what{F}$ is given by integration against some element of $\mathcal{K}_w(\mu)$, while the result above specifies which Bessel function appears.

Fundamentally, the \BesselExpandC follows from two facts, the first of which is that the Fourier transform of a zonal spherical function is a product of two Whittaker functions.
This is strongly tied to Wallach's Whittaker inversion theorem \cite{Wallach} and Comtat \cite{Comtat} has given a very general method for recovering this statement from Wallach's theorem.
Even though this method should certainly apply here, we consider the method of \cite{ArithKuzII}, which relies on a conjecture, as it is very nearly a special case of the \StrongIoIC and it is likely of independent interest:
Informally, the \JWDCC states \textit{for $\mu$ in a tube domain containing $\Re(\mu)=0$ and all $t$, the integral \eqref{eq:JWDef} defining the Jacquet-Whittaker function $W_\sigma(t,\mu,\delta)$ can be rearranged through change of variables, integration by parts and a smooth, dyadic partition of unity so that it converges absolutely.}
As with the \StrongIoIC, the full statement is rather technical, so we postpone it to \cref{sect:StrongIoI}.

The second required fact is a sort of Bessel inversion or Plancherel formula:
For $F(\mu)=F(\Lambda+\wtilde{\mu})$ Schwartz-class and holomorphic in $\wtilde{\mu}$ on a tube domain containing $\Re(\wtilde{\mu})=0$, we define
\[ \wtilde{F}_{\delta,\Lambda}(y) = \int_{i\mathfrak{a}^*_0(\Lambda)} F(\mu) K_{w_l}(y,\mu,\delta) \dspec\mu. \]
If also $F'(\mu)$ satisfies these properties for $\Lambda',\delta'$, define
\[ \mathcal{I}_{\delta,\Lambda,\delta',\Lambda'}(F,F') := \int_Y \wtilde{F}_{\delta,\Lambda}(y) \wbar{\wtilde{F'}_{\delta',\Lambda'}(y)} dy. \]
Then by analytic continuation, it is sufficient to show that $\mathcal{I}_{\delta,\Lambda,\delta',0}(F,F')$ is zero unless $\Lambda=0$ and $\delta=\delta'$, in which case
\begin{align}
\label{eq:InvBesPlancherel}
	\mathcal{I}_{\delta,0,\delta,0}(F,F') = \abs{V} \int_{i\mathfrak{a}^*_0(0)} F(\mu) \wbar{F'(-\wbar{\mu})} \dspec\mu.
\end{align}

To obtain the actual expansion of smooth, compactly supported functions on $Y_w$ into Bessel functions, we also require
\begin{conj*}[Test Functions]
\hypertarget{conj:TestFuns}{}
For $w \in W^\text{rel}$, let $f:Y_w \to \C$ be smooth and compactly supported.
Define $F:G \to \C$ to be zero outside $U(\R) Y \trans{U(\R)} w$ and
\begin{align}
\label{eq:TestFDef}
	F(x y y' \trans{z} w x') =& \psi_I(xx') f(y) u(y',z,x'),
\end{align}
$x\in U(\R), x' \in \wbar{U}_w(\R), y \in Y_w, y' \in \wbar{Y}_w, z \in U_{w^{-1}}(\R)$, where
\[ u:\wbar{Y}_w \times U_{w^{-1}}(\R) \times \wbar{U}_w(\R) \to \C \]
is smooth and compactly supported with
\[ \int_{\wbar{U}_w(\R)} u(I,I,x') dx' = 1. \]
Then $F$ is smooth and compactly supported in the Iwasawa coordinates when the support of $u$ is sufficiently small.
\end{conj*}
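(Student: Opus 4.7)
The plan is to show that $F$ is smooth on all of $G$ and that its support, modulo the left $U(\R)$-translation under which $F$ transforms by $\psi_I$, is contained in a compact subset of $G$; this is the natural interpretation of ``compactly supported in the Iwasawa coordinates'' given the $\psi_I$-equivariance of the Whittaker functions against which $F$ is integrated in the \BesselExpandC.

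First, I would establish that the parametrization
\[ \Phi: U(\R) \times Y_w \times \wbar{Y}_w \times U_{w^{-1}}(\R) \times \wbar{U}_w(\R) \longrightarrow G, \qquad (x,y,y',z,x') \mapsto xyy'\trans{z}wx', \]
is a real-analytic diffeomorphism onto an open dense subset $\Omega_w := U(\R) \cdot Y \cdot \trans{U(\R)} \cdot w$, which is the right translate of the big Bruhat cell by $w_l^{-1} w$. A dimension count gives $n^2-1 = \dim G$, using $Y = Y_w \cdot \wbar{Y}_w$ together with the complementary factorization $\trans{U(\R)} = \trans{U_{w^{-1}}(\R)} \cdot w \wbar{U}_w(\R) w^{-1}$; this latter follows from the root-system partition of positive roots into those with $w^{-1}\alpha > 0$ (contributing $U_{w^{-1}}$) and those with $w\alpha < 0$ (contributing $\wbar{U}_w$), so that the dimensions sum to $\dim \trans{U(\R)}$. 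Injectivity and local regularity then reduce to the standard big-cell parametrization.

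On $\Omega_w$, the pullback $F \circ \Phi$ is the product of the smooth unitary character $\psi_I(xx')$ with the smooth compactly supported functions $f(y)$ and $u(y',z,x')$, so $F$ is smooth on $\Omega_w$. The critical step is extending smoothly by zero across the boundary $\partial\Omega_w$. Setting $K_f := \mathrm{supp}(f) \subset Y_w$, the set $\{yw : y \in K_f\}$ is a compact subset of the open cell $\Omega_w$. By continuity of $\Phi$, there exists an open neighborhood $\mathcal{V}$ of $(I,I,I)$ in $\wbar{Y}_w \times U_{w^{-1}}(\R) \times \wbar{U}_w(\R)$ whose image $\Phi(\{I\} \times K_f \times \mathcal{V})$ lies in a compact subset of $\Omega_w$ bounded away from $\partial\Omega_w$; this is the quantitative content of the ``sufficiently small support of $u$'' hypothesis. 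Then $F$ vanishes in an open neighborhood of every point of $G \setminus \Omega_w$, so its extension by zero is smooth everywhere.

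Compactness of $\mathrm{supp}(F)$ modulo $U(\R)$-action is then automatic: $|F|$ depends only on $(y,y',z,x') \in K_f \times \mathrm{supp}(u)$, a compact set whose image under $\Phi(\{I\},\cdot)$ is compact in $G$ and projects under Iwasawa to a compact subset of $Y^+ \times K$. The main obstacle is Step 1, namely verifying that $\Phi$ is a \emph{global} diffeomorphism (not merely a local one) --- i.e., uniqueness of the coordinates $(x,y,y',z,x')$ --- which requires careful compatibility of the two subgroup decompositions above with the standard big Bruhat cell parametrization of $G$. Once that is in place, the smoothness-at-boundary argument is simply the point-set fact that a compact subset of an open set has an open neighborhood whose closure remains in the open set.
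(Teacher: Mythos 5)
The paper does not actually prove this statement: it is labeled a \emph{conjecture}, stated in \cref{sect:ConjConseq}, invoked in \cref{thm:ArithKuz}, and never established. So there is no proof of Buttcane's to compare against, and I can only assess your argument on its own terms.

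Your overall structure is sound and I believe the statement is provable along exactly these lines. The set $\Omega_w = U(\R)Y\trans{U(\R)}w$ is the right translate of the big Bruhat cell $U(\R)Yw_lU(\R)$ by $w_l w$, hence open and dense; and the parametrization $\Phi$ is indeed a global diffeomorphism. The point you flag as the ``main obstacle'' (global injectivity of $\Phi$) is in fact routine: the factorization $\trans{U(\R)} = \trans{U_{w^{-1}}(\R)}\cdot\bigl(w\wbar{U}_w(\R)w^{-1}\bigr)$ partitions the negative roots into the two closed subsets $\set{\alpha<0 : w^{-1}\alpha<0}$ and $\set{\alpha<0 : w^{-1}\alpha>0}$, and the standard theory of unipotent groups gives unique factorization and a variety isomorphism $\trans{U_{w^{-1}}}\times w\wbar{U}_w w^{-1}\to\trans{U}$; combining this with the standard bijectivity of the big-cell parametrization gives $\Phi$ bijective, with smooth inverse. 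Your reading of ``compactly supported in the Iwasawa coordinates'' as compactness modulo the left $U(\R)$-action (equivalently, compactness of the $Y^+\times K$ projection of $\mathrm{supp}\,F$) is also the right one, given the $\psi_I$-equivariance.

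The genuine problem is in how you use the ``support of $u$ sufficiently small'' hypothesis. You claim it is what guarantees that $\Phi\bigl(\{I\}\times K_f\times\mathcal{V}\bigr)$ sits in a compact subset of $\Omega_w$ bounded away from $\partial\Omega_w$. But that is automatic for \emph{any} compact $\mathcal{V}$: since $\Phi$ is a homeomorphism onto the open set $\Omega_w$, the image of the compact set $\{I\}\times\mathrm{supp}(f)\times\mathrm{supp}(u)$ is a compact subset of $\Omega_w$, hence automatically at positive distance from the boundary, regardless of how large $\mathrm{supp}(u)$ is. Your argument therefore never actually invokes the smallness, and as written it proves a stronger statement than the conjecture asserts. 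That is not a logical error, but it does mean you have misidentified the role of the hypothesis --- most likely it is present either as a hedge against a subtlety the author was unsure of, or for uniformity with the subsequent Isolation of Weyl Cells Conjecture, where the smallness of $\mathrm{supp}(u)$ relative to $a,b$ really is essential. You should either track down whether there is a case where compactness/smoothness genuinely fails for large $\mathrm{supp}(u)$, or state explicitly that your argument does not need the hypothesis; attributing a load-bearing role to an assumption you don't use is the kind of thing that conceals a real gap if one exists.
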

Note that such an $F$ has
\[ T_w(F)(y) = f(y), \qquad y \in Y_w. \]

If the \BesselExpand and \TestFuns Conjectures hold, it follows that for such $f$ and $F$, we have
\[ f(y) = \frac{1}{\abs{V}} \sum_{\delta,\Lambda} \int_{i\mathfrak{a}^*_0(\Lambda)} K_w(y,\mu,\delta) \int_G F(g) \wbar{K_{w_l}(g,-\wbar{\mu},\delta)} dg \, \dspec\mu. \]

Aside: One might be tempted to define a Bessel function on $G$ by $K(yw,\mu,\delta) = K_w(y,\mu,\delta)$, but applying these decompositions to the explicit representations on $GL(3)$, e.g. in \cite{ArithKuzI}, we see that such a function is not continuous (and the difficulty cannot be explained by any errors in the constants).

\subsection{Arithmetic Kuznetsov trace formulas}
Taking such an $F$ for the kernel of a Poincar\'e series, we get the arithmetic Kuznetsov formulas (see \cite[Section 8]{ArithKuzII}):
\begin{thm}
\label{thm:ArithKuz}
Assume the Bessel Expansion and \TestFuns Conjectures and suppose $\psi_a, \psi_b$ are non-degenerate characters.
Then for $f$ and $F$ as above,
\begin{align*}
	&\sum_{U_{w'}(\R) \supset U_w(\R)} \sum_c \frac{S_{w'}(a,b,c)}{p_{-\rho}(c)} T_{w'}(F)(acw' b^{-1} {w'}^{-1}) = \\
	& \qquad \frac{1}{\abs{V}} \sum_{\delta,\Lambda} \int_{\mathcal{B}(\Lambda,\delta)} \frac{\lambda_\Xi(b) \wbar{\lambda_\Xi(a)}}{L(1,\AdSq \Xi)} \int_G F(g) \wbar{K_{w_l}(g,-\wbar{\mu_\Xi},\delta)} dg \,d_\text{H}\Xi,
\end{align*}
and
\[ T_w(F)(y) = f(y), \qquad y \in Y_w. \]
\end{thm}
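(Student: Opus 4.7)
The plan is to adapt the standard two-sided Poincar\'e series computation described in \cref{sect:SpecKuz} (method~1), following the template of \cite[Section~8]{ArithKuzII}, to the kernel supplied by the \TestFunsC. Given $f$ and its associated smooth, compactly supported $F$, form the Poincar\'e series $P(g) := \sum_{\gamma \in U(\Z)\cap\Gamma \backslash \Gamma} F(\gamma g)$ and evaluate
\[
I_{a,b}(g) = \int_{U(\Z)\cap\Gamma\backslash U(\R)} P(xg)\,\wbar{\psi_b(x)}\,dx
\]
at $g=a$ in two ways; the $a$- and $b$-dependence is introduced through the outer character and the translation by $a$.

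On the arithmetic side, unfold the $\gamma$-sum and partition by Bruhat cells $\gamma=x_1 c w' x_2$, $x_1 \in U(\R)$, $c \in Y$, $w' \in W$, $x_2 \in \wbar{U}_{w'}(\R)$. The outer $x$-integral combined with the $x_1$-factor yields the Kloosterman sum $S_{w'}(a,b,c)$, the Haar rescaling across $cw'$ supplies the factor $p_{-\rho}(c)^{-1}$, and the remaining $x_2$-integration produces $T_{w'}(F)(acw'b^{-1}{w'}^{-1})$. By Friedberg \cite{Friedberg01}, the non-degeneracy of $\psi_a,\psi_b$ restricts the sum to $w' \in W^{\text{rel}}$. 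Because $F$ is supported in the $w$-Bruhat cell, $T_{w'}(F)$ vanishes unless the integration variable in $\wbar{U}_{w'}(\R)$ stays inside $\wbar{U}_w(\R)$, i.e.\ unless $\wbar{U}_{w'}(\R)\subseteq\wbar{U}_w(\R)$, equivalently $U_{w'}(\R)\supseteq U_w(\R)$; this is exactly the index set on the LHS.

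On the spectral side, apply the Langlands spectral expansion of $P$. Integrating against $\wbar{\psi_b(x)}$ and then performing a second unfolding of the Poincar\'e sum against each automorphic form $\Xi$ leaves an integral of $F$ against a Whittaker vector of $\Xi$. Normalizing Whittaker coefficients to Hecke eigenvalues via the Stade-type Rankin-Selberg integral \eqref{eq:StadesFormula} produces the factor $\lambda_\Xi(b)\wbar{\lambda_\Xi(a)}/L(1,\AdSq\Xi)$. Summing over $K$-types, the surviving integral of $F$ against spectral Whittaker vectors is exactly the Bessel-like distribution $\what{F}(I,\mu_\Xi,\delta)$ of the \BesselExpandC, which the conjecture rewrites as
\[
\frac{1}{\abs{V}}\int_{G_{w_l}} F(g)\,\wbar{K_{w_l}(g,-\wbar{\mu_\Xi},\delta)}\,dg,
\]
giving the RHS.

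The supplementary identity $T_w(F)(y)=f(y)$ for $y\in Y_w$ is immediate from \eqref{eq:TestFDef}: for $x'\in\wbar{U}_w(\R)$ we have $ywx'=(I)\,y\,(I)\,\trans{(I)}\,w\,x'$ in the coordinates $(x,y,y',z,x')=(I,y,I,I,x')$, so $F(ywx')=\psi_I(x')\,f(y)\,u(I,I,x')$, and integration against $\wbar{\psi_I(x')}$ collapses to $f(y)\int_{\wbar{U}_w(\R)}u(I,I,x')\,dx'=f(y)$ by the normalization of $u$. The principal technical obstacle is the pointwise validity of the Langlands expansion of $P$ needed to extract the Bessel-like distribution; the safest workaround, recommended in \cref{sect:SpecKuz}, is to begin with an inner product of two Poincar\'e series, apply Plancherel, and reach the stated identity by a density argument, circumventing pointwise convergence concerns at the cost of additional bookkeeping.
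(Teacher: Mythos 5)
Your proof reconstructs the Poincar\'e series unfolding that the paper defers to (\cite[Section 8]{ArithKuzII}), and the verification of $T_w(F)(y)=f(y)$ is correct and matches the remark immediately following the \TestFunsC. However, two steps have genuine gaps. First, the claim ``$F$ is supported in the $w$-Bruhat cell'' is false: the \TestFunsC places $\mathrm{supp}(F)$ inside $U(\R)\,Y\,\trans{U(\R)}\,w$, which is the right $w$-translate of the \emph{opposite} big cell $U(\R)\,Y\,\trans{U(\R)}=U(\R)\,Y\,w_l\,U(\R)$, not the cell $U(\R)\,Y\,w\,U(\R)$; the two agree only when $w=w_l$. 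For $w\ne w_l$ the set $\trans{U(\R)}\,w$ cuts across several Bruhat cells (for instance on $GL(3)$ with $w=w_{2,1}$, the element $\trans{z}\,w_{2,1}$ lies in the $w_{2,1}$-cell when $z_{2,1}=0$ and in the long cell otherwise), and determining which cells it meets is exactly the content behind the Weyl-element restriction. The heuristic ``the $\wbar{U}_{w'}(\R)$-variable must stay in $\wbar{U}_w(\R)$'' does not follow either: the argument of $F$ inside $T_{w'}(F)$ is $yw'u$, not $u$, so a containment of unipotent subgroups says nothing directly about whether $yw'u\in\mathrm{supp}(F)$. This step needs to be replaced by an actual Bruhat-cell analysis of $U(\R)\,Y\,\trans{U(\R)}\,w$.

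Second, the device of ``evaluating at $g=a$'' does not obviously produce the factor $\wbar{\lambda_\Xi(a)}$. The $F$ supplied by the \TestFunsC is $\psi_I$-equivariant on the left, so the unfolding of $\innerprod{P,\Xi}$ yields the $\psi_I$-Fourier coefficient of $\Xi$, not the $\psi_a$-one. The usual mechanism is to make the kernel $\psi_a$-equivariant by translating $F$ by a diagonal element before forming the Poincar\'e series, or, as \cref{sect:SpecKuz} recommends for controlling convergence, to take an inner product of two Poincar\'e series at characters $\psi_a$ and $\psi_b$. Evaluating the $\psi_b$-Fourier coefficient of $P$ at $g=a$ gives a single Whittaker value $W_\Xi^{\psi_b}(a)$, whose relation to the product $\lambda_\Xi(b)\,\wbar{\lambda_\Xi(a)}$ is indirect and would need to be made precise before the spectral side matches the right-hand side of the theorem.
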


When $w=w_l$ only the long element appears in the Weyl element sum, but for the other Weyl elements, we have a conjecture:
\begin{conj*}[Isolation of Weyl Cells]
For $f$ and $F$ as above, if the support of $u(y',z,x')$ is sufficiently small compared to $a,b$, then
\[ T_{w'}(F)(acw' b^{-1} {w'}^{-1}) = \delta_{w'=w} f(acw b^{-1} w^{-1}). \]
\end{conj*}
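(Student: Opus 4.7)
The plan is to separate into the two cases $w' = w$ and $w' \ne w$. In the latter case, the constraint $U_{w'} \supset U_w$ translates, via the containment of inversion sets $I(w') \subset I(w)$, into $w' < w$ in the strong Bruhat order, so that $w \not\le w'$ and the two Bruhat cells will be cleanly separated.

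For $w' = w$, the direct computation goes as follows. Write
\[
acwb^{-1}u' = (acwb^{-1}w^{-1}) \cdot w \cdot u' = \eta \, w \, u',
\]
where $\eta := acwb^{-1}w^{-1} \in Y_w$ by the compatibility condition. This coincides with the parameterization $xyy'\trans{z}wx'$ at $(x,y,y',z,x') = (I,\eta,I,I,u')$, so
\[
F(\eta w u') = \psi_I(u')\, f(\eta)\, u(I,I,u').
\]
Integrating against $\wbar{\psi_I(u')}$ and using $\int u(I,I,x')\,dx' = 1$ from the \TestFunsC yields $T_w(F)(acwb^{-1}w^{-1}) = f(acwb^{-1}w^{-1})$.

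For $w' \ne w$, write $C_{w''} := U(\R)Yw''U(\R)$ for the Bruhat cells. The point
\[
acw'b^{-1}u' = (ac\cdot w'b^{-1}{w'}^{-1})\, w'\, u'
\]
lies in $C_{w'}$ for every $u' \in \wbar{U}_{w'}(\R)$, whereas the support of $F$ is localized near $C_w$: at $(y',z,x')=(I,I,I)$ the parameterization produces $xyw \in C_w$, so the compact slice $K_w := \{yw : y \in \text{supp}(f)\} \subset C_w$ is the ``center'' of $\text{supp}(F)$ modulo the left $U(\R)$-action. Since $w \not\le w'$, the cell $C_w$ is not contained in the closure $\wbar{C_{w'}} = \bigsqcup_{w'' \le w'}C_{w''}$, so $K_w \subset C_w$ is disjoint from $\wbar{C_{w'}}$, and by compactness some open neighborhood $N$ of $K_w$ in $G$ is also disjoint from $\wbar{C_{w'}}$. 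For $\text{supp}(u)$ sufficiently small, continuity of the parameterization places the compact set $\{yy'\trans{z}wx' : y \in \text{supp}(f),(y',z,x') \in \text{supp}(u)\}$ inside $N$; left-$U(\R)$-invariance of $C_{w'}$ then gives $\text{supp}(F) \cap C_{w'} = \emptyset$, forcing $F(acw'b^{-1}u') = 0$ identically in $u'$ and hence $T_{w'}(F)(acw'b^{-1}{w'}^{-1}) = 0$.

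The main obstacle is really the well-definedness of $F$ by the parameterization formula: one must verify that $(x,y,y',z,x') \mapsto xyy'\trans{z}wx'$ is locally a diffeomorphism near $(x_0,y_0,I,I,I)$ so that $F$ extends smoothly by zero outside its image, as required by the \TestFunsC. A dimension count matches source and target, and the Jacobian at the central point should be nondegenerate by a direct computation reducing to the standard Bruhat parameterization of $C_w$; the implicit function theorem then yields local injectivity on the region where $F$ is supported for small $\text{supp}(u)$. A secondary combinatorial point, $U_{w'}\supset U_w \Rightarrow w'\le w$ in the strong Bruhat order, follows from the equivalence $U_{w'}\supset U_w \Leftrightarrow I(w')\subset I(w)$ and the refinement of the strong Bruhat order by the weak left order on inversion sets.
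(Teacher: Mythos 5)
This statement is posed in the paper as a \emph{conjecture}; there is no proof in the paper, so there is no argument to compare against, and I can only assess your proposal on its own terms.

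Your $w'=w$ computation simply reproduces the remark $T_w(F)(y)=f(y)$, $y\in Y_w$, recorded immediately after the \TestFunsC, and it is fine granting that conjecture (which is what makes the parametrization formula actually define $F$).

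For $w'\ne w$ the argument hinges on $w\not\le w'$, which you extract from $U_{w'}\supset U_w$ via $I(w')\subset I(w)\Rightarrow w'\le_L w\Rightarrow w'\le w$; that implication is correct, and the compactness and closure steps are sound (the Euclidean closure of a Bruhat cell is contained in the union of the cells below it). So, read as a proof of the displayed identity under the hypothesis $U_{w'}\supset U_w$ appearing in the theorem, the proposal works.

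However, two things suggest it is answering the wrong question. First, the threshold for $\mathrm{supp}(u)$ your compactness argument produces depends only on $\mathrm{supp}(f)$ and $n$, with no reference to $a,b$, whereas the conjecture explicitly invokes a threshold ``sufficiently small compared to $a,b$.'' Second, and more importantly, when $U_{w'}\supsetneq U_w$ the vanishing $T_{w'}(F)\equiv 0$ holds with no hypothesis on $\mathrm{supp}(u)$ at all: the integration locus $\eta w'\wbar{U}_{w'}(\R)$, $\eta=acw'b^{-1}{w'}^{-1}$, never meets the carrier set $U(\R)Y\trans{U(\R)}w$ on which $F$ can be nonzero. One can see this directly by writing $\eta w'u'w^{-1}=\eta L P$ with $L=w'u'{w'}^{-1}\in\trans{U}(\R)\cap w'U(\R){w'}^{-1}$ and $P=w'w^{-1}$, and checking that the trailing $k\times k$ minor of $\eta LP$ (which must be nonvanishing for membership in $U(\R)Y\trans{U(\R)}$) forces $P$ to fix $\{n-k+1,\dots,n\}$; combined with $I(w')\subset I(w)$, this gives $P=I$. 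So for this range of $w'$ the cell-closure machinery, while not incorrect, is overkill and does not explain the $a,b$-dependent smallness hypothesis. The genuinely conjectural case appears to be the \emph{opposite} inclusion $U_{w'}\subsetneq U_w$, equivalently $w<w'$: there $C_w\subset\wbar{C_{w'}}$, so every Euclidean neighborhood of $\{yw : y\in\mathrm{supp}(f)\}$ meets $C_{w'}$ and the disjointness argument collapses, and one can in fact exhibit $u'\in\wbar{U}_{w'}(\R)$ with $\eta w'u'\in U(\R)Y\trans{U(\R)}w$. In that regime the vanishing must come from the position of $\eta=acw'b^{-1}{w'}^{-1}$ relative to the compact Iwasawa support of $F$ --- presumably the source of the $a,b$-dependence --- and your proposal does not address it.
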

The conjecture implies that individual Weyl elements can be isolated from the original Poincar\'e series, giving a Kuznetsov-type formula on just the Weyl cell:
\begin{align*}
	& \sum_c \frac{S_w(a,b,c)}{p_{-\rho}(c)} f(acw b^{-1} w^{-1}) = \\
	& \qquad \frac{1}{\abs{V}} \sum_{\delta,\Lambda} \int_{\mathcal{B}(\Lambda,\delta)} \frac{\lambda_\Xi(b) \wbar{\lambda_\Xi(a)}}{L(1,\AdSq \Xi)} \int_G F(g) \wbar{K_{w_l}(g,-\wbar{\mu_\Xi},\delta)} dg \,d_\text{H}\Xi,
\end{align*}

\subsection{Other applications}
Though \cite{BlomerDensity} encountered some unexpected difficulties with square-root cancellation in the level aspect, one might hope that \cite[Theorem 4]{ArithKuzII} would continue to hold:
\begin{conj}
For $f:Y \to \R$ smooth and compactly supported,
\begin{align*}
	\sum_c \frac{S_{w_l}(a,b,c)}{\abs{c_1 c_2 \cdots c_{n-1}}} f(yc) &\ll_{a,b,f,\epsilon} \prod_{i=1}^{n-1} y_i^{\theta+\epsilon}+\sum_{i=1}^{n-1} y_i^{-1-\epsilon},
\end{align*}
where $\theta=\theta(n) < \frac{1}{2}$ is the Ramanujan-Selberg parameter for $GL(n)$.
\end{conj}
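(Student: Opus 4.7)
The plan is to invert the arithmetic Kuznetsov trace formula \cref{thm:ArithKuz}, applied at $w=w_l$, to convert the Kloosterman sum into a spectral average. Assuming the \BesselExpand, \TestFuns, and Isolation of Weyl Cells Conjectures, I would construct a test kernel $F:G\to\C$ supported near $U(\R)Y\trans{U(\R)}w_l$ whose $T_{w_l}$ transform, when restricted to $Y_{w_l}=Y$, reproduces the translate $y'\mapsto f(yy')$ (up to the discrepancy between $p_{-\rho}(c)$ and $\abs{c_1\cdots c_{n-1}}$, which can be absorbed into the definition of $f$ and $F$). The Isolation of Weyl Cells Conjecture kills the contribution from non-long Weyl elements, so the LHS of the statement equals
\[ \frac{1}{\abs{V}}\sum_{\delta,\Lambda} \int_{\mathcal{B}(\Lambda,\delta)} \frac{\lambda_\Xi(b)\wbar{\lambda_\Xi(a)}}{L(1,\AdSq\Xi)} \,\mathcal{I}(\mu_\Xi,\delta)\,d_\text{H}\Xi, \qquad \mathcal{I}(\mu,\delta):=\int_G F(g)\wbar{K_{w_l}(g,-\wbar\mu,\delta)}\,dg. \]

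Next, I would estimate the spectral side. Applying the Ramanujan--Selberg bound $\abs{\lambda_\Xi(a)}\ll_{a,\epsilon} \abs{a_1\cdots a_{n-1}}^{\theta+\epsilon}$ (and similarly for $b$) and bounding $L(1,\AdSq\Xi)^{-1}$ polynomially, the task reduces to controlling $\mathcal{I}(\mu,\delta)$ together with the spectral measure. Via the \MBIntsC, $K_{w_l}$ has a Mellin--Barnes representation in $y$, so unfolding the $G$-integral along the Iwasawa coordinates turns $\mathcal{I}(\mu,\delta)$ into a multidimensional contour integral in Mellin variables $s$, with the parameter $y$ appearing as $\prod_i y_i^{s_i}$. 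The two terms on the RHS of the conjecture then arise from shifting each $s_i$ either past the leading Mellin pole (contributing $y_i^{\theta+\epsilon}$ after pairing with the Hecke bound and applying the arithmetically-weighted Weyl law \cref{thm:ArithWeyl} to the resulting spectral sum) or to the far right past the first trivial gamma pole (contributing the $y_i^{-1-\epsilon}$ error). The ``min of two terms'' shape of the RHS says exactly that whichever shift is cheapest in each coordinate is the one to perform.

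The main obstacle is obtaining sufficient polynomial decay of the Mellin--Barnes integrand jointly in $\mu$ and $s$ for $n\ge 4$. On $GL(2)$ and $GL(3)$, the relevant Bessel transforms have explicit Stirling asymptotics, but \cref{prop:IshiiRepns} supplies only an inductive integral representation, and extracting uniform Stirling-type bounds from it seems to require an Olver-type asymptotic expansion of $K_{w_l}$ across all of parameter space, which the introduction explicitly identifies as ``dizzying.'' A second, more structural obstacle, flagged in the excerpt via \cite{BlomerDensity}, is that the expected square-root cancellation on the spectral side can degenerate in configurations where the spectral parameters $\mu_\Xi$ cluster; handling this may force either an archimedean averaging over a small window (yielding a slightly smoother version of the bound) or a weakening of the exponent $\theta$ away from the generic range, so the conjecture as stated may only be attainable in its current form for sufficiently generic $y$.
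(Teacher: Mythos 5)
This statement is a \emph{conjecture} in the paper, not a theorem: the author explicitly states that it was ``proved for $n=3$ using bounds for the Mellin transform of the Bessel functions'' in an earlier work, and offers no argument for $n\ge 4$. So there is no proof in the paper to compare your attempt against, and you should not read your task as reproducing an existing argument.

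With that said, your sketch is the right shape and matches what the author did in the $n=3$ case: invert the arithmetic Kuznetsov formula at $w_l$, land on the spectral side, insert the Ramanujan--Selberg bound for the Hecke eigenvalues, and control the archimedean factor $\mathcal{I}(\mu,\delta)$ by Mellin--Barnes contour shifts on $K_{w_l}$, with the two terms of the bound coming from the two natural directions of shifting. The obstacles you flag at the end are precisely why the statement remains a conjecture: the argument cannot close without uniform polynomial decay of the Mellin transform of $K_{w_l}(y,\mu,\delta)$ jointly in the Mellin variable $s$ and the spectral parameter $\mu$, and no such bound is available for $n\ge4$ (the inductive integral representation of \cref{prop:IshiiRepns} carries exponentially growing factors when any $y_i$ is large). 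The reference to \cite{BlomerDensity} is also apt --- the square-root cancellation implicit in the $\prod_i y_i^{\theta+\epsilon}$ term is known to misbehave in related settings.

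One small correction: you do not need the Isolation of Weyl Cells Conjecture here. The paper states, immediately after \cref{thm:ArithKuz}, that ``when $w=w_l$ only the long element appears in the Weyl element sum,'' because $U_{w'}(\R)\supset U_{w_l}(\R)$ forces $w'=w_l$. The Isolation of Weyl Cells Conjecture is only needed for shorter Weyl elements. So your reliance on it is unnecessary, though harmless.

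In short: you have correctly identified the method (the natural one, and the one used for $n=3$) and the genuine gaps. The gaps are not defects of your write-up --- they are why this is labeled a conjecture in the paper.
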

In \cite{ArithKuzII} this was proved for $n=3$ using bounds for the Mellin transform of the Bessel functions.

In \cite{NelsonSubConv}, Nelson seems to have proven a very general subconvexity bound for $L$-functions of $GL(n)$ Maass forms in ``generic position'', i.e. when all of $\abs{\mu_i-\mu_j} \asymp T$ and $\abs{\mu_i} \asymp T$, using so-called ``soft'' methods.
The earlier paper \cite{SubConv}, took a direct approach to subconvexity using the explicit Kuznetsov formula and Bessel functions for $GL(3)$, and we expect that such methods will be useful for mathematicians studying subconvexity and moments of $L$-functions on $GL(n)$, provided the Bessel functions and Kloosterman sums are sufficiently well-understood.
The first challenges faced by such a researcher will likely be:
\begin{enumerate}
\item Optimal (square-root cancellation?) bounds for the Kloosterman sums.
There has be some progress on this front (as well as some negative results), e.g. in \cite{Miao} and \cite{BlomerMan}.

\item Proving a sharp cutoff for the sum of Kloosterman sums.
That is, given a reasonably nice test function $f$, we expect $H_w(f, acwb^{-1}w^{-1})$ (recall \eqref{eq:HwDef}) to decay rapidly for $c_i$ large compared to $a,b$; the goal should be to identify the precise boundary of the region where the rapid decay occurs.
This is Lemmas 8 and 9 in \cite{SubConv}, and similarly, Lemmas 3 and 4 in \cite{GPSSubConv}.

\item Computing the Fourier transform of the Bessel functions.
After applying the Kuznetsov formula to a moment or subconvexity problem, a typical next step involves Poisson or Voronoi summation, and hence the Fourier transform of the $H_w(f,y)$ function.
This is Lemmas 10 and 11 in \cite{SubConv} and Lemmas 5 and 6 in \cite{GPSSubConv}.
Sharma \cite{SharmaSubConv} has apparently improved this by applying stationary phase to the double-Bessel integral representations of the $GL(3)$ Bessel function.
The author wonders if directly applying the known Fourier transform of the $GL(2)$ Bessel functions in the double-Bessel integral representations would result in a better bound and/or a cleaner argument, and might offer some hope of generalization to $GL(n)$.

\item Diophantine analysis of the collapsed Kloosterman sums.
The Poisson summation step of \cite{SubConv} caused the Kloosterman sums to mostly collapse to congruences; one then has to have some understanding of the set of points satisfying those relations.
\end{enumerate}

Lastly, It would be very interesting to write out the Kuznetsov formulas in the case where one or both of the characters are degenerate (having one or more indices equal to zero).
Of course, we should be able to build the $GL(n,\R)$ spectral Kuznetsov formulas at a degenerate character from the Kuznetsov formulas of $GL(m,\R)$ with $m < n$, but the arithmetic Kuznetsov formulas have the possibility of connecting sums of degenerate $GL(n,\R)$ Kloosterman sums to sums of $GL(m,\R)$ Kloosterman sums in unexpected ways.

\subsection{Other integral representations of the Bessel functions}
\subsubsection{An explicit direct integral}
For any proposed integral representation of the Bessel functions, the conjectures reduce the question to showing it satisfies the appropriate differential equations and computing its first-term asymptotics.
One might then hope for a direct integral representation; for example, that the Jacquet-type integral of the function defined on $G_{w_l}$ by
\[ I^*_{\mu,\delta}\paren{x y \trans{x'}} = \psi_I(x) p_{\rho+\mu}(y) \chi_\delta(y). \]
would give the Bessel function up to a constant; i.e. $K_w(y,\mu,\delta) = C(\mu,\delta) \mathcal{I}_w(y,\mu,\delta)$ where
\begin{align*}
	\mathcal{I}_w(y,\mu,\delta) =& \int_{\wbar{U}_w(\R)} I^*_{\mu,\delta}(y w u) \wbar{\psi_I(u)} du.
\end{align*}
Unfortunately, this integral is far from absolutely convergent for any value of $\mu$.
The author is unaware of any direct, absolutely convergent integral for the Bessel functions.

One could formulate a version of the \StrongIoIC for this integral and (with a bit more difficulty) argue that it would satisfy the same differential equations as the Bessel function (for the same reasons), leaving us to check the first-term asymptotics as $y \to 0$, which naively appear to be independent of $\sgn(y)$ (except for the factor $\chi_\delta(y)$), so it is not unreasonable.
The advantage to applying this integral representation would be significantly fewer extraneous integrals and a notable lack of spare Whittaker functions.

\subsubsection{The Voronoi Bessel function}
On the other hand, Zhi Qi \cite{ZhiQi} has evaluated the iterated integral $\mathcal{I}_w(y,\mu,\delta)$ for the Weyl elements $w=w_{1,n-1},w_{n-1,1}$.
On $GL(3)$, the Bessel functions $K_w(y,\mu,0)$ attached to $w=w_{1,2}$ or $w=w_{2,1}$ coincide with the Bessel functions appearing in the Voronoi formula \cite{MillerSchmid02} and these are the functions Zhi Qi has investigated on $GL(n)$; compare the functions $K_{w_4}(y,\mu)$ and $K_{w_5}(y,\mu)$ in \cite{SpectralKuz} to the functions $g^\pm(y)$ in \cite[Lemma 6]{Bl02}.

\begin{conj}
Up to normalization, the function $K_{w_{n-1,1}}(y,\mu,\delta)$ is given by the Mellin-Barnes integral
\[ \abs{y_{n-1}}^{\frac{n-1}{2}} \sum_{\ell\in\set{0,1}} \paren{i\sgn(y_{n-1})}^\ell \int_{-i\infty}^{i\infty} \abs{\pi^n y_{n-1}}^{-s} \prod_{j=1}^n \frac{\Gamma\paren{\frac{\ell_j+s-\mu_j}{2}}}{\Gamma\paren{\frac{1+\ell_j-s+\mu_j}{2}}} \frac{ds}{2\pi i}, \]
where $\set{0,1} \ni \ell_j \equiv \ell+\delta_j \pmod{2}$.
\end{conj}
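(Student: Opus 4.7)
The plan is to invoke the Uniqueness proposition: since $\ell=2$ for $w_{n-1,1}$, the Bessel function $K_{w_{n-1,1}}(y,\mu,\delta)$ is characterized by bi-$U$-equivariance, the single ODE in $\abs{y_{n-1}}$ inherited from the Casimir operators, and the first-term asymptotics as $y_{n-1}\to 0$ supplied by the \Asymps; it therefore suffices to verify that the proposed Mellin-Barnes integral shares these properties up to one global scalar factor (absorbing, in particular, whatever power of $\abs{y_{n-1}}$ and character in $\sgn(y_n)$ are needed to reconcile normalizations of $p_{\rho+\mu}\chi_\delta$ restricted to $Y_{w_{n-1,1}}$).

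Convergence and smoothness of the integral follow from Stirling and the constraint $\mu_1+\ldots+\mu_n=0$, which together give decay of order $\abs{\Im s}^{-n/2}$ on any vertical contour. Verification of the ODE reduces on the Mellin side to a polynomial identity in $s$ on the integrand, which should follow from the standard shifts $\Gamma(z+1)=z\Gamma(z)$ applied to the factors $\Gamma(\tfrac{\ell_j+s-\mu_j}{2})$ and $\Gamma(\tfrac{1+\ell_j-s+\mu_j}{2})$. Next I would shift the contour to the right past the simple poles at $s=\mu_j-\ell_j-2k$ for $j=1,\ldots,n$ and $k\ge 0$; assembling the sums over $\ell\in\set{0,1}$ with the signs $(i\sgn(y_{n-1}))^\ell$ converts the parity $\ell_j\equiv\ell+\delta_j\pmod 2$ into the character $\chi_{\delta^{w_j}}$ on $V_{w_{n-1,1}}$, where $w_j$ denotes the transposition swapping indices $j$ and $n$. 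These exhaust the $n$ cosets $W/W_{w_{n-1,1}}$ since $W_{w_{n-1,1}}\cong S_{n-1}$ fixes the last coordinate, so the leading residue in each coset yields a term proportional to $\abs{y_{n-1}}^{1+\ell_j-\mu_j}\chi_{\delta^{w_j}}(v)$ with an explicit coefficient that is a product of gamma values in $\mu_k-\mu_j$, thereby identifying each residue series with a scalar multiple of the Frobenius solution $J^*_{w_{n-1,1}}(y,\mu^{w_j},\delta^{w_j})$ through the uniqueness built into the \DEPSC.

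The main obstacle is then to match these $n$ residue coefficients with the constants $C^*_{w_{n-1,1}}(\mu^{w_j},\delta^{w_j})$ from part 2 of the \Asymps, up to a single $j$-independent constant. This requires reconciling the two parity conventions $\ell_j\equiv\ell+\delta_j$ and $m_{j,k}\equiv\delta_j+\delta_k$ and converting the gamma ratios via the duplication and reflection identities; the cleanest route is likely through the product form in part 3 of the \Asymps, in which $\sinmu(\mu,\delta)$ absorbs most of the parity combinatorics. Once all $n$ ratios of leading coefficients match on every sign-component $v\in V_{w_{n-1,1}}$, the Uniqueness proposition forces the Mellin-Barnes integral to equal $K_{w_{n-1,1}}(y,\mu,\delta)$ up to a single scalar, which is the asserted normalization.
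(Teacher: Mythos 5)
This statement is an open \emph{conjecture} in the paper, not a theorem: the paper offers no proof of the Mellin--Barnes representation for $K_{w_{n-1,1}}$. Your outline is in fact precisely the route the paper itself suggests in \cref{sect:Apps} (``the conjectures reduce the question to showing it satisfies the appropriate differential equations and computing its first-term asymptotics''), and the paper then explicitly notes that ``a direct comparison to \cite{ZhiQi} is difficult'' and that the $GL(3)$ computations only ``vaguely'' support the general case. So while your plan is the right one, it is an outline of a conjectural route, not a proof, and the paper leaves it open.

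Concretely, the gaps in the proposal are: (i) the entire argument is conditional on the \DEPSC and the \Asymps, which are themselves conjectural and only partially established (the paper proves parts of the \DEPSC only for $w=w_l$; for $w_{n-1,1}$ the differential equation is written out only on $GL(4)$ in \AppA, and for general $n$ one would first have to derive the single ODE on $Y_{w_{n-1,1}}$ from the Casimirs); (ii) you say the Mellin-side ODE verification ``should follow'' from $\Gamma(z+1)=z\Gamma(z)$ and the constant-matching is ``likely'' via reflection and duplication --- but neither computation is carried out, and the parity bookkeeping between $\ell_j \equiv \ell + \delta_j$ on one side and $m_{j,k} \equiv \delta_j + \delta_k$ in part~2 of the \Asymps on the other is exactly the delicate step the paper flags as difficult; (iii) a small but real error: with $\Re(\mu)$ near $0$ the poles of $\Gamma\paren{\tfrac{\ell_j + s - \mu_j}{2}}$ at $s = \mu_j - \ell_j - 2k$ lie to the \emph{left} of the contour, and $\abs{y_{n-1}}^{1-s}$ becomes small as $y_{n-1} \to 0$ precisely when $\Re(s)$ is pushed leftward, so you must shift the contour to the \emph{left}, not to the right, to extract the Frobenius expansion. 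Finally, note also that the leading powers $\abs{y_{n-1}}^{1-\mu_j+\ell_j}$ picked up at the residues do not equal the required $p_{\rho+\mu^{w_{(j\,n)}}}$-exponent $\tfrac{n-1}{2}-\mu_j$ for $n>3$; the ``up to normalization'' in the conjecture therefore must be read to include an overall power $\abs{y_{n-1}}^{\frac{n-3}{2}}$ as well as a scalar, and you should make this explicit rather than subsume it under ``whatever power of $\abs{y_{n-1}}$.''
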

Of course, $K_{w_{1,n-1}}(y,\mu,\delta)$ has a similar representation by duality (i.e. \cref{prop:IoIConsequences2}).

Under the \StrongIoIC, we have
\begin{align*}
	& K_{w_{n-1,1}}(y,\mu,0) W_1(I,\mu,0) = \\
	& \int_{\R^{n-1}} \e{-x_1+\frac{y_{n-1} x_{n-1}}{(1+x_{n-1}^2)\prod_{i=1}^{n-2}\sqrt{1+x_i^2}}-\sum_{i=1}^{n-2} \frac{x_i x_{i+1}}{\sqrt{1+x_i^2}}} \\
	& \qquad W_1\paren{\paren{\frac{\sqrt{1+x_2^2}}{\sqrt{1+x_1^2}},\ldots,\frac{\sqrt{1+x_{n-1}^2}}{\sqrt{1+x_{n-2}^2}},\frac{1}{(1+x_{n-1}^2)\prod_{i=1}^{n-2}\sqrt{1+x_i^2}}},\mu,0} dx
\end{align*}
(for the correct interpretation of the conditionally-convergent Riemann integral), so a direct comparison to \cite{ZhiQi} is difficult, but the $GL(3)$ computations in \cite{GoldLi} seem to (vaguely) lend support for the general case.

\subsubsection{Zagier's construction}
Li's generalization to $GL(n)$ \cite{Gold01} of Zagier's $GL(2)$ construction follows the spherical inversion route of constructing the Kuznetsov formula, so the weight functions on the Kloosterman sum side are of the form
\[ H_w(f, y) = \int_{\wbar{U}_w(\R)} \int_{Y^+} \int_{U(\R)} F\paren{t^{-1} x^{-1} y w u t} \psi_I(x) dx\,(\det t) dt \,\wbar{\psi_I(u)}du, \]
where $F(g)$ is given by an inverse spherical transform
\[ F(g) = \int_{i\mathfrak{a}^*_0(0)} f(\mu) h_\mu(g) d_\text{SI}\mu. \]
One might consider continuing to follow Zagier's path to find an integral representation of the Bessel function from this perspective; if the \IoIC holds, the constructions may be used interchangeably with the same Bessel functions appearing in each, as per the discussion in \cref{sect:SpecKuz}.

To the author's knowledge, Zagier's construction was never published, but it can be found in Xiaoqing Li's Kloostermania notes \cite{LiKloostermania}.
In the $GL(2)$ long-element case, Zagier substitutes on $u$ to arrive at
\[ H_w(f, y) = \int_{\wbar{U}_w(\R)} \int_{Y^+} \int_{U(\R)} F\paren{t^{-1} x^{-1} y w u x t} dx\,(\det t) dt \,\wbar{\psi_I(u)}du, \]
and this can be done for most Weyl elements on $GL(n)$ (but obviously not all; consider $w=I$) by carefully choosing $v \in U(\R)$ and $v' \in U_w(\R)$ such that $x^{-1} yw v' (yw)^{-1} = v^{-1}$ and $v' u v \in \wbar{U}_w(\R)$.

Zagier's next move substitutes on $xt$ to diagonalize $ywu$ over $\C$; on $GL(n)$, this can be facilitated by a change of variables replacing $n-1$ of the coordinates of $u$ with eigenvalues $\lambda$ of $ywu$.
(There is also perhaps some benefit to viewing $F(g)$ as a function of the characteristic polynomial of $g\trans{g}$.)
Then Zagier faces cases based on the types (complex or real) of the eigenvalues.
When the eigenvalues are real, we can substitute $xt \mapsto tx$, which removes $t$ from the argument of $F$ so the $t$ integral can be computed directly, and this turns out to be independent of $x$ and $u$ (provided one brings the signs $x \mapsto vxv$, $v \in V$ with $t$).

In the complex eigenvalue case, things are somewhat more complicated.
Zagier's substitution is now a complex fractional linear transformation; multiplication by $i$ reverses the coordinates of $x+it$ and the author has no intuition as to how to directly implement it on $GL(n)$.
However, one might instead consider to block diagonalize $ywu$ so that in the complex case we have elements of $SO(2)$ on the diagonal.
In this way, on $GL(2)$, we can take the Cartan coordinates for $xt \mapsto h_1(\theta) t$ and apply the commutativity of $SO(2)$ to remove the $\theta$ integral, which can be evaluated and again turns out to be independent of $t$ and $u$.
On $GL(n)$, a combination of converting certain $x$ coordinates to polar and conjugating certain coordinates of $t$ across should enjoy similar success.

At this point, having reduced the number of integrals, Zagier uses properties of the $GL(2)$ spherical transform whose generalization to $GL(n)$ are not at all obvious.
In the real eigenvalue case, Zagier is simply writing the spherical transform as the Mellin transform of the Harish-Chandra transform, but in the complex eigenvalue case, the picture is much less clear.

In theory, one could push the argument through simply because the dimension of $U(\R)$ is growing faster than the degree of the aforementioned characteristic polynomial, but the argument is far from ready for $GL(n)$, and here is where this story ends, at least for now.

\subsubsection{\texorpdfstring{Via $\what{F}(g,\mu,\delta)$}{Via hat-F(g,mu,delta)}}
One might consider to replace the Whittaker function $W_\sigma(g,\mu,\delta)$ in $\what{F}(g,\mu,\delta)$ as in the \BesselExpandC with $W_\sigma(g,\mu',\delta)$, then shift $-\mu$ and $\mu'$ into the region of absolute convergence of the Jacquet integrals.
At that point, one can open the Jacquet integrals and push the sum over $\sigma$ inside to produce a somewhat complicated, but explicit, integral.
One might hope to produce an integral representation of the long-element Bessel function by applying the \BesselExpandC; compare \cite[Theorem 1.7]{Baruch04} at the finite places.
Clearly, this would only apply for the long Weyl element.

\section{Background}

\subsection{Measures}
\label{sect:Measures}
We decompose Haar measure on $G$ by the Iwasawa decomposition $dg=dx \, dy \, dk$, and we take $dk$ to be Haar probability measure.

On $Y^+$, we apply the measure
\[ dy := p_{-2\rho}(y) \prod_{i=1}^{n-1} \frac{dy_i}{y_i} = \prod_{i=1}^{n-1} \frac{dy_i}{y_i^{1+i(n-i)}}, \]
which is the measure inherited as a subspace of $G$ (not Haar measure).
For the measure on $Y$, we apply counting measure to $V$, so if $d\bar{k}$ is probability measure on $V\backslash K$, we have
\[ \int_{Y^+} \int_K f(yk) dy \, dk = \frac{1}{\abs{V}} \int_Y \int_{V\backslash K} f(y\bar{k}) dy \, d\bar{k}. \]
Haar measure on $U(\R)$ is simply Lebesgue measure on $\R^{\frac{n(n-1)}{2}}$:
\[ dx = \prod_{i=1}^{n-1} \prod_{j=i+1}^n dx_{i,j}. \]

We will several times use the fact that the determinant of the Jacobian for the change of variables $x \mapsto yxy^{-1}$ on $\wbar{U}_w(\R)$ with $y \in Y$ is $p_{\rho-\rho^w}(y)$.
To see this, note that the coordinate $x_{i,j}$, $j>i$ is nonzero in $\wbar{U}_w(\R)$ if $i^w>j^w$, the change of variables sends $x_{i,j} \mapsto y_i\cdots y_{j-1} x_{i,j}$, and the power of $y_k$ in $p_{\rho-\rho^w}(y)$ is
\[ \sum_{q=1}^k \paren{q^w-q} = \sum_{1 \le i \le k} \sum_{\substack{1 \le j \le n\\i^w \ge j^w}} 1-\sum_{\substack{1 \le i,j \le k\\i^w \ge j^w}} 1 = \sum_{\substack{1 \le i \le k < j \le n\\i^w \ge j^w}} 1. \]

For $i=1,\ldots,n-1$, define $h_i(\theta) \in K$ by
\[ h_i(\theta) = \Matrix{\cos\theta && -\sin\theta\\&I_{i-1}\\ \sin\theta&&\cos\theta\\&&&I_{n-1-i}} = w_{(2\;i+1)} h_1(\theta) w_{(2\;i+1)}. \]
Then one possible arrangement of the hyper-spherical coordinates of $SO(n)$ is
\[ h_1(\theta_{n,1}) h_2(\theta_{n,2})\cdots h_{n-1}(\theta_{n,n-1}) h_1(\theta_{n-1,1})\cdots h_{n-2}(\theta_{n-1,n-2})\cdots h_1(\theta_{2,1}), \]
with $\theta_{i,1} \in [0,2\pi)$, $\theta_{i,j} \in [0,\pi]$, $j>1$.
Haar probability measure on $SO(n) \subset K$ is
\[ d_{SO(n)}k = \prod_{i=2}^n \frac{\Gamma\paren{\frac{i}{2}}}{2 \pi^{\frac{n+2}{4}}} \prod_{j=1}^{i-1} \sin^{j-1} \theta_{i,j} \, d\theta_{i,j}, \]
while $K/SO(n) = \set{SO(n), \diag(1,\ldots,1,-1) SO(n)}$, so Haar probability measure on $K$ is $dk = \frac{1}{2} d_{SO(n)}k$ if we assign counting measure on $K/SO(n)$.

\subsection{The power function}
Set
\begin{align*}
	M_n=&\set{\mu \in \C^n \setdiv \sum_{i=1}^n \mu_i = 0}, & M_n^0=&\set{\mu\in M_n\setdiv \Re(\mu)=0}.
\end{align*}
For elements $\mu$ of $M_n$, we define the linear transformation $\mu \mapsto \what{\mu} \in \C^{n-1}$ by
\[ \what{\mu}_i = \sum_{j=1}^i \mu_j, \]
and we define a coordinate-wise (as in \eqref{eq:YCoords}) power function
\[ \hat{p}_s(y) = \prod_{i=1}^{n-1} \abs{y_i}^{s_i}, \]
then we have $p_\mu(y) = \hat{p}_{\hat{\mu}}(y)$.
In particular, $\what{\rho}_i = \frac{i(n-i)}{2}$, and we apply the conventions $\what{\mu}^w = \what{\mu^w}$, $\what{\mu}^w_i = (\what{\mu}^w)_i$.
We apply the same construction to $\delta \mapsto \hat{\delta}$.

\subsection{Representations of $K$}
\label{sect:KRepns}
Throughout the paper, $\sigma$ will always refer to an irreducible, unitary representation $\sigma:K \to GL(n,\C)$.
Note that $\sigma|_{O(m)}$, $2 \le m < n$ is isomorphic to a sum of irreducible, unitary representations of $O(m)$, and we assume equality in place of the isomorphism, while for the last step, we assume $\sigma|_{SO(2)}$ is diagonalized.
These assumptions are effectively choosing a representative of the isormorphism class of $\sigma$.

If $\what{K}$ is the set of all such $\sigma$, then we have the Peter-Weyl Theorem
\begin{align}
\label{eq:PeterWeyl}
	f(k) =& \sum_{\sigma \in \what{K}} (\dim \sigma) \Tr\paren{\sigma(k) \int_K f(k') \trans{\wbar{\sigma(k)}} dk},
\end{align}
for $f \in L^2(K)$.

\subsection{Whittaker functions}
The Jacquet-Whittaker function arises from applying a certain integral operator to elements of irreducible adminisible representations of $G$.
Since these can always be viewed as subrepresentations of principal series representations and we may decompose $L^2(K)$ using the Peter-Weyl theorem, it is sufficient to consider matrix-valued Whittaker functions of the form $W_\sigma(g,\mu, \delta) = W_\sigma(g,w_l,\mu,\delta,\psi_I)$ where
\begin{gather}
\label{eq:JWDef}
	W_\sigma(g,w,\mu,\delta,\psi) := \int_{\wbar{U}_w(\R)} I_{\mu,\delta,\sigma}(w x g) \wbar{\psi(x)} dx, \qquad \Re(\mu_i-\mu_j) > 0, i<j \\
\begin{aligned}
	I_{\mu,\delta,\sigma} =& \Sigma_{\delta,\sigma} I_{\mu,\sigma}, & I_{\mu,\sigma}(xyk) =& p_{\rho+\mu}(y) \sigma(k), & \Sigma_{\delta,\sigma} =& \frac{1}{\abs{V}} \sum_{v \in V} \chi_\delta(v) \sigma(v),
\end{aligned}\nonumber
\end{gather}
for an irreducible, unitary (finite-dimensional) representation $\sigma$ of $K$.
Hopefully the reader will forgive the nonstandard definition which gives the transformations
\[ W_\sigma(xgk,w,\mu,\delta,\psi) = \psi(x)W_\sigma(g,w,\mu,\delta,\psi)\sigma(k), \qquad x \in \wbar{U}_w(\R), k \in K. \]
Anyone looking for a left action by $\sigma$ can simply replace $\sigma$ with $\sigma^{-1}$ in the definition above.

We may generally reduce to $\psi=\psi_I$ through
\begin{align}
\label{eq:WhittArgToCharFE}
	W_\sigma(g,w,\mu,\delta,\psi_{yt}) =& p_{-\rho-\mu^w}(y) \chi_{\delta^w}(y) W_\sigma(y g,w,\mu,\delta,\psi_t)
\end{align}

The $(\mu,\delta) \mapsto (\mu^w,\delta^w)$ functional equation of the Whittaker function can be written
\begin{align}
\label{eq:WhitFE}
	W_\sigma(g,\mu,\delta) =& M_\sigma(w,\mu,\delta) W_\sigma(g,\mu^w,\delta^w),
\end{align}
where we assume $M_\sigma(w,\mu,\delta) = M_\sigma(w,\mu,\delta) \Sigma_{\delta^w,\sigma^w}$.
Note: More generally, one can drop $\Sigma_{\delta,\sigma}$ from the definition of the Whittaker function and write the functional equation as $W_\sigma(g,\mu) = M_\sigma(w,\mu) W_\sigma(g,\mu^w)$, then $M_\sigma(w,\mu,\delta) = M_\sigma(w,\mu) \Sigma_{\delta^w,\sigma^w}$.

To make later computations slightly clearer, we also define
\begin{align}
\label{eq:WstarDef}
	W^*_{m,\sigma}(g,\mu,\delta) =& p_{-\rho_m}(g) W_\sigma(g,w_{l,m},\mu,\delta,\psi_I);
\end{align}
in case $m \le n$, this is a block-diagonal matrix of $GL(m)$ Whittaker functions (renormalized slightly).

\subsubsection{The Mellin transform}
\label{sect:WhittakerMellin}
For $\abs{\Re(\mu_i)} < \frac{1}{n}$ and $y \in Y^+$, the Whittaker function has a Mellin expansion of the form
\begin{align}
\label{eq:WhittMellinDef}
	W^*_{n,\sigma}(y, \mu,\delta) =& \int_{\Re(s)=1} \what{W}_{n,\sigma}(s,\mu,\delta) \hat{p}_{-s}(y) \frac{ds}{(2\pi i)^{n-1}}.
\end{align}
Note: $\what{W}_{n,\sigma}(s,\mu,\delta)$ is matrix-valued here.

For $S \subset [n-1]:=\set{1,\ldots,n-1}$ set
\begin{align*}
	M_S =& \set{\mu\in M_n \setdiv \hat{\mu}_j=0, \forall j \in S}, \\
	M_S^0=&\set{\mu\in M_S\setdiv \Re(\mu)=0}.
\end{align*}

Let $S=\set{i_1,\ldots,i_q} \subset [n-1]$ in increasing order, i.e. $i_1 < \ldots < i_q$, and take $S^c :=[n-1]-S=\set{i'_1,\ldots,i'_{n-1-q}}$, $i'_1 < \ldots < i'_{n-1-q}$.
For such $S$ and $S^c$, we let $s_{S^c} = (s_{i'_1},\ldots,s_{i'_{n-1-q}})$, and define
\[ \what{W}_{n,\sigma}(s_{S^c},\mu,\delta) = \res_{s_{i_1}=\hat{\mu}_{i_1}}\cdots\res_{s_{i_q}=\hat{\mu}_{i_q}} \what{W}_{n,\sigma}(s,\mu,\delta), \qquad p_{S,\mu}(y) = \prod_{i \in S} y_i^{\hat{\mu}_i}, \qquad \hat{p}_{S^c,-s_{S^c}}(y) = \prod_{i\in S^c} y_i^{-s_i}. \]
Further define $W_S \subset W$ to be the set of all Weyl elements that fix the restricted power function, i.e. $w_c \in W$ such that $p_{S,\mu^{w_c}} = p_{S,\mu}$.
Then by contour shifting, we have
\begin{equation}
\label{eq:WhittContShift}
\begin{aligned}
	& W^*_{n,\sigma}(y, \mu,\delta) =\\
	&\qquad \sum_{S \subset [n-1]} \sum_{w_b \in W/W_S} p_{S,\mu^{w_b}}(y) \int_{\Re(s_{S^c})=-\epsilon} \what{W}_{n,\sigma}(s_{S^c},\mu^{w_b},\delta) \hat{p}_{S^c,-s_{S^c}}(y) \frac{ds_{S^c}}{(2\pi i)^{\abs{S^c}}},
\end{aligned}
\end{equation}
on $\abs{\Re(\mu_i)} < \frac{\epsilon}{n}$.

\subsection{The relevant Weyl elements}
We collect some notes and definitions for the relevant Weyl elements:
Let $w=w_{r_1,\ldots,r_\ell} \in W^\text{rel}$ where $r_1+\ldots+r_\ell=n$.

\[ w^{-1} = w_l w w_l = \Matrix{&&I_{r_\ell}\\&\revdots\\I_{r_1}}, \qquad w U_w w^{-1} = U_{w^{-1}} = \wbar{U}_{w w_l}. \]
\[ U_w = \Matrix{U_{r_\ell}\\&\ddots\\&&U_{r_1}} \qquad \wbar{U}_w = \Matrix{I_{r_\ell}&&*\\&\ddots\\&&I_{r_1}}, \]
where $U_m$ is the upper triangular unipotent group in $GL(m)$.

For relevant Weyl elements, we have $\psi_I(u) = \psi_I(w u w^{-1})$ for all $u \in U_w(\R)$ since the set of first super-diagonal elements is the same.
So we may express the compatibility condition for relevant Weyl elements as $\psi_y(u)=\psi_I(u)$ for all $u \in U_{w^{-1}}(\R)$.
Clearly this constrains some of the coordinates of $y$ to be 1, i.e. that $y$ is block-diagonal with scalar-matrix entries, and the condition is equivalent to $y u y^{-1} = u$ for all $u \in U_{w^{-1}}(\R)$, and this gives rise to the description of $Y_w$ in \eqref{eq:YwDef}.

\subsection{The $GL(2)$ functional equations, explicitly}
Set $\alpha=\diag(1,-1)$.
For $O(2)$, the irreducible, unitary representations are just $1,\det$ and
\[ \sigma_d(h_1(\theta)) = \Matrix{e^{i d\theta}\\&e^{-i d\theta}}, \qquad \sigma_d(\alpha)=\Matrix{&1\\1}, \]
for $d \ge 1$.

The $V_2 = V$ group is composed of
\[ I, \qquad \Matrix{-1\\&-1} = h_1(\pi), \qquad \Matrix{1\\&-1} = \alpha, \qquad \Matrix{-1\\&1} = h_1(\pi) \alpha, \]
and the $\Sigma_{\chi,\sigma}$ matrices can be computed as
\begin{gather}
	\Sigma_{\chi,1} = \piecewise{1 & \If \chi=1, \\ 0 & \Otherwise.}, \qquad \Sigma_{\chi,\det} = \piecewise{1 & \If \chi=\chi_{(1,1)}, \\ 0 & \Otherwise.} \\
\label{eq:Sigmasigmad}
	\Sigma_{\chi_\delta,\sigma_d} = \frac{1}{2} \Matrix{1&(-1)^{\delta_2}\\(-1)^{\delta_2}&1} \times \piecewise{1& \If d \equiv \delta_1+\delta_2 \pmod{2}, \\ 0 & \Otherwise.}
\end{gather}

We have an explicit evaluation of the Jacquet-type integral
\begin{equation}
\label{eq:classWhittDef}
\begin{aligned}
	\mathcal{W}(y,u,d) =& \int_{-\infty}^\infty \paren{1+x^2}^{-\frac{1+u}{2}} \paren{\frac{1-ix}{\sqrt{1+x^2}}}^d \e{-yx} dx \\
	=& \piecewise{\displaystyle \frac{(\pi y)^{\frac{1+u}{2}}}{y \, \Gamma\paren{\frac{1-d+u}{2}}} W_{-\frac{d}{2}, \frac{u}{2}}(4\pi y) & \If y > 0, \\[5pt]
\displaystyle \frac{2^{1-u}\pi \,\Gamma(u)}{\Gamma\paren{\frac{1+u+d}{2}} \Gamma\paren{\frac{1+u-d}{2}}} & \If y=0,}
\end{aligned}
\end{equation}
with $W_{\alpha,\beta}(y)$ the classical Whittaker function (see \cite[Section 2.3.1]{HWI}).
Then the Whittaker function for $GL(2,\R)/\R^+$ is
\begin{align*}
	W_\sigma(I, w_l, \mu, \delta,\psi_y) =& \int_{U(\R)} I_{\mu,\sigma}(w_l x) \wbar{\psi_y(x)} dx \\
	=& \Sigma_{\chi_\delta,\sigma} \sigma(\alpha h_1(\tfrac{3\pi}{2})) \int_{-\infty}^\infty \paren{1+x^2}^{-\frac{1+\mu_1-\mu_2}{2}} \sigma\paren{h_1\paren{\arg\frac{1-ix}{1+x^2}}} \e{-y x} dx,
\end{align*}
and we may write this as
\begin{equation}
\label{eq:GL2Whitt}
\begin{aligned}
	W_1(I, w_l, \mu, \delta,\psi_y) =& \Sigma_{\chi_\delta,1} \mathcal{W}(y_1,\mu_1-\mu_2,0), \\
	W_{\det}(I, w_l, \mu, \delta,\psi_y) =& -\Sigma_{\chi_\delta,\det} \mathcal{W}(y_1,\mu_1-\mu_2,0), \\
	W_{\sigma_d}(I, w_l, \mu, \delta,\psi_y) =& \Sigma_{\chi_\delta,\sigma_d} \sigma_d(\alpha h_1(\tfrac{3\pi}{2})) \Matrix{\mathcal{W}(y_1,\mu_1-\mu_2,d)\\&\mathcal{W}(y_1,\mu_1-\mu_2,-d)}.
\end{aligned}
\end{equation}

For $y > 0$, we have
\[ W_\sigma(y, \mu, \delta) = p_{\rho+\mu^{w_l}}(y) W_\sigma(I, w_l, \mu, \delta,\psi_y) = y^{\frac{1-\mu_1+\mu_2}{2}} W_\sigma(I, w_l, \mu, \delta,\psi_y), \]
and if $\mathcal{W}^*(y,u,d) = y^{\frac{1-u}{2}} \mathcal{W}(y,u,d)$, then we have the functional equation
\[ \mathcal{W}^*(y,u,d) = \Gamma_\mathcal{W}^*(d, u) \mathcal{W}^*(y,-u,d), \qquad \Gamma_\mathcal{W}^*(d, u) = (-1)^d \pi^u \frac{\Gamma\paren{\frac{1+d-u}{2}}}{\Gamma\paren{\frac{1+d+u}{2}}}. \]
Thus we have the functional equations
\begin{equation}
\label{eq:GL2WhittFE}
\begin{aligned}
	M_1(w_l,\mu,\delta) =& \Gamma_\mathcal{W}^*(0, \mu_1-\mu_2) \Sigma_{\chi_{\delta^{w_l}},1}, \\
	M_{\det}(w_l,\mu,\delta) =& \Gamma_\mathcal{W}^*(0, \mu_1-\mu_2) \Sigma_{\chi_{\delta^{w_l}},\det}, \\
	M_{\sigma_d}(y, \mu, \delta) =& (-1)^{\delta_1+\delta_2} \Matrix{\Gamma_\mathcal{W}^*(d, \mu_1-\mu_2)\\&\Gamma_\mathcal{W}^*(-d, \mu_1-\mu_2)} \Sigma_{\chi_{\delta^{w_l}},\sigma_d}.
\end{aligned}
\end{equation}

\section{The Little Proofs}
\subsection{\texorpdfstring{\cref{prop:IoIConsequence}}{Proposition \ref{prop:IoIConsequence}}} 
\label{sect:IoIConsequencePf}
This is Shalika's local multiplicity one theorem for Archimedean Whittaker functions.
Consider the operators $h_1:\sigma \mapsto \wtilde{K}_w(y,t,\Lambda,\mu,\delta,\sigma)$ and $h_2:\sigma \mapsto W_\sigma(t,\mu,\delta)$ with images in the Whittaker model; these extend to operators on an irreducible, admissible representation of $G$ via the Peter-Weyl theorem \eqref{eq:PeterWeyl} and $h_2$ is known to be continuous in the topology of seminorms defined by the Lie algebra.
But $h_1$ is actually defined in terms of $h_2$ by the Interchange of Integrals conjecture, so $h_1$ is also continuous in that topology, hence $h_1$ is a constant multiple of $h_2$ by Shalika's theorem.

\subsection{\texorpdfstring{\cref{prop:IoIConsequences2}}{Proposition \ref{prop:IoIConsequences2}}}
The function $I_{\mu,\delta,\sigma}$ (recall \eqref{eq:JWDef}) satisfies
\[ I_{\mu,\delta,\sigma}((xyk)^\iota) = p_{-\rho^{w_l}-\mu^{w_l}}(y) \Sigma_{\delta,\sigma} \sigma(w_l k w_l) = \sigma(w_l) I_{-\mu^{w_l},\delta^{w_l},\sigma}(xyk) \sigma(w_l) \]
(using $-\rho^{w_l} = \rho$), and $\psi_I(x^\iota) = \wbar{\psi_I(x)}$, so the Whittaker function satisfies
\[ W_\sigma(g,w,\mu,\delta,\psi_I) = \sigma(w_l) W_\sigma(g^\iota, w^\iota, -\mu^{w_l}, \delta^{w_l}, \wbar{\psi_I}) \sigma(w_l). \]
For $v$ as in the statement of the proposition, $\wbar{\psi(x)} = \psi(v x v)$, so the Whittaker function also satisfies
\[ W_\sigma(g,w,\mu,\delta,\wbar{\psi}) = \chi_{\delta^w}(v) W_\sigma(vg,w,\mu,\delta,\psi). \]
Applying this to \eqref{eq:KwDef}, we get the first equality; the second follows from the functional equation of the Whittaker function and the holomorphy (and any bound on the growth) of $M_\sigma(w,\mu,\delta)$ near $\Re(\mu)=0$.
The last statement follows from the fact $\wbar{U}_I = \set{I}$.


\subsection{\texorpdfstring{\cref{prop:MBConsequences}}{Proposition \ref{prop:MBConsequences}}}
For $K^v_{w_l}(y,\mu)$, we can isolate such a linear combination of $J_{w_l}$ by taking the appropriate linear combination
\[ K^v_{w_l}(y,\mu) = \frac{1}{2^n} \sum_\delta \chi_\delta(v) \frac{\sinmu(\mu,\delta)}{(-\pi)^{\frac{n(n-1)}{2}} i^{-(n-1)(\delta_1+\ldots+\delta_n)}} K_{w_l}(vy,\mu,\delta). \]
For $K^\dagger_w(y,\mu,\eta,\delta)$, have
\[ K^\dagger_w(y,\mu,\eta,\delta) = \frac{1}{2^n} \sum_{v \in V} K_w(vy,\mu,\delta) \prod_{i=1}^{\ell-1} v_{\hat{r}_i}^{\eta_i}. \]

\subsection{\texorpdfstring{\cref{thm:ArithWeyl}}{Theorem \ref{thm:ArithWeyl}}}
\label{sect:ArithWeyl}
We take a very crude bound on the Kloosterman sum side of the spectral Kuznetsov formula:
For $T \ge 3$, define
\[ f_1(\Lambda+\wtilde{\mu}) := Q(\wtilde{\mu}) T^{(\wtilde{\mu}_1^2+\ldots+\wtilde{\mu}_n^2)/n^5}, \]
where
\[ Q(\wtilde{\mu}) = \prod_{i<j} \prod_{m=1}^{n^2} \frac{\paren{\wtilde{\mu}_i-\wtilde{\mu}_j}^2-m^2}{\paren{\wtilde{\mu}_i-\wtilde{\mu}_j}^2+n^2} \]
In the Mellin expansion of $K_w(y,\mu,\delta)$, $\mu \in i\mathfrak{a}^*_0(\Lambda)$, consider shifting $\Re(s_j) \mapsto -(\tau_w+\epsilon)\hat{\rho}_j$ as in \cite[Section 8.1]{WeylI} (see also \cref{sect:WhittakerMellin}).
We will encounter some poles at $s_j+\what{\mu}^{w'}_j \in [-\frac{j(n-j)}{2},0]\cap\Z$, $w'\in W$, which we may assume are simple.
For some $S =\set{j_1,\ldots,j_q} \subset [n-1]$, $Z=\set{z_{j_1},\ldots,z_{j_q}}$ with $z_{j_i} \in [-\frac{j_i(n-j_i)}{2},0]\cap\Z$, consider the iterated residues
\begin{align*}
	& E(w,\alpha,\mu,\delta,S,Z) = \\
	& \sup_{s_j \in i\R, j \notin S} \paren{\prod_{i \notin S} (1+\abs{s_i})^{1+\epsilon}} \abs{f_1(\mu) \specmu(\mu)\res_{\alpha_{j_1}+s_{j_1}+\hat{\mu}_{j_1}= z_{j_1}} \cdots \res_{\alpha_{j_q}+s_{j_q}+\hat{\mu}_{j_q}=z_{j_q}} \what{K}_w(\alpha+s, \mu,\delta)}.
\end{align*}
Our final bound on the inverse Bessel transform will use the function
\[ E(\mu,\delta) = \max_{w,w' \in W} \max_{S,Z} E(w,(-\tau_w-\epsilon,\ldots,-\tau_w-\epsilon),\mu^{w'}+\beta_{S,Z},\delta,S,Z), \]
where $\beta_{S,Z}=\sum_{j \in S} ((\tau_w+\epsilon)\hat{\rho}_j+z_j)(e_{n-j}-e_{n+1-j})$ with $e_j$ the standard basis in $\C^n$.
(We are not claiming this is the optimal choice for $\alpha$ and $\beta_{S,Z}$, just that the sum of Kloosterman sums would converge there.)

We apply the spectral Kuznetsov formula \eqref{eq:SpecKuz} with $a=b=I$ and a test function of the form
\[ \left.\int_\Omega f_1(\mu-\mu') d\mu'\middle/\int_{i\mathfrak{a}^*_0(\Lambda)} f_1(\mu') d\mu'\right.. \]
Such a test function is very close to the characteristic function of $\Omega$ except near the boundary, so we also apply the spectral Kuznetsov formula with a test function
\[ \left.\int_{\partial\Omega+B(0,(\log T)^{-1/2+\epsilon})} f_2(\mu-\mu') d\mu'\middle/\int_{i\mathfrak{a}^*_0(\Lambda)} f_2(\mu') d\mu'\right., \quad f_2(\Lambda+\wtilde{\mu}) := Q(\wtilde{\mu}) T^{(\wtilde{\mu}_1^2+\ldots+\wtilde{\mu}_n^2)/n^5/(\log T)^\epsilon}. \]

\section{Matrix Decompositions}
\label{sect:MatrixDecomps}

The goal of this section is essentially to compute the Iwasawa decomposition of $w x$ for $w \in W^\text{rel}$ and $x \in \wbar{U}_w(\R)$.
Throughout this section, the letters $a,b,c,d$ as subscripts are used to distinguish matrices of a similar type, not as indices.

We proceed in stages and the general procedure for each stage is essentially the same:
(carefully) factor $w x=x_a x_b$ with $x_a,x_b \in \trans{U(\R)}$, compute the Iwasawa decomposition of $x_b=x^*_b y^*_b k^*_b$, (somehow) interchange $x_a x^*_b = x^\dagger_b x^\dagger_a$ with $x^\dagger_b \in U(\R)$ and $x^\dagger_a \in \trans{U(\R)}$, compute the Iwasawa decomposition of $y^{*-1}_b x^\dagger_a y^*_b = x^*_a y^*_a k^*_a$, then the final Iwasawa decomposition becomes
\[ wx = \paren{x^\dagger_b y^*_b x^*a y^{*-1}_b} \paren{y^*_b y^*_a} \paren{k^*_a k^*_b}. \]
To simplify the process as much as possible, we perform coordinate substitutions at each stage.

We then apply these matrix decompositions to the integral definitions of the Whittaker and Bessel functions to derive inductive constructions of a sort.
An implementation of the decompositions in Mathematica is given in \cref{sect:AppMD}.

\subsection{The Preliminary Decomposition}
For
\[ w=w_{r_1,\ldots,r_\ell} = \Matrix{&&I_{r_1}\\&\revdots\\I_{r_\ell}}, \]
and $1 \le m < \ell$, define $\ell'=\ell-m$, $n'=\hat{r}_{\ell'}$,
\begin{align*}
	w' =& w_{r_1,\ldots,r_{\ell'}} = \Matrix{&&I_{r_1}\\&\revdots\\I_{r_{\ell'}}}, & w'' =& w_{r_{\ell'+1},\ldots,r_\ell} = \Matrix{&&I_{r_{\ell'+1}}\\&\revdots\\I_{r_\ell}}, \\
	w_a =& \Matrix{w'\\&w''}, & w_b =& w_{n',n-n'} =\Matrix{&I_{n'}\\I_{n-n'}}.
\end{align*}
Notice that $w=w_a w_b$ and
\[ w_b \paren{\wbar{U}_w \cap U_{w_b}} w_b^{-1} = \paren{w_a^{-1} \trans{U} w_a \cap w_b U w_b^{-1}} \cap \paren{U \cap w_b U w_b^{-1}} = \wbar{U}_{w_a} \cap w_b U w_b^{-1} = \wbar{U}_{w_a}. \]

Take $x \in \wbar{U}_w(\R)$ and write $x = \paren{w_b^{-1} x_a w_b} x_b$ with $x_a \in \wbar{U}_{w_a}(\R)$ and
\[ x_b \in \wbar{U}_w(\R) \cap \wbar{U}_{w_b}(\R) = \wbar{U}_{w_b}(\R) = \Matrix{I_{n-n'}&\R^{(n-n')\times n'}\\&I_{n'}}. \]
Then
\[ wx = w_a x_a \paren{w_b x_b w_b^{-1}} w_b, \]
and we write $w_b x_b w_b^{-1} = x_b^* y_b^* k_b^*$ with $x_b^* \in U(\R)$, $y_b^* \in Y$, $k_b^* \in K$.

Lastly, we write $x_a x_b^* = u_b u_a$ with $u_a \in \wbar{U}_{w_a}$ and $u_b \in U_{w_a}$, so that
\[ wx = \paren{w_a u_b w_a^{-1}} \paren{w_a y_b^* w_a^{-1}} w_a \paren{{y_b^*}^{-1} u_a y_b^*} \paren{k_b^* w_b}. \]

Let's examine that last step in more detail using block matrices.
If we write
\[ x_a = \Matrix{A\\&B}, \qquad x_b^* = \Matrix{C & D\\0&E}, \]
with $A,C \in U_{n'}(\R)$, $B,E \in U_{n-n'}(\R)$ and $D \in \R^{n' \times (n-n')}$, then
\begin{align}
\label{eq:BadHatter1}
	x_a x_b^* = \Matrix{I_{n'} & A D E^{-1} B^{-1}\\&I_{n-n'}} \Matrix{A C\\&B E}.
\end{align}
Now let $F$ be the entries of $C$ in $U_{w'}(\R)$ so that $F^{-1} C \in \wbar{U}_{w'}(\R)$ and similarly for $H \in U_{w''}(\R)$ so that $H^{-1} E \in \wbar{U}_{w''}(\R)$, then
\[ u_b = \Matrix{I_{n'} & A D E^{-1} B^{-1}\\&I_{n-n'}}\Matrix{F\\&H}, \qquad u_a = \Matrix{F^{-1} A C\\&H^{-1} B E} \]
(in terms of block matrices, $F$ and $H$ are diagonal while $A$ and $B$ are upper unipotent, so conjugating by $F$ and $H$ leaves $A$ and $B$ upper unipotent), and
\begin{align}
\label{eq:BadHatter2}
	w_a u_b w_a^{-1} =& \Matrix{I_{n'} & w' A D E^{-1} B^{-1} {w''}^{-1}\\&I_{n-n'}}\Matrix{w'F{w'}^{-1}\\&w''H{w''}^{-1}}.
\end{align}

Consider an integral of the form
\[ \mathcal{I}_w(f_1,f_2) := \int_{\wbar{U}_w(\R)} f_1(wx) f_2(x) dx, \]
where $f_1(xgk) = \psi_y(x) f_1(g) \sigma(k)$.
If $\ell=1$, we have $w=I_{r_1}$ and $\wbar{U}_w(\R)=\set{I_{r_1}}$, so
\[ \mathcal{I}_w(f_1,f_2) = f_1(I_{r_1}) f_2(I_{r_1}).  \]
Otherwise, substituting $u_a \mapsto x_a$ (i.e. $F^{-1} A C \mapsto A$, $H^{-1} B E \mapsto B$) has Jacobian determinant 1 and gives
\begin{align*}
	& \mathcal{I}_w(f_1,f_2) = \\
	& \int_{\wbar{U}_{w_b}(\R)} \int_{\wbar{U}_{w_a}(\R)} f_1\paren{\paren{w_a y_b^* w_a^{-1}} w_a \paren{(y_b^*)^{-1} x_a y_b^*}} f_2\paren{\Matrix{H\\&F} \paren{w_b^{-1} x_a w_b} \Matrix{E^{-1}\\&C^{-1}} x_b} \\
	& \qquad \psi_y\Matrix{I_{n'} & w' F A C^{-1} D B^{-1} H^{-1} {w''}^{-1}\\&I_{n-n'}} dx_a \, \psi_y\Matrix{w'F{w'}^{-1}\\&w''H{w''}^{-1}} \sigma(k_b^* w_b) dx_b.
\end{align*}
The conjugation $(y_b^*)^{-1} x_a y_b^* \mapsto x_a$ has Jacobian determinant $p_{\rho-\rho^{w_a}}(y_b^*)$ (see \cref{sect:Measures}), and we get
\begin{equation}
\label{eq:MainIoIDecomp}
\begin{aligned}
	& \mathcal{I}_w(f_1,f_2) = \\
	& \int_{\wbar{U}_{w_b}(\R)} \int_{\wbar{U}_{w_a}(\R)} f_1\paren{\paren{w_a y_b^* w_a^{-1}} w_a x_a} f_2\paren{\Matrix{H\\&F} \paren{w_b^{-1} y_b^* x_a (y_b^*)^{-1} w_b} \Matrix{E^{-1}\\&C^{-1}} x_b} \\
	& \qquad \psi_y\Matrix{I_{n'} & w' F A' C^{-1} D {B'}^{-1} H^{-1} {w''}^{-1}\\&I_{n-n'}} dx_a \, \psi_y\Matrix{w'F{w'}^{-1}\\&w''H{w''}^{-1}} \\
	& \qquad \times p_{\rho-\rho^{w_a}}(y_b^*) \sigma(k_b^* w_b) dx_b,
\end{aligned}
\end{equation}
where
\[ \Matrix{A'\\&B'} = y_b^* x_a (y_b^*)^{-1}. \]

\subsection{The Decomposition of $w_b x_b w_b^{-1}$}
For $i \ne j$, consider the matrix $X_{i,j} = X_{i,j}(x_{i,j}) = \exp\paren{x_{i,j} E_{i,j}} \in U(\R)$ which has all off-diagonal coordinates zero except for $x_{ij}$ at position $i,j$.
It is not too hard to see that for $j_1\ne i_1,j_2\ne i_2$ (or $i_1=i_2$ and $j_1=j_2$), $X_{i_1,j_1}$ and $X_{i_2,j_2}$ commute unless $i_1=j_2$ or $j_1=i_2$; in those cases, we have
\begin{align*}
	X_{i_1,j_1}(x_1) X_{i_2,i_1}(x_2) =& X_{i_2,i_1}(x_2) X_{i_2,j_1}(-x_1 x_2) X_{i_1,j_1}(x_1) \qquad i_2 \ne i_1,j_1, \\
	X_{i_1,j_1}(x_1) X_{j_1,j_2}(x_2) =& X_{j_1,j_2}(x_2) X_{i_1,j_2}(x_1 x_2) X_{i_1,j_1}(x_1) \qquad j_2 \ne i_1,j_1
\end{align*}
(the remaining case $i_1=j_2$ and $j_1=i_2$ being a bit more complicated).
The nonzero superdiagonal coordinates $x_{b,i,j}$ of $x_b$ have $i \le n-n'$ and $j > n-n'$, hence $\wbar{U}_{w_a}(\R)$ is isomorphic to $(\R^{(n-n') \times n'}, +)$ as a multiplicative group.
That is, we can freely factor elements of $\wbar{U}_{w_a}(\R)$ into products of $X_{i,j}$ matrices in any order.

We recursively compute $x^*_b$, $y^*_b$, $k^*_b$ and $x^{*-1}_b$ as follows:
First, we compute the Iwasawa decomposition of a single row.
For $x \in \R$, define
\begin{gather}
	\hat{y}_{n,i}(x) = \diag\paren{I_{i-1},\tfrac{1}{\sqrt{1+x^2}},I_{n-i-1},\sqrt{1+x^2}}, \\
	\hat{k}_{n,i}(x) = w_l h_{n+1-i}\paren{\arg\frac{1-i x}{\sqrt{1+x^2}}} w_l = \Matrix{I_{i-1}\\&\frac{1}{\sqrt{1+x^2}}&0&-\frac{x}{\sqrt{1+x^2}}\\&0&I_{n-i-1}&0\\&\frac{x}{\sqrt{1+x^2}}&0&\frac{1}{\sqrt{1+x^2}}}, \\
	\eta_{n,i,j} = \piecewise{-2 & \If i=j=n-1, \\ -1 & \If i=j\ne n-1 \text{ or } i=n-1\ne j, \\ 1 & \If i=j-1 \text{ or } i=n, \\ 0 & \Otherwise,}
\end{gather}
so that the $i$th $Y$-coordinate of $\hat{y}_{n,j}(x)$ is
\begin{align}
\label{eq:hatyYCoords}
	(\hat{y}_{n,j}(x))_i = \paren{1+x^2}^{\eta_{n,i,j}/2}.
\end{align}

\begin{lem}
\label{lem:RowIwa}
For a row vector $u = (u_1, \ldots, u_{n'}) \in \R^{n'}$,
\[ \Matrix{I_{n'} \\0&I_{n-n'-1}\\u&0&1} = x^\dagger y^\dagger k^\dagger, \qquad x^\dagger = \Matrix{u^*_c & 0 & u^*_d\\&I_{n-n'-1}&0\\&&1}, \qquad x^{\dagger-1} = \Matrix{\wtilde{u}^*_c & 0 & \wtilde{u}^*_d\\&I_{n-n'-1}&0\\&&1}, \]
where $u^*_c \in U_{n'}(\R)$, $u^*_d =\trans{(u^*_{d,1}, \ldots, u^*_{d,n'})} \in \R^{n'}$ as a column vector, $y^\dagger \in Y^+$ and $k^\dagger \in K$.
After performing the substitutions
\begin{align*}
	u_2 \mapsto& u_2 \sqrt{1+u_1^2}, &\ldots,&& u_{n'} \mapsto& u_{n'} \prod_{i=1}^{n'-1}\sqrt{1+u_i^2}
\end{align*}
(in that order) with Jacobian
\[ \prod_{i=1}^{n'-1} \paren{1+u_i^2}^{(n'-i)/2}, \]
the decomposition is given by:
\begin{align*}
	u^*_{c,i,j} =& -\frac{u_i u_j}{\prod_{l=i}^{j-1} \sqrt{1+u_l^2}}, & \wtilde{u}^*_{c,i,j} =& \frac{u_i u_j \prod_{l=i+1}^{j-1} \sqrt{1+u_l^2}}{\sqrt{1+u_i^2}}, \qquad j > i, \\
	u^*_{d,i} =& \frac{u_i}{\prod_{j=1}^{n'} \sqrt{1+u_j^2} \prod_{j=i}^{n'} \sqrt{1+u_j^2}}, & \wtilde{u}^*_{d,i} =& -\frac{u_i}{(1+u_i^2) \prod_{j=1}^{i-1} \sqrt{1+u_j^2}},\\
	y^\dagger =& \hat{y}_{n,1}(u_1) \cdots \hat{y}_{n,n'}\paren{u_{n'}}, & k^\dagger =& \hat{k}_{n,n'}\paren{u_{n'}} \cdots \hat{k}_{n,1}(u_1).
\end{align*}
\end{lem}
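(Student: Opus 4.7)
The Iwasawa decomposition of $M = \Matrix{I_{n'}\\ 0 & I_{n-n'-1} \\ u & 0 & 1}$ can be computed by right-multiplying $M$ by a sequence of Givens rotations that zero out the entries $u_1,\ldots,u_{n'}$ in the bottom row one at a time. Since each $\hat{k}_{n,i}(x)$ is an orthogonal rotation in the coordinate plane spanned by $e_i$ and $e_n$ (so $\hat{k}_{n,i}(x)^{-1} = \hat{k}_{n,i}(-x)$), I would compute $M \cdot \hat{k}_{n,1}(-u_1)\hat{k}_{n,2}(-u_2)\cdots \hat{k}_{n,n'}(-u_{n'})$ stage by stage.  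The substitutions $u_j \mapsto u_j\prod_{i<j}\sqrt{1+u_i^2}$ are designed precisely so that the rotation angles in $k^\dagger$ become simply $\pm u_j$ (in the new variables) rather than unwieldy ratios.  Since $u_j^{\mathrm{old}}$ depends only on $u_1,\ldots,u_j$, the substitution is a lower-triangular change of variables and its Jacobian determinant is the product of the diagonal entries $\partial u_j^{\mathrm{old}}/\partial u_j = \prod_{i<j}\sqrt{1+u_i^2}$, which rearranges to $\prod_{i=1}^{n'-1}(1+u_i^2)^{(n'-i)/2}$.

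The key identity underlying the substitution is
\[ 1+\sum_{i\le j}(u_i^{\mathrm{old}})^2 \;=\; \prod_{i\le j}(1+u_i^2), \]
which follows by induction on $j$ from the recursive definition of the substitution.  Using this, I would prove by induction on $j$ that after multiplying $M$ by $\hat{k}_{n,1}(-u_1)\cdots\hat{k}_{n,j}(-u_j)$ the resulting matrix has bottom row
\[ (0,\ldots,0,\, u_{j+1}^{\mathrm{old}},\ldots,u_{n'}^{\mathrm{old}},\,0,\ldots,0,\,\textstyle\prod_{i\le j}\sqrt{1+u_i^2}), \]
so that the next rotation $\hat{k}_{n,j+1}(-u_{j+1})$ exactly kills the $(n,j+1)$-entry (the required rotation angle being $u_{j+1}^{\mathrm{old}}/\sqrt{1+\sum_{i\le j}(u_i^{\mathrm{old}})^2}$, which the identity above simplifies to $u_{j+1}$) and updates the $(n,n)$-entry.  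After all $n'$ rotations the matrix is upper triangular with positive diagonal, and its diagonal entries match $y^\dagger = \hat{y}_{n,1}(u_1)\cdots\hat{y}_{n,n'}(u_{n'})$ by the definition \eqref{eq:hatyYCoords}.

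For the strictly upper-triangular part $x^\dagger$, I would trace the off-diagonal entries through each rotation: $\hat{k}_{n,j}(-u_j)$ mixes columns $j$ and $n$ of the current matrix, scaling the existing $(i,n)$-entries by $1/\sqrt{1+u_j^2}$ (producing the eventual $u^*_{d,i}$) while creating $(i,j)$-entries equal to $-u_j/\sqrt{1+u_j^2}$ times those same $(i,n)$-entries (producing $u^*_{c,i,j}$).  Unwinding this recursion and collecting the $(1/\sqrt{1+u_j^2})$ factors gives the stated closed forms.  Finally, for $x^{\dagger-1}$ I would invoke the block-inversion formula
\[ x^{\dagger-1} = \Matrix{(u^*_c)^{-1} & 0 & -(u^*_c)^{-1}u^*_d \\ & I_{n-n'-1} & 0 \\ && 1}, \]
and verify the claimed expressions for $\wtilde{u}^*_c$ and $\wtilde{u}^*_d$ by directly checking $u^*_c\,\wtilde{u}^*_c = I_{n'}$ and $u^*_c\,\wtilde{u}^*_d + u^*_d = 0$ column by column.

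The main obstacle is not conceptual but combinatorial bookkeeping: keeping track of the many $\sqrt{1+u_i^2}$ factors that appear in the rotations, the diagonal, the super-diagonal, and the inverse, and verifying that the telescoping products collapse to the stated closed forms.  The substitution step is what makes this bookkeeping tractable, since without it the formulas would involve ratios of partial sums of squares rather than clean products.
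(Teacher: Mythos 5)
Your proposal is correct and follows essentially the same route as the paper: both build the Iwasawa decomposition by applying the rotations $\hat{k}_{n,i}$ in sequence and rely on the same substitution to normalize the rotation angle at step $i$ to $u_i$ (the paper organizes this by pushing the factors of the one-element identity $X_{n,i}(u_i) = X_{i,n}\paren{u_i/(1+u_i^2)}\hat{y}_{n,i}(u_i)\hat{k}_{n,i}(u_i)$ across the product $M=\prod_i X_{n,i}(u_i)$, while you right-multiply $M$ by the inverse rotations to triangularize it). The treatment of $x^{\dagger-1}$ differs only superficially — the paper reads it off from the reversed product of the same factors, whereas you propose block inversion and a direct check of $u^*_c\,\wtilde{u}^*_c=I$ and $u^*_c\,\wtilde{u}^*_d=-u^*_d$, which does collapse thanks to telescoping of $u_k^2/\prod_{l\le k}(1+u_l^2)$.
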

\begin{proof}
The proof is effectively identical to the previous decompositions, starting with the readily verifiable identity
\[ X_{n,i}(u_i) = X_{i,n}\paren{\frac{u_i}{1+u_i^2}} \hat{y}_{n,i}(u_i) \hat{k}_{n,i}(u_i). \]
We inductively separate the $u_i$ coordinate, starting at $i=1$, apply the identity and then conjugate the remaining coordinates
\begin{align*}
	&\paren{\prod_{j=i+1}^{n-n'} X_{n,j}(u_j)} X_{i,n}\paren{\frac{u_i}{1+u_i^2}} \hat{y}_{n,i}(u_i) = \\
	& X_{i,n}\paren{\frac{u_i}{1+u_i^2}} \paren{\prod_{j=i+1}^{n-n'} X_{i,j}\paren{-\frac{u_i u_j}{1+u_i^2}}} \hat{y}_{n,i}(u_i) \paren{\prod_{j=i+1}^{n-n'} \paren{\hat{y}_{n,i}(u_i)^{-1} X_{n,j}(u_j) \hat{y}_{n,i}(u_i)}}.
\end{align*}
Substituting $u_j \mapsto u_j\sqrt{1+u_i^2}$ sends $\hat{y}_{n,i}(u_i)^{-1} X_{n,j}(u_j) \hat{y}_{n,i}(u_i) \mapsto X_{n,j}(u_j)$, and it is not too hard to see (though it does require an induction) that $x^\dagger y^\dagger$, which is the product of the terms
\[ X_{i,n}\paren{\frac{u_i}{1+u_i^2}} \paren{\prod_{j=i+1}^{n-n'} X_{i,j}\paren{-\frac{u_i u_j}{\sqrt{1+u_i^2}}}} \hat{y}_{n,i}(u_i) \]
for $i=1,\ldots,n-n'$ in left-to-right order, has the given entries.
Similarly, $y^{\dagger-1} x^{\dagger-1}$ is the product of terms
\[ \hat{y}_{n,i}(u_i)^{-1} \paren{\prod_{j=i+1}^{n-n'} X_{i,j}\paren{\frac{u_i u_j}{\sqrt{1+u_i^2}}}} X_{i,n}\paren{-\frac{u_i}{1+u_i^2}} \]
for $i=1,\dots,n-n'$ in right-to-left order.
\end{proof}

Now we use the lemma to compute the Iwasawa decomposition of $w_b x_b w_b^{-1}$ in \eqref{eq:MainIoIDecomp}.
If we peel off the bottom row of $x_b$, we have
\[ x_b = \Matrix{I_{n-n'-1} & 0&x_c\\&1&u\\&& I_{n'}} = \Matrix{I_{n-n'-1} & 0&x_c\\&1&0\\&& I_{n'}} \Matrix{I_{n-n'-1} & 0&0\\&1&u\\&& I_{n'}}, \]
with $x_c \in \R^{(n-n'-1) \times n'}$ and $u \in \R^{n'}$ (as a row vector).
Conjugating by $w_b$ gives
\[ w_b x_b w_b^{-1} = \Matrix{I_{n'} \\x_c&I_{n-n'-1}\\0&0&1} \Matrix{I_{n'} \\0&I_{n-n'-1}\\u&0&1}. \]

Applying \cref{lem:RowIwa} gives
\[ w_b x_b w_b^{-1} = \Matrix{u^*_c& 0 & u^*_d\\x_c u^*_c&I_{n-n'-1}&x_c u^*_d\\0&0&1} y^\dagger k^\dagger \]
and some conjugating results in
\[ w_b x_b w_b^{-1} = \Matrix{u^*_c & 0 & u^*_d\\&I_{n-n'-1}&x_c u^*_d\\&&1} y^\dagger \, y^{\dagger-1} \Matrix{I_{n'} \\x_c u^*_c&I_{n-n'-1}\\0&0&1} y^\dagger k^\dagger. \]
Therefore, if
\[ y^{\dagger-1} \Matrix{I_{n'} \\x_c u^*_c&I_{n-n'-1}\\0&0&1} y^\dagger = x_c^* y_c^* k_c^*, \]
then we have
\begin{align*}
	x_b^* =& \Matrix{u^*_c & 0 & u^*_d\\&I_{n-n'-1}&x_c u^*_d\\&&1} y^\dagger x_c^* y^{\dagger-1}, &
	y_b^* =& y^\dagger y_c^*, &
	k_b^* =& k_c^* k^\dagger.
\end{align*}

We briefly use the entries $y^\dagger_{i,i}$ of the diagonal matrix $y^\dagger$ rather than the $Y$-coordinates.
In the integral \eqref{eq:MainIoIDecomp}, if one applies the substitutions of the lemma and then substitutes
\begin{align*}
	y^{\dagger-1} \Matrix{I_{n'} \\x_c u^*_c&I_{n-n'-1}\\0&0&1} y^\dagger \mapsto \Matrix{I_{n'} \\x_c&I_{n-n'-1}\\0&0&1},
\end{align*}
the combined Jacobian is
\[ \prod_{i=1}^{n'-1} \paren{1+u_i^2}^{(n'-i)/2}\paren{\prod_{i=1}^{n-n'-1} y^\dagger_{n'+i,n'+i}}^{n'} \paren{\prod_{j=1}^{n'} y^\dagger_{j,j}}^{-(n-n'-1)} = \prod_{i=1}^{n'} \paren{1+u_i^2}^{(n-1-i)/2}, \]
and using $u^{*-1}_c u^*_d = \wtilde{u}^*_c u^*_d = -\wtilde{u}^*_d$, the final result becomes
\begin{align}
\label{eq:xbConjDecomp}
	x_b^* =& \Matrix{u^*_c & 0 & u^*_d\\&I_{n-n'-1}& x_c^\dagger\\&&1} y^\dagger x_c^* y^{\dagger-1}, &
	y_b^* =& y^\dagger y_c^*, &
	k_b^* =& k_c^* k^\dagger,
\end{align}
where the column vector $x_c^\dagger \in \R^{n-n'-1}$ has entries
\[ x_{c,i}^\dagger = -\sum_{j=1}^{n'} x_{c,i,j} \frac{y^\dagger_{n'+i,n'+i}}{y^\dagger_{j,j}} \wtilde{u}^*_{d,j} = \sum_{j=1}^{n'} \frac{u_j x_{c,i,j}}{\prod_{l=1}^j \sqrt{1+u_l^2}}. \]

\subsection{The Whittaker function}
\label{sect:MDWhitt}
In this section, we apply \eqref{eq:MainIoIDecomp} to the Jacquet-Whittaker function.
Suppose $f_1=\Sigma_{\delta,\sigma} I_{\mu,\sigma}$ and $f_2=\wbar{\psi_t}$ so that $\psi_y=1$, then
\begin{align*}
	W_\sigma(I,w,\mu,\delta,\psi_t) =& \int_{\wbar{U}_{w_b}(\R)} \int_{\wbar{U}_{w_a}(\R)} I_{\mu,\delta,\sigma}\paren{w_a x_a} \wbar{\psi_{w_b t w_b^{-1} y_b^*}(x_a)} dx_a \, \psi_t\Matrix{H^{-1} E\\&F^{-1} C} \\
	& \qquad \times \wbar{\psi_t(x_b)} p_{\rho+\mu^{w_a}}(y_b^*) \sigma(k_b^* w_b) dx_b,
\end{align*}
since $\psi_I(w_b^{-1} x_a w_b) = \psi_I(x_a)$.

It is not too hard to see that
\begin{align}
\label{eq:tcharArg}
	\psi_t\Matrix{H^{-1} E\\&F^{-1} C} = \e{\sum_{i=\ell'+1}^{\ell-1} t_{n-\hat{r}_i} x^*_{b,n'+n-\hat{r}_i,n'+n-\hat{r}_i+1}+\sum_{i=1}^{\ell'-1} t_{n-\hat{r}_i} x^*_{b,n'-\hat{r}_i,n'-\hat{r}_i+1}},
\end{align}
and
\[ \wbar{\psi_t(x_b)} = \e{-t_{n-n'} x_{b,n-n',n-n'+1}}. \]

The unipotent group $\wbar{U}_{w_a}(\R)$ is composed of two commuting blocks, so we may write
\begin{align*}
	W_\sigma(I,w,\mu,\delta,\psi_t) =& \int_{\wbar{U}_{w_b}(\R)} W_\sigma(I,w',\mu,\delta,\psi_{w_b t w_b^{-1} y_b^*}) \sigma(w_b) W_\sigma(I,w'',\mu^{w_b},\delta^{w_b},\psi_{t w_b^{-1} y_b^* w_b}) \sigma(w_b^{-1}) \\
	& \qquad \times \psi_t\Matrix{H^{-1} E\\&F^{-1} C} \wbar{\psi_t(x_b)} p_{\rho+\mu^{w_a}}(y_b^*) \sigma(k_b^* w_b) dx_b \\
	=& \int_{\wbar{U}_{w_b}(\R)} \sigma(w_b) W_\sigma(I,w'',\mu^{w_b},\delta^{w_b},\psi_{t w_b^{-1} y_b^* w_b}) \sigma(w_b^{-1}) W_\sigma(I,w',\mu,\delta,\psi_{w_b t w_b^{-1} y_b^*}) \\
	& \qquad \times \psi_t\Matrix{H^{-1} E\\&F^{-1} C} \wbar{\psi_t(x_b)} p_{\rho+\mu^{w_a}}(y_b^*) \sigma(k_b^* w_b) dx_b,
\end{align*}
since $\sgn(y_b^*) = I$ and
\[ \Sigma_{\delta,\sigma} I_{\mu,\sigma}(w_b yk w_b^{-1}) = \Sigma_{\delta,\sigma} \sigma(w_b) I_{\mu^{w_b},\sigma}(yk) \sigma(w_b^{-1}) = \sigma(w_b) \Sigma_{\delta^{w_b},\sigma} I_{\mu^{w_b},\sigma}(yk) \sigma(w_b^{-1}). \]
Here we are embedding $w'$ and $w''$ into $G$ in the upper left, as usual.

Now take $w=w_l$.
Recalling \eqref{eq:WstarDef} and \eqref{eq:WhittArgToCharFE}, we see $w_b=w_{n-m,m}$, $w'=w_{l,n-m}$, $w''=w_{l,m}$,
\begin{align*}
	W_\sigma(I,w',\mu,\delta,\psi_{w_b t w_b^{-1} y_b^*}) =& p_{-{\mu'}^{w_{l,n-m}}}(t' y') \chi_{{\delta'}^{w_{l,n-m}}}(t') W^*_{n-m,\sigma}(t' y', \mu', \delta'), \\
	W_\sigma(I,w'',\mu^{w_b},\delta^{w_b},\psi_{t y_b^\dagger}) =& p_{-{\mu''}^{w_{l,m}}}(t'' y'') \chi_{{\delta''}^{w_{l,m}}}(t'') W^*_{m,\sigma}(t'' y'', \mu'', \delta''),
\end{align*}
where
\begin{gather}
\label{eq:WhittDecompParams1}
\begin{aligned}
	\delta' =& (\delta_1,\ldots,\delta_{n-m}), & \delta'' =& (\delta_{n-m+1},\ldots,\delta_n), \\
	t' =& (t_{m+1},\ldots,t_{n-1},1), & t'' =& (t_1,\ldots,t_{m-1},1), \\
	y' =& (y^*_{b,1},\ldots,y^*_{b,n-1-m},1), & y'' =& (y^*_{b,n-m+1},\ldots,y^*_{b,n-1},1), \\
	y_b^* =& \prod_{j=1}^m \prod_{k=1}^{n-m} \hat{y}_{n-m+j,k}\paren{x_{b,j,m+k}}, & y_{b,i}^* =& \prod_{j=1}^m \prod_{k=1}^{n-m} (1+x_{b,j,m+k}^2)^{\eta_{n-m+j,i,k}/2},
\end{aligned} \\
\label{eq:WhittDecompParams2}
\begin{aligned}
	\mu' =& (\mu_1,\ldots,\mu_{n-m})-\tfrac{\mu_1+\ldots+\mu_{n-m}}{n-m}(1,\ldots,1), \\
	\mu'' =& (\mu_{n-m+1},\ldots,\mu_n)-\tfrac{\mu_{n-m+1}+\ldots+\mu_n}{m}(1,\ldots,1), \\
	k_b^* =& \hat{k}_{n-m,n-m}(x_{b,1,m+1})\cdots\hat{k}_{n-m,1}(x_{b,1,n})\cdots\hat{k}_{n,n-m}(x_{b,m,m+1})\cdots\hat{k}_{n,1}(x_{b,m,n}),
\end{aligned}
\end{gather}
and if either $n-m=1$ or $m=1$, we take the corresponding Whittaker function to be 1.
Then
\begin{align*}
	p_{-{\mu'}^{w_{l,n-m}}}(t') p_{-{\mu''}^{w_{l,m}}}(t'') p_{\mu^{w_{l,n}}}(t) =& \abs{t_1^1\cdots t_m^m}^{\frac{\mu_{n-m+1}+\ldots+\mu_n}{m}} \abs{t_{m+1}^{n-m-1}\cdots t_{n-1}^1}^{\frac{\mu_{n-m+1}+\ldots+\mu_n}{n-m}}, \\
	\chi_{{\delta'}^{w_{l,n-m}}}(t') \chi_{{\delta''}^{w_{l,m}}}(t'') \chi_{\delta^{w_{l,n}}}(t) =& \sgn(t_m\cdots t_{n-1})^{\delta_{n-m+1}+\ldots+\delta_n} \sgn(t_n)^{\delta_1+\ldots+\delta_n}, \\
	p_{-{\mu'}^{w_{l,n-m}}}(y') p_{-{\mu''}^{w_{l,m}}}(y'') p_{\rho+\mu^{w_a}}(y_b^*) =& \prod_{i=1}^m \prod_{j=1}^{n-m} \paren{1+x_{b,i,n+1-j}^2}^{-\frac{n+1-i-j}{2}+\frac{n}{2m(n-m)}(\mu_{n-m+1}+\ldots+\mu_n)}, \\
\end{align*}
and the Jacobian of the change of variables applying \eqref{eq:xbConjDecomp} to each row $i=1,\ldots,m$ of $x_b$ is
\[  \prod_{i=1}^m \prod_{j=1}^{n-m} \paren{1+x_{b,i,n+1-j}^2}^{(n-i-j)/2}. \]
Thus for $t \in Y$, we have
\begin{equation}
\label{eq:WhittDecomp}
\begin{aligned}
	& W^*_{n,\sigma}(t,\mu,\delta) \abs{t_1^1\cdots t_m^m}^{-\frac{\mu_{n-m+1}+\ldots+\mu_n}{m}} \abs{t_{m+1}^{n-m-1}\cdots t_{n-1}^1}^{-\frac{\mu_{n-m+1}+\ldots+\mu_n}{n-m}} \\
	& \qquad \times \sgn(t_m\cdots t_{n-1})^{\delta_{n-m+1}+\ldots+\delta_n} \sgn(t_n)^{\delta_1+\ldots+\delta_n} \\
	&= \int_{\wbar{U}_{w_{n-m,m}}(\R)} W^*_{n-m,\sigma}(t' y', \mu', \delta') \sigma(w_{n-m,m}) W^*_{m,\sigma}(t'' y'', \mu'', \delta'') \sigma(w_{n-m,m}^{-1} k_b^* w_{n-m,m})  \\
	& \qquad \times \e{\sum_{i=1}^{m-1} t_i x^*_{b,n-m+i,n-m+i+1}+\sum_{i=m+1}^{n-1} t_i x^*_{b,i-m,i-m+1}-t_m x_{b,m,m+1}} \\
	& \qquad \times \prod_{i=1}^m \prod_{j=m+1}^n \paren{1+x_{b,i,j}^2}^{-\frac{1}{2}+\frac{n}{2m(n-m)}(\mu_{n-m+1}+\ldots+\mu_n)} dx_b \\
	&= \int_{\wbar{U}_{w_{n-m,m}}(\R)} \sigma(w_{n-m,m}) W^*_{m,\sigma}(t'' y'', \mu'', \delta'') \sigma(w_{n-m,m}^{-1}) W^*_{n-m,\sigma}(t' y', \mu', \delta') \sigma(k_b^* w_{n-m,m}) \\
	& \qquad \times \e{\sum_{i=1}^{m-1} t_i x^*_{b,n-m+i,n-m+i+1}+\sum_{i=m+1}^{n-1} t_i x^*_{b,i-m,i-m+1}-t_m x_{b,m,m+1}} \\
	& \qquad \times \prod_{i=1}^m \prod_{j=m+1}^n \paren{1+x_{b,i,j}^2}^{-\frac{1}{2}+\frac{n}{2m(n-m)}(\mu_{n-m+1}+\ldots+\mu_n)} dx_b,
\end{aligned}
\end{equation}
and the coordinates of $x_b^*$ can be worked out by induction using \eqref{eq:xbConjDecomp}.

\subsection{The Bessel function}
\label{sect:BesselMatrixDecomps}
Now suppose $f_1(xg) = \psi_I(x) f_1(g)$ and
\[ f_2(x) = f_{2,n}(x;r,t,\alpha) = \wbar{\psi_t(x)} \e{\alpha_1 x_{r_\ell,r_\ell+1}+\ldots+\alpha_{n'} x_{r_\ell,n}}. \]
We assume $m=1$ so $n'=n-r_\ell$, $w''=B=I_{r_\ell}$ and $H=E$:
\begin{align*}
	& \mathcal{I}_w(f_1(y\cdot),f_{2,n}(\cdot;r,t,\alpha)) = \\
	& \int_{\wbar{U}_{w_b}(\R)} \int_{\wbar{U}_{w_a}(\R)} f_1\paren{y\paren{w_a y_b^* w_a^{-1}} w_a x_a} f_2\paren{\Matrix{E\\&F} \paren{w_b^{-1} y_b^* x_a (y_b^*)^{-1} w_b} \Matrix{E^{-1}\\&C^{-1}} x_b} \\
	& \qquad \psi_y\Matrix{I_{n'} & w' F A' C^{-1} D E^{-1}\\&I_{r_\ell}} dx_a \, \psi_y\Matrix{w'F{w'}^{-1}\\&E} p_{\rho-\rho^{w_a}}(y_b^*) \sigma(k_b^* w_b) dx_b.
\end{align*}
We have
\[ \Matrix{E\\&F} \paren{w_b^{-1} y_b^* x_a (y_b^*)^{-1} w_b} \Matrix{E^{-1}\\&C^{-1}} = \Matrix{I_{r_\ell}\\&*}, \]
so this becomes
\begin{align*}
	& \mathcal{I}_w(f_1(y\cdot),f_{2,n}(\cdot;r,t,\alpha)) = \\
	& \int_{\wbar{U}_{w_b}(\R)} \int_{\wbar{U}_{w_a}(\R)} f_1\paren{y\paren{w_a y_b^* w_a^{-1}} w_a x_a} \psi_y\Matrix{I_{n'} & w' F A' C^{-1} D E^{-1}\\&I_{r_\ell}} \wbar{\psi_{w_b t w_b^{-1} (y_b^*)}(x_a)} dx_a \\
	& \qquad \psi_t\Matrix{I_{r_\ell}\\&F^{-1} C} \wbar{\psi_t(x_b)} \e{\sum_{i=1}^{n'} \alpha_i x_{b,r_\ell,r_\ell+i}} \psi_y\Matrix{w'F{w'}^{-1}\\&E} p_{\rho-\rho^{w_a}}(y_b^*) \sigma(k_b^* w_b) dx_b.
\end{align*}
Notice that
\[ \Matrix{C^{-1} & -C^{-1} D E^{-1}\\&E^{-1}} = {x_b^*}^{-1}, \]
so
\begin{align*}
	& \psi_y\Matrix{I_{n'} & w' F A' C^{-1} D E^{-1}\\&I_{r_\ell}} \\
	&= \e{y_{n'} (w' F A' C^{-1} D E^{-1})_{n',1}} \\
	&= \e{y_{n'} (F A' C^{-1} D E^{-1})_{r_{\ell-1},1}} \\
	&= \e{y_{n'} (A' C^{-1} D E^{-1})_{r_{\ell-1},1}} \\
	&= \e{-y_{n'} \sum_{i=r_{\ell-1}}^{n'} \frac{y^*_{b,r_{\ell-1},r_{\ell-1}}}{y^*_{b,i,i}} x_{a,r_{\ell-1},i} \paren{{x_b^*}^{-1}}_{i,n'+1}}\\
	&= \e{-y_{n'} \paren{{x_b^*}^{-1}}_{r_{\ell-1},n'+1} -y_{n'} \sum_{i=r_{\ell-1}+1}^{n'} \paren{\prod_{j=r_{\ell-1}}^{i-1}y^*_{b,j}} x_{a,r_{\ell-1},i} \paren{{x_b^*}^{-1}}_{i,n'+1}}.
\end{align*}
(Note this is $r_{\ell-1}$ not $r_\ell-1$.)
From the calculation on Whittaker functions, i.e. \eqref{eq:tcharArg}, we have
\[ \psi_t\Matrix{I_{r_\ell}\\&F^{-1} C} = \e{\sum_{i=1}^{\ell-2} t_{n-\hat{r}_i} x^*_{b,n'-\hat{r}_i,n'-\hat{r}_i+1}}, \]
and it is not too hard to see that
\[ \psi_y\Matrix{w'F{w'}^{-1}\\&E} = \e{\sum_{j=n'+1}^{n-1} y_j x^*_{b,j,j+1}+ \sum_{i=1}^{\ell-1} \sum_{j=n'-\hat{r}_i+1}^{n'-\hat{r}_{i-1}-1} y_{\hat{r}_i+\hat{r}_{i-1}-n'+j} x^*_{b,j,j+1}}. \]

Define
\begin{align*}
	r^\sharp =& (r_1,\ldots,r_{\ell-1}), \\
	y^\flat \Matrix{y^\sharp\\&I_{r_\ell}} :=& y w_a y_b^* w_a^{-1}, \\
	t^\sharp =& \paren{(w_b t w_b^{-1} y^*_b)_1, \ldots, (w_b t w_b^{-1} y^*_b)_{n'-1}, 1} \\
	=& \paren{t_{r_\ell+1} y^*_{b,1}, \ldots, t_{n-1} y^*_{b,n'-1}, 1}, \\
	\alpha^\sharp =& -y_{n'} \paren{y^*_{b,r_{\ell-1}} \paren{{x_b^*}^{-1}}_{r_{\ell-1}+1,n'+1},\ldots, y^*_{b,r_{\ell-1}} \cdots y^*_{b,n'-1} \paren{{x_b^*}^{-1}}_{n',n'+1}}, \\
	f_1^\sharp(g) =& f_1\paren{y^\flat\Matrix{g\\&I_{r_\ell}}},
\end{align*}
where the lower right entry of $y^\sharp$ is $y^\sharp_{n',n'} = 1$ and $y^\flat_i =1$ for all $i < n'$.
Then we have our first inductive form
\begin{align*}
	& \mathcal{I}_w(f_1(y\cdot),f_{2,n}(\cdot;r,t,\alpha)) = \\
	& \int_{\wbar{U}_{w_b}(\R)} \e{\sum_{i=1}^{n'} \alpha_i x_{b,r_\ell,r_\ell+i}-t_{r_\ell}x_{b,r_\ell,r_\ell+1}-y_{n'} \paren{x^{*-1}_b}_{r_{\ell-1},n'+1}} \\
	& \qquad \e{\sum_{i=1}^{\ell-2} t_{n-\hat{r}_i} x^*_{b,n'-\hat{r}_i,n'-\hat{r}_i+1}+\sum_{j=n'+1}^{n-1} y_j x^*_{b,j,j+1}+ \sum_{i=1}^{\ell-1} \sum_{j=n'-\hat{r}_i+1}^{n'-\hat{r}_{i-1}-1} y_{\hat{r}_i+\hat{r}_{i-1}-n'+j} x^*_{b,j,j+1}} \\
	& \qquad \mathcal{I}_{w'}\paren{f_1^\sharp\paren{y^\sharp \cdot},f_{2,n-r_1}\paren{\cdot; r^\sharp, t^\sharp, \alpha^\sharp}} p_{\rho-\rho^{w_a}}(y_b^*) \sigma(k_b^* w_b) dx_b.
\end{align*}
As usual, we are using the upper left embedding of $GL(n-r_1,\R)$ into $G$ in the functions $f_1^\sharp, f_{2,n-r_1}$.
Some care must be taken as $f_1^\sharp$ and $f_{2,n-r_1}$ are now functions on $GL(n-r_1,\R)$ instead of $GL(n-r_1,\R)/\R^+$, but our construction of $y^\sharp, y^\flat$ mostly resolves the issue; it is sufficient to treat the $Y_{n-r_1}$ component of the Iwasawa decomposition as a diagonal matrix rather than requiring its lower right entry to be $\pm1$.

Now we apply \eqref{eq:xbConjDecomp}; after the substitutions of \cref{lem:RowIwa}, if
\[ \Matrix{I_{n'} \\x_c&I_{n-n'-1}\\0&0&1} = x_c^* y_c^* k_c^*, \]
we have
\begin{align*}
	x_{b,r_\ell,r_\ell+i} \mapsto& u_i \prod_{j=1}^{i-1} \sqrt{1+u_i^2}, \\
	\paren{x_b^{*-1}}_{i,n'+1} =& (y^\dagger x_c^{*-1} y^{\dagger-1})_{i,n'+1} = \frac{y^\dagger_{i,i}}{y^\dagger_{n'+1,n'+1}} x^{*-1}_{c,i,n'+1} = \frac{1}{\sqrt{1+u_i^2}} x^{*-1}_{c,i,n'+1}, \\
	x^*_{b,i,i+1} =& y^\dagger_i x^*_{c,i,i+1}+u^*_{c,i,i+1} = y^\dagger_i x^*_{c,i,i+1}-\frac{u_i u_{i+1}}{\sqrt{1+u_i^2}}, \qquad i=1,\ldots,n'-1, \\
	y^*_{b,r_{\ell-1}} \cdots y^*_{b,i-1} =& \frac{y^\dagger_{r_{\ell-1},r_{\ell-1}}}{y^\dagger_{i,i}} = \frac{\sqrt{1+u_i^2}}{\sqrt{1+u_{r_{\ell-1}}^2}}, \qquad i=r_{\ell-1},\ldots,n'-1,\\
	x^*_{b,i,i+1} =& y^\dagger_i x^*_{c,i,i+1}, \qquad i=n',\ldots,n-2, \\
	x^*_{b,n-1,n} =& y^\dagger_{n-1} x^*_{c,n-1,n}+x^\dagger_{c,r_\ell-1} = y^\dagger_{n-1} x^*_{c,n-1,n}+\sum_{j=1}^{n'} \frac{u_j x_{c,r_\ell-1,j}}{\prod_{l=1}^j \sqrt{1+u_l^2}}, \\
	p_{\rho-\rho^{w_a}}(y_b^*) =& p_{\rho-\rho^{w_a}}(y_c^*) p_{\rho-\rho^{w_a}}(y^\dagger), \\
	\sigma(k_b^* w_b) =& \sigma(k_c^*) \sigma(k^\dagger w_b),
\end{align*}
when $r_\ell > 1$.
In case $r_\ell=1$, everything is the same except
\begin{align*}
	\paren{x_b^{*-1}}_{i,n'+1} =& \wtilde{u}^*_{d,i} = -\frac{u_i}{(1+u_i^2) \prod_{j=1}^{i-1} \sqrt{1+u_j^2}}, \\
	x^*_{b,n-1,n} =& u^*_{d,n-1} = \frac{u_{n-1}}{(1+u_{n-1}^2)\prod_{j=1}^{n-2}\sqrt{1+u_j^2}},
\end{align*}
and $x^*_{c,i,i+1}=0$.

For a moment, we will explicitly denote the dependence of $\rho$ on $n$ by writing $\rho_n$.
Define
\begin{align}
\label{eq:MatDecompIndw}
	\wtilde{w} =& w_{r_1,r_2,\ldots,r_\ell-1}, & \wtilde{w}_a =& \Matrix{w'\\&I_{r_\ell-1}}, & \wtilde{w}_b =& w_{n',n-1-n'} = \Matrix{&I_{n'}\\I_{n-1-n'}}
\end{align}
(dropping zero-dimensional matrices in case $r_\ell=1$), then $y_c^* \in GL(n-1,\R)$ implies that
\[ p_{\rho_n-\rho_n^{w_a}}(y_c^*) = p_{\rho_{n-1}-\rho_{n-1}^{\wtilde{w}_a}}(y_c^*), \]
and we have
\[ p_{\rho_n-\rho_n^{w_a}}(y^\dagger) \prod_{i=1}^{n'} \paren{1+u_i^2}^{(n-1-i)/2} = p_{-\rho_n^{w_a}}(y^\dagger) \prod_{i=1}^{n'} \paren{1+u_i^2}^{-1/2}. \]

Similarly, $w_a \hat{y}_{n,i}(u) w_a^{-1} = \hat{y}_{n,i^{w_a}}(u)$, so
\begin{align*}
	w_a y^\dagger w_a^{-1} =& \prod_{i=1}^{n'} \hat{y}_{n,i^{w_a}}(u_i), \\
	p_{\rho_n^{w_a}}(y^\dagger) =& \prod_{i=1}^{n'} p_{\rho_n}(\hat{y}_{n,i^{w_a}}(u_i)) = \prod_{i=1}^{n'} \paren{1+u_i^2}^{\frac{\rho_{n,n}-\rho_{n,i^{w_a}}}{2}} = \prod_{i=1}^{n'} \paren{1+u_i^2}^{\frac{i^{w_a}-n}{2}}.
\end{align*}

First, define
\[ \wbar{y} \Matrix{\wtilde{y}\\&1} := y \prod_{i=1}^{n'} \hat{y}_{n,i^{w_a}}(u_{n,i}), \]
where the lower right entry of $\wtilde{y}$ is $\wtilde{y}_{n-1,n-1} = 1$ and $\wbar{y}_i =1$ for all $i < n$.
This gives
\begin{align}
\label{eq:MatDecompIndybar}
	\wbar{y} =& \diag\paren{\wbar{y}_a,\ldots,\wbar{y}_a, \prod_{i=1}^{n'} \sqrt{1+u_{n,i}^2}}, \qquad \wbar{y}_a = y_{n-1} \times \piecewise{(1+u_{n,r_{\ell-1}}^2)^{-1/2} & \If r_\ell = 1, \\ 1 & \Otherwise,} \\
\label{eq:MatDecompIndytilde}
	\wtilde{y}_i =& y_i \prod_{j=1}^{n'} \paren{1+u_{n,j}^2}^{\eta_{n,i,n,j^{w_a}}/2}, \qquad i=1,\ldots,n-2
\end{align}

Now define
\begin{align}
\label{eq:MatDecompIndt}
	\wtilde{t} =& \paren{0,\ldots,0,t_{r_\ell+1} \tfrac{\sqrt{1+u_{n,2}^2}}{\sqrt{1+u_{n,1}^2}},\ldots,t_{n-1} \tfrac{\sqrt{1+u_{n,n'}^2}}{\sqrt{1+u_{n,n'-1}^2}}} \in \R^{n-2}, \\
\label{eq:MatDecompInda}
	\wtilde{\alpha} =& \piecewise{\paren{\frac{y_{n-1} u_1}{\sqrt{1+u_1^2}},\ldots,\frac{y_{n-1} u_{n'}}{\prod_{l=1}^{n'} \sqrt{1+u_l^2}}} \in \R^{n'} & \If r_\ell > 1, \\ \paren{\frac{y_{n'} u_{n,r_{\ell-1}+1}}{\sqrt{1+u_{n,r_{\ell-1}}^2}\prod_{i=1}^{r_{\ell-1}+1}\sqrt{1+u_{n,i}^2}},\ldots,\frac{y_{n'} u_{n,n'}}{\sqrt{1+u_{n,r_{\ell-1}}^2}\prod_{i=1}^{n'}\sqrt{1+u_{n,i}^2}}} \in \R^{n'-r_{\ell-1}} & \If r_\ell=1,}, \\
\label{eq:MatDecompIndr}
	\wtilde{r} =& \piecewise{(r_1,\ldots,r_{\ell-1},r_\ell-1) & \If r_\ell > 1, \\ (r_1,\ldots,r_{\ell-1}) & \If r_\ell=1,} \\
\label{eq:MatDecompIndf}
	\wtilde{f}_1(g) =& f_1\paren{\wbar{y}\Matrix{g\\&I_{r_\ell}}},
\end{align}
then we have our second inductive form
\begin{equation}
\label{eq:BesselInductiveForm2}
\begin{aligned}
	& \mathcal{I}_w(f_1(y\cdot),f_{2,n}(\cdot;r,t,\alpha)) = \\
	& \int_{\R^{n'}} \e{\sum_{i=1}^{n'} \alpha_i u_{n,i} \prod_{j=1}^{i-1} \sqrt{1+u_{n,j}^2}-t_{r_\ell} u_{n,1}+\delta_{r_\ell=1} \frac{y_{n'} u_{n,r_{\ell-1}}}{(1+u_{n,r_{\ell-1}}^2)\prod_{i=1}^{r_{\ell-1}-1}\sqrt{1+u_{n,i}^2}}} \\
	& \qquad \e{-\sum_{i=1}^{\ell-2} t_{n-\hat{r}_i} \frac{u_{n'-\hat{r}_i} u_{n'-\hat{r}_i+1}}{\sqrt{1+u_{n'-\hat{r}_i}^2}}-\sum_{i=1}^{\ell-1} \sum_{j=n'-\hat{r}_i+1}^{n'-\hat{r}_{i-1}-1} y_{\hat{r}_i+\hat{r}_{i-1}-n'+j} \frac{u_j u_{j+1}}{\sqrt{1+u_j^2}}} \\
	& \qquad \mathcal{I}_{\wtilde{w}}\paren{\wtilde{f}_1\paren{\wtilde{y} \cdot},f_{2,n-1}\paren{\cdot; \wtilde{r}, \wtilde{t}, \wtilde{\alpha}}} \sigma\paren{\wtilde{w}_b^{-1}k^\dagger w_b} \prod_{i=1}^{n'} \paren{1+u_{n,i}^2}^{\frac{n-i^{w_a}-1}{2}} du_{n,i}.
\end{aligned}
\end{equation}
As usual, we are using the upper left embedding of $GL(n-1,\R)$ into $G$ in the functions $\wtilde{f}_1,f_{2,n-1}$.

\section{Some results on Whittaker functions}
\label{sect:WhittFunResults}
Consider \eqref{eq:WhittDecomp}, which converges for
\[ \frac{n}{2m(n-m)}\Re(\mu_{n-m+1}+\ldots+\mu_n) > 2\Max{\max_i \abs{\Re(\mu'_i)},\max_i \abs{\Re(\mu''_i)}}. \]
The author anticipates that this integral representation will have applications, e.g. to the development of integral representations like those due to Stade \cite{Stade03}, and we explore some simple, relevant applications here.

The functional equation $\mu \mapsto \mu^w$ can be reduced to a sequence of transpositions of adjacent indices, and by two applications of \eqref{eq:WhittDecomp} (for $w_{(j\,j+1)}$, take $m=j-1$ and then on the Whittaker function at $w''$, take $m=2$), this can be reduced to the functional equation of a block diagonal matrix composed of the $GL(2)$ Whittaker functions (recall our assumptions on $\sigma$ in \cref{sect:KRepns}).
(Note that the sum $\mu_{n-m+1}+\ldots+\mu_n$ is invariant under the functional equations of both Whittaker functions in either form of \eqref{eq:WhittDecomp}.)
Therefore, $M_\sigma(w,\mu,\delta)$ is a product of $\sigma$ at fixed elements of the Weyl group and block diagonal matrices composed of the blocks \eqref{eq:GL2WhittFE}.
In particular, $M_\sigma(w,\mu,\delta)$ is holomorphic on the region described by $\abs{\Re(\mu_i-\mu_j)} < 1$ for all $i \ne j$, as this holds for each $\Gamma_{\mathcal{W}}^*(0,\mu_i-\mu_j)$.

Recalling \eqref{eq:WstarDef}, we now take $m=1,2$ in the first form of \eqref{eq:WhittDecomp}:
\begin{gather}
\begin{aligned}
\label{eq:WhitInd1}
	& W^*_{n,\sigma}(t,\mu,\delta) \abs{t_1^{n-1} t_2^{n-2}\cdots t_{n-1}^1}^{-\frac{\mu_n}{n-1}} \sgn(t_1\cdots t_{n-1})^{\delta_n} \sgn(t_n)^{\delta_1+\ldots+\delta_n} \\
	&= \int_{\R^{n-1}} W^*_{n-1,\sigma}(t' y', \mu', \delta') \sigma(\hat{k}_{n,n-1}(x_1)\cdots\hat{k}_{n,1}(x_{n-1}) w_{n-1,1}) \\
	& \qquad \times \e{-\sum_{i=1}^{n-2} t_{i+1} \frac{x_i x_{i+1}}{\sqrt{1+x_i^2}}-t_1 x_1} \prod_{j=1}^{n-1} \paren{1+x_j^2}^{-\frac{1}{2}+\frac{n}{2(n-1)}\mu_n} dx,
\end{aligned}\\
\begin{aligned}
	t' =& (t_2,\ldots,t_{n-1},1), & y' =& (y^*_1,\ldots,y^*_{n-2},1), & y^*_i =& \prod_{k=1}^{n-1} (1+x_k^2)^{\eta_{n,i,k}/2}, \\
	\delta' =& (\delta_1,\ldots,\delta_{n-1}), & \mu' =& (\mu_1,\ldots,\mu_{n-1})+\tfrac{\mu_n}{n-1}(1,\ldots,1),
\end{aligned}\nonumber
\end{gather}
\begin{gather}
\begin{aligned}
\label{eq:WhitInd2}
	& W^*_{n,\sigma}(t,\mu,\delta) \abs{t_1}^{-\frac{\mu_{n-1}+\mu_n}{2}} \abs{t_2^{n-2}\cdots t_{n-1}^1}^{-\frac{\mu_{n-1}+\mu_n}{n-2}} \sgn(t_2\cdots t_{n-1})^{\delta_{n-1}+\delta_n} \sgn(t_n)^{\delta_1+\ldots+\delta_n} \\
	&= \int_{\R^{2\times(n-2)}} W^*_{n-2,\sigma}(t' y', \mu', \delta') \sigma(w_{n-2,2}) W^*_{2,\sigma}((t_1 y^*_{n-1},1), \mu'', (\delta_{n-1},\delta_n)) \\
	& \qquad \times \sigma(w_{n-2,2}^{-1} \hat{k}_{n-1,n-2}(x_{1,1})\cdots\hat{k}_{n-1,1}(x_{1,n-2})\cdots\hat{k}_{n,n-2}(x_{2,1})\cdots\hat{k}_{n,1}(x_{2,n-2}) w_{n-2,2}) \\
	& \qquad \times \e{t_1 \sum_{i=1}^{n-2} \tfrac{x_{1,i} x_{2,i}}{\sqrt{1+x_{2,i}^2}}\prod_{j=1}^{i-1}\tfrac{\sqrt{1+x_{1,j}^2}}{\sqrt{1+x_{2,j}^2}}-\sum_{i=1}^{n-3} t_{i+2} \tfrac{x_{1,i} x_{1,i+1}\sqrt{1+x_{2,i+1}^2}}{\sqrt{1+x_{1,i}^2}\sqrt{1+x_{2,i}^2}}-\sum_{i=1}^{n-3} t_{i+2}\tfrac{x_{2,i} x_{2,i+1}}{\sqrt{1+x_{2,i}^2}} -t_2 x_{2,1}} \\
	& \qquad \times \prod_{i=1}^2 \prod_{j=1}^{n-2} \paren{1+x_{i,j}^2}^{-\frac{1}{2}+\frac{n(\mu_{n-1}+\mu_n)}{4(n-2)}} dx,
\end{aligned}\\
\begin{aligned}
	t' =& (t_3,\ldots,t_{n-1},1), & y' =& (y^*_1,\ldots,y^*_{n-3},1), \\
	y_i^* =& \prod_{j=1}^2 \prod_{k=1}^{n-2} (1+x_{j,k}^2)^{\eta_{n-2+j,i,k}/2}, & \delta' =& (\delta_1,\ldots,\delta_{n-2}), \\
	\mu' =& (\mu_1,\ldots,\mu_{n-2})+\tfrac{\mu_{n-1}+\mu_n}{n-2}(1,\ldots,1), & \mu'' =& (\mu_{n-1},\mu_n)-\tfrac{\mu_{n-1}+\mu_n}{2}(1,1).
\end{aligned}\nonumber
\end{gather}
In a neighborhood of $i\mathfrak{a}^*_0(\Lambda)$, the first integral representation is best in the case $\Lambda_n=0$, while the second is better for $\Lambda_n \ne 0$ since then $\Lambda_{n-1}+\Lambda_n=0$ so $\Re(\mu_{n-1}+\mu_n)$ is close to $0$.
Note that Stade and Ishii have given decompositions in \cite[Theorem 2.1]{Stade03} and \cite[Theorem 14]{IshiiStade} in the spherical case (in the latter paper by passing through the Mellin transform); it would be interesting to see if one can arrive at their formula by starting with this one.
This also recovers the integral representations of \cite[(3.22), (3.24)]{HWI}.

We use these expressions, starting with \eqref{eq:WhitInd1}, to obtain expressions for the Mellin transform of the Whittaker function.
Since we can conjugate by an element of the Weyl group to transform $\hat{k}_{n,i}(x)$ back to an element of $SO(2)$, our assumptions on $\sigma$ (see \cref{sect:KRepns}) imply that the entries of $\sigma(\hat{k}_{n,i}(x))$ are trigonometric polynomials in $\arg \frac{1-ix}{\sqrt{1+x^2}}$.
So we replace each $\sigma(\hat{k}_{n,n-j}(x_j))$ with $\paren{\frac{1-ix_j}{\sqrt{1+x_j^2}}}^{a_j}$ for some $a_j \in \Z$ and replace the Whittaker function $W^*_{n-1,\sigma}$ with its entry $W^*_{n-1,\sigma_{b_1},b_2,b_3}$ where $\sigma_{b_1}$ is an irreducible component of $\sigma|_{O(n-1)}$ and $1 \le b_2, b_3 \le \dim \sigma_{b_1}$.
Next, Mellin expand the $GL(n-1)$ Whittaker function and apply \cite[(6.1)]{MeThesis}
\begin{align*}
	\e{x} &= \lim_{\theta \to \frac{\pi}{2}^-} \exp\paren{-\abs{2\pi x} e^{-i\theta\sgn(x)}} = \lim_{\theta\to\frac{\pi}{2}^-} \int_{\Re(s) = 1} \abs{2\pi x}^{-s} e^{is\theta \sgn(x)} \Gamma\paren{s} \, \frac{ds}{2\pi i}, \qquad x \ne 0,
\end{align*}
to each term in the complex exponential, then use \cite[Section 2.3.2]{HWI}
\begin{gather*}
	\int_{\R} \sgn(x)^d \abs{x}^{a-1} (1+x^2)^{-\frac{a+b}{2}} \paren{\frac{1-ix}{\sqrt{1+x^2}}}^c dx = B_{d,c}(a,b), \qquad d \in \set{0,1} \\
\begin{aligned}
	B_{d,c}(a,b) =& (i \sgn(c))^d \sum_{j=0}^{(\abs{c}-d)/2} \binom{\abs{c}}{2j+d} (-1)^j B\paren{\frac{d+a}{2}+j,\frac{\abs{c}-d+b}{2}-j}, \\
	B(a,b) =& \frac{\Gamma(a)\Gamma(b)}{\Gamma(a+b)}, \qquad e^{it\theta \sgn(x)} = \sum_{d \in \set{0,1}} \cos\paren{d \frac{\pi}{2}-\theta t} \sgn(x)^d,
\end{aligned}
\end{gather*}
(which is the Mellin transform of the classical Whittaker function, compare \eqref{eq:classWhittDef}) and since the resulting integral converges absolutely, we may replace $\theta$ with $\frac{\pi}{2}$ in the integrand.
We conclude that the entries of the Mellin transform $\what{W}_{n,\sigma}(s,\mu,\delta)$ of the Whittaker function (as in \eqref{eq:WhittMellinDef}) are linear combinations (with coefficients of modulus at most 1) of at most $(\dim \sigma)^{2n-1}$ terms of the form
\begin{equation}
\label{eq:WhittMellin1}
\begin{aligned}
	& \sum_{d \in \set{0,1}^{n-1}} \int_{\Re(q)=\mathfrak{q}} \what{W}_{n-1,\sigma_{b_1}, b_2, b_3}(q,\mu',\delta') \prod_{j=1}^{n-1} A_{d_j,+}\paren{s_j-q_{j-1}+\tfrac{n-j}{n-1}\mu_n} \\
	& \times B_{\tilde{d}_j,a_j}\paren{1-s_j-s_{j+1}+q_{j-1}+q_j-\tfrac{2n-2j-1}{n-1}\mu_n,s_j-q_j-\tfrac{j}{n-1}\mu_n} \frac{dq}{(2\pi i)^{n-2}},
\end{aligned}
\end{equation}
where
\[ A_{d,\pm}(u) := (2\pi)^{-u} \Gamma(u) \cos \frac{\pi}{2}(d\pm u), \]
 $\set{0,1} \ni \tilde{d}_j \equiv d_j+d_{j+1} \pmod{2}$ using $s_n=q_0=q_{n-1}=d_n=0$.

By Stirling's formula, the beta function $B(u+v,u-v)$ has exponential decay in $\abs{\Im(v)}$ unless $\abs{\Im(v)} < \abs{\Im(u)}$, and the same holds for $B_{d,c}(u+v,u-v)$.
Suppose the Mellin transform of the lower-rank Whittaker function forces $\norm{\Im(q)} \ll_n \norm{\Im(\mu')}$, then the beta functions force $\norm{\Im(s)} \ll_n \norm{\Im(\mu)}$ and by induction this holds for all $n$ since it holds for $n=2$.

In \eqref{eq:WhittMellin1}, consider shifting the $q$ contours far to the right, starting with $q_1$ and ending with $q_{n-2}$, we see that $\what{W}_{n,\sigma}(s,\mu,\delta)$ is holomorphic in $\mu$ and $s$ for $\Re(s)$ large compared to $\Re(\mu)$, so $W^*_{n\sigma}(t,\mu,\delta)$ has super-polynomial decay as the coordinates of $t$ tend to infinity.
Furthermore, the potential poles of $\what{W}_{n,\sigma}(s,\mu,\delta)$ occur when $s=-\hat{\mu}^w-m$ for some $w \in W$, $m \in \N_0^{n-1}$ (imagine shifting the $q$ contours far to the right, picking up residues at $q_{j-1} = s_j + \mu_n-\frac{j-1}{n-1}\mu_n+m_j$ and $q_j = s_j-\frac{j}{n-1}\mu_n+m_j$, $m_j \in \N_0$), and by contour shifting this implies
\begin{thm}[Inverse Whittaker Paley-Wiener]
\label{thm:IWPW}
If $F(\mu)$ is holomorphic and has rapid decay in $\norm{\mu}$ on a tube domain containing $i\mathfrak{a}^*_0(\Lambda)$, then there exists $\eta,\eta',A,B>0$ so that
\[ \int_{i\mathfrak{a}^*_0(\Lambda)} F(\mu) W_\sigma(y,\mu,\delta) \dspec\mu \ll_{\eta,\eta',A,B} p_{(1+\eta')\rho}(y) (\dim \sigma)^B \norm{F}_{\eta,A}, \]
where
\[ \norm{F}_{\eta,A} := \sup_{\norm{\beta}<\eta} \int_{\Re(\mu)=\beta} \abs{F(\mu)} (1+\norm{\mu})^A \abs{d\mu}. \]
\end{thm}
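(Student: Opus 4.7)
The plan is a Mellin expansion of $W^*_{n,\sigma}$ followed by contour shifts in both the Mellin variable $s$ and the spectral variable $\mu$, using the structural properties of $\what{W}_{n,\sigma}$ established in the preceding paragraph.

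First I would write $W_\sigma(y,\mu,\delta) = p_\rho(y) W^*_{n,\sigma}(y,\mu,\delta)$, insert the Mellin expansion \eqref{eq:WhittMellinDef}, and apply Fubini to swap the $s$ and $\mu$ integrals. This is justified by the rapid decay of $\what{W}_{n,\sigma}$ in $\norm{\Im(s)}$ outside the cone $\norm{\Im(s)}\ll_n\norm{\Im(\mu)}$ (from Stirling on the gamma factors in \eqref{eq:WhittMellin1}) combined with the rapid decay of $F$. Then shift the Mellin contour from $\Re(s)=1$ to $\Re(s_i)=-\eta'\what{\rho}_i$ for small $\eta'>0$. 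On the shifted contour, $\abs{\hat{p}_{-s}(y)} = p_{\eta'\rho}(y)$, which combined with the prefactor $p_\rho(y)$ produces the desired $p_{(1+\eta')\rho}(y)$ factor. Absolute convergence of the shifted double integral is ensured by the polynomial dependence of $\what{W}_{n,\sigma}$ on $\Re(s)$, the effective $s$-integration volume of size polynomial in $\norm{\Im(\mu)}$ (due to the exponential cutoff outside the cone), the bound on the term count by $(\dim\sigma)^{2n-1}$ giving the $(\dim\sigma)^B$ factor, and the rapid decay of $F$ absorbed into $\norm{F}_{\eta,A}$.

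The contour shift crosses finitely many poles of $\what{W}_{n,\sigma}$ at $s=-\what{\mu}^w-m$ with $w\in W$ and $\Re(\what{\mu}^w_i)+m_i\in(-\eta'\what{\rho}_i,1)$. Each residue contributes a term of the form
\[\int F(\mu)\, p_\rho(y)\, p_{\mu^w}(y)\, \hat{p}_m(y)\, R_{w,m}(\mu)\, \dspec\mu,\]
with $R_{w,m}$ the associated residue of $\what{W}_{n,\sigma}$. For each $(w,m)$ I would then deform the $\wtilde{\mu}$-contour from $\Re(\wtilde{\mu})=0$ to $\Re(\wtilde{\mu})=\beta^{(w)}$, with $\beta^{(w)}$ chosen in the open cone where $\sum_{j=1}^i(\beta^{(w)})^w_j > \eta'\what{\rho}_i$ for every $i$. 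This cone is nonempty (take $\beta^{(w)}$ equal to the action of $w^{-1}$ on any sufficiently dominant vector), and for $\eta$ large enough relative to $\eta'$ and $\norm{\Lambda}$ the shift lies within the tube of holomorphy of $F$. On the shifted $\mu$-contour one has $\abs{p_{\mu^w}(y)} \le C(\Lambda)\, p_{\eta'\rho}(y)$ for $y$ in any compact range, which delivers the required bound after $\hat{p}_m(y)$ is absorbed into the constant.

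The main obstacle is the interplay between the $s$- and $\mu$-shifts: deforming the $\wtilde{\mu}$-contour after the residue has been extracted can cause other pole surfaces $s=-\what{\mu}^{w'}-m'$ to sweep across the already-shifted $s$-contour, generating secondary residues. A careful inductive bookkeeping (equivalently an analysis of the arrangement of pole divisors in $(s,\mu)$-space) keeps the total number of residue contributions bounded by $\abs{W}\cdot(\dim\sigma)^{2n-1}$, absorbable into $(\dim\sigma)^B$. For $y$ in regions where some $y_i$ is large, no $\mu$-shift is needed: the super-polynomial decay of $W_\sigma$ noted just before the theorem, obtained by shifting $\Re(s_i)$ far to the right in those coordinates, trivially dominates the polynomial growth of $p_{(1+\eta')\rho}(y)$ there.
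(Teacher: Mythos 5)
Your strategy—Mellin expansion, shifting $\Re(s)$ leftward to $-\eta'\what{\rho}$, then shifting $\Re(\mu)$ by a $w$-dependent direction for each residue term—is exactly the contour shift that the paper's one-line ``and by contour shifting this implies'' is gesturing at, and it is built on the same ingredients the paper records just before the statement: the pole locations $s=-\what{\mu}^w-m$, the exponential decay of $\what{W}_{n,\sigma}$ outside the cone $\norm{\Im(s)}\ll_n\norm{\Im(\mu)}$, and the term-count bound $(\dim\sigma)^{2n-1}$. Your choice $\beta^{(w)}$ with $\what{\beta}^{(w),w}$ dominant is the right device, and it correctly mirrors the paper's decomposition \eqref{eq:WhittContShift}, which already packages the first round of $s$-residues into a sum over $S\subset[n-1]$, $w_b\in W/W_S$.

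Two points where the write-up is looser than it should be. First, you record only the full residue terms ($S=[n-1]$), but the shift in general produces partial residues (residue in $s_i$ for $i\in S$, integral over $s_{S^c}$ on $\Re(s_{S^c})=-\eta'\what{\rho}_{S^c}$) exactly as in \eqref{eq:WhittContShift}; these should be included, though the same $\beta^{(w_b)}$-shift handles them since the $S^c$ integrals already contribute $p_{S^c,\eta'\hat\rho_{S^c}}(y)$ on their own. Second, ``$\abs{p_{\mu^w}(y)} \le C(\Lambda)p_{\eta'\rho}(y)$ for $y$ in any compact range'' should read ``for $y$ with all $y_i\le 1$''; combined with the super-polynomial decay for coordinates $y_i>1$ (obtained by shifting the corresponding $\Re(s_i)$ to the right, which is pole-free), this covers all $y$, including the mixed case, but that coordinate-wise splitting should be stated.

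The genuine subtlety you flag—pole surfaces $s=-\what{\mu}^{w'}-m'$ sweeping across the already-shifted $s$-contour as $\Re(\mu)$ moves—is real, and there is a second one you do not mention: the residue function $R_{w,m}(\mu)$ (and more generally the coefficient of $p_{S,\mu^{w_b}}(y)$) is only meromorphic in $\mu$, with potential poles along $\mu_i-\mu_j\in\Z$, and the individual terms can blow up as these hyperplanes are approached even though the total (being the entire function $W^*_{n,\sigma}$) stays finite. For $\norm{\beta^{(w)}}<1$ these hyperplanes are not crossed during the $\mu$-shift, but on $i\mathfrak{a}^*_0(\Lambda)$ itself they may be met when $\wtilde\mu_i=\wtilde\mu_j$; a small deformation of the $\mu$-contour around those loci (or an appeal to the zeros of $\specmu$) is needed to justify integrating the residue terms one at a time. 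Neither subtlety breaks the argument, but both deserve a sentence.
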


Applying the same process to \eqref{eq:WhitInd2} yields
\begin{equation}
\label{eq:WhittMellin2}
\begin{aligned}
	& \sum_{d \in \set{0,1}^{n-2}} \sum_{d' \in \set{0,1}^{n-3}} \sum_{d'' \in \set{0,1}^{n-2}} \int_{\Re(u)=\epsilon} \what{W}_{n-2,\sigma_{b_1},b_2,b_3}(u,\mu',\delta') \int_{\Re(u')=\epsilon} \what{W}_{2,\sigma_{b'}}(u',\mu'',(\delta_{n-1},\delta_n)) \\
	& \int_{\Re(q) = \epsilon (n-3,\ldots,1)} \int_{\Re(q') = \epsilon} A_{d_1,-}\paren{s_1-q_1-u'+\tfrac{\mu_{n-1}+\mu_n}{2}} A_{d''_1,+}(s_2+\mu_{n-1}+\mu_n) \\
	& B_{\tilde{d}_{1,1},a_{1,1}}\bigl(1-s_1-s_3+q_1+q'_1+u_1+u'-\tfrac{3n-8}{2(n-2)}(\mu_{n-1}+\mu_n), s_1-u_1-\tfrac{1}{n-2}(\mu_{n-1}+\mu_n)\bigr) \\
	& B_{\tilde{d}_{2,1},a_{2,1}}\bigl(1-s_1-s_2+q_1-q'_1+u'-\tfrac{3}{2}(\mu_{n-1}+\mu_n), s_2-s_3-q_1+q'_1-u'-\tfrac{1}{2}(\mu_{n-1}+\mu_n) \bigr) \\
	& \prod_{j=1}^{n-3} A_{d_{j+1},-}(q_j-q_{j+1}) A_{d''_{j+1},+}(q'_j) A_{d'_j,+}\paren{s_{j+2}-q'_j-u_j+\tfrac{n-2-j}{n-2}(\mu_{n-1}+\mu_n)} \\
	& \prod_{j=2}^{n-2} B_{\tilde{d}_{1,j},a_{1,j}}\bigl(1-s_{j+1}-s_{j+2}-q_{j-1}+q_j+q'_{j-1}+q'_j+u_{j-1}+u_j-\tfrac{2n-3-2j}{n-2}(\mu_{n-1}+\mu_n), \\
	& \qquad \qquad \qquad s_{j+1}+q_{j-1}-q'_{j-1}-u_j+u'+\tfrac{n-2j-2}{2(n-2)}(\mu_{n-1}+\mu_n)\bigr) \\
	& B_{\tilde{d}_{2,j},a_{2,j}}\bigl(1-q_{j-1}+q_j-q'_{j-1}-q'_j, s_{j+1}-s_{j+2}-q_j+q'_j-u'-\tfrac{1}{2}(\mu_{n-1}+\mu_n)\bigr) \\
	& \frac{dq'}{(2\pi i)^{n-3}} \frac{dq}{(2\pi i)^{n-3}}\frac{du'}{2\pi i} \frac{du}{(2\pi i)^{n-3}},
\end{aligned}
\end{equation}
where $\set{0,1}\ni\tilde{d}_{1,j} \equiv d_j+d'_j+d'_{j-1} \pmod{2}$, $\set{0,1} \ni\tilde{d}_{2,j} \equiv d_j+d''_j+d''_{j+1}\pmod{2}$ using $q_{n-2}=q'_{n-2}=d'_0=d''_{n-1}=0$.
Convergence of this integral is somewhat more difficult as the exponential decay factors coming from the beta functions and Mellin transforms of the Whittaker functions are not sufficient, but the polynomial part of Stirling's formula does give enough decay.

\subsection{Jacquet integrals and limits}
For the \Asymps, we will need to evaluate the Jacquet integral of a limit of a Whittaker function, and we perform that computation here.
Throughout this section, the letters $a,b$ as subscripts are used to distinguish matrices of a similar type, not as indices.

For $y \in Y_w$, define $\alpha_w \in \R^{n-1}$ by $\psi_{\alpha_w} = \lim_{y \to 0} \psi_y$, where the limit is taken along the non-constant entries of $y$, so $\alpha_{w,i}$ is 0 if $i=\hat{r}_j$ for some $j$ and 1 otherwise.

Suppose $\Re(\mu_i) > \Re(\mu_j)$ for all $i < j$, then for $y \in Y_w$,
\begin{equation}
\label{eq:WhittYwAsymp}
\begin{aligned}
	W_\sigma(y g,\mu,\delta) =& p_{\rho+\mu^{w_l}}(y) \chi_{\delta^{w_l}}(y) W_\sigma(g,w_l,\mu,\delta,\psi_y) \\
	\sim& p_{\rho+\mu^{w_l}}(y) \chi_{\delta^{w_l}}(y) W_\sigma(g,w_l,\mu,\delta,\psi_{\alpha_w}), \qquad \text{ as } y \to 0.
\end{aligned}
\end{equation}
Then we decompose $U=\wbar{U}_{w^{-1}} U_{w^{-1}}$ so that
\begin{align*}
	W_\sigma(w g,w_l,\mu,\delta,\psi_{\alpha_w}) =& \int_{U(\R)} I_{\mu,\delta,\sigma}(w_l u w g) \wbar{\psi_{\alpha_w}(u)} du \\
	=& \int_{U_{w^{-1}}(\R)} \int_{\wbar{U}_{w^{-1}}(\R)} I_{\mu,\delta,\sigma}(w_l u_a u_b w g) \wbar{\psi_I(u_b)} du_a \, du_b,
\end{align*}
since $\psi_{\alpha_w}(u_a u_b) = \psi_I(u_b)$.
Then conjugating $w^{-1} u_b w \mapsto u_b$ with $\psi_I(w^{-1} u_b w) = \psi_I(u_b)$ gives
\begin{align*}
	W_\sigma(w g,w_l,\mu,\delta,\psi_{\alpha_w}) =& \int_{U_w(\R)} \int_{\wbar{U}_{w^{-1}}(\R)} I_{\mu,\delta,\sigma}(w_l u_a w u_b g) \wbar{\psi_I(u_b)} du_a \, du_b.
\end{align*}

Now write $w_l u_b g = u^* y^* k^*$, then
\begin{align*}
	W_\sigma(w g,w_l,\mu,\delta,\psi_{\alpha_w}) =& \int_{U_w(\R)} \int_{\wbar{U}_{w^{-1}}(\R)} I_{\mu,\delta,\sigma}(w^{-1} u_a u^* y^*) \wbar{\psi_I(u_b)} \sigma(k^*) du_a \, du_b \\
	=& \int_{U_w(\R)} W_\sigma(u^* y^*, w^{-1}, \mu, \delta, \psi_0) \wbar{\psi_I(u_b)} \sigma(k^*) du_b,
\end{align*}
using $w_l w^{-1} \wbar{U}_{w^{-1}} w w_l = \wbar{U}_{w^{-1}}$ and $w_l w w_l=w^{-1}$.
Using the known properties, e.g. \eqref{eq:WhittArgToCharFE}, of the degenerate Whittaker function,
\begin{align*}
	W_\sigma(w g,w_l, \mu,\delta,\psi_{\alpha_w}) =& W_\sigma(I, w^{-1}, \mu, \delta, \psi_0) \int_{U_w(\R)} I_{\mu^{w^{-1}},\delta^{w^{-1}},\sigma}(w_l u_b g) \wbar{\psi_I(u_b)} du_b.
\end{align*}
Since the original integral converged absolutely, we know the $u_b$ integral converges absolutely on $\Re(\mu)=\epsilon\rho$, even at $\mu^{w^{-1}}$, and by analytic continuation, this equality holds wherever the $u_b$ integral converges.
In fact, the $u_b$ integral can be expressed in terms of lower-rank, non-degenerate Whittaker functions, and this is entire in $\mu$.
As in \cref{sect:MatrixDecomps}, the fully degenerate Whittaker function can be evaluated and, up to matrix factors of the form $\sigma(w), w \in W$, is given by a product of sums of quotients of gamma functions
\[ \int_{\R} (1+x^2)^{-\frac{1+\mu_i-\mu_j}{2}} \paren{\frac{1-ix}{\sqrt{1+x^2}}}^d dx = \mathcal{W}(0,\mu_i-\mu_j,d), \]
as in \eqref{eq:classWhittDef}, and the possible poles occur at $\mu_i-\mu_j \in -\N_0$ for some $i < j$.

Hence
\begin{align}
\label{eq:LimitJacquetInt}
	\int_{\wbar{U}_w(\R)} W_\sigma(w x t,w_l, \mu,\delta,\psi_{\alpha_w}) \wbar{\psi_I(x)} dx =& W_\sigma(I, w^{-1}, \mu, \delta, \psi_0) W_\sigma(t,\mu^{w^{-1}},\delta^{w^{-1}}),
\end{align}
and this holds wherever the $x$ integral converges, in particular, on the region $\Re(\mu^{w^{-1}}_i)>\Re(\mu^{w^{-1}}_j)$ for all $i < j$.

\section{The Strong Interchange of Integrals}
\label{sect:StrongIoI}

For the \StrongIoIC, we start by taking
\[ f_1(g) = \int_{i \mathfrak{a}_0^*(\Lambda)} f(\mu) W_\sigma(yg,\mu,\delta) \dspec\mu, \]
with $f(\mu)$ as in the \IoIC and $f_2(x) = \wbar{\psi_t(x)}$ in \eqref{eq:BesselInductiveForm2}, which is effectively a choice of coordinates on $\wbar{U}_w(\R)$ where $w=w_{r_1,\ldots,r_\ell}$.
Note that compared to the \IoIC, we have conjugated $x \mapsto txt^{-1}$ and substituted $ywtw^{-1} \mapsto y$.

We can also address the convergence using \eqref{eq:BesselInductiveForm2}:
The final argument of the Whittaker function is
\[ Y^+(ywx) = y\prod_{k=1}^{\ell-1} \prod_{m=\hat{r}_k+1}^{\hat{r}_{k+1}} \prod_{i=1}^{\hat{r}_k} \hat{y}_{m,i^{\hat{w}_k}}(u_{m,i}), \qquad \hat{w}_k = w_{r_1,\ldots,r_k}, \]
and modelling $W_\sigma(y,\mu,\delta)$ as roughly $p_\rho(y)$ for small $y$ (with, say, $\Re(\mu)=0$; the general case tends to be worse) means that without the extra decay coming from the $\mu$ integral, each $u_{m,j}$ integral would converge like
\[ \int_{\R} p_\rho\paren{\hat{y}_{m,j^{\hat{w}_k}}(u_{m,j})} \paren{1+u_{m,j}^2}^{\frac{m-j^{\hat{w}_k}-1}{2}} du_{m,j} = \int_{\R} \paren{1+u_{m,j}^2}^{-\frac{1}{2}} du_{m,j} = \infty. \]
On the other hand, the slight decay given by \cref{thm:IWPW} means the integrals all converge absolutely.

Since the $\wbar{U}_w(\R)$ integral converges absolutely (outside the $\mu$ integral), we can apply a smooth, dyadic partition of unity as follows:
Let $g_1,g_2:\R\to[0,1]$ be even and smooth with $g_1$ supported on $[-4,4]$ and $g_2$ supported on $[-4,-1]\cup[1,4]$ such that
\[ g_1(x) + \sum_{i=1}^\infty g_2\paren{x/2^i} = 1 \]
identically on $\R$.
We apply the partition of unity to each $u_{m,j}$ coordinate, taking $C_{m,j}=2^i$ for some $i \in \N$ or $C_{m,j}=1$ for the $g_1$ term.
We set $\norm{C} = \max_{m,j} C_{m,j}$ and pull the (now compactly supported) $u_{m,j}$ integrals inside the $\mu$ integral.

From the matrix decompositions and in particular \cref{lem:RowIwa}, we see that $\sigma(K(wx))$ is a product of $\sigma$ at some fixed Weyl elements and $\sigma\paren{h_1\paren{\arg\frac{1-iu_{m,j}}{\sqrt{1+u_{m,j}^2}}}}$, which we have assumed are diagonalized, so that the entries are just powers $\paren{\frac{1-iu_{m,j}}{\sqrt{1+u_{m,j}^2}}}^{a_{m,j}}$, $a_{m,j} \in \Z$ with $\abs{a_{m,j}} \le a_\sigma$ for some $a_\sigma \in \N_0$, the highest weight character occurring in the restriction of $\sigma$ to $SO(2)$.
(For $n \ge 3$, we have $a_\sigma \le \dim \sigma$, but for $n=2$, $\dim \sigma \le 2$ while $a_\sigma$ can be arbitrarily large.)

Lastly, the Whittaker function has super-polynomial (actually exponential) decay in $\norm{C}$ unless each coordinate of $Y^+(wx)$ is $\ll_\mu \norm{C}^\epsilon$, with polynomial dependence on $1+\norm{\mu}$.
We record this bound on the $C_{m,j}$ variables and Mellin expand the Whittaker function.
Recalling \eqref{eq:hatyYCoords}, the $i$-th $Y$-coordinate of $\hat{y}_{m,j^{\hat{w}_k}}(u_{m,j})$ is $\paren{1+u_{m,j}^2}^{\eta_{n,i,m,j^{\hat{w}_k}}/2}$, and then
\[ b(s)_{m,j} = -\frac{1}{2} \sum_{i=1}^{n-1} \eta_{n,i,m,j^{\hat{w}_k}} s_i-\frac{1}{2} \]
is the power of $(1+u_{m,j}^2)$ resulting from the Mellin expansion of the Whittaker function.
To simplify notation we define
\[ g_{m,j,s}(u_{m,j})=\frac{1-iu_{m,j}}{\sqrt{1+u_{m,j}^2}}^{a_{m,j}} (1+u_{m,j}^2)^{b(s)_{m,j}} \times \piecewise{g_1(x) & \If C_{m,j}=1, \\ g_2(x/C_{m,j}) & \Otherwise.} \]
Note that
\[ \frac{d^k}{du_{m,j}^k} g_{m,j,s}(u_{m,j}) \ll_k \paren{a_\sigma+\norm{s}}^k C_{m,j}^{-k} \]
and $\sqrt{1+u_{m,j}^2} \asymp C_{m,j}$ on the support of $g_{m,j}(u_{m,j})$.

We inductively define $\mathcal{I}_n(r,a,s,C,y,t,\alpha)$ by $\mathcal{I}_n((n),a,s,C,y,t,\alpha) = 1$ and for $r=(r_1,\ldots,r_\ell)$ a non-trivial (i.e. $\ell > 1$) partition of $n$,
\begin{equation}
\label{eq:StrongIoI}
\begin{aligned}
	& \mathcal{I}_n(r,a,s,C,y,t,\alpha) = \\
	& \int_{\R^{n-r_1}} \e{\sum_{i=1}^{n-r_1} \alpha_i u_{n,i} \prod_{j=1}^{i-1} \sqrt{1+u_{n,j}^2}-t_{n-r_1} u_{n,1}+\delta_{r_1=1} \frac{y_{r_1} u_{n,r_2}}{(1+u_{n,r_2}^2)\prod_{i=1}^{r_2-1}\sqrt{1+u_{n,i}^2}}} \\
	& \qquad \e{-\sum_{i=2}^{\ell-1} t_{n-\hat{r}_i} \frac{u_{n,\hat{r}_i-r_1} u_{n,\hat{r}_i-r_1+1}}{\sqrt{1+u_{n,\hat{r}_i-r_1}^2}} - \sum_{i=2}^\ell \sum_{j=1}^{r_i-1} y_{\hat{r}_{i-1}+j} \frac{u_{n,\hat{r}_i-r_1-j} u_{n,\hat{r}_i-r_1-j+1}}{\sqrt{1+u_{n,\hat{r}_i-r_1-j}^2}}} \\
	& \qquad \mathcal{I}_{n-1}\paren{\wtilde{r}, a, s, C, \wtilde{y}, \wtilde{t}, \wtilde{\alpha}} \prod_{j=1}^{n-r_1} g_{n,j,s}(u_{n,j}) du_{n,j},
\end{aligned}
\end{equation}
with $\wtilde{r}, \wtilde{y}, \wtilde{t}, \wtilde{\alpha}$ as in \eqref{eq:MatDecompIndybar}-\eqref{eq:MatDecompIndr}.

Finally, we have:
\begin{conj*}[Strong Interchange of Integrals]
\hypertarget{conj:StrongIoI}{}
Fix $r$ and the values of $a_{m,j} \in \Z$ for $j=1,\ldots,n-\hat{r}_k$, $n-\hat{r}_k+1 \le m \le n-\hat{r}_{k-1}$, $k=1,\ldots,\ell$ and $\Re(s_i)$ for $i=1,\ldots,n-1$.
Then for tuples $(C_{m,j})_{m,j}$, $\log_2 C_{m,j} \in \N_0$ satisfying
\[ \prod_{k=1}^{\ell-1} \prod_{m=\hat{r}_k+1}^{\hat{r}_{k+1}} \prod_{j=1}^{\hat{r}_k} C_{m,j}^{\eta_{n,i,m,j^{\hat{w}_k}}} \ll \norm{C}^\epsilon, \qquad i=1,\ldots,n-1, \]
the integral $\mathcal{I}_n(r,a,s,C,y,t,0)$ has super-polynomial decay in $\norm{C}$ for $\norm{C}$ larger than some fixed power of the norm of $(y,t,y^{-1},t^{-1},s, a_\sigma)$.
\end{conj*}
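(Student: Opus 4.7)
The plan is to prove the conjecture by induction on $n$, exploiting the recursive structure of $\mathcal{I}_n$ encoded in \eqref{eq:StrongIoI}. The trivial case $\ell=1$ (where $\mathcal{I}_n=1$) is the base, and the inductive step reduces the problem to establishing super-polynomial $\norm{C}$-decay for the outermost $(n-r_1)$-dimensional integral over $(u_{n,1},\ldots,u_{n,n-r_1})$, uniformly in the parameters $(\wtilde{y},\wtilde{t},\wtilde{\alpha})$ passed into $\mathcal{I}_{n-1}$. Since the inductive hypothesis gives super-polynomial decay for $\mathcal{I}_{n-1}$ only when $\norm{C}$ is large relative to the norm of $(\wtilde y, \wtilde t, \wtilde y^{-1}, \wtilde t^{-1}, s, a_\sigma)$, one has to control precisely how the lower-rank parameters depend on the new integration variables.

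First I would catalog the oscillating factors in the outer row: the linear term $-t_{n-r_1} u_{n,1}$, coupling terms of the form $c\,\frac{u_{n,j}u_{n,j+1}}{\sqrt{1+u_{n,j}^2}}$ with $c$ a coordinate of $y$ or $t$, and (when $r_1=1$) an isolated rational term in $u_{n,r_2}$. On the support of the dyadic cutoffs $\abs{u_{n,j}}\asymp C_{n,j}$, so whenever $C_{n,j}\gg 1$ the coupling factor is effectively $\pm u_{n,j+1}$ with a bounded correction, i.e.\ the phase becomes approximately linear in $u_{n,j+1}$ with a coefficient of size $\abs{c}$. The core technique is then repeated non-stationary phase integration by parts in a coordinate $u_{n,j^*}$ whose partial phase derivative is bounded below by a positive power of $\norm{C}$; each IBP gains a factor $1/\phi'$ and costs a derivative of the amplitude whose $L^\infty$-size is at most $(a_\sigma+\norm{s})/C_{m,j}$ times the ambient amplitude size $C_{m,j}^{2b(s)_{m,j}}$. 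The constraint $\prod_{k,m,j} C_{m,j}^{\eta_{n,i,m,j^{\hat{w}_k}}} \ll \norm{C}^\epsilon$ in the conjecture is exactly what guarantees that the Mellin-expanded Whittaker amplitude does not grow faster than $\norm{C}^\epsilon$, so any polynomial gain in $\norm{C}^{-1}$ from IBP survives.

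The main obstacle is the combinatorics of choosing the IBP direction and keeping the lower-rank parameters inductively controlled: the $\wtilde y, \wtilde t, \wtilde \alpha$ produced by \eqref{eq:MatDecompIndybar}--\eqref{eq:MatDecompInda} depend delicately on the $u_{n,j}$, and a naive IBP in one $u_{n,j}$ might inflate a coordinate of $\wtilde \alpha$ beyond what the inductive hypothesis allows. To handle this I would partition the $(C_{m,j})$-tuple into regimes according to the pattern of comparative sizes of the $C_{n,j}$ and the positions at which the constraint forces $C_{m,j}=1$; within each regime, exhibit either a single coordinate or a triangular linear combination of coordinates (whose Jacobian is bounded and whose effect on the amplitudes is absorbed into the $\norm{C}^\epsilon$ tolerance) in which the phase derivative is $\gtrsim \norm{C}^{\eta}$ for some $\eta=\eta(r)>0$, while simultaneously verifying that the resulting $\wtilde y, \wtilde t, \wtilde \alpha$ either remain bounded or are controlled by factors of $\norm{C}^{\epsilon}$ so that the inductive hypothesis still applies. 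The author's heuristic---that the system of equations forcing stationarity of all oscillating factors is overconstrained in $n$---asserts precisely that such a good IBP direction always exists; converting this heuristic into a rigorous regime-by-regime analysis, with bookkeeping for the interaction between the outer IBPs and the induced lower-rank integral, is the crux of the proof. Once a direction with derivative $\gtrsim \norm{C}^\eta$ is identified, $N$ applications of IBP yield a factor $\norm{C}^{-N\eta}$ which, combined with the inductive bound for $\mathcal{I}_{n-1}$, closes the induction for any prescribed $N$.
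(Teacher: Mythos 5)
This statement is a \emph{conjecture} in the paper, not a theorem; the author explicitly offers only heuristic justification (the verified $GL(2)$ and $GL(3)$ cases, plus a counting argument that the system of stationarity equations is likely overconstrained) and never proves it. So there is no paper proof to compare against, and the judgment must rest on whether your sketch would in fact settle an open conjecture. It would not, for two reasons.

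First, you have punted the actual content. You correctly identify that after dyadic localization the game is to exhibit, in every regime of comparative $C_{m,j}$-sizes, an integration-by-parts direction with phase derivative $\gtrsim \norm{C}^\eta$, and you candidly call this ``the crux.'' But that crux \emph{is} the conjecture: the paper's own heuristic (``the equations required to ensure a lack of oscillation likely form an overconstrained system'') is nothing more than a reason to hope such a direction exists, and your plan gives no mechanism for producing it. Restating the conjecture as ``there is always a good IBP direction'' and then deferring the regime-by-regime verification is not progress.

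Second, and more concretely, the induction on $n$ you propose is blocked by a difficulty the paper flags explicitly. The conjecture asserts super-polynomial $\norm{C}$-decay of $\mathcal{I}_n(r,a,s,C,y,t,\alpha)$ only at $\alpha=0$. But the recursion \eqref{eq:StrongIoI} expresses the outer integral in terms of $\mathcal{I}_{n-1}(\wtilde{r},a,s,C,\wtilde y,\wtilde t,\wtilde\alpha)$ with $\wtilde\alpha$ given by \eqref{eq:MatDecompInda}, which is generically nonzero and depends on the $u_{n,j}$. Immediately after the conjecture the paper warns: ``It is important to note that we are not conjecturing that the inner integral $\mathcal{I}(\wtilde{r},a,s,C,\wtilde y,\wtilde t,\wtilde\alpha)$ has super-polynomial decay in the corresponding coordinates of $C$; having nonzero $\alpha$ will interfere.'' So your step ``combined with the inductive bound for $\mathcal{I}_{n-1}$, closes the induction'' invokes an inductive hypothesis that does not apply to the object you actually face. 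Any genuine proof would need either a strengthened statement uniform in $\alpha$ (with explicit dependence strong enough to survive the substitution $\alpha\mapsto\wtilde\alpha$) or a non-inductive, global analysis of the full phase --- and producing either of those is precisely what is open.
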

It is important to note that we are not conjecturing that the inner integral $\mathcal{I}\paren{\wtilde{r}, a, s, C, \wtilde{y}, \wtilde{t}, \wtilde{\alpha}}$ has super-polynomial decay in the corresponding coordinates of $C$; having nonzero $\alpha$ will interfere.

\begin{proof}[Proof of \cref{prop:StrongIoIConsequences}]
The \IoIC follows by applying the decomposition (of the $\wbar{U}_w(\R)$ integral into linear combinations of $\mathcal{I}(r,a,s,C,y,t,\alpha)$, where $w=w_{r_1,\ldots,r_\ell}$) described above to get something of the form
\begin{align*}
	&p_{\rho-\rho^w}(t) \wtilde{K}_w(ywt^{-1}w^{-1},t,\Lambda,\mu,\delta,\sigma)_{i,j} = \\
	& \lim_{R \to \infty} \sum_{\norm{C} \le R} \sum_{a,b,i',j'} d_{i,j,a,b,i',j'} \int_{\Re(s)=\mathfrak{s}} \what{W}_{n,\sigma}(s,\mu,\delta)_{i',j'} \hat{p}_{-s}(y) \mathcal{I}_n(r,a,s,C,y,t,0) \frac{ds}{(2\pi i)^{n-1}},
\end{align*}
for appropriate coefficients $d_{i,j,a,b,i',j'} \in \C$ and $\mathfrak{s}$ is sufficiently large compared to $\Re(\mu)$.
The \StrongIoIC implies we can freely interchange the $R$ limit with the $\mu$ in the \IoIC.
Note that differentiating $\mathcal{I}(r,a,s,C,y,t,\alpha)$ in $y$, $t$ and $s$ produces linear combinations of integrals of the same form having different values for $a$, $b$ and $d_{i,j,a,b,i',j'}$, while the trivial bound on
\[ \mathcal{I}_n(r,a,s,C,y,t,0) \]
is clearly polynomial in $s$, hence $\wtilde{K}_w(y,t,\Lambda,\mu,\delta,\sigma)$ is polynomially bounded in $\norm{\mu}$.

Furthermore, the \AnContC follows because the Whittaker function is entire in $\mu$ and our construction of $\wtilde{K}_w(y,t,\Lambda,\mu,\delta,\sigma)$ above is visibly independent of $\Lambda$.
\end{proof}

In the integral for the Bessel function, taking $y=0$ (in the complex exponential), $w=w_l$, and replacing the Whittaker function with the power function gives the Jacquet integral for the Whittaker function, hence
\begin{conj*}[Jacquet-Whittaker Direct Continuation]
\hypertarget{conj:JWDC}{}
Fix the values of $a_{m,j} \in \Z$ for $j=1,\ldots,n-\hat{r}_k$, $n-\hat{r}_k+1 \le m \le n-\hat{r}_{k-1}$, $k=1,\ldots,\ell$.
Suppose $\abs{\Re(\mu_i)} \le \delta$ for $i=1,\ldots,n-1$ and some fixed $\delta > 0$.
Then for tuples $(C_{m,j})_{m,j}$, $\log_2 C_{m,j} \in \N_0$ and some fixed $\epsilon > 0$, we have
\[ \mathcal{I}_n((1,\ldots,1),a,\hat{\mu},C,0,t,0) \ll \norm{C}^{-\epsilon} \]
for $\norm{C}$ larger than some fixed power of the norm of $(t,t^{-1},\mu, a_\sigma)$.
\end{conj*}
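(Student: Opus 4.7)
The plan is to proceed by induction on $n$, exploiting the recursive structure in the definition \eqref{eq:StrongIoI} of $\mathcal{I}_n$. For $n = 2$, the integral collapses to a one-dimensional oscillatory integral
\[ \mathcal{I}_2((1,1),a,\hat{\mu},C,0,t,0) = \int_{\R} g_{2,1,s}(u)\, \e{-t_1 u}\, du, \]
and repeated integration by parts in $u$ yields the bound $\abs{\mathcal{I}_2} \ll_N (\abs{t_1}\cdot C_{2,1})^{-N}$ for any $N$. The hypothesis that $\norm{C}$ exceeds a fixed power of $\norm{(t,t^{-1},\mu,a_\sigma)}$ then converts this into the required $\norm{C}^{-\epsilon}$ decay.

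For the inductive step, observe that with $y=0$ the formulas \eqref{eq:MatDecompIndybar}--\eqref{eq:MatDecompInda} force $\wtilde{y}=0$ and $\wtilde{\alpha}=0$, so the inner factor $\mathcal{I}_{n-1}(\wtilde{r},a,s,C,0,\wtilde{t},0)$ is again a long-Weyl integral covered by the inductive hypothesis. When $\norm{C}$ is attained at an inner row, the bound is inherited directly, provided one verifies that $(\wtilde{t},\wtilde{t}^{-1})$ remain polynomially controlled by $(t,t^{-1})$ and the outer variables $u_{n,j}$ on the support of the cutoffs. It remains to treat the case where $\norm{C}$ is attained at an outer coordinate $C_{n,k}$. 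Here one integrates by parts in the outer integral against the phase
\[ \phi(u_n) = -t_{n-1}u_{n,1} - \sum_{i=2}^{n-1} t_{n-i}\frac{u_{n,i-1}u_{n,i}}{\sqrt{1+u_{n,i-1}^2}}, \]
which has a chain structure: each $u_{n,k}$ appears in at most two phase terms, and $u_{n,n-1}$ appears in only one, namely $-t_1 u_{n,n-2} u_{n,n-1}/\sqrt{1+u_{n,n-2}^2}$. Integrating by parts first in $u_{n,n-1}$ gains $(\abs{t_1}C_{n,n-1})^{-N}$ whenever $\abs{u_{n,n-2}}\gtrsim 1$, and more generally, integration by parts in $u_{n,k}$ for $k \ge 2$ extracts a factor involving $\partial_{u_{n,k}}\phi \supset -t_{n-k}u_{n,k-1}/\sqrt{1+u_{n,k-1}^2}$, of size $\asymp\abs{t_{n-k}}$ whenever $\abs{u_{n,k-1}} \gtrsim 1$.

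The main obstacle is the degenerate regime where some $\abs{u_{n,j}}$ is small (i.e.\ $C_{n,j}=1$), so the corresponding phase derivative degenerates. One strategy is to cascade: on the cell where $\abs{u_{n,j}} \lesssim 1$ the integral over $u_{n,j}$ is $\BigO{1}$, and one seeks decay by integrating by parts in the nearest neighbor whose cutoff is large. A canonical ordering---processing the $u_{n,k}$ from $k=n-1$ down to $k=1$, passing small-cutoff variables directly to the next step---should allow one to extract a factor of $(\abs{t}\cdot C_{n,k})^{-N}$ from some $k$, and the hypothesis $\norm{C} \gg \norm{(t,t^{-1})}^A$ converts this into $\norm{C}^{-\epsilon}$. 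Polynomial factors produced by differentiating the weights $g_{n,j,s}$, of size $\BigO{(\norm{s}+a_\sigma)^N}$, are absorbed by the hypothesis that $\norm{C}$ exceeds a fixed power of $\norm{(\mu,a_\sigma,t,t^{-1})}$. Coordinating the cascading argument uniformly across all configurations of the cutoff vector $C$, and ensuring that the nested recursion in $n$ does not accumulate losses in the $(t,\mu,a_\sigma)$-dependence, is the technical heart of the proof.
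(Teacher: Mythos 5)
This statement is posed in the paper as a \emph{conjecture}, not a theorem. The paper offers no proof of the Jacquet-Whittaker Direct Continuation; it only remarks that it is ``very nearly a special case'' of the Strong Interchange of Integrals Conjecture (obtained by taking $y=0$ and $w=w_l$), and the only justification given for the Strong IoI is the informal heuristic that the number of oscillating factors grows faster in $n$ than the number of integration variables, so the system is ``likely overconstrained.'' There is therefore no paper argument to compare your sketch against. Your proposal is a strategy outline, and you acknowledge as much by identifying the ``cascading'' step and the uniformity in $C$ as ``the technical heart of the proof'' without carrying it out.

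Within the sketch itself, the base case and the structural observation are correct: with $y=0$, \eqref{eq:MatDecompIndybar}--\eqref{eq:MatDecompInda} force $\wtilde{y}=0$ and $\wtilde{\alpha}=0$, so the inner factor $\mathcal{I}_{n-1}$ is again of JWDC type and the obstruction the paper flags for the Strong IoI (that nonzero $\alpha$ destroys decay of the inner integral) does not arise. But there is a gap beyond the one you acknowledge. When you integrate by parts in an outer variable $u_{n,k}$, the product rule does not only hit the phase $\phi$ and the cutoff $g_{n,k,s}$: the inner factor $\mathcal{I}_{n-1}(\wtilde{r},a,s,C,0,\wtilde{t},0)$ depends on $u_{n,k}$ through $\wtilde{t}$ (see \eqref{eq:MatDecompIndt}, where the entries of $\wtilde{t}$ involve ratios $\sqrt{1+u_{n,j}^2}/\sqrt{1+u_{n,j-1}^2}$), so you must also bound $\partial_{u_{n,k}}\mathcal{I}_{n-1}$. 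The inductive hypothesis controls $\mathcal{I}_{n-1}$ itself but says nothing about its $\wtilde{t}$-derivatives, and the bounds you need must be uniform in $C$, $s$ and $a_\sigma$ in a way that composes correctly through the nested recursion. To close the induction you would need to strengthen the inductive statement to include derivative bounds in $t$, verify the strengthened statement in the base case, and re-check that all the polynomial losses still track; none of this is automatic, and it is not addressed in the proposal. This is a genuine obstacle, separate from the stationary/degenerate-cell analysis you defer, and until both are resolved the statement remains, as the paper labels it, a conjecture.
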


\section{The Asymptotic Analysis}
\label{sect:Asymptotics}
Assume the \IoI, \AnCont and \DEPS Conjectures; we want to prove the \Asymps.
The Weyl invariant of part 1 follows by \cref{prop:IoIConsequences2}, the $W_w$-invariance of $J_w(y,\mu,\delta)$ and the holomorphy of both $K_w(y,\mu,\delta)$ and $J_w(y,\mu,\delta)$; the meromorphy follows from part 2.
For the remaining parts, we first prove the informal Asymptotics Conjecture; i.e. we derive the relation between the asymptotics of the Bessel functions and the Whittaker functions.
This is primarily \cref{lem:InformalAC}, below.
Then we determine the coefficients in terms of Shahidi's local coefficients in \cref{sect:CwComp}.
We generally view $GL(n-j) \subset GL(n)$ by the upper-left embedding.

\subsection{Proof of the Informal Asymptotics Conjecture}
Let $w=w_{r_1,\ldots,r_\ell} \in W^\text{rel}$ where $r_1+\ldots+r_\ell=n$.
For convenience of notation, we extend (for this section only) by the Iwasawa decomposition for $x \in U(\R)$, $y \in Y^+$, $k \in K$, and any character $\psi$ of $U(\R)$,
\[ \psi(xyk) = \psi(x), \quad p_{S,\mu}(xyk) = p_{S,\mu}(y), \quad \hat{p}_{S^c,-s_{S^c}}(xyk) = \hat{p}_{S^c,-s_{S^c}}(y), \quad \sigma(xyk) = \sigma(k), \]
recalling $p_{S,\mu}$ and $\hat{p}_{S^c,-s_{S^c}}$ from \cref{sect:WhittakerMellin}.

Define a test function $f:M_n \to \C$ by
\[ f(\mu)=f_{\mathcal{S}}(\mu) f_{\mathcal{S}^c}(\mu), \qquad f_{\mathcal{S}^c}(\mu) := \prod_{i \in \mathcal{S}^c} f_0(\mu_{n-i}), \qquad f_{\mathcal{S}}(\mu) := \prod_{i \in \mathcal{S}} f_1(\hat{\rho}_i,\hat{\mu}_i), \]
where $\mathcal{S}=\set{\hat{r}_i \setdiv i=1,\ldots,\ell}$,
\[ f_0(t) = 2\sqrt{\pi} R_0 e^{R_0^2 t^2}, \qquad f_1(u,t) = \frac{R_1 e^{t^2}}{u-R_1 t}, \qquad R_0 > 1, \qquad R_1 > 4+\frac{1}{\epsilon}. \]

Let $\beta \in M_n^0$, then we apply the \IoIC to the test function $f(\mu-\beta)$, we have
\begin{align*}
	\mathcal{I} :=& \int_{\wbar{U}_w(\R)} \int_{M_n^0} f(\mu) W_\sigma(y w x y', \mu+\beta,\delta) \frac{\dspec\mu}{(2\pi i)^{n-1}} \wbar{\psi_I(x)} dx \\
	&= \int_{M_n^0} f(\mu) K_w(y,\mu+\beta,\delta) W_\sigma(y', \mu+\beta,\delta) \frac{\dspec\mu}{(2\pi i)^{n-1}}.
\end{align*}
We will compare asymptotics as $R_1 \to \infty$ with
\[ y_j = \sgn(y_j) e^{-R_1^2}, \qquad j \in \mathcal{S} \]
in the two expressions for $\mathcal{I}$, and then take the limit as $R_0 \to \infty$.

\begin{lem}
\label{lem:InformalAC}
Let
\[ T = p_{(1+R_1^{-1})\rho}(y) \prod_{i\in\mathcal{S}} y_i^{\hat{\beta}_i} \sgn(y_i)^{\hat{\delta}_i}, \]
the leading term as $R_1 \to \infty$ is proportional to $T$.
More precisely,
\begin{align*}
	T^{-1} \mathcal{I} =& \int_{M_{\mathcal{S}}^0} f_{\mathcal{S}^c}(\mu) C_w^*(\mu+\beta,\delta) W_\sigma(y', \mu+\beta,\delta) \frac{\dspec\mu}{(2\pi i)^{n-\ell}} + \BigO{R_1^{-1}} \\
	=& \int_{M_{\mathcal{S}}^0} f_{\mathcal{S}^c}(\mu) \wtilde{C}_w(\mu+\beta, \delta) W_\sigma(y', \mu+\beta, \delta) \frac{\dspec\mu}{(2\pi i)^{n-\ell}}+\BigO{R_1^{-1}},
\end{align*}
where $\wtilde{C}_w(\beta, \delta)$ is the scalar determined by
\begin{align}
\label{eq:CtildeDef}
	W_\sigma(I, w^{-1}, \mu^{w_l}, \delta^{w_l},\psi_0) = \wtilde{C}_w(\mu, \delta) M_\sigma(w^{-1},\mu^{w_l}, \delta^{w_l}).
\end{align}
\end{lem}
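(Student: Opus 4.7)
The plan is to evaluate the two forms of $\mathcal{I}$ asymptotically as $R_1\to\infty$, extracting in each case the leading piece proportional to $T$ through a contour shift that picks up the simple poles of the $f_1$-factors in $f_\mathcal{S}$. The two stated identities then correspond to the Bessel-side and Whittaker-side analyses, and the $O(R_1^{-1})$ rate arises from the fact that on the shifted contour the $f_1$-factor has size $O(1)$ rather than $\Theta(R_1)$ at the pole.

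For the Bessel-function expression I would first invoke the \DEPSC to write
\[ K_w(y,\mu+\beta,\delta) = \sum_{w'\in W/W_w} C_w^*((\mu+\beta)^{w'},\delta^{w'})\, J_w^*(y,(\mu+\beta)^{w'},\delta^{w'}), \]
and expand each $J_w^*$ as a Frobenius series. The tail (terms with some $m_i\ge 1$) carries extra factors of $|y_{\hat r_i}|^2 = e^{-2R_1^2}$ and contributes negligibly by the coefficient bound of part 6 of the \DEPSC. For the leading Frobenius term $\chi_{\delta^{w'}}(y)\, p_{\rho+(\mu+\beta)^{w'}}(y)$, I shift each $\hat\mu_{\hat r_i}$-contour across the simple pole of $f_1(\hat\rho_{\hat r_i}, \hat\mu_{\hat r_i})$ at $\hat\mu_{\hat r_i}=\hat\rho_{\hat r_i}/R_1$. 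In the trivial coset the residue $-e^{\hat\rho_{\hat r_i}^2/R_1^2}|y_{\hat r_i}|^{\hat\rho_{\hat r_i}/R_1+\hat\beta_{\hat r_i}}$ combines with $|y_{\hat r_i}|^{\hat\rho_{\hat r_i}}$ from the Frobenius power function to reproduce exactly the $|y_{\hat r_i}|^{(1+R_1^{-1})\hat\rho_{\hat r_i}+\hat\beta_{\hat r_i}}$ piece of $T$, while the shifted-contour remainder contributes at order $O(R_1^{-1}T)$. For $w' \notin W_w$ the partial sum $(\hat\rho)^{w'}_{\hat r_k}$ is strictly less than $\hat\rho_{\hat r_k}$ for some $k$, so a joint choice of contour shifts (shifting right where the coefficients of $(\hat\mu)^{w'}_{\hat r_k}$ in $\hat\mu_{\hat r_k}$ permit, and left otherwise) produces a net contribution smaller than $T$ by an exponential factor $e^{-\epsilon R_1}$.

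For the Whittaker-function expression I would use \eqref{eq:WhittYwAsymp} to replace $W_\sigma(ywxy',\mu+\beta,\delta)$ by $p_{\rho+(\mu+\beta)^{w_l}}(y)\chi_{\delta^{w_l}}(y)\, W_\sigma(wxy',w_l,\mu+\beta,\delta,\psi_{\alpha_w})$ as $y\to 0$ in $Y_w$, with subleading terms carrying additional $|y|^2$-factors. The $\wbar U_w(\R)$-integration collapses by \eqref{eq:LimitJacquetInt} to $W_\sigma(I,w^{-1},\mu+\beta,\delta,\psi_0)\, W_\sigma(y',(\mu+\beta)^{w^{-1}},\delta^{w^{-1}})$; combining \eqref{eq:WhitFE} with the definition \eqref{eq:CtildeDef} of $\wtilde C_w$ then converts the prefactor into $\wtilde C_w(\mu+\beta,\delta)\, W_\sigma(y',\mu+\beta,\delta)$. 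The same contour-shift analysis (now with exponents transformed by $\hat\mu^{w_l}_{\hat r_i}=-\hat\mu_{n-\hat r_i}$) extracts the $T$-prefactor and yields the second identity.

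The main obstacle is the uniform control of the errors. Absolute convergence of the shifted integrals requires the Gaussian decay $e^{\hat\mu_i^2}$ built into $f_1$ together with the polynomial-in-$\norm{\mu}$ bound on $K_w$ (from the \IoIC) and the Paley-Wiener decay of $W_\sigma$ from \cref{thm:IWPW}; the Frobenius tail is controlled by part 6 of the \DEPSC; and the delicate handling of the $w'\ne I$ terms on the Bessel side rests on the strict inequality $(\hat\rho)^{w'}_{\hat r_k}<\hat\rho_{\hat r_k}$ for $w'\notin W_w$, which guarantees the requisite exponential separation between the trivial-coset residue and the others.
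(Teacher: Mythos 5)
Your strategy is the one the paper follows (expand via \eqref{eq:KwPS} and the Frobenius series on one side, use \eqref{eq:WhittYwAsymp}, \eqref{eq:LimitJacquetInt} and \eqref{eq:CtildeDef} on the other, extract the leading $T$-power through the $f_1$-pole, and use the tools you list to control errors), but two steps as you describe them do not go through and are precisely where the paper's proof spends its effort.

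First, you propose to replace $W_\sigma(ywxy',\mu+\beta,\delta)$ by its $y\to 0$ asymptotic from \eqref{eq:WhittYwAsymp} \emph{inside} the $\wbar U_w(\R)$- and $\mu$-integrals, and then collapse the $x$-integral by \eqref{eq:LimitJacquetInt}. The asymptotic in \eqref{eq:WhittYwAsymp} is pointwise in $g=wxy'$, and the error does not carry a uniform extra power of $|y|$ independent of $x$ and $\mu$; since the $x$-integral is only conditionally convergent one cannot interchange the $y\to 0$ limit with it (or with the $\mu$-integral) without a uniform estimate. The paper achieves this control by never passing to the pointwise asymptotic: it works throughout with the Mellin expansion \eqref{eq:WhittContShift}, choosing the contours $\Re(\mu)=\alpha\rho_S$ and $\Re(s_{S^c})=-\alpha\hat\rho_{S^c}$, verifying $\Re(s_i+\hat\mu^{w_c}_i)<0$ along the shift, and using \cref{lem:IwaBounds} to bound $p_{S,\mu}\hat p_{S^c,-s_{S^c}}(wxy')$ so that the $x$-integral converges with at most a polynomial pole in $R_1$. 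Only after the dominant $(S\supset\mathcal S, w_b\in W_w)$ term is isolated does it identify that term with the asymptotic form; the Mellin expansion is the device that makes the informal replacement rigorous, and your proposal omits it.

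Second, the specific asymptotic form you quote, $p_{\rho+(\mu+\beta)^{w_l}}(y)\chi_{\delta^{w_l}}(y)\,W_\sigma(wxy',w_l,\mu+\beta,\delta,\psi_{\alpha_w})$, is the one valid when $\Re(\mu_i-\mu_j)>0$ for $i<j$. But the $f_1$-poles sit at $\hat\mu_{\hat r_i}=R_1^{-1}\hat\rho_{\hat r_i}>0$, and the contour shift that extracts $T$ lands in the \emph{opposite} region $\Re(\mu_j-\mu_i)>0$ for $i<j$; there the dominant small-$y$ term is $p_{\rho+\mu}(y)\chi_\delta(y)\,M_\sigma(w_l,\mu,\delta)\,W_\sigma(wxy',w_l,\mu^{w_l},\delta^{w_l},\psi_{\alpha_w})$. (The paper flags this reversal explicitly.) If you proceed with your form and collapse by \eqref{eq:LimitJacquetInt}, the prefactor is $p_{\rho+\mu^{w_l}}(y)$ rather than $p_{\rho+\mu}(y)$, which does not match $T$ on $Y_w$, and the coefficient produced by \eqref{eq:CtildeDef} is $\wtilde C_w(\mu^{w_l},\delta^{w_l})$ rather than $\wtilde C_w(\mu,\delta)$. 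The $M_\sigma(w_l,\mu,\delta)$ factor is essential: together with the $M_\sigma(w^{-1},\mu^{w_l},\delta^{w_l})$ from \eqref{eq:CtildeDef} and the identity $w_lw^{-1}=ww_l$ it recombines to $M_\sigma(ww_l,\mu,\delta)W_\sigma(y',\mu^{ww_l},\delta^{ww_l})=W_\sigma(y',\mu,\delta)$, which is how the lemma's stated coefficient $\wtilde C_w(\mu+\beta,\delta)$ arises. Your Bessel-side sketch is essentially the paper's power-series argument, but note that the $\hat\mu_{\hat r_i}$ cannot be shifted independently — $\mu$ lives on $M_n^0$, so the shifts must be coordinated; the paper does this by shifting along $\alpha\rho_S$ and then reparameterizing $\mu^{w_b^{-1}}=\mu_{\mathcal S}+\sum t_i(e_{\hat r_i}-e_{\hat r_i+1})$ before taking residues in the $t_i$.
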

The lemma implies the asymptotics as follows:
Equating the main terms, we may then send $R_0 \to \infty$ and divide by $W_\sigma(y', \beta, \delta)$ (which is not identically zero for $y'$ sufficiently small) to conclude
\begin{align*}
	C^*_w(\beta,\delta) = \wtilde{C}_w(\beta, \delta),
\end{align*}
for all $\delta$ and by analytic continuation all $\beta \in M_n$.

We will now proceed to prove the lemma, but first a simple lemma:
\begin{lem}
\label{lem:IwaBounds}
Let $x \in \wbar{U}_w(\R)$.
If we write the Iwasawa decomposition of $w x$ as $wx=x^* y^* k^*$ then
\begin{enumerate}
\item for $j=1,\ldots,n-1$, we have $x^*_{j,j+1} \ll_n p_{-\rho}(y^*)$,
\item for any $\alpha \in [-1,1]^{n-1}$, we have $y_1^{* \alpha_1} \cdots y_{n-1}^{* \alpha_{n-1}} \le p_{-2\rho}(y^*)$.
\end{enumerate}
\end{lem}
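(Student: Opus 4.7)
The plan is to proceed by induction on $n$, using the explicit recursive Iwasawa decomposition developed in \cref{sect:MatrixDecomps}. The base case $n=1$ is vacuous since $\wbar{U}_w(\R) = \set{I}$. For the inductive step, peel off the bottom row of $x \in \wbar{U}_w(\R)$ as in \eqref{eq:xbConjDecomp}, reducing the Iwasawa decomposition of $wx$ to that of a matrix in $GL(n-1,\R)$ together with a single-row contribution treated by \cref{lem:RowIwa}. The latter produces the factor $y^\dagger = \hat{y}_{n,1}(u_1) \cdots \hat{y}_{n,n'}(u_{n'})$ together with upper-unipotent pieces $u^*_c, u^*_d, x^\dagger_c$ whose entries are the explicit rational functions of the $u_j$ recorded in that lemma.

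For part 2, the plan is to observe via \eqref{eq:hatyYCoords} that each diagonal entry of $\hat{y}_{n,j}(u)$ is $(1+u^2)^{\eta_{n,i,j}/2}$ with $\eta \in \set{-2,-1,0,1}$, and that $\eta_{n,\cdot,j}$ has at most two nonzero entries per column. Iterating, each $y^*_{i,i}$ decomposes as a product of such $(1+u_{m,j}^2)^{\eta/2}$ factors (modified by $y_c^*$ from the inductive step, handled by hypothesis). The claim then reduces to verifying, for each coordinate $u_{m,j}$ independently, that
\[ \sum_i \alpha_i \eta_{m,i,j^{\hat{w}_k}} \leq \sum_i 2\hat{\rho}_i \cdot \abs{\eta_{m,i,j^{\hat{w}_k}}}, \]
which follows from $\abs{\alpha_i}\leq 1$ and $\hat{\rho}_i = i(n-i)/2 \ge 1$ by a short case analysis on the structure of $\eta$.

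For part 1, take $\alpha = 0$ in part 2 to obtain $p_{-2\rho}(y^*) \geq 1$, hence $p_{-\rho}(y^*) \geq 1$, so it suffices to establish $\abs{x^*_{j,j+1}} \ll_n 1$. Each superdiagonal entry of $x^*$, assembled via \eqref{eq:xbConjDecomp} and \cref{lem:RowIwa}, is expressible as a finite sum of terms of the form $\pm u_i u_j/\prod_l \sqrt{1+u_l^2}$ times a product of $(1+u_l^2)^{\pm 1/2}$ factors coming from the conjugation $y^\dagger x_c^* y^{\dagger-1}$. Each individual building block $u_i u_j/\prod_l \sqrt{1+u_l^2}$ is bounded in absolute value by $1$, and the residual scaling from the conjugation is bounded by (and in fact matches) the diagonal structure already absorbed into $y^*$, so the inductive hypothesis closes the loop.

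The main obstacle is tracking the accumulation of the $(1+u^2)^{\pm 1/2}$ factors across the inductive step, especially in the conjugation $y^\dagger x_c^* y^{\dagger-1}$ which can scale entries both up and down; the explicit formulas in \cref{lem:RowIwa} and \eqref{eq:xbConjDecomp}, however, make this a finite bookkeeping exercise rather than a conceptual difficulty.
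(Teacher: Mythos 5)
Your plan mirrors the paper's own proof: write $y^*$ as a product of the factors $\hat{y}_{m,j}(u)$ produced by the recursive decomposition and argue factor-by-factor, bounding $x^*_{j,j+1}$ via the explicit formulas of \cref{lem:RowIwa} and \eqref{eq:xbConjDecomp}. Part 1 is on the right track. For part 2, however, there are two problems.

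First, your displayed reduction is not what is needed. Since $y_i = (1+u^2)^{\eta_i/2}$ and $p_{-2\rho}(\hat{y}_{m,j}) = \prod_i y_i^{-2\hat{\rho}_i}$, the target $\prod_i y_i^{\alpha_i} \le p_{-2\rho}(\hat{y}_{m,j})$ reads
\[ \sum_i \alpha_i \eta_i \le -2\sum_i \hat{\rho}_i \eta_i, \]
with a minus sign and no absolute values. Your inequality $\sum_i \alpha_i \eta_i \le 2\sum_i \hat{\rho}_i |\eta_i|$ is strictly weaker (some $\eta_i$ are positive) and does not yield the lemma.

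Second, and more seriously, the corrected per-factor claim is false when $j=m-1$, and part 2 as stated is false. Take $n=3$, $w=w_l$, $x = I + u E_{1,2}$. One computes $y^* = \diag\bigl(1,(1+u^2)^{-1/2},(1+u^2)^{1/2}\bigr)$, so $y^*_1 = (1+u^2)^{1/2}$, $y^*_2 = (1+u^2)^{-1}$, $p_{-2\rho}(y^*) = 1+u^2$; with $\alpha=(1,-1)$ this gives $(y^*_1)^{\alpha_1}(y^*_2)^{\alpha_2} = (1+u^2)^{3/2} > 1+u^2$ for $u\ne 0$. The paper's own proof stops after recording $p_{-\rho}(\hat{y}_{m,i}) \ge \sqrt{1+x^2}$ and $|\eta|\le 2$, but these facts give only $\prod_i y_i^{\alpha_i} \le (1+x^2)^{\frac12\sum_i|\eta_i|}$, which can exceed $p_{-2\rho}(\hat{y}_{m,i}) = (1+x^2)^{m-i}$ when $m-i=1$; this is exactly the gap you would inherit.

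What is true (and suffices for the uses in \cref{sect:Asymptotics}, where the exponents are $O(R_1^{-1})$) replaces $p_{-2\rho}$ by $p_{-4\rho}$. Per factor: $\prod_i y_i^{\alpha_i} \le (1+u^2)^{\frac12\sum_i|\eta_i|} \le (1+u^2)^2 \le (1+u^2)^{2(m-j)} = p_{-4\rho}(\hat{y}_{m,j})$. Or, without any decomposition, set $P_i = y^*_{1,1}\cdots y^*_{i,i}$; the bottom-right $k\times k$ minor of $(wx)(wx)^T$ is a sum of squares of $k\times k$ minors of the last $k$ rows of $wx$, one of which equals $\pm1$ because $x$ is triangular unipotent, so $P_i\le 1$; then $\log y^*_j = 2\log P_j - \log P_{j-1} - \log P_{j+1}$ and $\log p_{-\rho}(y^*) = -\sum_i \log P_i$ give $\sum_j |\log y^*_j| \le 4\sum_i|\log P_i| = \log p_{-4\rho}(y^*)$, and the constant $4$ is sharp. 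You should prove (and use) the $p_{-4\rho}$ bound; the stated $p_{-2\rho}$ version cannot be proved by this route.
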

\begin{proof}
We may use the coordinates on $x$ defined by the substitutions throughout \cref{sect:MatrixDecomps}, and we note that in those coordinates $y^*$ is a product of the matrices $\hat{y}_{m,i}(x)$, $i<m\le n$ of \cref{lem:RowIwa}.
So it's enough to prove part 2 for such a $\hat{y}_{m,i}(x)$, and we see that
\[ p_{-\rho}\paren{\hat{y}_{m,i}(x)} = \paren{1+x^2}^{\frac{\rho_i-\rho_m}{2}} = \paren{1+x^2}^{\frac{m-i}{2}} \ge \sqrt{1+x^2}, \]
while (recall \eqref{eq:hatyYCoords}) the $j$-th $Y$-coordinate of $\hat{y}_{m,i}(x)$ is $\paren{1+x^2}^{\eta_{n,j,m,i}/2}$ and $\abs{\eta_{n,j,m,i}} \le 2$.

Similarly, $x^*_{j,j+1}$ is a sum of terms appearing in the complex exponential in \eqref{eq:BesselInductiveForm2} (or possibly deeper in the inductive definition).
Since each $x$ coordinate $u_{n,i}$ appears in a numerator as either $u_{n,i}$ or $\sqrt{1+u_{n,i}^2}$ (possibly deeper in the inductive definition through $\wtilde{y},\wtilde{t}$ or $\wtilde{\alpha}$) and only ever once in a single summand, we immediately have part 1.
\end{proof}

\subsubsection{The integral transform side}
For $S \subset [n-1]$, define $\rho_S \in M_n$ and $\hat{\rho}_{S^c}$ as follows:
If $i_1,i_2 \in S\cup\set{0,n}$ with $i_1 < i_2$ and $i_1 < i_3 < i_2 \Rightarrow i_3 \notin S$, define
\[ \rho_{S, i_3}=\frac{\hat{\rho}_{i_2}-\hat{\rho}_{i_1}}{i_2-i_1} = \frac{n-i_1-i_2}{2} \]
for all $i_1 < i_3 \le i_2$; in particular, we have $\hat{\rho}_{S,\hat{r}_i} = \hat{\rho}_{\hat{r}_i}$ for all $i =1, \ldots, \ell-1$.
Define $\hat{\rho}_{S^c}=(\hat{\rho}_{i'_1},\ldots,\hat{\rho}_{i'_{n-1-q}})$, where $S^c=\set{i'_1,\ldots,i'_{n-1-q}}$ in increasing order.
Note that if $\Re(\mu)=\alpha \rho_S$, then $\Re(\hat{\mu}_i) = \alpha \hat{\rho}_i$ for all $i \in S$ and any $\alpha \in \R$, and if also $\Re(s_{S^c})=-\alpha\hat{\rho}_{S^c}$, then
\begin{align}
\label{eq:TotalYPower}
	\abs{p_{S,\mu}(y) \hat{p}_{S^c,-s_{S^c}}(y)} = p_{\alpha\rho}(y).
\end{align}

Applying \eqref{eq:WhittContShift}, we have
\begin{align*}
	\mathcal{I} =& \sum_{S \subset [n-1]} \sum_{w_b \in W/W_S} \int_{\wbar{U}_w(\R)} \psi_y(w x y') p_\rho(y w x y') \int\limits_{\substack{M_n\\ \Re(\mu)=\alpha\rho_S}} f(\mu^{w_b^{-1}}) p_{S,\mu+\beta^{w_b}}(y w x y')\\
	& \qquad \int\limits_{\Re(s_{S^c})=-\alpha\hat{\rho}_{S^c}} \what{W}_{n,\sigma}(s_{S^c},\mu+\beta^{w_b},\delta) \hat{p}_{S^c,-s_{S^c}}(y w x y') \frac{ds_{S^c}}{(2\pi i)^{\abs{S^c}}} \\
	& \qquad \sigma(y w x y') \frac{\dspec\mu}{(2\pi i)^{n-1}} \wbar{\psi_I(x)} dx,
\end{align*}
where $\alpha=n^{-1} R_1^{-1}$.
(We will use $\alpha$ as a parameter in our contour shifting arguments.)

\subsubsection{Contour shifting}
We pause to consider the contour shifting implicit in the above.
Starting from $\Re(\mu)=0$ and the contours $\Re(s_{S^c})=-\epsilon$ in \eqref{eq:WhittContShift}, we have shifted first $\Re(\mu)$ leftward to $-\alpha\rho_S$ and then $\Re(s_{S^c})$ rightward to $-\alpha\hat{\rho}_{S^c}$.
The poles of $\what{W}_{n,\sigma}(s,\mu,\delta)$ occur whenever $-s=\hat{\mu}^{w_c}+m$ for some $m \in \N_0^{n-1}$ and $w_c \in W$, and our contour shifting in $\mu$ is certainly small enough that we won't cross any poles, so when we start shifting the $s_{S^c}$ contours rightward, we have $\Re(s_i+\hat{\mu}^{w_c}_i) \in (-1,0)$ for all $i \in S^c$, $w_c \in W_S$ and $\what{W}_{n,\sigma}(s_{S^c},\mu,\delta)$ is holomorphic on this region.
Note that this condition forms a convex subset of the parameter space $(\Re(s_{S^c}), \Re(\mu))$.
Therefore, if the endpoint of the contour shifting also satisfies this condition, the contour shifting may be performed by a (linear) substitution on the $s_{S^c}$ and a contour shift on a single $s_i$ integral (which is trivial to accomplish by the rapid decay of $\what{W}_{n,\sigma}(s_{S^c},\mu,\delta)$), while remaining in the region of holomorphy of $\what{W}_{n,\sigma}(s_{S^c},\mu,\delta)$.
Again, it is sufficient to check that $\Re(s_i+\hat{\mu}^{w_c}_i) < 0$, since the shifts are small and in fact we are shifting the $s_i$ contours in the positive direction.

Now the coordinates of $\mu^{w_c}$ for an arbitrary $w_c \in W_S$ are determined as follows:
if $i_1,i_2\in S\cup\set{0,n}$ with $i_1<i_2$ and $i_1 < i_3 < i_2 \Rightarrow i_3 \notin S$, then $\set{\mu^{w_c}_{i_1+1}, \ldots, \mu^{w_c}_{i_2}} = \set{\mu_{i_1+1},\ldots,\mu_{i_2}}$, since the powers of $y_{i_1}$ and $y_{i_2}$ in the restricted power function $p_{S,\mu^{w_c}}(y)$ must agree with $p_{S,\mu}(y)$.
We see that
\begin{align*}
	\alpha^{-1}\Re(\hat{\mu}^{w_c}_{i_3}) =& \hat{\rho}_{i_1}+\rho_{S,i_1+1}+\ldots+\rho_{S,i_3} \\
	=& \hat{\rho}_{i_1}+(i_3-i_1)\frac{n-i_1-i_2}{2} \\
	<& \hat{\rho}_{i_1}+(i_3-i_1)\frac{n-i_1-i_3}{2} \\
	=& \hat{\rho}_{i_1}+\rho_{i_1+1}+\ldots+\rho_{i_3} \\
	=& \hat{\rho}_{i_3} \\
	=& -\alpha^{-1}\Re(s_{i_3}),
\end{align*}
for all $i_1 < i_3 < i_2$, and in particular, $\Re(s_{i_3}+\hat{\mu}^{w_c}_{i_3})<0$ holds whenever $i_3 \in S^c$.

\subsubsection{Returning to the main argument}
For $j \in \mathcal{S}$, the total exponent on $y_j$ in our previous expression for $\mathcal{I}$ has real part $(1+\alpha) \hat{\rho}_j$ and as above, we have
\[ \frac{1}{\alpha} \Re(\hat{\mu}^{w_b^{-1}}_j) \le \hat{\rho}^{w_b^{-1}}_j \le \hat{\rho}_j. \]
Here the first inequality is strict when $\hat{\mu}^{w_b^{-1}}_j=\hat{\mu}_i$ with $i \in S^c$, and in that case
\[ \frac{1}{\alpha} \Re(\hat{\mu}^{w_b^{-1}}_j) \le \hat{\rho}_j-\frac{1}{2} \]
by the computation of the previous section.
The second inequality is strict when $w_b$ fails to fix the set $\set{\mu_1,\ldots,\mu_j}$ and in that case
\[ \frac{1}{\alpha} \Re(\hat{\mu}^{w_b^{-1}}_j) \le \hat{\rho}_j-2. \]

So we shift $\alpha \mapsto \paren{1+n^{-3}} R_1^{-1}$, possibly encountering poles at $\hat{\mu}^{w_b^{-1}}_j = R_1^{-1} \hat{\rho}_j$.
By the above reasoning and recalling \eqref{eq:TotalYPower}, the least power of $y$ (that is, the least power of $e^{-R_1^2}$) occurring in the shift is $p_{(1+R_1^{-1})\rho}(y)$ and this happens when $\mathcal{S}\subset S$ and $w_b \in W_w$.
(One might be confused, as the author sometimes is, why the minimum power is not $p_{\rho+R_1^{-1}\rho^{w_l}}(y) = p_{(1-R^{-1})\rho}(y)$ at $w_b=w_l$; the answer is simply that when $w_b=w_l$, shifting $\alpha$ in the positive direction means heading away from the poles, so the contour shifting actually ends at power $p_{(1+(1+n^{-3})R_1^{-1})\rho}(y)$ without encountering a single pole.)
For the other terms, we gain at least a power $\frac{1}{n^3 R_1}$ of some $y_j$, $j\in\mathcal{S}$, and we have
\[ \abs{p_\rho(w x y') p_{S,\mu+\beta^{w_b}}(w x y') \hat{p}_{S^c,-s_{S^c}}(w x y')} \ll_{y'} p_{\paren{1+\paren{1-n^{-2}}R_1^{-1}}\rho}(w x y'), \]
by \cref{lem:IwaBounds}.
So the $x$ integral converges with a pole of order at most $\frac{n(n-1)}{2}$ at $R_1=\infty$.
We handle the residues by substituting
\[ \mu^{w_b^{-1}}=\mu_{\mathcal{S}}+\sum_{i=1}^{\ell-1} t_i (e_{\hat{r}_i}-e_{\hat{r}_i+1}), \]
where $\mu_{\mathcal{S}} \in M_{\mathcal{S}}$ and then taking the residues at each
\[ t_i = R_1^{-1} \hat{\rho}_{\hat{r}_i}. \]
Therefore,
\begin{align*}
	T^{-1} \mathcal{I} =& \paren{\prod_{i \in \mathcal{S}} e^{\hat{\rho}_i^2/R_1^2}} \sum_{\mathcal{S} \subset S \subset [n-1]} \sum_{w_b \in W_w/W_S} \int_{\wbar{U}_w(\R)} \psi_y(w x y') p_\rho(w x y') \\
	& \qquad \int\limits_{\substack{M_{\mathcal{S}}\\ \Re(\mu+R_1^{-1}\rho^*)=R_1^{-1}\rho^{w_b^{-1}}_S}} f_{\mathcal{S}^c}^*(\mu+R_1^{-1}\rho^*) p_{S,(\mu+R_1^{-1}\rho^*+\beta)^{w_b}}(w x y') \\
	& \qquad \int\limits_{\Re(s_{S^c})=-R_1^{-1}\hat{\rho}_{S^c}} \what{W}_{n,\sigma}(s_{S^c},(\mu+R_1^{-1}\rho^*+\beta)^{w_b},\delta) \hat{p}_{S^c,-s_{S^c}}(w x y') \frac{ds_{S^c}}{(2\pi i)^{\abs{S^c}}} \\
	& \qquad \sigma(w x y') \frac{\dspec\mu}{(2\pi i)^{n-\ell}} \wbar{\psi_I(x)} dx+\BigO{R_1^{n^2} e^{-n^{-3} R_1}},
\end{align*}
where
\[ \rho^* = \sum_{i=1}^{\ell-1} \hat{\rho}_{\hat{r}_i} (e_{\hat{r}_i}-e_{\hat{r}_i+1}), \]
using the standard basis elements $e_i$.

A reality check for the $\mu$ contour:
To determine if this contour is possible, it is sufficient to check that $\rho^{w_b^{-1}}_S-\rho^* \in M_\mathcal{S}$, or in other words, that
\[ \hat{\rho}^{w_b^{-1}}_{S,\hat{r}_i} = \hat{\rho}_{S,\hat{r}_i} = \hat{\rho}_{\hat{r}_i} = \hat{\rho}^*_{\hat{r}_i}, \qquad i=1,\ldots,\ell-1, \]
and this is true since $w_b^{-1} \in W_w$ and $\mathcal{S} \subset S$ and by construction of $\rho^*$.

Now we may remove the factor $\psi_y(w x y')$ by applying
\[ \abs{\e{y_j x^*_{j,j+1}}-1} \ll \abs{y_j x^*_{j,j+1}}^{\frac{1}{n^3 R_1}} \ll_{n,y'} \abs{y_j}^{\frac{1}{n^3 R_1}} p_{-\frac{1}{n^3 R_1}\rho}(w x y'), \]
for each $j \in\mathcal{S}$, by \cref{lem:IwaBounds}, where $x^* y^* k^* = w x y'$.
Thus we have
\begin{align*}
	T^{-1} \mathcal{I} =& \paren{\prod_{i \in \mathcal{S}} e^{\hat{\rho}_i^2/R_1^2}} \int_{\wbar{U}_w(\R)} \mathcal{I}_1(x) \wbar{\psi_I(x)} dx+\BigO{R_1^{n^2} e^{-n^{-3} R_1}}, \\
	\mathcal{I}_1(x) :=& \int_{M_{\mathcal{S}}^0} f_{\mathcal{S}^c}^*(\mu+R_1^{-1}\rho^*) \mathcal{I}_2(x,\mu+R_1^{-1}\rho^*+\beta,\delta) \frac{\dspec\mu}{(2\pi i)^{n-\ell}}\\
	\mathcal{I}_2(x,\mu,\delta) :=& \sum_{\mathcal{S} \subset S \subset [n-1]} \sum_{w_b \in W_w/W_S} p_\rho(w x y')  p_{S,\mu^{w_b}}(w x y') \int\limits_{\Re(s_{S^c})=-\epsilon} \what{W}_{n,\sigma}(s_{S^c},\mu^{w_b},\delta) \\
	& \qquad \hat{p}_{S^c, -s_{S^c}}(w x y') \frac{ds_{S^c}}{(2\pi i)^{\abs{S^c}}} \sigma(w x y'),
\end{align*}
after shifting the $\mu$ and $s_{S^c}$ contours in the interated integral (as we may).

\subsubsection{Asymptotics and functional equations of the Whittaker function}
Suppose that
\[ 0<\Re(\mu_j-\mu_i)<\frac{\epsilon}{n^2}, \qquad \forall i<j, \]
then by apply the exact same reasoning as the previous section directly to \eqref{eq:WhittContShift}, we see that $\mathcal{I}_2$ is the first term asymptotic of 
\begin{align*}
	W_\sigma(y w x y', \mu, \delta) =& p_{\rho+\mu}(y) \chi_\delta(y) M_\sigma(w_l,\mu,\delta) W_\sigma(w x y', w_l, \mu^{w_l}, \delta^{w_l},\psi_y) \\
	\sim& p_{\rho+\mu}(y) \chi_\delta(y) M_\sigma(w_l,\mu,\delta) W_\sigma(w x y', w_l, \mu^{w_l}, \delta^{w_l},\psi_{\alpha_w})
\end{align*}
as in \eqref{eq:WhittYwAsymp}.
So we compute that from the Jacquet integral and then apply analytic continuation in $\mu$.
Note: The reversal from the region $\Re(\mu_i-\mu_j) > 0,\forall i<j$ where we did the work of the previous section to the region $\Re(\mu_j-\mu_i) > 0,\forall i<j$ used above has to do with the contour shifting and the pole of $f_1$ -- when $\Re(\mu)$ is fixed, then the lead asymptotic in $y_j$ is the one encountered first while shifting the $s$ contours to the left.

From \eqref{eq:LimitJacquetInt}, we have
\begin{equation}
\label{eq:I2IntEval}
\begin{aligned}
	& \int_{\wbar{U}_w(\R)} \mathcal{I}_2(x,\mu,\delta) \wbar{\psi_I(x)} dx = \\
	& \qquad p_{\rho+\mu}(y) \chi_\delta(y) M_\sigma(w_l,\mu,\delta) W_\sigma(I, w^{-1}, \mu^{w_l}, \delta^{w_l}, \psi_0) W_\sigma(y',\mu^{w_l w^{-1}},\delta^{w_l w^{-1}}),
\end{aligned}
\end{equation}
provided the $x$ integral converges, and $\Re(\mu) = R_1^{-1} \rho^{w w_l}$ is sufficient for that.

Now we would like to shift the $\mu$ integral in $\mathcal{I}_1(x)$ to $\Re(\mu)=R_1^{-1} (\rho^{w w_l}-\rho^*)$ in order to accommodate that condition.
To determine if this is possible, it is sufficient to check that $\rho^{w w_l}-\rho^* \in M_\mathcal{S}$, or in other words, that
\[ \hat{\rho}_{\hat{r}_i}^{w w_l} = \hat{\rho}^*_{\hat{r}_i} = \hat{\rho}_{\hat{r}_i}, \qquad i=1,\ldots,\ell-1, \]
and since $w w_l = \diag(w_{l,r_1},\ldots,w_{l,r_\ell})$, it is trivial that $w w_l$ fixes the required blocks of coordinates.
Then we may apply \eqref{eq:I2IntEval} and the end result is
\begin{align*}
	T^{-1} \mathcal{I} =& \paren{\prod_{i \in \mathcal{S}} e^{\hat{\rho}_i^2/R_1^2}} \int\limits_{\substack{M_{\mathcal{S}}\\ \Re(\mu)=R_1^{-1} (\rho^{w w_l}-\rho^*)}} f_{\mathcal{S}^c}(\mu+R_1^{-1}\rho^*) M_\sigma(w_l,\mu+R_1^{-1}\rho^*+\beta,\delta) \\
	& \qquad W_\sigma(I, w^{-1}, (\mu+R_1^{-1}\rho^*+\beta)^{w_l}, \delta^{w_l},\psi_0) \\
	& \qquad W_\sigma(y', (\mu+R_1^{-1}\rho^*+\beta)^{w_l w^{-1}}, \delta^{w_l w^{-1}}) \frac{\dspec\mu}{(2\pi i)^{n-\ell}}+\BigO{R_1^{n^2} e^{-n^{-3} R_1}} \\
	=& \int_{M_{\mathcal{S}}^0} f_{\mathcal{S}^c}(\mu) M_\sigma(w_l,\mu+\beta,\delta) W_\sigma(I, w^{-1}, (\mu+\beta)^{w_l}, \delta^{w_l},\psi_0) \\
	& \qquad W_\sigma(y', (\mu+\beta)^{w_l w^{-1}}, \delta^{w_l w^{-1}}) \frac{\dspec\mu}{(2\pi i)^{n-\ell}}+\BigO{R_1^{-1}},
\end{align*}
by the Mean Value Theorem.

Applying \eqref{eq:CtildeDef}, we have
\begin{align*}
	T^{-1} \mathcal{I} =& \int_{M_{\mathcal{S}}^0} f_{\mathcal{S}^c}(\mu) \wtilde{C}_w(\mu+\beta, \delta) W_\sigma(y', \mu+\beta, \delta) \frac{\dspec\mu}{(2\pi i)^{n-\ell}}+\BigO{R_1^{-1}}.
\end{align*}

\subsubsection{The power series side}
From \eqref{eq:KwPS}, we have
\begin{align*}
	\mathcal{I} =& \sum_{w_b\in W/W_w} \int_{M_n^0} f(\mu^{w_b^{-1}}) C_w^*(\mu+\beta^{w_b},\delta^{w_b}) \\
	& \qquad J_w^*(y,\mu+\beta^{w_b},\delta^{w_b}) W_\sigma(y', \mu^{w_b^{-1}}+\beta^{w_a},\delta^{w_a}) \frac{\dspec\mu}{(2\pi i)^{n-1}}.
\end{align*}

As before, we shift to $\Re(\mu) = \epsilon \rho$ and the leading term occurs when $w_b \in W_w$, so
\begin{align*}
	\mathcal{I} =& \int\limits_{\substack{M_{\mathcal{S}}\\ \Re(\mu)=R_1^{-1}(\rho-\rho^*)}} f_{\mathcal{S}^c}(\mu) C_w^*(\mu+R_1^{-1}\rho^*+\beta,\delta) J_w^*(y,\mu+R_1^{-1}\rho^*+\beta,\delta) \\
	& \qquad W_\sigma(y', \mu+R_1^{-1}\rho^*+\beta,\delta) \frac{\dspec\mu}{(2\pi i)^{n-\ell}} + \BigO{T e^{-R_1}}.
\end{align*}
Note that the presumed bound on the coefficients in part 6 of the \DEPSC implies that $J_w^*(y,\mu,\delta) \ll_{\mu,n} p_{\rho+\Re(\mu)}(y)$ on the region with all $y_i \ll 1$.

Applying the power series expansion of $J_w^*$, we have
\begin{align*}
	T^{-1} \mathcal{I} =& \int\limits_{\substack{M_{\mathcal{S}}\\ \Re(\mu)=R_1^{-1}(\rho-\rho^*)}} f_{\mathcal{S}^c}(\mu+R_1^{-1}\rho^*) C_w^*(\mu+R_1^{-1}\rho^*+\beta,\delta) \\
	& \qquad W_\sigma(y', \mu+R_1^{-1}\rho^*+\beta,\delta) \frac{\dspec\mu}{(2\pi i)^{n-\ell}} + \BigO{e^{-R_1}} \\
	=& \int_{M_{\mathcal{S}}^0} f_{\mathcal{S}^c}(\mu) C_w^*(\mu+\beta,\delta) W_\sigma(y', \mu+\beta,\delta) \frac{\dspec\mu}{(2\pi i)^{n-\ell}} + \BigO{R_1^{-1}}.
\end{align*}

\subsection{\texorpdfstring{Computing the $C^*_w(\beta,\delta)$ coefficients}{Computing the C*w(beta,delta) coefficients}}
\label{sect:CwComp}

Since the degenerate Whittaker function is also the intertwining operator and induces a map between Whittaker models, multiplicity one implies it acts as a scalar multiple of the functional equation \eqref{eq:WhitFE}.
So we can take $C^\dagger_w(\beta, \delta)$ to be the scalar determined by
\begin{align}
\label{eq:CdaggerDef}
	W_\sigma(I, \mu, \delta,w,\psi_0) = C^\dagger_w(\mu, \delta) M_\sigma(w,\mu, \delta).
\end{align}
This definition relates to $\wtilde{C}_w(\mu,\delta)$ as in \cref{lem:InformalAC} by
\begin{align}
\label{eq:localCoefRel}
	C^\dagger_w(\mu,\delta) =& \wtilde{C}_{w^{-1}}(\mu^{w_l}, \delta^{w_l}) = C^*_{w^{-1}}(\mu^{w_l}, \delta^{w_l}).
\end{align}
These are the local coefficients in Shahidi's chapter 5 \cite{Shah01}.

Using \cite[Theorem 4.2.2]{Shah01} and the definition of $M_\sigma(w,\mu, \delta)$, we have the factorizations
\begin{align*}
	W_\sigma(I, w_a w_b, \mu, \delta,\psi_0) =& W_\sigma(I, w_a, \mu, \delta,\psi_0) W_\sigma(I, w_b, \mu^{w_a}, \delta^{w_a},\psi_0), \\
	M_\sigma(w_a w_b,\mu, \delta) =& M_\sigma(w_a,\mu, \delta) M_\sigma(w_b,\mu^{w_a}, \delta^{w_a})
\end{align*}
and it follows that
\[ C^\dagger_{w_a w_b}(\mu, \delta) = C^\dagger_{w_a}(\mu, \delta) C^\dagger_{w_b}(\mu^{w_a}, \delta^{w_a}). \]
In \cite[Theorem 4.2.2]{Shah01}, the notation is rather different and there are conditions on $\mu$, but with analytic continuation, they are sufficient to factor any given $w$ into a product of transpositions of adjacent indices.

By this factorization of the functional equations, it's enough to consider $w$ a transposition of adjacent indices when computing $C^\dagger_w(\beta,\delta)$.
By conjugating $\sigma$ as necessary (using the inductive formula for the Whittaker function \eqref{eq:WhittDecomp}), it's enough to consider the Whittaker function on the upper left copy of $GL(2,\R)$.
Comparing \eqref{eq:GL2WhittFE} to \eqref{eq:GL2Whitt} at $y=0$ using \eqref{eq:classWhittDef}, we have
\[ C^\dagger_w(\mu,\delta) = (-1)^{\delta_1} \pi^{\frac{1}{2}-\mu_1+\mu_2} i^m \frac{\Gamma\paren{\frac{m+\mu_1-\mu_2}{2}}}{\Gamma\paren{\frac{1+m-\mu_1+\mu_2}{2}}}, \]
where $\set{0,1} \ni m \equiv \delta_1+\delta_2 \pmod{2}$.

Part 2 of the \Asymps follows by induction over the factorization $w = w_{(w(n)\,w(n)+1)} w'$ where now $w'(n)=w(n)+1$ and we use the fact that $w^{-1}(j) = j^w$.

To prove the last piece of the \Asymps, it's enough to show four things:
For $u \in \C$, $\delta \in \set{0,1}^n$ and $\set{0,1} \ni m \equiv \delta_1+\delta_2 \pmod{2}$,
\[ \sinmu(\mu^w,\delta^w) = \sinmu(\mu,\delta) \sgn(w), \]
\[ \pi^{\frac{1}{2}+u} \frac{\Gamma\paren{\frac{m-u}{2}}}{\Gamma\paren{\frac{1+m+u}{2}}} \times (2\pi)^{-u} \Gamma\paren{1+u} \times \sin \frac{\pi}{2}\paren{m+u} = (-1)^m (-\pi), \]
\[ (-1)^{\delta_2} i^m \times \frac{\sin \frac{\pi}{2}\paren{u+\delta_1-\delta_2}}{\sin \frac{\pi}{2}\paren{m+u}} \times (-1)^m = (-i)^{\delta_1+\delta_2} \]
\[ \prod_{j<k} (-i)^{\delta_j+\delta_k} = i^{-(n-1)(\delta_1+\ldots+\delta_n)}. \]
The last one is $\#\set{j:1 \le j < q}+\#\set{k:q<k\le n}=n-1$, and the rest are fairly straightforward.

\section{The long Weyl element}
\label{sect:LE}
\subsection{The Differential Equations}
\label{sect:LEDiffEqs}
We assume the \IoI and \AnCont Conjectures and prove the \DEPSC for the long Weyl element, outside of the exceptional cases $\mu_i - \mu_j \in \Z$, $i \ne j$.
Consider the function
\[ f(g) := p_{-\rho}(g) K_{w_l}(g \trans{g}, \mu, \delta), \]
which satisfies
\[ f(xyk) = \psi_I^2(x) f(y). \]

We wish to show that $\Delta f = \lambda_{2\mu}(\Delta) f$ for every $\Delta$ in the center of the universal enveloping algebra (i.e. for the Casimir operators), where
\[ \Delta p_{\rho+\mu} = \lambda_{\mu}(\Delta) p_{\rho+\mu}. \]
Let $\trans{(x^*)} y^* k^* = w_lxt$ in \eqref{eq:KwDef}, then
\[ \Delta p_{-\rho}(g) W_\sigma(g \trans{g} w_lxt,\mu,\delta) = \wbar{\psi_I(x^*)} p_{-\mu^{w_l}}(y^*) \Delta p_{-\rho}(y^* x^* g) W_\sigma(y^* x^* g \trans{g} \trans{(x^*)} y^*,w_l,\mu,\delta,\psi_{(y^*)^{-1}}), \]
and by translation invariance, it suffices to determine
\[ \Delta p_{-\rho}(g) W_\sigma(g \trans{g},w_l,\mu,\delta,\psi). \]

Now we assume $\mu$ is in the region of absolute convergence of the Whittaker function, so we need to compute
\[ \Delta p_{-\rho}(g) I_{\mu,\sigma}(w_l u g \trans{g}), \]
and again we let $x^\dagger y^\dagger k^\dagger = w_l u$ so that the above may be written
\[ p_{\rho+\mu}(y^\dagger) \Delta p_{-\rho}(g) I_{\mu,\sigma}(k^\dagger g \trans{g} \trans{(k^\dagger)}) \sigma(k^\dagger). \]
Because $\Delta$ may be written as a linear combination of compositions of first-order derivatives (coming from the Lie algebra), each of which is left-translation invariant, it is sufficient to determine $\Delta f^*$ where $f^*(g) = p_{-\rho}(g) p_{\rho+\mu}(g \trans{g})$.
Now $\Delta_G f^*(g) = \Delta_{G/K} f^*(z)$ by $K$-invariance (i.e. using $k \exp(tX) = \exp(tkXk^{-1})k$), and it is easy to see that $\Delta_{G/K} f^*(z) = \lambda_{2\mu}(\Delta) f^*(z)$ since the equivalent statement holds for every element of the Lie algebra.
(For $u = E_{i,j} \in \trans{\mathfrak{n}}$, i.e. $i > j$, write $u = \trans{u}+(u-\trans{u})$ and note that $u-\trans{u} \in \mathfrak{k}$.)

We denote by $2 \in Y^+$ the matrix such that $\psi_y^2(x) = \psi_I(2y x (2y)^{-1})$.
We now believe that $f(g)$ is the spherical Whittaker function; that is,
\[ K_{w_l}(y^2, \mu, \delta) = C(\mu) p_\rho(y) W_1(2y,2\mu, 0) \]
for $y\in Y^+$ and some constant $C(\mu)$.
We check this by comparing asymptotics as $y\to 0$:
By the functional equations of the spherical Whittaker function,
\begin{align*}
	\Lambda_\text{JW}(2\mu) W_1(2y,2\mu, 0) \sim& \sum_{w\in W} p_{\rho+2\mu^{ww_l}}(2y) \Lambda_\text{JW}(2\mu^w) W_1(I,w_l,2\mu^w, 0, \psi_0), \\
	\Lambda_\text{JW}(\mu) :=& \prod_{1 \le i < j \le n} \pi^{-\frac{1+\mu_i-\mu_j}{2}}\Gamma\paren{\tfrac{1+\mu_i-\mu_j}{2}}, \\
	W_1(I,w_l,\mu, 0, \psi_0) =& \prod_{1 \le i < j \le n} \pi^{\frac{1}{2}} \frac{\Gamma\paren{\frac{\mu_i-\mu_j}{2}}}{\Gamma\paren{\frac{1+\mu_i-\mu_j}{2}}}
\end{align*}
and from the \Asymps (for $y \in Y^+$),
\begin{align}
\label{eq:KwlAsymp}
	\sinmu(\mu,\delta) K_{w_l}(y,\mu,\delta) \sim& (-\pi)^{\frac{n(n-1)}{2}}i^{-(n-1)(\delta_1+\ldots+\delta_n)} \sum_{w\in W} \frac{\sgn(w)}{\Lambda_{w_l}(\mu^w)} p_{\rho+\mu^w}(y).
\end{align}
The ratio
\[ \frac{\Lambda_\text{JW}(2\mu) \sgn(w)}{\Lambda_{w_l}(\mu^w) p_{\rho+2\mu^w}(2) \Lambda_\text{JW}(2\mu^{ww_l}) W(I,2\mu^{ww_l}, \psi_0)} \]
is invariant under $w$ (consider the case of a transposition of consecutive indices), as required, and we have
\[ C(\mu) = \frac{i^{-(n-1)(\delta_1+\ldots+\delta_n)} \sinmu(2\mu,0) \Lambda_\text{JW}(2\mu)}{p_\rho(2) \sinmu(\mu,\delta)}, \]
after some simplification.
Note:  It might be possible to prove instead that $p_{-\rho}(y) K_{w_l}(y^2, \mu,\delta)$ has moderate growth in $y$, but this isn't entirely obvious, a priori.

Michihiko Hashizume \cite{Hashi} shows that the space of spherical Whittaker functions (not necessarily of moderate growth in $y$) has a spanning set given by power series of the form $J^*_{w_l}(y^2,(\mu/2)^w)$, $w\in W$, for some coefficients $a_{w_l,m}(\mu)$ determined recursively by the action of the Casimir operators.
It follows that for positive $y$, $K_{w_l}(y,\mu,\delta)$ has the form \eqref{eq:KwPS}.
Now the restriction of the Casimir operators to the space of long-element Bessel functions (i.e. functions that transform by $\psi_I$ on both the left and right) is composed of polynomials in $y_i$ and $y_i \partial_{y_i}$, $i=1,\ldots,n-1$ (for such a function $f$, consider the Lie algebra action of $E_{i,j}$ on $f(y)$, then $i>j$ multiplies by constants, $j>i$ multiplies by coordinates of $y$ and $2\pi i$, and $j=i$ have the form in $y_i \partial_{y_i}$); these operators act on $p_{\rho+\mu}(y) \hat{p}_m(y)$ by
\[ y_i^{k_1} (y_i \partial_{y_i})^{k_2} p_{\rho+\mu}(y) \hat{p}_m(y) = \paren{\hat{\rho}_i + \hat{\mu}_i+m_i}^{k_2} p_{\rho+\mu}(y) \hat{p}_{m+k_1 e_i}(y), \]
(using the standard basis element $e_i$), and this holds independent of the sign of $y$.
In particular, \eqref{eq:KwPS} holds for every sign (at $w=w_l$), since the asymptotics of $K_{w_l}(y,\mu,\delta)$ have the correct dependence on $\sgn(y)$ (that is, the coefficient of $p_{\rho+\mu^w}(y)$ transforms by $\chi_{\delta^w}(y)$).

\subsection{Integral Representations of the Long-Element Bessel Function}
\subsubsection{Ishii's Theorem}
Ishii \cite[Theorem 4]{Ishii} gave a useful recurrence relation for the coefficients of the power series solutions to the Whittaker differential equations on $GL(n)$, and we record it here as it applies to the long-element Bessel functions, anticipating its utility in the construction of integral representations.

For $k \in \N_0^{n-1}$ and $\mu \in \C^n$, set
\[ S(k) = \set{m \in \N_0^{n-3} \setdiv 0 \le m_i \le k_{i+1}}, \]
\[ P_{n,k}(\mu) = \prod_{j=1}^{n-1} \frac{\Poch{\mu_1-\mu_n+1}{k_j+k_{j+1}}}{\Poch{\mu_1-\mu_n+1}{k_j} \Poch{\mu_1-\mu_{j+1}+1}{k_j} \Poch{\mu_{j+1}-\mu_n+1}{k_{j+1}}}, \]
for $m \in S(k)$,
\begin{align*}
	& Q_{n,k,m}(\mu) = \\
	& \prod_{j=1}^{n-1} \frac{\Poch{\mu_{j+1}-\mu_n+k_{j+1}+1-m_{j-1}}{m_{j-1}} \Poch{\mu_1-\mu_n+k_j+1-m_{j-1}}{m_{j-1}}}{(k_j-m_{j-1})! \, \Poch{\mu_1-\mu_n+k_j+k_{j+1}+1-m_{j-1}-m_j}{m_{j-1}+m_j}} \\
	& \qquad \times \Poch{\mu_1-\mu_{j+1}+k_j+1-m_j}{m_j},
\end{align*}
\[ R_{n,k}(\mu) = \sum_{j=1}^{n-1} \paren{(k_j-k_{j-1})^2+2 k_j (\mu_j-\mu_{j+1})}, \]
using $m_j = 0$ for $j \notin \set{1,\ldots, n-3}$ and $k_j=0$ for $j \notin \set{1,\ldots,n-1}$.
Define $G_{n,k}(\mu)$ by
\[ G_{n,0}(\mu)=1, \qquad R_{n,k}(\mu) G_{n,k}(\mu) = \sum_{j=1}^{n-1} G_{n,k-e_j}(\mu). \]

Let
\[ \Omega_n = \set{\mu \in M_n \setdiv R_{n,k}(\mu^w) \ne 0, \forall k \in \N_0^{n-1}\setminus\set{0} \forall w \in W, \mu_i - \mu_j \notin \Z, i \ne j.} \]
Then for $\mu \in \Omega_n$, define $\mu' \in \C^{n-2}$ by $\mu'_j = \mu_{j+1}+\frac{\mu_1+\mu_n}{n-2}$ and suppose $\mu' \in \Omega_{n-2}$.
Then Ishii's recurrence relation is
\begin{align}
\label{eq:IshiiRecRel}
	G_{n,k}(\mu) = P_{n,k}(\mu) \sum_{m \in S(k)} G_{n-2,m}(\mu') Q_{n,k,m}(\mu),
\end{align}
and these compare to the coefficients $a_{w_l,m}(\mu)$ in the \DEPSC as
\[ a_{w_l,m}(\mu) = G_{n,m}(\mu) \prod_{j=1}^{n-1} \paren{4 \pi^2}^{m_j}. \]

Then \cite[Theorem 3]{Ishii} for $\Re(\mu_1-\mu_n) > -2$, and $y_j > 0$, becomes
\begin{align*}
	& J_{n,w_l}^\dagger(I, y, \mu) = \\
	& \prod_{j=1}^{n-1} y_j^{\frac{n-2j}{2(n-2)}(\mu_1+\mu_n)} \frac{1}{(2\pi i)^{n-2}} \int_{\abs{u_1}=1} \cdots \int_{\abs{u_{n-2}}=1} \prod_{j=1}^{n-1} I_{\mu_1-\mu_n}\paren{4 \pi\sqrt{y_j(1+u_{j-1})(1+1/u_j)}} \\
	& \qquad J_{n-2,w_l}^\dagger\paren{I, \paren{y_2 u_1/u_2,\ldots,y_{n-2} u_{n-3}/u_{n-2}}, \mu'} \prod_{j=1}^{n-2} u_j^{-\frac{n}{2(n-2)} (\mu_1+\mu_n)} \frac{du_j}{u_j},
\end{align*}
taking $u_0 = 1/u_{n-1}=0$.
Note: We've expressed Ishii's result in terms of the Bessel function by replacing his $y$ with $2\sqrt{y}$ and $\mu \mapsto 2\mu$.
To avoid confusion about branch cuts and convergence, it's best to imagine $\Im(\mu)=0$ with $\mu_1 > \mu_2 > \ldots > \mu_n$ and use analytic continuation on the final result.

\subsubsection{The other signs}
We now prove \cref{prop:IshiiRepns}.

The result of the analytic continuation of $J_{n,w_l}(I, y,\mu)$ from $\sgn(y)=I$ to $\sgn(y)=v$ by rotating each coordinate $y_j$ (having $v_j=-1$) in the counter-clockwise direction is given by
\[ J_{n,w_l}^\dagger(v, (\abs{y_1},\ldots,\abs{y_{n-1}}),\mu) \exp \paren{i \pi \sum_{v_j=-1} \hat{\mu}_j}. \]
Meanwhile, we have $I_\alpha(ix) = e^{i\frac{\pi}{2} \alpha} J_\alpha(x)$, so the analytic continuation of the right-hand side of Ishii's integral becomes
\begin{align*}
	& \prod_{j=1}^{n-1} \abs{y_j}^{\frac{n-2j}{2(n-2)}(\mu_1+\mu_n)} \frac{1}{(2\pi i)^{n-2}} \int_{\abs{u_1}=1} \cdots \int_{\abs{u_{n-2}}=1} \prod_{j=1}^{n-1} Z^{v_j}_{\mu_1-\mu_n}\paren{4\pi\sqrt{\abs{y_j}(1+u_{j-1})(1+1/u_j)}} \\
	& \times J_{n-2,w_l}^\dagger\paren{v', \paren{\abs{y_2} u_1/u_2,\ldots,\abs{y_{n-2}} u_{n-3}/u_{n-2}},\mu'} \prod_{j=1}^{n-2} u_j^{-\frac{n}{2(n-2)} (\mu_1+\mu_n)} \frac{du_j}{u_j} \\
	& \times \exp \paren{i\frac{\pi}{2}\paren{\sum_{v_j=-1}\paren{\frac{n-2j}{(n-2)}(\mu_1+\mu_n)+\mu_1-\mu_n}+2\sum_{\substack{2 \le j \le n-2\\v_j=-1}} \sum_{k=2}^j \paren{\mu_j+\frac{\mu_1+\mu_n}{n-2}}}},
\end{align*}
and the complex exponential is the same on both sides.

\section{Acknowledgments}
The author would like to thank Tyrone Crisp and Peter Humphries for helpful discussions on representation theory.

\appendix
\section{Matrix decompositions in Mathematica}
\label{sect:AppMD}
Here we give an implementation of the decompositions of \cref{sect:MatrixDecomps} in Mathematica.
The two main functions below are NiceIwaxb and NiceIwawx.
Both take a vector of parameters $rs=\set{r_1,\ldots,r_\ell}$ and return a tuple $\set{x, x^*, y^*, kss}$ where $kss$ is a vector of matrices and we take $k^*=$ Dot@@$kss$ to be the product.
For NiceIwaxb, $x$ is a choice of coordinates on $w_b \wbar{U}_{w_b}(\R) w_b^{-1}$ and the Iwasawa decomposition is $x=x^* y^* k^*$.
For NiceIwawx, $x$ is a choice of coordinates on $\wbar{U}_w(\R)$ and the Iwasawa decomposition is $wx=x^* y^* k^*$.\\[0.15in]

\begingroup
\ttfamily
\setlength{\parindent}{0pt}
DoSymbol[name\char`_, index\char`_] := Symbol[name \textless\textgreater ToString[index]]\\
DoSymbolList[name\char`_, length\char`_] := Module[\{i\}, \mlb Table[DoSymbol[name, i], \{i, 1, length\}]]\\
xmat[n\char`_, name\char`_] := Module[\{vars = DoSymbolList[name, (n (n - 1))/2], i, j\}, \mlb Table[If[i == j, 1, If[j \textgreater i, vars[[i + (j - 1) (j - 2)/2]], 0]], \mlb \{i, 1, n\}, \{j, 1, n\}]]\\
wl[rs\char`_] := Module[\{n = Length[rs], i, j\}, ArrayFlatten[Table[\mlb If[i + j == n + 1, IdentityMatrix[rs[[i]]], \mlb ConstantArray[0, \{rs[[i]], rs[[n + 1 - j]]\}]], \{i, 1, n\}, \{j, 1, n\}]]]\\
Embed[n\char`_, mat\char`_] := If[Length[mat] == n, mat, ArrayFlatten[\{\{mat, \mlb ConstantArray[0, \{Length[mat], n - Length[mat]\}]\}, \mlb \{ConstantArray[0, \{n - Length[mat], Length[mat]\}],\mlb  IdentityMatrix[n - Length[mat]]\}\}]]\\
getws[w\char`_] := Module[\{row\}, Table[Position[row, 1][[1, 1]], \{row, w\}]]\\
GetUw[w\char`_, x\char`_] := Module[\{n = Length[w], ws = getws[Transpose[w]]\}, \mlb Table[If[j \textless i, 0, If[j == i, 1, If[ws[[i]] \textless ws[[j]], x[[i, j]], 0]]], \mlb \{i, 1, n\}, \{j, 1, n\}]]\\

MDhatr[rs\char`_, i\char`_] := Total[rs[[;; i]]]\\
MDhatr[rs\char`_] := Table[MDhatr[rs, i], \{i, 1, Length[rs]\}]\\
MDnp[rs\char`_, m\char`_] := MDhatr[rs, Length[rs] - m]\\
MDwp[rs\char`_, m\char`_] := If[m \textless Length[rs], wl[rs[[;; -m - 1]]], Null]\\
MDwpp[rs\char`_, m\char`_] := wl[rs[[-m ;;]]]\\
MDwa[rs\char`_, m\char`_] := ArrayFlatten[\{\{MDwp[rs, m], 0\}, \{0, MDwpp[rs, m]\}\}]\\
MDwb[rs\char`_, m\char`_] := If[m == Length[rs], IdentityMatrix[Total[rs]], \mlb ArrayFlatten[\{\{0, IdentityMatrix[MDhatr[rs, Length[rs] - m]]\}, \mlb \{IdentityMatrix[Total[rs] - MDhatr[rs, Length[rs] - m]], 0\}\}]]\\

MDDecomps[rs\char`_, m\char`_, xa\char`_, xbs\char`_] := Module[\{n = Total[rs], wa = MDwa[rs, m], \mlbb wb = MDwb[rs, m], np = MDnp[rs, m], A, B, C, D, E, F, H\},\mlb
  A = xa[[;; np, ;; np]];\mlb
  B = xbs[[np + 1 ;;, np + 1 ;;]];\mlb
  C = xbs[[;; np, ;; np]];\mlb
  D = xbs[[;; np, np + 1 ;;]];\mlb
  E = xbs[[np + 1 ;;, np + 1 ;;]];\mlb
  F = GetUw[MDwp[rs, m], C];\mlb
  H = GetUw[MDwpp[rs, m], E];\mlb
  \{A, B, C, D, E, F, H\}]\\
  
MDhaty[n\char`_, m\char`_, i\char`_, x\char`_] := DiagonalMatrix[Join[ConstantArray[1, i - 1], \mlb \{(1 + x\^{}2)\^{}(-1/2)\}, ConstantArray[1, m - i - 1], \{(1 + x\^{}2)\^{}(1/2)\}, \mlb ConstantArray[1, n - m]]]\\
RevEmbed[n\char`_, s\char`_] := If[Length[s] \textgreater= n, s, \mlb ArrayFlatten[\{\{IdentityMatrix[n - Length[s]], 0\}, \{0, s\}\}]]\\
MDhatk2[n\char`_, x\char`_] := If[n \textless= 2, \{\{(1 + x\^{}2)\^{}(-1/2), -x (1 + x\^{}2)\^{}(-1/2)\}, \mlb \{x (1 + x\^{}2)\^{}(-1/2), (1 + x\^{}2)\^{}(-1/2)\}\}, ArrayFlatten[\{\{(1 + x\^{}2)\^{}(-1/2), \mlb 0, -x (1 + x\^{}2)\^{}(-1/2)\}, \{0, IdentityMatrix[n - 2], 0\}, \mlb \{x (1 + x\^{}2)\^{}(-1/2), 0, (1 + x\^{}2)\^{}(-1/2)\}\}]]\\
MDhatk[n\char`_, m\char`_, i\char`_, x\char`_] := Embed[n, RevEmbed[m, MDhatk2[m - i + 1, x]]]\\

MDucstarentry[i\char`_, j\char`_, u\char`_] := Module[\{l\}, \mlb -u[[i]] u[[j]] Product[(1 + u[[l]]\^{}2)\^{}(-1/2), \{l, i, j - 1\}]]\\
MDucstar[rs\char`_, m\char`_, u\char`_] := Module[\{n = Total[rs], np = MDnp[rs, m], i, j\}, \mlb Table[If[j \textless i, 0, If[j == i, 1, MDucstarentry[i, j, u]]], \{i, 1, np\}, \mlb \{j, 1, np\}]]\\
MDucstar[rs\char`_, m\char`_] := MDucstar[rs, m, DoSymbolList["u", Total[rs]]]\\
MDudstar[rs\char`_, m\char`_, u\char`_] := Module[\{n = Total[rs], np = MDnp[rs, m], i, j\},\mlb Transpose[\{Table[u[[i]] Product[(1 + u[[j]]\^{}2)\^{}(-1/2), \{j, 1, np\}]\mlb Product[(1 + u[[j]]\^{}2)\^{}(-1/2), \{j, i, np\}], \{i, 1, np\}]\}]]\\
MDudstar[rs\char`_, m\char`_] := MDudstar[rs, m, DoSymbolList["u", Total[rs]]]\\
MDuctildeentry[i\char`_, j\char`_] := Module[\{u = DoSymbolList["u", j], l\}, u[[i]] u[[j]] \mlb (1 + u[[i]]\^{}2)\^{}(-1/2) Product[(1 + u[[l]]\^{}2)\^{}(1/2), \{l, i + 1, j - 1\}]]\\
MDuctilde[rs\char`_, m\char`_] := Module[\{n = Total[rs], np = MDnp[rs, m], i, j\}, \mlb Table[If[j \textless i, 0, If[j == i, 1, MDuctildeentry[i, j]]], \{i, 1, np\}, \mlb \{j, 1, np\}]]\\
MDudtilde[rs\char`_, m\char`_] := Module[\{n = Total[rs], np = MDnp[rs, m], u, i, j\},\mlb u = DoSymbolList["u", n];Transpose[\{Table[-u[[i]] (1 + u[[i]]\^{}2)\^{}(-1) \mlb Product[(1 + u[[j]]\^{}2)\^{}(-1/2), \{j, 1, i - 1\}], \{i, 1, np\}]\}]]\\
MDxdag[rs\char`_, m\char`_] := Module[\{n = Total[rs], np = MDnp[rs, m]\}, If[n - np \textless= 1, \mlb ArrayFlatten[\{\{MDucstar[rs, m], MDudstar[rs, m]\}, \{0, 1\}\}], \mlb ArrayFlatten[\{\{MDucstar[rs, m], 0, MDudstar[rs, m]\}, \mlb \{0, IdentityMatrix[n - np - 1], 0\}, \{0, 0, 1\}\}]]]\\
MDxdaginv[rs\char`_, m\char`_] := Module[\{n = Total[rs], np = MDnp[rs, m]\}, \mlb If[n - np \textless= 1, ArrayFlatten[\{\{MDuctilde[rs, m], MDudtilde[rs, m]\}, \{0, 1\}\}], \mlb ArrayFlatten[\{\{MDuctilde[rs, m], 0, MDudtilde[rs, m]\}, \mlb \{0, IdentityMatrix[n - np - 1], 0\}, \{0, 0, 1\}\}]]]\\
MDydag[rs\char`_, m\char`_, u\char`_] := Module[\{n = Total[rs], np = MDnp[rs, m], i\},\mlb Dot @@ Table[MDhaty[n, n, i, u[[i]]], \{i, 1, np\}]]\\
MDydag[rs\char`_, m\char`_] := MDydag[rs, m, DoSymbolList["u", MDnp[rs, m]]]\\
MDkdagAll[rs\char`_, m\char`_, u\char`_] := Module[\{n = Total[rs], np = MDnp[rs, m], i\},\mlb Table[MDhatk[n, n, i, u[[i]]], \{i, np, 1, -1\}]]\\
MDxcdagentry[rs\char`_, u\char`_, xc\char`_, i\char`_] := Module[\{np = MDnp[rs, 1], j, l\}, \mlb Sum[xc[[np + i, j]] u[[j]]/Product[(1 + u[[l]]\^{}2)\^{}(1/2), \{l, 1, j\}], \mlb \{j, 1, MDnp[rs, 1]\}]]\\
MDxcdag[rs\char`_, u\char`_, xc\char`_] := Module[\{i\}, \mlb Table[\{MDxcdagentry[rs, u, xc, i]\}, \{i, 1, Total[rs] - MDnp[rs, 1] - 1\}]]\\

NiceIwaxb[rs\char`_] := NiceIwaxb[rs, xmat[Total[rs], "x"]]\\
NiceIwaxb[rs\char`_, origx\char`_] := NiceIwaxb[rs, Last[rs], origx]\\
NiceIwaxb[rs\char`_, 0, origx\char`_] := Module[\{n = Total[rs]\}, \mlb \{IdentityMatrix[n], IdentityMatrix[n], IdentityMatrix[n], \{\}\}]\\
NiceIwaxb[rs\char`_, ind\char`_, origx\char`_] := Module[\{n = Total[rs], np = Total[rs[[;; -2]]],\mlbb  rl = rs[[-1]], rps, ydag, wb, kdag, u, umat, xc, xcs, ycs, kcs, ucs, \mlbb uds, xcdag, mat1, mat2, mat3, i, j, l\},\mlb
  \{xc, xcs, ycs, kcs\} = NiceIwaxb[rs, ind - 1, origx];\mlb
  rps = Append[rs[[;; -2]], ind];\mlb
  xc = Embed[n, xc];\mlb
  xcs = Embed[n, xcs];\mlb
  ycs = Embed[n, ycs];\mlb
  u = origx[[ind, rl + 1 ;;]];\mlb
  ucs = MDucstar[rps, 1, u];\mlb
  uds = MDudstar[rps, 1, u];\mlb
  xcdag = MDxcdag[rps, u, xc];\mlb
  ydag = Embed[n, MDydag[rps, 1, u]];\mlb
  wb = MDwb[rs, 1];\mlb
  kdag = Map[Embed[n, \#] \&, MDkdagAll[rps, 1, u]];\mlb
  mat1 = Embed[n, If[ind == 1, ArrayFlatten[\{\{ucs, uds\}, \{0, 1\}\}], \mlbb ArrayFlatten[\{\{ucs, 0, uds\}, \{0, IdentityMatrix[ind - 1], xcdag\}, \mlbb \{0, 0, \{\{1\}\}\}\}]]];\mlb
  mat2 = Embed[n, ucs];\mlb
  umat = Table[If[i == ind, u[[j]] Product[(1 + u[[l]]\^{}2)\^{}(1/2), \mlbb \{l, 1, j - 1\}], 0], \{i, 1, rl\}, \{j, 1, np\}];\mlb
  mat3 = ArrayFlatten[\{\{IdentityMatrix[np], 0\}, \{umat, IdentityMatrix[rl]\}\}];\mlbb
  \{mat3 . mat2 . ydag . xc . Inverse[ydag] . Inverse[mat2],\mlbb mat1 . ydag . xcs . Inverse[ydag], ydag . ycs, \mlbb Join[Map[Embed[n, \#] \&, kcs], kdag]\}]\\

NiceIwawx[rs\char`_] := NiceIwawx[rs, xmat[Total[rs], "x"]]\\
NiceIwawx[rs\char`_ /; Length[rs] == 1, origx\char`_] := Module[\{n = Total[rs]\}, \mlb \{IdentityMatrix[n], IdentityMatrix[n], IdentityMatrix[n], \{\}\}]\\
NiceIwawx[rs\char`_, origx\char`_] := Module[\{n = Total[rs], rl = Last[rs], \mlb wp = MDwp[rs, 1], wa = MDwa[rs, 1], wb = MDwb[rs, 1], xa, xb, xbs, ybs, \mlb kbs, ybswa, Ap, Bp, C, D, E, F, H, xp, xs, ys, ks, mat1, mat2, \mlb mat3, mat4\},\mlb
  \{xa, xs, ys, ks\} = \mlbb NiceIwawx[rs[[;; -2]], (wb.origx.Inverse[wb])[[;; n - rl, ;; n - rl]]];\mlb
  \{xb, xbs, ybs, kbs\} = NiceIwaxb[rs, origx];\mlb
  xa = Embed[n, xa];\mlb
  xb = Inverse[wb] . Embed[n, xb] . wb;\mlb
  \{Ap, Bp, C, D, E, F, H\} = MDDecomps[rs, 1, ybs . xa . Inverse[ybs], xbs];\mlb
  mat1 = ArrayFlatten[\{\{IdentityMatrix[n - rl], \mlbb wp . F . Ap . Inverse[C] . D . Inverse[E]\}, \{0, IdentityMatrix[rl]\}\}];\mlb
  mat2 = ArrayFlatten[\{\{wp . F . Inverse[wp], 0\}, \{0, E\}\}];\mlb
  mat3 = ArrayFlatten[\{\{H, 0\}, \{0, F\}\}];\mlb
  mat4 = ArrayFlatten[\{\{Inverse[E], 0\}, \{0, Inverse[C]\}\}];\mlb
  ybswa = wa . ybs . Inverse[wa];\mlb
  \{mat3 . Inverse[wb] . ybs . xa . Inverse[ybs] . wb . mat4 . xb,\mlbb mat1 . mat2 . ybswa . Embed[n, xs] . Inverse[ybswa],\mlbb ybswa . Embed[n, ys], Join[Map[Embed[n, \#] \&, ks], kbs, \{wb\}]\}]\\
\endgroup

\bibliographystyle{amsplain}

\bibliography{HigherWeight}

\end{document}